\documentclass[11pt]{amsart}
\usepackage{latexsym}
\usepackage{rotating, enumerate}
\usepackage{amsfonts}
\usepackage{amssymb}
\usepackage{amsmath}
\usepackage{graphics, color}
\definecolor{darkblue}{rgb}{0,0,0.4}
\usepackage[colorlinks=true, citecolor=darkblue, filecolor=darkblue,
linkcolor=darkblue, urlcolor=darkblue]{hyperref}
\input xy
\xyoption{all}
\DeclareMathOperator*{\mocolim}{colim}
\DeclareMathOperator*{\colim}{colim}

\DeclareMathOperator*{\coprodmo}{\coprod}

\newtheorem{theorem}{Theorem}[section]
\newtheorem{lemma}[theorem]{Lemma}

\newtheorem{proposition}[theorem]{Proposition}

\newtheorem{definition}[theorem]{Definition}
\newtheorem{remark}[theorem]{Remark}

\newtheorem{construction}[theorem]{Construction}

\begin{document}

\newcommand{\e}{\emph}
\newcommand{\fD}{\mathfrak{D}}
\newcommand{\fB}{\mathfrak{B}}

\newcommand{\bbA}{\mathbb{A}}
\newcommand{\bbC}{\mathbb{C}}
\newcommand{\N}{\mathbb{N}}
\newcommand{\bbN}{\mathbb{N}}
\newcommand{\R}{\mathbb{R}}
\newcommand{\bbR}{\mathbb{R}}
\newcommand{\bbL}{\mathbb{L}}
\newcommand{\bbZ}{\mathbb{Z}}
\newcommand{\bbW}{\mathbb{W}}
\newcommand{\bbK}{\mathbb{K}}

\newcommand{\A}{\mathcal{A}}
\newcommand{\B}{\mathcal{B}}
\newcommand{\C}{\mathcal{C}}
\newcommand{\mcC}{\mathcal{C}}
\newcommand{\D}{\mathcal{D}}
\newcommand{\mcD}{\mathcal{D}}
\newcommand{\E}{\mathcal{E}}
\newcommand{\mF}{\mathcal{F}}
\newcommand{\G}{\mathcal{G}}
\newcommand{\mcG}{\mathcal{G}}
\newcommand{\mH}{\mathcal{H}}
\newcommand{\mcH}{\mathcal{H}} %Holonomy map
\newcommand{\I}{\mathcal{I}}
\newcommand{\mL}{\mathcal{L}}
\newcommand{\mcN}{\mathcal{N}}
\newcommand{\mO}{\mathcal{O}}
\newcommand{\mcP}{\mathcal{P}}
\newcommand{\Q}{\mathcal{Q}}
\newcommand{\mcR}{\mathcal{R}}
\newcommand{\V}{\mathcal{V}}
\newcommand{\mS}{\mathcal{S}}
\newcommand{\U}{\mathcal{U}}
\newcommand{\W}{\mathcal{W}}
\newcommand{\X}{\mathcal{X}}
\newcommand{\mX}{\mathcal{X}}
\newcommand{\Y}{\mathcal{Y}}
\newcommand{\Z}{\mathcal{Z}}

\newcommand{\bs}{\mathbf{s}}
\newcommand{\bn}{\mathbf{n}}
\newcommand{\bm}{\mathbf{m}}
\newcommand{\bq}{\mathbf{q}}
\newcommand{\bbf}{\mathbf{f}}
\newcommand{\bbg}{\mathbf{g}}
\newcommand{\bt}{\mathbf{t}}
\newcommand{\bv}{\mathbf{v}}
\newcommand{\bN}{\mathbf{N}}
\newcommand{\bT}{\mathbf{T}}
\newcommand{\bR}{\mathbf{R}}

\newcommand{\sd}{d_{\Delta}}
\newcommand{\la}{\langle}
\newcommand{\ra}{\rangle}
\newcommand{\rdisj}{\stackrel{r}{\coprod}}
\newcommand{\xmaps}{\xrightarrow}

\newcommand{\hofib}{\textrm{hofib}}

\newcommand{\contains}{\supseteq}
\newcommand{\Img}{\textrm{Im}}
\newcommand{\ts}{\textsuperscript}
\newcommand{\bC}{\mathbf{C}}
\newcommand{\supp}{\textrm{supp}}
\newcommand{\Sperp}{\mS^{\perp}}
\newcommand{\Aperp}{A^{\perp}}
\newcommand{\goesto}{\mapsto}
\newcommand{\bG}{\mathcal{G}_0} %Based gauge group
\newcommand{\sth}[1]{#1^{\mathrm{th}}}
\newcommand{\abs}[1]{\left| #1\right|}
\newcommand{\ord}[1]{\Delta \left( #1 \right)}
\newcommand{\leqs}{\leqslant}
\newcommand{\geqs}{\geqslant}
\newcommand{\heq}{\simeq}
\newcommand{\iso}{\simeq}
\newcommand{\maps}{\longrightarrow}
\newcommand{\injects}{\hookrightarrow}
\newcommand{\homeo}{\cong}
\newcommand{\surjects}{\twoheadrightarrow}
\newcommand{\isom}{\cong}
\newcommand{\cross}{\times}
\newcommand{\srm}[1]{\stackrel{#1}{\maps}}
\newcommand{\srt}[1]{\stackrel{#1}{\to}}
\newcommand{\normal}{\vartriangleleft}
\newcommand{\wt}[1]{\widetilde{#1}} %wide tilde for M's
\newcommand{\fc}{\mathcal{A}_{\mathrm{flat}}} %Space of flat connections
\newcommand{\Gr}{\textrm{Gr}}
\newcommand{\Seq}{\textrm{\underline{\bf Seq}}}
\newcommand{\Ob}{\textrm{Ob}}
\newcommand{\Id}{\textrm{Id}}
\newcommand{\Ai}{\mathcal{A}_1\cap\mathcal{A}_2}
\newcommand{\Ab}{\A_b}
\newcommand{\Ac}{\mathcal{A}_c}
\newcommand{\Ainf}{\mathcal{A}_\infty}
\newcommand{\bAc}{\overline{\mathcal{A}_c}}
\newcommand{\Acp}{\Ac^+}
\newcommand{\bAcp}{\bAc^+}
\newcommand{\bAcpp}{\bAc^{++}}
\newcommand{\vect}[1]{\stackrel{\rightharpoonup}{\mathbf #1}}
\newcommand{\SR}{\mathcal{SR}}
\newcommand{\SRe}{\mathcal{SR}^{\mathrm{even}}}
\newcommand{\Rep}{\mathrm{Rep}}
\newcommand{\SRep}{\mathrm{SRep}}
\newcommand{\Hom}{\mathrm{Hom}}
\newcommand{\Lie}{\mathrm{Lie}}
\newcommand{\diam}{\mathrm{diam}}
\newcommand{\K}{K}
\newcommand{\mK}{\mathcal{K}_{\mathrm{def}}}
\newcommand{\SK}{SK_{\mathrm{def}}}
\newcommand{\dom}{\mathrm{dom}}
\newcommand{\codom}{\mathrm{codom}}
\newcommand{\Mor}{\mathrm{Mor}}
\newcommand{\+}[1]{\underline{#1}_+}
\newcommand{\Fin}{\Gamma^{\mathrm{op}}}
\newcommand{\f}[1]{\underline{#1}}
\newcommand{\Stab}{\mathrm{Stab}}
\newcommand{\Css}{\mathcal{C}_{ss}}
\newcommand{\Map}{\mathrm{Map}}
\newcommand{\flatc}{\mathcal{A}_{\mathrm{flat}}}
\newcommand{\F}[1]{\mathrm{Flag}(\vect{#1})}
\newcommand{\p}{\vect{p}}
\newcommand{\avg}{\mathrm{avg}}
\newcommand{\smsh}[1]{\ensuremath{\mathop{\wedge}_{#1}}}
\newcommand{\Vect}{\mathrm{Vect}}
\newcommand{\convto}{\Longrightarrow}
\newcommand{\sm}{\wedge}

\def\co{\colon\thinspace}

\def\endrem{}
\def\colon{{:}\;}

\title[Finite decomposition complexity and algebraic $K$--theory]{Finite decomposition complexity and the integral Novikov conjecture for higher algebraic $K$--theory}

\author[Ramras]{Daniel A. Ramras}
\address{New Mexico State University\\
Department of Mathematical Sciences\\
P.O. Box 30001\\
Department 3MB\\
Las Cruces, New Mexico 88003-8001 U.S.A.}
\email{ramras@nmsu.edu}
%\urladdr{http://www.math.nmsu.edu/~ramrasda}

\author[Tessera]{Romain Tessera}
\address{UMPA, ENS de Lyon\\
 46 all\'{e}e d'Italie\\
  69364 Lyon Cedex 07\\
   France}
\email{tessera@phare.normalesup.org}
%\urladdr{http://www.normalesup.org/~tessera/}

\author[Yu]{Guoliang Yu}
\address{Department of Mathematics\\
1326 Stevenson Center\\
\newline Vanderbilt University\\
Nashville, TN 37240 U.S.A.}
\email{guoliang.yu@vanderbilt.edu}
%\urladdr{http://sitemason.vanderbilt.edu/site/jeMHDy}

%\author[Yu]{Guoliang Yu}
%\address{Department of Mathematics\\
%Texas A\&M University\\
%\newline College Station, TX 77843-3368\\
%and Shanghai Center for Mathematical Sciences}
%\email{guoliangyu@math.tamu.edu}
%%\urladdr{http://sitemason.vanderbilt.edu/site/jeMHDy}

\thanks{The first author was partially supported by NSF grants DMS-0804553/0968766.\\
The second author was partially supported by NSF grant  DMS-0706486 and ANR grants AGORA and BLANC.\\
The third author was partially supported by NSF grants DMS-0600216 and DMS-1101195}

%\subjclass[2000]{Primary 19D50; Secondary  19G24, 20F69}
%19D50 View Publications (1991-now) Computations of higher K-theory of rings [See also 13D15, 16E20] 
%20F69 View Publications (2000-now) Asymptotic properties of groups 
%19G24 View Publications (1986-now) L-theory of group rings [See also 11E81]

 \begin{abstract}  Decomposition complexity for metric spaces was recently introduced by Guentner, Tessera, and Yu as a natural generalization of asymptotic dimension.  We prove a vanishing result for the continuously controlled algebraic $K$--theory of bounded geometry metric spaces with finite decomposition complexity.  This leads to a proof of the integral $K$--theoretic Novikov conjecture, regarding split injectivity of the $K$--theoretic assembly map, for groups with finite decomposition complexity and finite CW models for their classifying spaces.  By work of Guentner, Tessera, and Yu, this  includes all (geometrically finite) linear groups.  
\end{abstract}

\maketitle

\section{Introduction}$\label{intro}$

Decomposition complexity for metric spaces, introduced by Guentner, Tessera, and Yu~\cite{GTY-FDC, GTY-rigid}, is a natural inductive generalization of the much-studied notion of asymptotic dimension.  Roughly speaking, decomposition complexity measures the difficulty of decomposing a metric space into uniformly bounded pieces that are well-separated from one another.
The class of metric spaces with \e{finite} decomposition complexity (FDC), as defined in Definition~\ref{fdc},  contains all metric spaces with finite asymptotic dimension \cite[Theorem 4.1]{GTY-FDC}, as well as all countable linear groups equipped with a proper (left-)invariant metric (\cite[Theorem 3.0.1]{GTY-rigid} and~\cite[Theorem 5.2.2]{GTY-FDC}).  
In this article, we study the integral Novikov conjecture for the algebraic $K$--theory of group rings $R[\Gamma]$, where $\Gamma$ has FDC.

For a discrete group $\Gamma$, the classical Novikov conjecture on the homotopy invariance of higher signatures is implied by rational injectivity of the Baum--Connes assembly map~\cite{Baum-Connes}.  In Yu~\cite{Yu-BC} and Skandalis--Tu--Yu~\cite{STY-coarse-BC}, injectivity of the Baum-Connes map was proved for groups coarsely embeddable into Hilbert space.
Using this result, Guentner, Higson, and Weinberger~\cite{G-H-W} proved the Novikov conjecture for linear groups.  This inspired the work of Guentner, Tessera, and Yu~\cite{GTY-rigid}, who proved the \e{integral} Novikov conjecture (establishing integral injectivity of the $L$--theoretic assembly map) for geometrically finite FDC groups (i.e. those with a finite CW model for their classifying space), and hence the stable Borel Conjecture for closed aspherical manifolds whose fundamental groups have FDC.

The algebraic $K$--theory Novikov conjecture claims that Loday's assembly map~\cite{Loday} 
\begin{equation}\label{assembly-map-intro}H_* (B\Gamma; \bbK (R)) \maps K_* (R[\Gamma])\end{equation}
is (rationally) injective.
Here $\Gamma$ is a finitely generated group and $R$ is an associative, unital ring (not necessarily commutative).  The domain of the assembly map is the homology of $\Gamma$ with coefficients in the (non-connective) $K$--theory spectrum of $R$, and the range is the (non-connective) $K$--theory of the group ring $R[\Gamma]$.  For discussions of this conjecture and its relations to geometry, see Hsiang~\cite{Hsiang-geom-applications} and Farrell--Jones~\cite{Farrell-Jones-isom}.  A great deal is known about the map (\ref{assembly-map-intro}): B\"okstedt, Hsiang, and Madsen~\cite{BHM} proved that (\ref{assembly-map-intro}) is \e{rationally} injective for
$R=\bbZ$ under the assumption that $H_*(\Gamma; \bbZ)$ is finitely generated in each degree.  Integral injectivity results were proven for geometrically finite groups with finite asymptotic dimension by Bartels~\cite{Bartels} and Carlsson--Goldfarb~\cite{Carlsson-Goldfarb}, building on Yu's work~\cite{Yu} (which established injectivity of the Baum--Connes assembly map for groups with finite asymptotic dimension).  In Section~\ref{assembly-sec}, we prove the following generalization of \cite{Bartels, Carlsson-Goldfarb}.

\begin{theorem}$\label{assembly}$
Let $\Gamma$ be a  group with finite decomposition complexity, and assume there exists a universal principal $\Gamma$--bundle $E\Gamma\to B\Gamma$ with $B\Gamma$ a finite CW complex.  Then for every ring $R$, the $K$--theoretic assembly map
$$H_* (B\Gamma; \bbK (R)) \maps K_* (R [\Gamma])$$
is a split injection for all $*\in \bbZ$.
\end{theorem}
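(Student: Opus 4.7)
The plan is a descent argument in the spirit of Bartels~\cite{Bartels} and Carlsson--Goldfarb~\cite{Carlsson-Goldfarb}, in which split injectivity of Loday's assembly map is reduced to a vanishing theorem for the continuously controlled algebraic $K$--theory of bounded-geometry metric spaces with finite decomposition complexity. First I would fix a proper left-invariant word metric on $\Gamma$ (possible because $B\Gamma$ finite forces $\Gamma$ to be finitely generated) and lift it via a finite fundamental domain to a proper $\Gamma$--invariant metric on $E\Gamma$. With these choices $E\Gamma$ becomes a bounded-geometry metric space carrying a free, cocompact, isometric $\Gamma$--action, and since $\Gamma$ has FDC so does $E\Gamma$.

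Next I would organize a diagram of equivariant $K$--theory spectra built from the open cone $\mO E\Gamma$: Morita theory identifies an equivariant boundedly controlled $K$--theory spectrum on $E\Gamma$ with $\bbK(R[\Gamma])$, the finiteness of $B\Gamma$ lets one identify $H_*(B\Gamma;\bbK(R))$ with the locally finite piece of this same spectrum, and the fiber of the forget-control map sits as an equivariant continuously controlled spectrum on $\mO E\Gamma$. A transfer argument exploiting that $\Gamma$ acts freely (as in~\cite{Bartels, Carlsson-Goldfarb}) reduces split injectivity of assembly to the non-equivariant vanishing statement
\[
\pi_*\bbK_{cc}(\mO X;R)=0 \quad \text{for every bounded-geometry metric space } X \text{ with FDC,}
\]
applied ultimately to $X=\Gamma$ with its word metric.

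The heart of the argument, and the main obstacle, is this vanishing theorem, which I would prove by induction along the inductive definition of FDC. The base case of uniformly bounded $X$ is essentially immediate, since $\mO X$ controllably contracts to $\mO(\mathrm{pt})$. For the inductive step, write $X=X_0\cup X_1$ with each $X_i$ an $r$--disjoint union $\bigcup_j X_{i,j}$ of pieces lying in a class for which the vanishing is already known. Because the $X_{i,j}$ are $r$--separated, at continuous control scales below $r/2$ the $K$--theory of $\mO X_i$ splits as an infinite product over $j$ of the factors $\bbK_{cc}(\mO X_{i,j};R)$, each contractible by induction; passing to the colimit over control scales (here bounded geometry is used to keep the product structure well-behaved) preserves the vanishing, and a controlled Mayer--Vietoris sequence then glues $X_0$ and $X_1$ to give vanishing on $X$. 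The delicate technical points are verifying that excision for $r$--disjoint unions is compatible with the continuous control condition at infinity in the cone (requiring careful choice of the control functions) and coordinating control scales through the induction, so that the disjointness parameter produced at one level of the FDC hierarchy matches the scale needed for Mayer--Vietoris at the next. Once the vanishing is in hand, the descent diagram produces the desired splitting of the assembly map.
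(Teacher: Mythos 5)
Your overall strategy---reduce the assembly map to a vanishing statement for continuously controlled $K$-theory of FDC spaces via a Carlsson-style descent argument---is the same as the paper's, and your treatment of the base case (uniformly bounded $X$, contracted to a point via an Eilenberg swindle) also matches. The gap is in your inductive step, and it is exactly the point the paper has to work hardest to resolve. You invoke ``continuous control scales below $r/2$'' to split the $K$-theory over the $r$-separated pieces $X_{i,j}$; but continuously controlled $K$-theory has no scale parameter. Morphisms in $\Ac(X)$ are required only to have \emph{some} finite propagation, and the control condition at infinity is topological rather than metric, so there is no meaningful ``scale below $r/2$'' at which a product decomposition is visible. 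A bound of this kind lives in bounded-control (Pedersen--Weibel) or $\epsilon$-controlled (Ranicki--Yamasaki) $K$-theory, and the paper's introduction points out that a quantitative version of \emph{higher} algebraic $K$-theory of that flavor does not currently exist. This is precisely why the vanishing theorem (Theorem~\ref{vanishing-intro}) is stated for $\colim_s K^c_*(P_s X)$ rather than for the controlled $K$-theory of $X$ itself: the Rips parameter $s$ reintroduces the missing scale, and for a discrete group $\Gamma$ with its word metric, $K_*\Ac(\Gamma)$ does not vanish---it is only the Rips colimit that does, which is then transferred to $E\Gamma$ by a Lipschitz-homotopy comparison (Theorem~\ref{controlled-vanishing-EG}).

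Even granting Rips complexes, the issue you flag as ``delicate''---coordinating the disjointness parameter with the Mayer--Vietoris scale---is not a detail but the central difficulty, and it forces a new device. Since FDC yields a \emph{different} decomposition $X = U^r\cup V^r$ for each $r$, the pieces $X_{i,j}$ change as $r$ grows; there is no fixed product decomposition to pass to the colimit with. The paper's resolution is the notion of a \emph{decomposed sequence} (Section~\ref{MV-sec}): one bundles all $r$ at once into a single disjoint union, writes Mayer--Vietoris for the whole sequence in the Karoubi quotient $\bAc$, and lets $\bs\in\Seq$ grow so that at sufficiently high stages each class is absorbed. A further subtlety your sketch does not confront is the intersection term in the Mayer--Vietoris sequence, which is not obviously at a lower FDC level; the paper introduces relative Rips complexes $P_{\bs,\bs'}(\Z,\bbW_\bT)$ (Section~\ref{rel-sec}) precisely to rewrite this intersection as a Rips complex of an FDC family of strictly lower complexity, so the transfinite induction can close.
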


We note that in Theorem~\ref{assembly}, the ring $R$ may be replaced by any additive category $\A$, as will be clear from our proof.  (Then $K_* (R[\Gamma])$ must be replaced by the $K$--theory of the category $\A[\Gamma]$, as defined in Bartels~\cite{Bartels}.)

Analogous methods yield an integral injectivity result for the assembly map associated to Ranicki's ultimate lower quadratic $L$--theory $\bbL^{-\infty}$.  We also obtain a large-scale version of the Borel Conjecture for bounded $K$--theory (Theorem~\ref{bdd-Borel}), analogous to~\cite[Theorems 4.3.1, 4.4.1]{GTY-rigid}.  

 Guentner, Tessera, and Yu~\cite{GTY-rigid} studied the Ranicki--Yamasaki controlled (lower) algebraic $K$-- and $L$--groups~\cite{R-Y-K-theory, R-Y-L-theory} of FDC metric spaces, and established a large-scale vanishing result formulated in terms of Rips complexes.  They used this result to study related assembly maps, leading to important geometric rigidity results (in particular, the stable Borel Conjecture).  The key technical result in the present paper is a  vanishing theorem for  \e{continuously} controlled $K$--theory, analogous to~\cite[Theorem 5.1]{GTY-rigid}.
 
\begin{theorem} $\label{vanishing-intro}$
If $X$ is a  metric space with bounded geometry and finite decomposition complexity, then
$\mocolim_s K^c_* (P_s X) = 0$ for all $*\in \bbZ$.
\end{theorem}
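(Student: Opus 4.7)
The plan is to adapt the inductive strategy used in~\cite[Theorem 5.1]{GTY-rigid} to the continuously controlled setting. I would define a class $\fD$ of bounded-geometry metric families for which $\mocolim_s K^c_*(P_s\,\cdot\,)$ vanishes, and then prove that (i) uniformly bounded families lie in $\fD$, and (ii) $\fD$ is closed under the decomposition operation: if a family $\X$ can be $r$-decomposed into subfamilies in $\fD$ for every $r$, then $\X \in \fD$. Combined with the inductive definition of FDC from~\cite{GTY-FDC}, this yields the theorem by induction on the decomposition complexity ordinal.

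For the base case, a uniformly bounded family has each $P_s X$ equal to a single simplex once $s$ exceeds the uniform diameter bound. The continuously controlled $K$--theory of a fixed compact space vanishes by a standard Eilenberg swindle shifting to infinity, and this extends to the whole family by the additivity of $K^c$ over the disjoint union of uniformly bounded pieces.

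For the inductive step, given an $r$-decomposition $\X \preceq \Y_0 \cup \Y_1$ in which each $\Y_i$ is a coarse disjoint union $\coprod_j Y_i^{(j)}$ of subfamilies lying in $\fD$, I would cover $P_s X$ (for $s$ well below $r$) by suitable thickenings of $P_s \Y_0$ and $P_s \Y_1$, whose intersection is captured by $P_s(\Y_0 \cap \Y_1)$. A coarse Mayer--Vietoris sequence for continuously controlled $K$--theory, together with a strong additivity statement $K^c_*(P_s \coprod_j Y_i^{(j)}) \cong \prod_j K^c_*(P_s Y_i^{(j)})$ valid when the $Y_i^{(j)}$ are $r$-disjoint and $s < r$, reduces the vanishing of $\mocolim_s K^c_*(P_s X)$ to the same vanishing for each $Y_i^{(j)}$, which holds by the inductive hypothesis. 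A diagonal argument in $r$ and $s$ closes the induction.

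The main obstacle is establishing the coarse Mayer--Vietoris sequence and the strong additivity result in the continuously controlled setting. Continuous control at infinity is a more rigid condition than the metric control of~\cite{R-Y-K-theory}, so excision along the pieces of a decomposition requires carefully matching the germ/boundary data at the overlaps, and the additivity over $r$-disjoint infinite unions amounts to a product-form excision statement. A secondary subtlety is the interplay between $\mocolim_s$, the decomposition scale $r$, and the infinite products appearing in additivity: one must check that the diagonal limit commutes with the Mayer--Vietoris sequence in the relevant degrees and that any Milnor $\lim^1$ contributions vanish in the colimit, so that they do not obstruct the final vanishing.
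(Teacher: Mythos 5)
Your high-level strategy matches the paper's: transfinite induction over the decomposition complexity ordinal, with uniformly bounded families as the base case (handled by an Eilenberg swindle) and a Mayer--Vietoris argument in continuously controlled $K$--theory for the inductive step. The paper's proof runs Theorem~\ref{vanishing-intro} through Theorem~\ref{vanishing-thm}, Proposition~\ref{vanishing-prop2}, and Proposition~\ref{vanishing-prop4}, and the latter is exactly a transfinite induction as you propose. However, the specific mechanism you propose has gaps that the paper's machinery is designed precisely to avoid.

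First, your ``strong additivity'' statement $K^c_*(P_s \coprodmo_j Y_i^{(j)}) \cong \prod_j K^c_*(P_s Y_i^{(j)})$ hands you a product, and the inductive hypothesis gives you vanishing of $\mocolim_s K^c_*(P_s Y_i^{(j)})$ for each $j$ separately. But $\mocolim_s$ does not commute with $\prod_j$: a family of classes that each die at some parameter $s_j$, with $s_j \to \infty$, survives every finite stage of the colimit of the product. You flag this as a ``secondary subtlety'' involving $\lim^1$, but it is the central obstruction, and it is not one you can wave away. The paper avoids products entirely: the relevant category is the Karoubi quotient $\bAc$ (Definition~\ref{decomposed}), which kills modules supported on any finite initial segment of the disjoint union. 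The vanishing then takes the form ``for each class $x$, there exists $\bs'$ killing $x$'' (Proposition~\ref{vanishing-prop2}), where $\bs'$ is a \emph{sequence} of parameters, allowed to depend on $x$ and to grow from piece to piece. This replaces the commutation of colimits with products by an honest diagonal built into the category.

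Second, the intersection term in your Mayer--Vietoris sequence is not ``captured by $P_s(\Y_0 \cap \Y_1)$.'' Covering $P_s X$ by $t$--neighborhoods $N_t(P_s \Y_0)$ and $N_t(P_s \Y_1)$ gives an intersection living over $N_{t'}(\Y_0)\cap N_{t'}(\Y_1)$ for some $t'$ depending on $t$ and the simplicial distortion constant $C(s,X)$, not over $\Y_0 \cap \Y_1$. Worse, you then need to decompose this intersection and take the colimit over $t$, and the $r$--disjointness of the pieces degrades to $r - 2t$ after $t$--thickening. The paper handles this by arranging the decomposed sequence so that at level $r$ the disjointness is $\sim r$ times the natural scale, so $(r - 2t)$ still tends to infinity with $r$ for any fixed $t$, and by introducing the relative Rips complexes $P_{\bs,\bs'}(-,\bbW_\bT)$ (Section~\ref{rel-sec}) together with a comparison between the absolute and relative Mayer--Vietoris sequences (Theorem~\ref{MV-diagram}). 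Your sketch, read closely, is essentially the quantitative argument of~\cite[Section 6]{GTY-rigid}, which works for the Ranicki--Yamasaki controlled groups where propagation bounds are an explicit parameter; porting it to continuous control at $1$ is exactly what required the ``decomposed sequence'' formalism here, because in the continuously controlled setting there is no universal propagation bound to chase, and the needed diagonalization has to happen at the categorical level rather than by choosing constants at the end.
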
 

 This theorem is proven in Section~\ref{FDC}.
Here $K^c_* (Z)$ denotes the continuously controlled $K$--theory of the metric space $Z$ (see Section~\ref{modules}), and \e{bounded geometry} means that for each $r>0$, there exists $N\in \bbN$ such that each ball of radius $r$ contains at most $N$ elements.  Given a bounded geometry metric space $X$ and a positive number $s$, the Rips complex $P_s (X)$ is formed from the vertex set $X$ by laying down a simplex $\langle x_0, \ldots, x_n\rangle$ whenever the pairwise distances $d(x_i, x_j)$ are all at most $s$.

The analogous result from~\cite{GTY-rigid} is proven using controlled Mayer--Vietoris sequences for Ranicki and Yamasaki's controlled lower $K$-- and $L$--groups~\cite{R-Y-K-theory, R-Y-L-theory}.  In that flavor of controlled algebra, one imposes universal bounds on the propagation of morphisms, and the Mayer--Vietoris sequences are only exact in a weak sense involving these bounds.  While it may be possible to construct quantitative versions of higher algebraic $K$--groups (analogous to the Ranicki--Yamasaki controlled lower $K$--groups, and to recent work of Oyono-Oyono and Yu in operator $K$--theory~\cite{OOY-quant}) such a theory does not currently exist.  Instead, we produce analogous (strictly exact) Mayer--Vietoris sequences in \e{continuously} controlled $K$--theory.  Loosely speaking, this corresponds (in low dimensions) to taking colimits over the propagation bounds in the Ranicki--Yamasaki theory.  Our Mayer--Vietoris sequences are produced using the machinery of Karoubi filtrations  as developed, for instance,  in C{\'a}rdenas--Pedersen~\cite{Cardenas-Pedersen}.  

In broad strokes, the proof of Theorem~\ref{vanishing-intro} is similar to the arguments in~\cite[Section 6]{GTY-rigid}.
The starting point is that the theorem holds for \e{bounded} metric spaces.
In~\cite[Section 6]{GTY-rigid}, controlled Mayer--Vietoris sequences were applied to a space $X$ covered by two subspaces, each  an $r$--disjoint union of smaller subspaces.  Great care was taken in order to keep $r$  large with respect to the other parameters involved, e.g. the Rips complex parameter and the propagation bound on morphisms.
In the present work we consider all at once a \e{sequence} of such decompositions of $X$, whose disjointness tends to infinity.  For each continuously controlled $K$--theory class $x\in K_*^c (P_s X)$, we show that at sufficiently high stages in the sequence of decompositions, $x$ can be build from classes supported on the (relative) Rips complexes of the individual factors appearing in the decompositions.  
An inductive process ensues, in which we further decompose the spaces appearing at each level of the previous sequence of decompositions.  Metric spaces with finite decomposition complexity are essentially those for which this process eventually results in (uniformly) bounded pieces.
Such considerations lead to the notion of a \e{decomposed sequence}, introduced in Section~\ref{MV-sec}.
Our approach avoids much of the intricate manipulation of various constants in~\cite[Section 6]{GTY-rigid}, but the price we pay  is that we must deal with more complicated objects than simply a metric space decomposed as a union of two subspaces.

Our approach to the assembly map makes crucial use of both ordinary Rips complexes $P_s (X)$ and the relative Rips complexes introduced in \cite{GTY-rigid}.  As the parameter $s$ increases, the simplices in $P_s (X)$ wipe out any small-scale features of $X$ and expose the large-scale structure of the space.
When $X$ is a torsion-free  group $\Gamma$ equipped with the word metric associated to a finite generating set, the Rips complexes also give a sequence of cocompact $\Gamma$--spaces approximating the universal free $\Gamma$--space $E\Gamma$ (if $\Gamma$ has torsion, they approximate the universal space for \e{proper} actions). 
Theorem~\ref{assembly} is deduced from Theorem~\ref{vanishing-intro} through a comparison between $E\Gamma$ and the Rips complexes, which shows that when $\Gamma$ is geometrically finite and has FDC, the controlled $K$--theory of $E\Gamma$ vanishes (Theorem~\ref{controlled-vanishing-EG}).

In earlier work on assembly maps in higher algebraic $K$--theory, nerves of coverings (as in Bartels~\cite{Bartels} or Carlsson--Goldfarb~\cite{Carlsson-Goldfarb}) or compactifications of the universal space $E\Gamma$ (as in Carlsson--Pedersen~\cite{Carlsson-Pedersen} or Rosenthal~\cite{Rosenthal}) played roles similar to the Rips complexes used here.  Unlike coverings and compactifications, Rips complexes  are built in a canonical way from the underlying metric space.  Together with their dual relationships to the large-scale geometry of $\Gamma$ and to the universal space $E\Gamma$, this makes Rips complexes ideally suited to the study of assembly maps.

\vspace{.2in}
\noindent {\bf Organization:}   Section~\ref{modules} reviews notions from geometric algebra.  Section~\ref{K-filt} establishes algebraic facts about Karoubi filtrations that underly our controlled Mayer--Vietoris sequences.  The sequences themselves are constructed in Section~\ref{MV-sec}.  This section begins with a general Mayer--Vietoris sequence for proper metric spaces, and then specializes this sequence to Rips complexes and relative Rips complexes.  Section~\ref{MV-sec} also introduces the terminology of \e{decomposed sequences} used extensively in Section~\ref{FDC}.
In Section~\ref{Rips-sec}, we review the necessary metric properties of Rips complexes and relative Rips complexes.  Section~\ref{FDC} reviews the notion of finite decomposition complexity and establishes our vanishing theorem for continuously controlled $K$--theory.  Assembly maps for $K$-- and $L$--theory are studied in the final section. 

\vspace{.2in}
To aid readability, we have attempted to make our indexing sets as explicit as possible.  In some arguments, the same indexed family occurs several times in one argument, and in such cases we will abbreviate expressions like   $\{Z_\alpha\}_{\alpha\in A}$ to $\{Z_\alpha\}_{\alpha}$ after their first appearance.

\vspace{.2in}
\noindent {\bf Acknowledgements:} We thank Daniel Kasprowski for pointing out an error in a previous version of the paper, and the referee for offering many suggestions that improved the exposition. The first author also  thanks Ben Wieland for helpful conversations.  

%%%%%%%%%%%%%%%%%%%%%%%%%%%%%%%%%%%%%%%%%%
%%%%%%%%%%%%%%%%%%%%%%%%%%%%%%%%%%%%%%%%%%

\section{Geometric modules}$\label{modules}$

Throughout this paper all metrics will be allowed to take on the value $\infty$, and all categories will be assumed to be small.  If $X$ is a metric space and $x\in X$, we set $B_r (x) = \{ y\in X \, :\, d(x, y)< r\}$ and if $Z\subset X$, we set $N_r (Z) = \{y\in X \, :\, d(y, Z) < r\}$.  We call a metric space \e{proper} if the closed ball $\{ y\in X \, :\, d(x, y)\leqs r\}$ is compact for every $x\in X$ and every $r>0$.

\begin{definition} Let $\A$ be an additive category (we think of the objects of $\A$ as ``modules'').  A geometric $\A$--module over a metric space $X$ is a function $M: X\to \Ob (\A)$.  We say that $M$ is \e{locally finite} if  its support $\supp(M) = \{x\in X: M(x) \neq 0\}$ is locally finite in $X$, in the sense that for   each compact set $K \subset X$, $\supp(M) \cap K$ is finite.  (If $X$ is proper, this is equivalent to requiring that each $x\in X$ has a neighborhood $U_x$ such that $\supp(M)\cap U_x$ is finite.)  We will usually abbreviate $M(x)$ by $M_x$, and for any subspace $Y\subset X$ we define $M(Y)$ to be the geometric module given by
$$M(Y)_x = \left\{ \begin{array}{ll} M_x, \,\,\,\,\, x\in Y,\\
      0, \,\,\,\,\,\,\, x\notin Y
      \end{array}\right.
$$

A morphism $\phi$ from a geometric module $M$ to a geometric module $N$ is a collection of morphisms $\phi_{xy} : M_y \to N_x$ for all pairs $(x,y)\in X\cross X$, subject to the condition that for each $x\in X$, the sets 
$$\{y\in X \, : \, \phi_{xy} \neq 0\} \textrm{\,\,\, and \,\,\,} \{y\in X\, : \, \phi_{yx} \neq 0\}$$
are finite.  
\end{definition}

One may think of $\phi = \{\phi_{xy}\}$ as a matrix indexed by the points in $X$, in which each row and each column has only finitely many non-zero entries.

We will deal with a fixed additive category $\A$ throughout the paper, and we will refer to geometric $\A$--modules simply as geometric modules.  The main case of interest is when $\A$ is (a skeleton of) the category of finitely generated free $R$--modules for some associative unital ring $R$.

Geometric modules and their morphisms form an additive category $\A(X)$, in which composition of morphisms is simply matrix multiplication (which is well-defined due to the row- and column-finiteness of these matrices) and addition of morphisms is defined via entry-wise sum of matrices (using the additive structure of $\A$).  
Direct sums of objects in $\A(X)$ are formed by taking direct sums pointwise over $X$.
The categories we are interested in will impose important additional support conditions on the morphisms $\phi$.

\begin{definition} 
 We say that a morphism $\phi : M \to N$ of geometric modules over $X$ has \emph{finite propagation} (or is \emph{bounded}) if there exists $R>0$ such that $\phi_{xy} = 0$ whenever $d (x, y) > R$.
 \end{definition}
 
 We may now consider the subcategory of locally finite geometric modules and $\emph{bounded}$ morphisms
 $$\Ab (X) \subset \A(X).$$
 This is again an additive category, and its $K$--theory is, by definition, the bounded $K$--theory of $X$ with coefficients in $\A$.  
 
 \begin{remark} $\label{K-rmk}$ Throughout this paper, the $K$--theory of an additive category $\C$ will mean the non-connective $K$--theory spectrum $\bbK (\C)$ as defined, for example, in~\cite[Section 8]{Cardenas-Pedersen}.
This means we consider $\C$ as a Waldhausen category, in which cofibrations are (up to isomorphism) inclusions of direct summands and weak equivalences are isomorphisms.  Since inclusions of direct summands can be characterized in terms of split exact sequences, additive functors $\C\to \D$ always preserve these notions of cofibration and weak equivalence, and hence induce maps $\bbK(\C)\to \bbK(\D)$.  We set $K_* (\C) = \pi_* \bbK (\C)$ for $*\in \bbZ$.
 \end{remark}

Next, we will consider the notion of \e{continuously controlled morphisms}, which will be the main object of study in this paper.  Here and in what follows, we give the half-open interval $[0,1)$ the usual Euclidean metric $d(s, t) = |s-t|$, and for metric spaces $(X, d_X)$ and $(Y, d_Y)$, we give $X\cross Y$ the metric $d\left((x,y), (x', y')\right) = d_X (x, x') + d_Y (y, y')$.  The following definition appears in Weiss~\cite{Weiss}, and is a slight variation on the work of Anderson--Connolly--Ferry--Pedersen~\cite{ACFP}.

\begin{definition} $\label{control}$
A morphism $\phi: M\to N$ of geometric modules over $X\cross [0,1)$ is \emph{continuously controlled at 1} if for each $x\in X$ and each neighborhood $U$ of $(x,1)$ in $X\cross [0,1]$, there exists a (necessarily smaller) neighborhood $V$ of $(x,1)$ such that $\phi$ does not cross $U\setminus V$: that is, if $v\in V$ and $y\notin U$, then $\phi_{yv} = \phi_{vy} = 0$.  
\end{definition}

It is an exercise to check that the collection of continuously controlled morphisms in $\Ab(X\cross [0,1))$ form a subcategory.  Since the control condition only depends on the support of the morphism, this collection of morphisms is also closed under addition and negation, and direct sums in this subcategory agree with direct sums in $\Ab (X\cross [0,1))$. 

\begin{definition}$\label{controlled-mod}$ Let $X$ be a proper metric space.  The category of locally finite geometric modules over $X\cross [0,1)$ and continuously controlled morphisms, denoted 
$\Ac (X)$, is the subcategory of $\Ab(X\cross [0,1))$ containing all objects, but only those morphisms with continuous control at 1.  As explained above, this is an additive subcategory of $\A(X\cross [0,1))$.

For $Z\subset X$,  we will write $\Ac^{X} (Z)$ for the category of controlled modules on $Z\cross [0,1)$, where $Z$ has the metric inherited from $X$.  (This will be especially relevant when $Z$ and $X$ are simplicial complexes, since then $Z$ has its own intrinsic simplicial metric, giving rise to a different category of controlled modules.)  

Given a closed subset $Z \subset X$, we define   
$$\Ac^{X +} (Z) \subset \Ac (X)$$
to be the full subcategory on those geometric modules $M\in \Ac(X)$
which are supported ``near" $Z\cross [0,1)$; that is, $M\in \Ac^{X +} (Z)$ if and
only if there exists $R>0$ such that $M_{(x,t)}\neq 0$ implies $d(x, Z) <
R$.  When $X$ is clear from context, we will simply write $\Ac^+ (Z)$ rather than $\Ac^{X +} (Z)$.
\end{definition}

\begin{remark} In Weiss~\cite[Section 2]{Weiss}, a slightly different support condition for modules is used to define an analogue of our category $\Ac (X)$: namely 
the support of each module is required to be a discrete, closed subset of $X\cross [0,1)$.  This condition is equivalent to our local finiteness condition when $X$ is a proper metric space, so that our category $\Ac (X)$ is the same as Weiss's $\A \left(X\cross [0,1], X\cross [0,1)\right)$.  In this paper, we only need to consider $\Ac (X)$ for proper metric spaces $X$.
\end{remark}

The spaces whose controlled $K$--theory appears in this paper will all be simplicial complexes.
We will assume all our simplices have diameter one.  More specifically, we identify the simplex with vertices $x_1, \ldots, x_n$ with the convex hull of the points $\frac{\sqrt{2}}{2}e_i \in \bbR^n$, where the $e_i$ are the standard basis vectors.

Given a simplicial complex $K$, the simplicial metric $d_\Delta$ on $P$ is the unique path-length metric which restricts to the standard Euclidean metric on each simplex.  Explicitly, 
$$d_{\Delta} (x, y) = \inf \sum_{i = 0}^{N-1} \sd(p_i, p_{i+1})$$
where the infimum is taken over all sequences $x = p_0, p_1, \ldots, p_N = y$ (with $N$ arbitrary) such that $p_i$ and $p_{i+1}$ lie in the same simplex of $K$, and $\sd(p_i, p_{i+1})$ is the Euclidean metric on a simplex containing both points.  When $x$ and $y$ lie in different path components of $K$, we set $d_{\Delta} (x, y) = \infty$.
Note that locally finite simplicial complexes are always proper with respect to their simplicial metrics (this follows, for example, from the argument in Lemma~\ref{metric-comp} below, which can be used to show that each ball contains finitely many vertices).  All simplicial complexes in this paper will be equipped with the simplicial metric (possibly restricted from some larger complex).

We will need a lemma regarding the functoriality of controlled $K$--theory for maps between metric spaces.  Versions of the following result are stated (without proof) in~\cite{Bartels-Rosenthal, Bartels, Weiss}; an equivariant version is proven in~\cite[Lemma 3.3]{BFJR}.  For completeness, we sketch the argument. 

\begin{lemma} $\label{functoriality}$  
Let $f: X\to Y$ be a continuous map of proper metric spaces which is proper 
(that is, $f^{-1} (C)$ is compact in $X$ for all compact sets $C\subset Y$) and metrically coarse (that is, for each $R>0$ there exists $S>0$ such that $d_X (x_1, x_2) < R$ implies $d_Y (f(x_1), f(x_2))< S$).  

Then $f$ induces a functor 
\begin{equation*}  f_*\co \Ac (X) \to \Ac (Y).\end{equation*}

Moreover, if $X' \subset X$ and $Y'\subset Y$ are closed subspaces with $f(X') \subset N_t (Y')$ for some $t>0$, then $f$ induces a functor 
\begin{equation*} \label{ii}f_*\co \Acp (X') \to \Acp (Y').\end{equation*}

In particular, given a commutative diagram
$$\xymatrix{ P' \ar@{^{(}->}[r] \ar@{^{(}->}[d] & P \ar@{^{(}->}[r] \ar@{^{(}->}[d] & P'' \ar[d]\\
		    Q' \ar@{^{(}->}[r] & Q \ar@{^{(}->}[r] & Q'' 
		    }
$$		 
 of simplicial maps between locally finite simplicial complexes (with all but the right-hand vertical map injective) there is an induced functor
\begin{equation*}  \label{iii} \Ac^{P +} (P') \to \Ac^{Q +} (Q'),\end{equation*}
where $P'$ and $P$ are given the subspace metrics inherited from the simplicial metric on $P''$, while $Q'$ and $Q$ are given the subspace metrics inherited from the simplicial metric on $Q''$.
\end{lemma}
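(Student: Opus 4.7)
My plan is to define $f_*$ on objects and morphisms of $\Ac(X)$, verify that the output is a locally finite module with row/column-finite, continuously controlled morphisms, and then address the support conditions for parts 2 and 3 in turn. On objects I would set $(f_*M)(y,t) := \bigoplus_{x\in f^{-1}(y)} M(x,t)$; this is a finite direct sum because properness of $f$ makes $f^{-1}(y)\cross\{t\}$ compact, and local finiteness of $\supp(M)$ then forces only finitely many nonzero terms. For morphisms, $(f_*\phi)_{(y'',t''),(y',t')}$ is the block matrix whose blocks are the entries $\phi_{(x,t''),(x',t')}$ for $x\in f^{-1}(y'')$ and $x'\in f^{-1}(y')$. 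Local finiteness of $\supp(f_*M)$ and row/column finiteness of $f_*\phi$ both reduce to the statement that the extension $\bar f\co X\cross[0,1)\to Y\cross[0,1)$ given by $(x,t)\mapsto(f(x),t)$ is proper: compact sets pull back to compact sets, on which the supports of $M$ and $N$ are finite.

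The main technical point is continuous control of $f_*\phi$. I would extend $f$ to $\bar f\co X\cross[0,1]\to Y\cross[0,1]$ by the identity on the second factor. Given a neighborhood $U$ of $(y,1)$ in $Y\cross[0,1]$, the open set $\bar f^{-1}(U)$ contains the compact set $f^{-1}(y)\cross\{1\}$, and for each $x\in f^{-1}(y)$ the continuous control of $\phi$ at $(x,1)$ produces an open neighborhood $V_x\subset\bar f^{-1}(U)$ of $(x,1)$ such that $\phi$ does not cross $\bar f^{-1}(U)\setminus V_x$. By compactness, finitely many $V_{x_i}$ cover $f^{-1}(y)\cross\{1\}$, and $V:=\bigcup_i V_{x_i}$ is open in $X\cross[0,1]$. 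The key step is to convert $V$ into a neighborhood of $(y,1)$ downstairs: since $X\cross[0,1]$ and $Y\cross[0,1]$ are locally compact Hausdorff and $\bar f$ is continuous and proper, $\bar f$ is a closed map, so $\bar f\bigl((X\cross[0,1])\setminus V\bigr)$ is closed in $Y\cross[0,1]$ and does not contain $(y,1)$. Its complement $W$ is therefore an open neighborhood of $(y,1)$ with $\bar f^{-1}(W)\subset V$, and a direct entry-by-entry check using the properties of the $V_{x_i}$ shows that $f_*\phi$ does not cross $U\setminus W$.

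For part 2, if $\supp(M)\subset N_R(X')\cross[0,1)$ then $\supp(f_*M)\subset f(N_R(X'))\cross[0,1)$; metric coarseness of $f$ provides $S=S(R)$ with $f(N_R(X'))\subset N_S(f(X'))$, and combining with $f(X')\subset N_t(Y')$ yields $\supp(f_*M)\subset N_{S+t}(Y')\cross[0,1)$, so $f_*M\in\Acp(Y')$. For part 3, simplicial maps between locally finite complexes are continuous and $1$--Lipschitz (each restricts to an affine map on each simplex, and the simplicial metric is the associated path-length metric), hence metrically coarse; commutativity of the diagram ensures that the restriction of $P''\to Q''$ to $P$ factors through $Q$ and sends $P'$ into $Q'$, so the hypothesis of part 2 holds with $t=0$. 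The $+$-support restriction combined with the injectivity of $P'\hookrightarrow Q'$ and the simplex-by-simplex structure of simplicial maps ensures that the relevant fibers meet the support of each module in finitely many points, so the construction of parts 1 and 2 carries over. I expect the main obstacle to be the closed-map argument in part 1; the rest is bookkeeping.
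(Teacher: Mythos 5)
Your construction mirrors the paper's proof almost exactly: same pushforward formula for modules and morphisms, same use of metric coarseness to handle the $\Acp$ support condition, same reduction of the simplicial case to the general one. For the continuous-control step the paper covers the compact set $(f\times\Id)^{-1}(y,1)$ by finitely many control neighborhoods $V_{x_i}$, shrinks $U$ to have compact closure, and takes $V = U\setminus C$ where $C$ is the (compact) pushforward of $(f\times\Id)^{-1}(\overline{U})\setminus\bigcup V_{x_i}$; your version reaches the same neighborhood by instead observing that a proper continuous map between locally compact Hausdorff spaces is closed and taking $W$ to be the complement of the pushforward of $(X\times[0,1])\setminus V$. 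These are two packagings of the same compactness/properness argument, so the approaches are essentially identical.
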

\begin{proof}  We will construct the functor  $f_*\co \Acp (X') \to \Acp (Y')$; the other functors are special cases (note that simplicial maps decrease distances).

Let $M$ be a geometric module in $\Acp(X')$.  If $f$ is injective, we set $f_* (M)_{(y,t)} = M_{f^{-1} (y,t)}$.
If $f$ is not injective, one needs to redefine the category $\Acp(-)$ so that setting 
$$f_* (M)_{(y,t)} = \bigoplus_{x\in f^{-1}(y)} M_{(x,t)}$$
is well-defined.   We will ignore this technicality in what follows; see~\cite[Section 2]{Weiss} for details.
Since $M$ is supported on a neighborhood of $X'$, $f$ is metrically coarse, and $f(X') \subset N_t (Y')$, the module $f(M)$ will be supported on a neighborhood of $Y'$.  The behavior of $f$ on morphisms is defined similarly; since $f$ is metrically coarse, we see that $f(\phi)$ has finite propagation for each $\phi\in \Acp(X')$.  Finally, we must check that for each $\phi\in \Acp(X')$, $f(\phi)$ is continuously controlled.  
Fix $y\in Y$, and consider a neighborhood $U$ of $(y,1)$ in $Y\cross I$.  Replacing $U$ with a small ball around $(y,1)$ if necessary, we may assume that the closure $\overline{U}$ is compact.  Let $U' = (f\cross \Id_I)^{-1} (U)$.
For each $x\in f^{-1} (y)$, $U'$ is a neighborhood of $(x,1)$, so there exists a smaller neighborhood $V_x$ of $(x,1)$ such that $\phi_{z', z} = 0$ if $z\in V_x$, $z'\notin U'$ or $z\notin U'$, $z'\in V_x$.  Since $f$ is proper, $f^{-1} (y)$ is compact, so we may cover $(f\cross \Id_I)^{-1} (y,1)$ by finitely many of the sets $V_x$, say $V_{x_1}, \ldots,V_{x_n}$.
Since $f$ is proper and continuous and $\overline{U}$ is compact, it follows that 
$$C := (f\cross \Id_I) \left( (f\cross \Id_I)^{-1} (\overline{U}) \setminus \bigcup_{i=1}^n V_{x_i} \right)$$
is compact.  Now $V = U \setminus C$ is a neighborhood of $(y,1)$, and one may check that $\phi$ does not 
cross $U\setminus V = C\cap U$.
\end{proof}

\begin{remark} Most uses of Lemma~\ref{functoriality} in the sequel will only require the statement regarding simplicial complexes.  Note that by setting $P' = P$ and $Q' = Q$, we obtain a statement about the categories $\Ac (-)$.

The functors constructed in Lemma~\ref{functoriality} combine to yield a functor from the category of proper metric spaces and continuous, metrically coarse \e{injections} into the category of small categories.  This makes the various colimits of categories considered later in the paper well-defined.  (For non-injective maps, one needs to be careful in order to make composition strictly associative at the categorical level; this is achieved by Weiss's construction~\cite[Section 2]{Weiss}.  Until Section~\ref{assembly-sec}, all the maps we consider are injective.)
\end{remark}
%%%%%%%%%%%%%%%%%%%%%%%%%%%%%%%%%%%%%%%%%%%%%%%%%%%%%%%%%%%%%%%%%%%%%%%%%%%%%%%%%%%%

\section{Karoubi Filtrations}$\label{K-filt}$

We will use the notion of a Karoubi filtration to produce various Mayer--Vietoris sequences in controlled $K$--theory.  Algebraically, a Karoubi filtration is a tool for collapsing a full subcategory of an additive category; geometrically it is a method for producing fibrations of $K$--theory spectra. 

By abuse of notation we will write $A\in \A$ to mean that $A$ is an object in $\A$.  Furthermore, we will write $A = A_1\oplus A_2$ to mean that there exist maps $i_j: A_j \to A$ making $A$ the categorical direct sum of $A_1$ and $A_2$.  We will always implicitly choose particular maps $i_j$, and we will denote the corresponding projections $A\to A_j$ by $\pi_j$.

\begin{definition} Let $\mS \subset \A$ be a full additive subcategory of a small additive category $\A$.  A \emph{Karoubi filtration} on the pair $(\A, \mS)$ consists of an index set $I$ and for each $A\in \A$ and each $i\in I$, a direct sum decomposition $A = A_i \oplus A'_i$ with $A_i\in \mS$.  These data must satisfy the following conditions:
\begin{enumerate}
\item For each morphism $A\srt{f} S$ (with $S\in \mS$) there exists $i\in I$ such that $f$ factors as 
$$A = A_i \oplus A'_i \stackrel{\pi_1}{\maps} A_i \maps S$$
\item For each morphism $S\srt{g} A$ (with $S\in \mS$)  there exists $i\in I$ such that $g$ factors as
$$S \maps A_i \stackrel{i_1}{\maps} A_i \oplus A'_i = A$$
\item The index set $I$ is a directed poset under the relation $i \leqs j \iff$ for all $A\in \A$, $A_i$ is a direct summand of $A_j$ and $A'_j$ is a direct summand of $A'_i$.  (Here \emph{directed} means that for each $i, j\in I$, there exists $k\in I$ such that $i, j\leqs k$.)
\item For each $A, B\in \A$ and each $i\in I$, we have $(A\oplus B)_i = A_i \oplus B_i$ and $(A\oplus B)'_i = A'_i \oplus B'_i$.
\end{enumerate}
\end{definition}

\begin{remark} In the literature on Karoubi quotients, the term ``filtered" is often used instead of ``directed."  In category theory, the term ``directed" is standard.
\end{remark} 

For any full additive subcategory $\mS \subset \A$, the Karoubi quotient $\A/\mS$ is the category with the same objects as $\A$, but with two morphisms identified if their difference factors through an object of $\mS$.  The following lemma is surely well-known, but seems not to have been made explicit previously.

\begin{lemma}$\label{sums}$ If $\mS$ is a full additive subcategory of the additive category $\A$, then $\A/\mS$
is an additive category, and if $A_1\srm{i_1} A \stackrel{i_2}{\longleftarrow} A_2$ is a direct sum diagram in $\A$, then $A_1\srm{[i_1]} A \stackrel{[i_2]}{\longleftarrow} A_2$ is a direct sum diagram in $\A/\mS$.
\end{lemma}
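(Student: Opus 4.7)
The plan is to check that $\A/\mS$ satisfies the axioms of a preadditive category and that the canonical quotient functor $q\colon \A \to \A/\mS$ is preadditive; the biproduct statement will then follow from the standard fact that preadditive functors preserve biproducts.

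First I would verify that the relation ``$f \sim g$ iff $f - g$ admits a factorization $f - g = \beta \alpha$ through some $S \in \mS$'' is preserved under pre- and post-composition: for any morphism $h$, the products $h\beta\alpha$ and $\beta\alpha h$ still factor through $S$, so composition descends to $\A/\mS$. The one point requiring the full hypothesis on $\mS$ is that hom-sets form abelian groups, i.e., sums of equivalent pairs are equivalent. If $f_i - g_i = \beta_i \alpha_i$ with $\alpha_i\colon X \to S_i$, $\beta_i\colon S_i \to Y$, and $S_i \in \mS$, then choosing a biproduct $S_1 \oplus S_2$ in $\A$ with its structure maps one checks that
\[ (f_1 + f_2) - (g_1 + g_2) = \beta_1 \alpha_1 + \beta_2 \alpha_2 \]
factors as $X \to S_1 \oplus S_2 \to Y$, and $S_1 \oplus S_2 \in \mS$ because $\mS$ is an additive subcategory. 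Bilinearity of composition then descends from $\A$, and the zero object of $\A$ remains a zero object in $\A/\mS$, so $\A/\mS$ is preadditive and $q$ is preadditive (it sends $0$ to $0$ and respects addition).

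For the main claim, recall that in a preadditive category a diagram $A_1 \xrightarrow{i_1} A \xleftarrow{i_2} A_2$ is a biproduct precisely when there exist $\pi_j\colon A \to A_j$ satisfying $\pi_j i_k = \delta_{jk}\, \mathrm{id}_{A_j}$ and $i_1 \pi_1 + i_2 \pi_2 = \mathrm{id}_A$. Applying $q$ transports these identities verbatim, so $([i_1], [i_2])$ is a biproduct in $\A/\mS$. This simultaneously produces all finite biproducts in $\A/\mS$, making it additive. The only real obstacle in the entire argument is the closure of $\mS$ under finite direct sums, needed to see that sums of $\mS$-negligible morphisms remain $\mS$-negligible; once that is in place the rest is formal.
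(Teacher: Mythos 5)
Your proof is correct, but it takes a genuinely different route from the paper. You establish that $\A/\mS$ is preadditive and that the quotient functor $q\colon\A\to\A/\mS$ is preadditive, then invoke the standard equational characterization of biproducts in preadditive categories (structure maps satisfying $\pi_j i_k = \delta_{jk}$ and $i_1\pi_1 + i_2\pi_2 = \mathrm{id}_A$, which any preadditive functor transports verbatim). The paper instead verifies the universal property of the coproduct in $\A/\mS$ directly: it shows that $[f\oplus g]$ makes the relevant test diagram commute, and then proves uniqueness by writing $\phi = (\phi\circ i_1)\oplus(\phi\circ i_2)$, so that $f\oplus g - \phi = (f - \phi\circ i_1)\oplus(g - \phi\circ i_2)$ factors through $S_1\oplus S_2\in\mS$. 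Both arguments hinge on the same essential observation — that $\mS$ is closed under finite direct sums, so sums of $\mS$-negligible morphisms remain $\mS$-negligible — but your version is more conceptual and shorter once the preadditive-functor fact is granted, whereas the paper's is more self-contained, carrying out the diagram chase explicitly rather than appealing to the general theorem about biproduct preservation.
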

\begin{proof}  It is elementary to check that $\A/\mS$ is a category.  The addition on morphisms is given by $[\phi]+[\psi] = [\phi + \psi]$.  This is well-defined because if $\phi\co A\to B$ factors through $S\in \mS$ and $\psi\co A\to B$ factors through $S'\in \mS$, then $\phi + \psi$ 
factors through $S\oplus S' \in \mS$.

Now, say $A_1\srm{i_1} A \stackrel{i_2}{\longleftarrow} A_2$ is a direct sum diagram in $A$.  Given a diagram
\begin{equation}\label{phi} \xymatrix{
	A_1 \ar[r]^{[i_1]} \ar[dr]_{[f]} & A \ar@{.>}[d] & A_2 \ar[l]_{[i_2]} \ar[dl]^{[g]}\\
	& C
		}
\end{equation}
in $\A/\mS$ we must show that 
there is a unique morphism $A\to C$ making  (\ref{phi}) commute.  For any direct sum $f\oplus g$, the map  $[f\oplus g]$ makes (\ref{phi})  commute, so it suffices
to show that if $A\stackrel{[\phi]}{\maps} C$ makes (\ref{phi}) commute, then $f\oplus g - \phi$ factors through an object of $\mS$. 
Writing  $\phi = (\phi \circ i_1) \oplus (\phi \circ i_2)$, we have 
\begin{equation} \label{f+g} f\oplus g - \phi = (f - \phi \circ i_1) \oplus (g - \phi \circ i_2).\end{equation}
If $[\phi]$ makes (\ref{phi}) commute, then $f - \phi \circ i_1$ and $g - \phi \circ i_2$ factor through objects $S_1$ and $S_2$ in $\mS$ (respectively), so  $f - \phi \circ i_1$ and $g - \phi \circ i_2$ are the composites
$$A_1 \stackrel{\alpha_1}{\maps} S_1 \stackrel{\beta_1}{\maps} C \,\,\,
\textrm{  and  }\,\,\,
 A_2 \stackrel{\alpha_2}{\maps} S_2 \stackrel{\beta_2}{\maps} C,$$
(respectively) for some morphisms $\alpha_k$ and $\beta_k$  in $\A$ ($k = 1, 2$). 
We now see that $(f - \phi \circ i_1) \oplus (g - \phi \circ i_2)$ factors through $S_1 \oplus S_2 \in \mS$ since (letting $j_1$ and $j_2$ denote the inclusions of the summands into $S_1 \oplus S_2$), the diagram
$$\xymatrix{A_1 \ar@/_3pc/[rrdd]_{f-\phi\circ i_1}  \ar[rr]^{i_1} \ar[dr]_{\alpha_1} && A \ar[d]_{j_1 \alpha_1}^{\oplus j_2 \alpha_2} & & A_2 \ar[ll]_{i_2} \ar[dl]^{\alpha_2} \ar@/^3pc/[lldd]^{g-\phi\circ i_2}\\
& S_1 \ar[r]^(.35){j_1} \ar[dr]_{\beta_1} 
& S_1 \oplus S_2 \ar[d]_{\beta_1}^{\oplus \beta_2} 
& S_2 \ar[l]_(.35){j_2} \ar[dl]^{\beta_2}\\
		  &&C
 }
$$
commutes in $\A$ by construction.   
\end{proof}

The utility of Karoubi filtrations comes from the following result due to Pedersen--Weibel~\cite{Pedersen-Weibel};  see also C{\'a}rdenas--Pedersen~\cite[Section 8]{Cardenas-Pedersen}.

\begin{theorem}  $\label{LES}$ If $\mS \subset \A$ is a full additive subcategory of a small additive category $\A$ and $(\A, \mS)$ admits a Karoubi filtration, then then there is a long exact sequence in non-connective algebraic $K$--theory
$$\cdots \maps K_*\mS \maps K_* \A \maps K_* \A/\mS \stackrel{\partial}{\maps} K_{*-1} \mS \maps \cdots$$
\end{theorem}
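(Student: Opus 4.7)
The plan is to deduce the long exact sequence from a homotopy fiber sequence of non-connective $K$--theory spectra
$$\bbK(\mS) \longrightarrow \bbK(\A) \longrightarrow \bbK(\A/\mS),$$
with the connecting map $\partial$ arising as the boundary homomorphism of that fibration. That $\bbK(\A/\mS)$ is even defined is exactly the content of Lemma~\ref{sums} combined with Remark~\ref{K-rmk}, and the additivity of the quotient functor $\A \to \A/\mS$ (also visible from Lemma~\ref{sums}) gives the induced maps of spectra to begin with. Granting the fibration, the claim is just the long exact sequence of homotopy groups of a fibration of spectra.

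The substantive content is therefore to produce the fibration. The four axioms of a Karoubi filtration play the following roles. Axioms (1) and (2) say that every morphism with source or target in $\mS$ is ``controlled'' by some index $i \in I$, in the sense that it factors through the $\mS$-summand $A_i$ of an appropriate ambient object. Axiom (3) lets us pass to a common refinement of finitely many indices, and axiom (4) makes the factorizations compatible with direct sums. Together, these are precisely the data needed to identify $\A/\mS$, up to a suitable equivalence of Waldhausen categories, with the localization of $\A$ at the morphisms whose ``defect'' lies in $\mS$.  I would verify this identification directly from the axioms: any morphism $[\phi]\in \A/\mS$ that becomes an isomorphism after quotienting can be lifted, using (1)--(2), to a morphism in $\A$ whose kernel and cokernel are summands of objects in $\mS$, and (3)--(4) make the lift functorial up to the relevant equivalence.

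Given this identification, the theorem follows from the Pedersen--Weibel fibration theorem, as formulated in C\'ardenas--Pedersen~\cite[Section 8]{Cardenas-Pedersen}. In the connective range the argument is an application of Waldhausen's fibration theorem, whose hypotheses (cylinder-type and saturation axioms, together with the existence of lifts of weak equivalences) are supplied by the Karoubi factorizations above. The main obstacle is the extension to negative degrees, i.e. producing an actual fibration of $\Omega$-spectra rather than just a long exact sequence in non-negative degrees. Here one uses Pedersen--Weibel's model of non-connective $K$--theory via iterated germ-at-infinity categories built from $\A[\bbR^n]$, and the key technical step is to check that if $(\A, \mS)$ admits a Karoubi filtration then so does each pair of iterated germ categories, by extending the given direct-sum decompositions pointwise over $\bbR^n$. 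Once this is verified, the connective fibrations are compatible under the suspension structure maps and assemble into the desired fiber sequence of non-connective $K$--theory spectra, whose long exact sequence in homotopy is the statement of the theorem.
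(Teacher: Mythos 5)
The paper does not actually prove Theorem~\ref{LES}; it cites the result to Pedersen--Weibel and to Section~8 of C\'ardenas--Pedersen and uses it as a black box. Your two-step outline --- produce a connective fibration $\bbK(\mS)\to\bbK(\A)\to\bbK(\A/\mS)$ of spectra, then extend to the non-connective Pedersen--Weibel spectra by checking that Karoubi filtrations persist through the germ-at-infinity constructions over $\bbR^n$ --- is indeed the skeleton of the argument in those references, so you are aiming in the right direction.

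Your central paragraph, however, contains a concrete error that must be repaired before this can count as a proof. You write that a morphism of $\A$ which becomes an isomorphism in $\A/\mS$ has ``kernel and cokernel'' lying (as summands) in $\mS$. But $\A$ is only an additive category and need not have kernels or cokernels at all, so this statement cannot even be formulated in the generality of the theorem. What the Karoubi axioms actually supply is a two-sided inverse up to $\mS$: if $[\phi]$ is invertible in $\A/\mS$, then there exists $\psi$ in $\A$ with $\Id-\psi\phi$ and $\Id-\phi\psi$ each factoring through objects of $\mS$, and axioms (1)--(4) let you organize these factorizations compatibly with direct sums and refine to a common index. (Note also that the phrase ``any morphism $[\phi]\in\A/\mS$ that becomes an isomorphism after quotienting can be lifted to a morphism in $\A$'' is not well-typed: $[\phi]$ already lives in the quotient and there is nothing further to quotient by.) Beyond this, you merely assert that Waldhausen's saturation, extension, and cylinder axioms hold for the class $w$ of $\mS$-equivalences, that the $w$-trivial objects of $\A$ can be identified with $\mS$ up to idempotent completion, and that $\bbK(\A,w)$ agrees with $\bbK$ of the Karoubi quotient $\A/\mS$ taken with isomorphisms as weak equivalences; none of these are verified, and they are precisely where the content of the Karoubi filtration hypotheses enters. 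Until those verifications are carried out, the connective fibration --- and with it the whole long exact sequence --- rests on an unestablished claim.
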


For our purposes, the key examples of Karoubi filtrations arise from restricting the support of geometric modules.

\begin{definition} Given any family of subspaces $\Y$ of a proper metric space $X$ we may
consider the full subcategory $\Ac(\Y)\subset \Ac(X)$ on those
modules supported on $Y\cross [0,1)$ for some $Y\in \Y$.  Note that $\Ac^{X +} (Z) = \Ac (\{N_r (Z) \, : \, r\in \bbN\}) \subset \A(X)$.  
The category $\Ac(\Y)$ is unchanged if we enlarge $\Y$ be adding subspaces of elements in $\Y$, so we may always assume that our families are closed under taking subspaces.
\end{definition}

The following lemma is a special
case of Bartels and Rosenthal~\cite[(5.7)]{Bartels-Rosenthal}.

\begin{lemma}$\label{filtrations}$ Let $\Y$ and $\Z$ be families of
  subspaces of a proper metric space $X$, and assume $\Y$ and $\Z$ are closed under finite unions.  Then 
  $\Ac(\Y), \Ac(\Z)\subset \Ac(X)$ are additive subcategories, and if
 for all $Y\in \Y$ there exists $Z\in \Z$ 
  with $Y\subset Z$, then $\Ac (\Y)$ is a full (additive) subcategory of $\Ac (\Z)$.
If, in addition, for each $Y\in \Y$
  and each
  $r\in \bbN$
  there exists $Y'\in \Y$ such that $N_r (Y) \subset Y'$, then the
  inclusion
$\Ac(\Y) \subset \Ac(\Z)$
admits a Karoubi filtration.

In particular, for any subspace $Z\subset X$, the pair
$\Ac^{+} (Z) \subset \A(X)$
admits a Karoubi filtration.   
\end{lemma}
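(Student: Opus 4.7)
The plan is to exhibit an explicit Karoubi filtration indexed by the poset $\Y$ itself. For each $M \in \Ac(\Z)$ and each $Y \in \Y$, I will take the decomposition $M = M_Y \oplus M'_Y$ to be the pointwise restriction from Definition 2.1: set $M_Y := M(Y \cross [0,1))$ and $M'_Y := M((X\setminus Y) \cross [0,1))$. The inclusion and projection maps between these restrictions have zero propagation and trivially satisfy continuous control at $\{1\}$, so the decomposition lives in $\Ac(X)$, and in fact in $\Ac(\Z)$ since $\supp(M'_Y)$ remains inside $\supp(M)$. Local finiteness of $M_Y$ and $M'_Y$ follows from that of $M$, and $M_Y \in \Ac(\Y)$ since $\supp(M_Y) \subset Y \cross [0,1)$.

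First, I would dispatch the preliminary claims. Closure of $\Y$ under finite unions makes $\Ac(\Y)$ an additive subcategory of $\Ac(X)$: given $M, N \in \Ac(\Y)$ supported on $Y_1, Y_2 \in \Y$ respectively, the direct sum is supported on $Y_1 \cup Y_2 \in \Y$. The same argument handles $\Ac(\Z)$. If every $Y \in \Y$ is contained in some $Z \in \Z$, then each object of $\Ac(\Y)$ is an object of $\Ac(\Z)$; since both categories inherit their hom-sets from $\Ac(X)$, the inclusion is automatically full.

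The main content is verifying conditions (1) and (2) of the Karoubi filtration, and here the growth hypothesis on $\Y$ does the essential work. Given a morphism $f \colon M \to S$ with $S \in \Ac(\Y)$ supported on $Y \cross [0,1)$, choose a propagation bound $r$ for $f$. Then $f_{(x,t),(x',t')} = 0$ unless $x \in Y$ (so the codomain entry is nonzero) and $d(x,x') < r$, forcing $x' \in N_r(Y)$. By hypothesis pick $Y^* \in \Y$ with $N_r(Y) \subset Y^*$; then $f$ vanishes on $M'_{Y^*}$, so $f = (f|_{M_{Y^*}}) \circ \pi_1$ with respect to the decomposition $M = M_{Y^*} \oplus M'_{Y^*}$. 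Condition (2) follows by the symmetric argument applied to the source rather than the target of a morphism $g \colon S \to M$. This is the step I expect to require the most care, since it is where the continuous control, finite propagation, and neighborhood closure all have to line up, but none of it is deep.

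Conditions (3) and (4) are formal. Define $Y \leqs Y'$ by set inclusion $Y \subset Y'$; then $M_{Y'} = M_Y \oplus M_{Y'\setminus Y}$ and $M'_Y = M'_{Y'} \oplus M_{Y'\setminus Y}$ realize the summand conditions, and closure of $\Y$ under finite unions gives directedness via $Y, Y' \leqs Y \cup Y'$. Pointwise definition of the decomposition yields condition (4) tautologically. Finally, for the special case $\Ac^+(Z) \subset \Ac(X)$, take $\Y = \{N_r(Z) : r \in \bbN\}$ and let $\Z$ be all subsets of $X$: closure of $\Y$ under finite unions holds because $N_r(Z) \cup N_s(Z) = N_{\max(r,s)}(Z)$, the growth condition reduces to $N_r(N_s(Z)) \subset N_{r+s}(Z)$, and $\Ac(\Z) = \Ac(X)$, so the general statement specializes to the claim as written.
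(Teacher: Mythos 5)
Your proof is correct and uses exactly the pointwise-restriction decomposition $M = M(Y\cross[0,1)) \oplus M\bigl((X\setminus Y)\cross[0,1)\bigr)$ that the paper isolates as Construction~\ref{sum}; the paper itself does not carry out the verification, instead deferring to Bartels--Rosenthal~\cite[(5.7)]{Bartels-Rosenthal}, but what you have written is precisely the intended argument. (One cosmetic point: since finite propagation $r$ means $f_{uv}=0$ when $d(u,v)>r$, a nonzero entry only forces $d(x,x')\leqs r$, so one should take $Y^*\supset N_{r+1}(Y)$ rather than $N_r(Y)$; this does not affect the argument.)
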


The direct sum decompositions making up these Karoubi filtrations come from the following construction, applied
to the subspaces $Y\in \Y$.  

\begin{construction}$\label{sum}$
Let $Z\subset X$ be a subspace of the proper metric space $X$.  For
any $M\in \Ac(X)$, the inclusions 
$$M(Z\cross [0,1)) \injects M \textrm{  and  } M\left(  (X\setminus Z) \cross [0,1)\right)\injects M$$
 yield a direct sum decomposition
$$M = M(Z\cross [0,1)) \oplus M\left(  (X\setminus Z) \cross [0,1)\right).$$
\end{construction}

This follows from the fact that a morphism $M\to N$ is defined as a family of morphisms $M_{(x,t)} \to N_{(y,s)}$ for all $(x,t), (y,s)\in X\cross [0,1)$.

\begin{definition} 
Let $\A_1$ and $\A_2$ be full subcategories of an additive category $\A$.
Let $\Ai$ (the intersection of $\A_1$ and $\A_2$) be the full subcategory generated by those objects lying in both $\A_1$ and $\A_2$.  
\end{definition}

\begin{remark}$\label{filtration-rmk}$
Note that if $\Y$ and $\Z$ are families of
  subspaces of a metric space $X$, then the intersection category
 $\Ac(\Y)\cap \Ac (\Z)$ is simply $\Ac (\{Y\cap Z \, :\, Y\in \Y, Z\in \Z\})$. 
 In particular, if $X_1, X_2\subset X$, then 
 $$\Ac^+(X_1) \cap \Ac^+ (X_2) = \Ac(\{N_r (X_1) \cap N_s (X_2)\}_{r, s \in \bbN}),$$
and Lemma~\ref{filtrations} shows that $\Ac^+(X_1) \cap \Ac^+ (X_2) \subset \Ac(X)$
admits a Karoubi filtration.  With the exception of the inclusion $\Ac(X)_{<1} \subset \Ac(X)$ discussed in Section~\ref{assembly-sec}, all the Karoubi filtrations in this paper follow from Lemma~\ref{filtrations} by similar arguments.
\end{remark}

We need another technical condition for some of our arguments.

\begin{definition} 
Let $\A_1$ and $\A_2$ be full, additive subcategories of the additive category $\A$.
We say that $(\A_1, \A_2)$ is \emph{dispersed} if every morphism $\phi: A_1 \to A_2$ (with $A_i\in \A_i$) factors through an object in $\Ai$.
\end{definition}

The dispersion conditions encountered in this paper are all special cases of the following observation.

\begin{lemma}$\label{disp}$
Let $X$ be a metric space, and consider a family of subspaces $\{X_i\}_{i\in I}$.  Assume that for each $i\in I$ and each $r\in \bbN$, there exists $j\in I$ such that $N_r (X_i) \subset X_j$.  If $\mS\subset \Ac (X)$ is a full additive subcategory that is closed under restriction of modules (meaning that for all $S\in \Ob (\mS)$ and for all $Z\subset X$, $S(Z\cross [0,1)) \in \Ob (\mS)$), then the pair $(\Ac (\{X_i\}_i), \mS)$ is dispersed.
\end{lemma}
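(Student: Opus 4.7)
The plan is to show that any morphism $\phi \colon A \to S$ in $\Ac(X)$ with $A \in \Ac(\{X_i\}_{i \in I})$ and $S \in \mS$ factors through the restriction of $S$ to a suitable metric neighborhood of the support of $A$. First, I would fix $i \in I$ such that $A$ is supported on $X_i \times [0,1)$, and choose $r \in \bbN$ bounding the propagation of $\phi$, so that $\phi_{(x,t),(y,s)} = 0$ whenever $d_X(x,y) > r$. Set $T := S(N_r(X_i) \times [0,1))$. By Construction~\ref{sum}, $T$ is a direct summand of $S$ in $\Ac(X)$, so the summand inclusion $\iota \colon T \injects S$ is a valid morphism of $\Ac(X)$.

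Next I would verify that $T$ lies in the intersection $\Ac(\{X_i\}_{i \in I}) \cap \mS$. The hypothesis on the family gives $j \in I$ with $N_r(X_i) \subset X_j$, so $T$ is supported on $X_j \times [0,1)$ and hence lies in $\Ac(\{X_i\}_{i})$. Closure of $\mS$ under restriction of modules gives $T \in \mS$.

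Then I would factor $\phi$ through $T$ by defining $\phi' \colon A \to T$ via $\phi'_{(x,t),(y,s)} := \phi_{(x,t),(y,s)}$ for $(x,t) \in N_r(X_i) \times [0,1)$, with the remaining entries forced to be $0$ because $T_{(x,t)} = 0$ elsewhere. This $\phi'$ is a morphism in $\Ac(X)$ since finite propagation, row/column finiteness, and continuous control at $1$ are all preserved under restricting a matrix to a set of rows. The identity $\iota \circ \phi' = \phi$ then reduces to showing that $\phi_{(x,t),(y,s)} = 0$ whenever $x \notin N_r(X_i)$ and $(y,s)$ lies in the support of $A$: but $\supp(A) \subset X_i \times [0,1)$ forces $y \in X_i$, and the propagation bound then yields $d_X(x,y) \leqs r$, contradicting $x \notin N_r(X_i)$.

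The only real obstacle is the bookkeeping required to match the matrix entries of $\iota \circ \phi'$ with those of $\phi$, and this is handled by combining the propagation bound on $\phi$ with the support restriction on $A$. No subtle metric estimates or categorical maneuvers are needed beyond the hypotheses already supplied.
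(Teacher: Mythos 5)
Your proof is correct and follows essentially the same route as the paper's: identify the support of the source module as $X_i$, bound the propagation of $\phi$ by $r$, invoke the hypothesis to find $j$ with $N_r(X_i) \subset X_j$, and factor $\phi$ through the restriction of $S$ to this neighborhood. The only cosmetic difference is that the paper factors through $S(X_j \times [0,1))$ directly, while you factor through the (smaller) restriction $S(N_r(X_i) \times [0,1))$; both land in $\Ac(\{X_i\}_i) \cap \mS$ for the same reasons. Your explicit verification that the restriction $\phi'$ remains continuously controlled and that $\iota \circ \phi' = \phi$ is just a more spelled-out version of the step the paper leaves implicit in "Now $\phi$ factors through $S(X_j \times [0,1))$."
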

\begin{proof} Consider a morphism $\phi\co M\to S$ in $\Ac (X)$, with $M\in \Ob (\Ac (\{X_i\}_i))$ and $S\in \Ob (\mS)$.  Then $\supp (M) \subset X_i\cross [0,1)$ for some $i\in I$, and 
$$\left\{(z,t) : \phi_{(z,t), (z', t')} \neq 0 \textrm{ for some } (z',t')\right\}\subset N_r (X_i)\cross [0,1) \subset X_j\cross [0,1)$$
for some $r>0$ and some $j\in J$.  Now $\phi$ factors through $S(X_j\cross [0,1))$.
\end{proof}

In Section~\ref{MV-sec} we will build Mayer--Vietoris sequences in continuously controlled $K$--theory.  These sequences will be applied in Section~\ref{FDC} to spaces of the form $\coprod_{r=1}^\infty Z^r$, covered by subspaces  $\coprod_{r=1}^\infty U^r$ and  $\coprod_{r=1}^\infty V^r$.  These decompositions $Z^r = U^r \cup V^r$ will become finer (in a sense) as $r$ increases, and we will want to ignore the subcategory 
$\Ac\left( \left\{\coprod_{r=1}^R Z^r\right\}_{R\geqs 1}\right)$.  This will be done through the use of Karoubi quotients, and in the remainder of this section we discuss the necessary categorical set-up. 

\begin{lemma} $\label{quot}$
Let $\mS, \B \subset \A$ be full additive subcategories of the additive category $\A$.  Assume that:
\begin{enumerate}
\item the pairs $(\B, \mS\cap \B)$, $(\A, \mS)$, and $(\A, \B)$ admit Karoubi filtrations;  
\item $(\B, \mS)$ is dispersed.
\end{enumerate}
Then the full
subcategory of $\A/\mS$ on the objects of $\B$ is precisely $\B/(\mS \cap \B)$,  
and the inclusion $\B/(\mS\cap \B) \subset \A/\mS$ admits a Karoubi filtration.
\end{lemma}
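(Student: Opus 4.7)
The plan is to prove the two assertions separately. For the identification of the full subcategory of $\A/\mS$ on $\Ob(\B)$ with $\B/(\mS\cap\B)$, both categories have object set $\Ob(\B)$, so I need only compare $\Hom$-groups. Since $\B$ is full in $\A$, for $B_1, B_2\in\B$ each $\Hom$-group is a quotient of $\Hom_\A(B_1, B_2)$: in $\A/\mS$ one kills morphisms factoring through objects of $\mS$, while in $\B/(\mS\cap\B)$ one kills those factoring through objects of $\mS\cap\B$. Since $\mS\cap\B\subset\mS$, the induced map from $\Hom_{\B/(\mS\cap\B)}(B_1,B_2)$ to $\Hom_{\A/\mS}(B_1,B_2)$ is tautologically surjective, and to prove injectivity I must show that any morphism $\phi\co B_1\to B_2$ factoring as $B_1\srt{f} S\srt{g} B_2$ with $S\in\mS$ must in fact factor through some $S'\in\mS\cap\B$. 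This is precisely where the dispersion hypothesis is used: applied to $f\co B_1\to S$ it produces a factorization $f = \iota\circ f'$ with $f'\co B_1\to S'$ and $\iota\co S'\to S$ for some $S'\in \mS\cap \B$, whence $\phi = (g\circ\iota)\circ f'$ factors through $S'$ as required.

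For the second assertion, I take the index set $I$ and the decompositions $A = A_i\oplus A'_i$ (with $A_i\in\B$) from the given Karoubi filtration on $(\A,\B)$ and verify that the same data yield a Karoubi filtration on $(\A/\mS, \B/(\mS\cap\B))$. By Lemma~\ref{sums}, each direct sum diagram in $\A$ descends to a direct sum diagram in $\A/\mS$, so $[A] = [A_i]\oplus[A'_i]$ holds in $\A/\mS$ with $[A_i]\in \B/(\mS\cap\B)$ under the identification just established. The factorization axioms are then checked by lifting any morphism $[f]\co [A]\to[T]$ (respectively $[g]\co[T]\to[A]$) with $T\in\B$ to a representative in $\A$, applying the corresponding axiom for $(\A,\B)$, and descending. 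The compatibility with finite direct sums is inherited objectwise. Finally, a direct summand relation in $\A$ descends to one in $\A/\mS$ (again by Lemma~\ref{sums}), so the partial order on $I$ used for $(\A,\B)$ is contained in the analogous order for $(\A/\mS, \B/(\mS\cap\B))$, and hence $I$ remains directed.

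The only genuinely delicate point is the first assertion: without dispersion, the natural functor $\B/(\mS\cap\B)\to\A/\mS$ could fail to be fully faithful, since a morphism between objects of $\B$ might factor through $\mS$ in $\A$ without factoring through $\mS\cap\B$. Once this identification is in place, the transport of the Karoubi filtration from $(\A,\B)$ to $(\A/\mS, \B/(\mS\cap\B))$ is essentially formal, since the data defining a Karoubi filtration --- direct sum decompositions together with factorization properties through the summands --- are preserved by any additive quotient functor.
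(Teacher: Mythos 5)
Your proposal is correct and follows essentially the same route as the paper: identify the full subcategory on $\Ob(\B)$ with $\B/(\mS\cap\B)$ by applying dispersion to the first arrow $B_1\to S$ of any factorization through $\mS$, and then transport the Karoubi filtration of $(\A,\B)$ verbatim to the quotient using Lemma~\ref{sums}. Your write-up spells out the verification of the filtration axioms (lifting a morphism, applying the axiom in $\A$, descending) in slightly more detail than the paper, but the ideas and the use of the hypotheses are identical.
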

\begin{proof}  We begin by examining the full subcategory of $\A/\mS$ on the objects of $\B$.  This category is formed by identifying two morphisms $\phi, \psi: B_1\to B_2$ ($B_i\in \B$) if $\phi - \psi$ factors as
$$B_1 \stackrel{\alpha}{\maps} S \stackrel{\beta}{\maps} B_2$$
for some $S\in \mS$.  Since $(\B, \mS)$ is dispersed, $\alpha$ factors through an object of $\B\cap \mS$, so $\phi \equiv \psi$ (modulo $\B\cap \mS$).     Hence the full subcategory of $\A/\mS$ on the objects of $\B$ is precisely $\B/(\B\cap \mS)$.

Next, we must show that the inclusion $\B/(\B\cap \mS) \subset \A/\mS$ admits a Karoubi filtration.  The filtration on $\B/(\B\cap \mS) \subset \A/\mS$ is exactly the same as the filtration on $\B \subset \A$: for $A\in \A$,
let $I$ denote the indexing set for the latter filtration.  Then for each $i\in I$ we have a decomposition $A = B_i \oplus B'_i$ in $\A$ (with $B_i\in \B$), and this remains a direct sum decomposition in the category $\A/\mS$ by Lemma~\ref{sums}.  It now follows from the definitions that these decompositions give a Karoubi filtration on $\B/(\B\cap \mS) \subset \A/\mS$.
\end{proof}

We record the universal property of Karoubi quotients, which we will use several times.  The proof is an elementary exercise.

\begin{lemma}$\label{induced-map}$ Say $G: \A \to \B$ is a functor between additive categories and $\mS\subset \A$ is a full additive  subcategory admitting a Karoubi filtration.  If $G(\phi) = 0$ whenever $\phi\equiv 0$ (mod $\mS$), then there is a unique additive functor
$$\overline{G} \co \A/\mS \maps \B$$
such that the composite $\A \to \A/\mS \xmaps{\overline{G}} \B$ equals $G$.
\end{lemma}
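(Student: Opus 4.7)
The plan is to define $\overline{G}$ on objects by $\overline{G}(A) = G(A)$ and on a morphism class $[\phi]$ by $\overline{G}([\phi]) = G(\phi)$, then verify well-definedness, functoriality, additivity, and uniqueness. The Karoubi filtration hypothesis enters only indirectly: Lemma~\ref{sums} guarantees that $\A/\mS$ is an additive category with direct sums inherited from $\A$, so that speaking of additive functors out of $\A/\mS$ is meaningful; everything else is formal.

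First I would check well-definedness. If $[\phi] = [\psi]$ in $\A/\mS$, then by definition $\phi - \psi$ factors through some object of $\mS$, so $\phi - \psi \equiv 0 \pmod{\mS}$. The hypothesis on $G$ then gives $G(\phi - \psi) = 0$, and since $G$ is additive this forces $G(\phi) = G(\psi)$. Hence $\overline{G}([\phi])$ depends only on the equivalence class.

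Next I would verify that $\overline{G}$ is an additive functor. Composition and addition of morphisms in $\A/\mS$ are defined representative-wise, so functoriality and additivity of $\overline{G}$ are immediate from the corresponding properties of $G$. Preservation of direct sums follows because, by Lemma~\ref{sums}, a direct-sum diagram in $\A/\mS$ is the image of a direct-sum diagram in $\A$, which $G$ already carries to a direct-sum diagram in $\B$. By construction the triangle $\A \to \A/\mS \xmaps{\overline{G}} \B$ equals $G$.

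For uniqueness, any additive functor $F \co \A/\mS \to \B$ whose precomposition with the quotient equals $G$ must satisfy $F(A) = G(A)$ on objects and $F([\phi]) = G(\phi)$ on morphisms; since every morphism in $\A/\mS$ is represented by some $\phi$ in $\A$, this determines $F$ completely, so $F = \overline{G}$. I do not anticipate any genuine obstacle here: this is the standard universal property of a quotient by an ideal of morphisms, and the entire argument is essentially bookkeeping.
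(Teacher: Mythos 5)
Your argument is correct and is exactly the elementary verification the paper has in mind (the paper explicitly declines to give a proof, calling it "an elementary exercise"). You also correctly observe that the Karoubi-filtration hypothesis is not actually used: well-definedness, functoriality, additivity on hom-sets, and uniqueness all follow from $\mS$ being a full additive subcategory together with the hypothesis that $G$ kills morphisms factoring through $\mS$; the filtration assumption is present in the statement only because that is the setting in which the paper applies the lemma.
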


Our Mayer--Vietoris sequences will be built using the following version of the Third Isomorphism Theorem from elementary abstract algebra.

\begin{proposition}$\label{3rd}$ Let $\A$ be a small additive category with full additive subcategories $\mS$, $\A_1$ and $\A_2$, and assume that $\A_1$ and $\A_2$ generate $\A$ in the sense that every $A\in \A$ admits a direct sum decomposition $A = A_1 \oplus A_2$ with $A_i\in \A_i$.  Set $\A_{12} = \A_1 \cap \A_2$, and similarly set $\mS_1 = \mS \cap \A_1$, $\mS_2 = \mS\cap \A_2$, and $\mS_{12} = \mS\cap \A_{12}$.

If the triples $\mS, \A_1 \subset \A$ and $\mS_2, \A_{12} \subset \A_2$ satisfy the conditions of Lemma~\ref{quot} (in other words, if $(\A_1, \mS)$ and $(\A_{12}, \mS_2)$ are dispersed, and all the relevant inclusions admit Karoubi filtrations), and if the pairs $(\A_1, \A_2)$ and $(\A_2, \mS)$ are dispersed, then the inclusion $\A_2 \injects \A$ induces an equivalence of categories between Karoubi quotients as follows:
$$ \frac{\A_2/\mS_2}{\A_{12}/\mS_{12}} \stackrel{\heq}{\maps} \frac{\A/\mS}{\A_1/\mS_1}$$
\end{proposition}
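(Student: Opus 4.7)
The plan is to construct an additive functor $F$ from left to right using the universal property of Karoubi quotients (Lemma~\ref{induced-map}), and then verify that it is essentially surjective and fully faithful. For the construction, the composite $\A_2 \hookrightarrow \A \to \A/\mS$ sends morphisms factoring through $\mS_2$ to zero (since $\mS_2 \subset \mS$), and the Karoubi filtration on $\mS_2 \subset \A_2$ is one of the standing hypotheses, so Lemma~\ref{induced-map} yields $G\co \A_2/\mS_2 \to \A/\mS$. Composing $G$ with the projection $\A/\mS \to (\A/\mS)/(\A_1/\mS_1)$, any morphism in $\A_2$ that factors through $\A_{12}$ becomes a morphism factoring through $\A_{12} \subset \A_1$ in $\A/\mS$, hence vanishes in the further quotient. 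By Lemma~\ref{quot}, the inclusion $\A_{12}/\mS_{12} \subset \A_2/\mS_2$ admits a Karoubi filtration, so a second application of Lemma~\ref{induced-map} produces the desired functor $F \co (\A_2/\mS_2)/(\A_{12}/\mS_{12}) \to (\A/\mS)/(\A_1/\mS_1)$.

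Essential surjectivity follows from the generating assumption $A = A_1 \oplus A_2$. By Lemma~\ref{sums} applied twice, this remains a direct sum decomposition in $(\A/\mS)/(\A_1/\mS_1)$. Since $A_1$ is itself an object of $\A_1/\mS_1$, its identity morphism factors trivially through $A_1$ and so becomes zero in the Karoubi quotient; in an additive category, an object whose identity is zero is a zero object. Hence $A_1 \iso 0$ and $A \iso A_2$ in the target, so $A$ lies in the essential image of $F$.

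Fullness is immediate: any morphism in $(\A/\mS)/(\A_1/\mS_1)$ between objects of $\A_2$ lifts to a morphism of $\A$, which since $\A_2$ is a full subcategory, already lies in $\A_2$ and hence lifts to the source of $F$. For faithfulness, suppose $\phi \co B_1 \to B_2$ in $\A_2$ becomes zero in $(\A/\mS)/(\A_1/\mS_1)$. Then $\phi = \beta \alpha + s$ in $\A$ for some $\alpha \co B_1 \to A_1$, $\beta \co A_1 \to B_2$ with $A_1 \in \A_1$, and $s$ factoring through $\mS$. Dispersion of $(\A_2, \mS)$ forces $s$ to factor through $\mS_2$, so $[s]=0$ in $\A_2/\mS_2$. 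Dispersion of $(\A_1, \A_2)$ factors $\beta$ as $A_1 \to C \to B_2$ with $C \in \A_{12}$, whence $\beta \alpha$ factors through $C$; since $B_1, C, B_2$ all lie in the full subcategory $\A_2$, the factorization lives entirely in $\A_2$. Thus $[\phi] = [\beta\alpha]$ factors through $C \in \A_{12}$ in $\A_2/\mS_2$, giving $[\phi] = 0$ in the quotient by $\A_{12}/\mS_{12}$.

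The main obstacle is bookkeeping: tracking which Karoubi filtration justifies each quotient, and verifying that the asymmetric dispersion $(\A_1, \A_2)$ gets applied in the correct direction. This could have been problematic had we needed to factor the incoming map $\alpha \co B_1 \to A_1$; fortunately, applying dispersion to the outgoing map $\beta \co A_1 \to B_2$ suffices, because any factorization of $\beta$ through $\A_{12}$ drags the whole composite $\beta\alpha$ into $\A_{12}$ as well.
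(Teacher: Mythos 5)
Your proposal is correct and follows essentially the same route as the paper: construct $F$ via two applications of the universal property of Karoubi quotients, then verify essential surjectivity from the generating hypothesis, fullness from fullness of $\A_2 \subset \A$, and faithfulness from the two dispersion hypotheses. Two minor points of comparison: for essential surjectivity, the paper shows directly that $i_2\co A_2 \to A$ and $\pi_2 \co A \to A_2$ become mutually inverse in the target quotient (since $\Id_A - i_2\pi_2 = i_1\pi_1$ factors through $A_1$), whereas you argue that $A_1$ becomes a zero object — both amount to the same thing and your version is perfectly clean. For faithfulness, the paper states somewhat loosely that $\phi_1 - \phi_2$ ``factors through an object in either $\mS$ or $\A_1$''; your version, writing $\phi = \beta\alpha + s$ with $\beta\alpha$ factoring through $\A_1$ and $s$ factoring through $\mS$ and then treating the two summands separately, is the more careful and literally correct reading of what vanishing in the iterated quotient means, and your observation that dispersion of $(\A_1,\A_2)$ needs to be applied to the outgoing map $\beta$ (not the incoming $\alpha$) is exactly the right use of the asymmetric hypothesis.
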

\begin{proof}  Lemma~\ref{quot} guarantees that the displayed Karoubi quotients are well-defined.  We begin by checking that the composite
\begin{equation} \label{Q} \A_2 \injects \A \maps \A/\mS \maps \frac{\A/\mS}{\A_1/\mS_1}
\end{equation}
factors through the identifications in  $\frac{\A_2/\mS_2}{\A_{12}/\mS_{12}}$, so that Lemma~\ref{induced-map} yields a well-defined functor 
$F\co  \frac{\A_2/\mS_2}{\A_{12}/\mS_{12}} \to  \frac{\A/\mS}{\A_1/\mS_1}$.   

If $A_2, A'_2\in \A_2$ and $\phi:A_2\to A'_2$ is a morphism in $\A$ which is equivalent to zero in $\frac{\A_2/\mS_2}{\A_{12}/\mS_{12}}$, then $\phi$ must factor through an object in either $\mS_2$ or $\A_{12}$.  These are subcategories of $\mS$ and $\A_1$ (respectively), so such morphisms certainly map to zero under the composite (\ref{Q}).  Applying Lemma~\ref{induced-map} twice yields the desired functor $F$.

We must show that, up to isomorphism, every object is in the image of $F$.
Every object $A\in \A$ can be written in the form $A = A_1 \oplus A_2$ with $A_i \in \A_i$, and we claim that in the Karoubi quotient $\frac{\A/\mS}{\A_1/\mS_1}$, the objects $A$ and $A_2$ are isomorphic.  Indeed, the inclusion $i_2: A_2 \to A$ and the corresponding projection $\pi_2 : A \to A_2$ are inverses in this Karoubi quotient (the composite $\pi_2 i_2$ is the identity on $A_2$ by definition, and $\Id_A = i_1 \pi_1 + i_2 \pi_2$, so $\Id_{A} - i_2 \pi_2 = i_1 \pi_1$, which factors through $A_1$).  This shows that up to isomorphism, every object is in the image of the map $F$.

To complete the proof, we must check that $F$ is full and faithful.  Fullness follows from the fact that $\A_2$ is a full subcategory of $\A$.  
Next, if $\phi_1$ and $\phi_2$ are morphisms in $\A_2$ that are equivalent in $(\A/\mS)/(\A_1/\mS_1)$, then $\phi_1 - \phi_2$ factors through an object in either $\mS$ or $\A_1$.  Dispersion implies that $\phi_1 - \phi_2$ actually factors through an object of $\mS_2$ or $\A_{12}$, so $\phi_1$ and $\phi_2$ are equivalent in the domain of $F$.  Hence $F$ is faithful.
\end{proof}

%%%%%%%%%%%%%%%%%%%%%%%%%%%%%%%%%%%%%%%%%%%%%%%%%%%%%%%%%%%%%%%%%%%%%%%%%%%%%%%%%%%%

\section{Mayer--Vietoris sequences in continuously controlled $K$--theory}$\label{MV-sec}$

In this section we build Mayer--Vietoris sequences in continuously controlled $K$--theory analogous to the controlled Mayer--Vietoris sequences of Ranicki--Yamasaki~\cite{R-Y-K-theory, R-Y-L-theory} (see also~\cite[Appendix B]{GTY-rigid}).
First we produce a general Mayer--Vietoris sequence for metric spaces, and then we specialize the construction to the Rips complexes that will be used in later sections.

\subsection{A Mayer--Vietoris sequence for the continuously controlled $K$--theory of metric spaces}

\begin{proposition} $\label{MV}$ Let $X$ be a proper metric space with subspaces $X_1, X_2\subset X$ and assume that for some $r>0$, $N_r (X_1)\cup N_r (X_2) = X$. 
Consider a family $\{S_i\}_{i\in I}$ of subspaces of $X$ such that for each $t\in \bbN$ and each $i\in I$, there exists $j\in I$ such that $N_t (S_i)\subset S_j$.
Let $\mS = \Ac (\{S_i\}_{i\in I})$.
  We denote the
  intersection of $\mS$ with $\Ac^+(X_i)$ by $\mS_i$, and we denote
  the intersection of $\mS$ with $\Ac^+(X_1)\cap \Ac^+(X_2)$ by $\mS_{12}$.

Then the natural maps from $\Acp(X_1)/\mS_1$ and $\Acp(X_2)/\mS_2$ to $\Ac(X)/\mS$ are isomorphisms onto their images, and the natural map
$$\frac{\Acp (X_1) \cap \Acp (X_2)}{\mS_{12}} \maps \Ac (X)/\mS$$
is an isomorphism onto the intersection of the images of $\Acp(X_1)/\mS_1$ and $\Acp(X_2)/\mS_2$.
Moreover, there is a long-exact Mayer--Vietoris sequence in non-connective $K$--theory
\begin{eqnarray*}\cdots\maps \K_{*+1} \left(\Ac (X)/\mS\right)\srm{\partial}
 \K_* \left(\frac{\Ac^+ (X_1) \cap \Ac^+ (X_2)}{\mS_{12}}\right)\hspace{.5in}\\
 \xrightarrow{((i_1)_*, (i_2)_*)} \K_*(\Ac^+ (X_1)/\mS_1)\oplus \K_* (\Ac^+ (X_2)/\mS_2)\hspace{.2in}\\
\xrightarrow{(j_1)_* - (j_2)_*} \K_* (\Ac (X)/\mS) \stackrel{\partial}{\maps} \cdots, 
\end{eqnarray*}
in which $i_1, i_2, j_1$, and $j_2$ are induced by the relevant
inclusions of categories.
\end{proposition}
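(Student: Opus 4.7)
The plan is to deduce this from Proposition~\ref{3rd} together with Theorem~\ref{LES}, taking $\A = \Ac(X)$, $\A_1 = \Acp(X_1)$, $\A_2 = \Acp(X_2)$, and $\mS = \Ac(\{S_i\}_i)$ (so that $\A_{12} = \Acp(X_1)\cap\Acp(X_2)$ in the notation of Proposition~\ref{3rd}). Once the hypotheses of Proposition~\ref{3rd} are in place, it produces an equivalence of Karoubi quotients
$$\frac{\Acp(X_2)/\mS_2}{(\Acp(X_1)\cap\Acp(X_2))/\mS_{12}} \stackrel{\heq}{\maps} \frac{\Ac(X)/\mS}{\Acp(X_1)/\mS_1}.$$
Theorem~\ref{LES}, applied to the Karoubi filtrations $\Acp(X_1)/\mS_1 \subset \Ac(X)/\mS$ and $(\Acp(X_1)\cap\Acp(X_2))/\mS_{12} \subset \Acp(X_2)/\mS_2$, will then produce two long exact sequences in non-connective $K$--theory whose rightmost terms are identified by this equivalence; splicing them together in the usual way will yield the claimed six-term Mayer--Vietoris sequence, with connecting map $\partial$ equal to the composite of the boundary from Theorem~\ref{LES} with the inverse of the equivalence above.

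The bulk of the work lies in verifying the hypotheses of Proposition~\ref{3rd}. For the generation condition, I would apply Construction~\ref{sum} with $Z = N_r(X_1)$ to split an arbitrary $M\in \Ac(X)$ as $M = M(N_r(X_1)\cross[0,1)) \oplus M((X\setminus N_r(X_1))\cross[0,1))$: the first summand lies in $\Acp(X_1)$ tautologically, and the second lies in $\Acp(X_2)$ because the covering hypothesis $X = N_r(X_1)\cup N_r(X_2)$ forces $X\setminus N_r(X_1) \subset N_r(X_2)$. The six required Karoubi filtrations---those for the pairs $(\Acp(X_1), \mS_1)$, $(\Ac(X), \Acp(X_1))$, $(\Ac(X), \mS)$, $(\Acp(X_2), \mS_2)$, $(\Acp(X_2), \Acp(X_1)\cap\Acp(X_2))$, and $(\Acp(X_1)\cap\Acp(X_2), \mS_{12})$---all follow from Lemma~\ref{filtrations} applied to appropriate families of subspaces, invoking Remark~\ref{filtration-rmk} to identify the intersection categories as modules supported on the intersected families. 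The closure-under-neighborhoods requirement of Lemma~\ref{filtrations} is supplied either by the standing assumption on $\{S_i\}_{i\in I}$ or by the trivial containment $N_r(N_R(X_i)) \subset N_{r+R}(X_i)$. The four dispersion conditions---$(\Acp(X_1), \mS)$, $(\Acp(X_2), \mS)$, $(\Acp(X_1)\cap\Acp(X_2), \mS_2)$, and $(\Acp(X_1), \Acp(X_2))$---are each instances of Lemma~\ref{disp}, since every target subcategory appearing here is closed under restriction of modules in the sense of Construction~\ref{sum}, and each indexing family is closed under $r$-neighborhoods.

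Once these verifications are complete, the first two claims of the proposition are essentially immediate: dispersion of $(\Acp(X_i), \mS)$ implies, by the same argument used in the first paragraph of the proof of Lemma~\ref{quot}, that the full subcategory of $\Ac(X)/\mS$ on objects of $\Acp(X_i)$ agrees with $\Acp(X_i)/\mS_i$ (so that the natural map is fully faithful onto its image), and the identification of the intersection of these two images follows from Remark~\ref{filtration-rmk}. I expect the main obstacle to be purely organizational: no single step is deep, but one must keep careful track of several families of subspaces and half a dozen Karoubi-filtration and dispersion conditions in order to legitimately apply Proposition~\ref{3rd}, and clean notation for the various intersection categories is essential to avoid confusion.
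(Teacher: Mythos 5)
Your proposal follows essentially the same route as the paper's proof: both take $\A = \Ac(X)$, $\A_1 = \Acp(X_1)$, $\A_2 = \Acp(X_2)$, $\mS = \Ac(\{S_i\}_i)$, invoke Proposition~\ref{3rd} to obtain the equivalence of Karoubi quotients, apply Theorem~\ref{LES} to the two Karoubi filtrations $\Acp(X_1)/\mS_1 \subset \Ac(X)/\mS$ and $(\Acp(X_1)\cap\Acp(X_2))/\mS_{12} \subset \Acp(X_2)/\mS_2$, splice the resulting long exact sequences via the induced $K$--theoretic isomorphism, verify generation using Construction~\ref{sum} with $Z = N_r(X_1)$ together with $X\setminus N_r(X_1)\subset N_r(X_2)$, and check all Karoubi filtrations and dispersion conditions via Lemmas~\ref{filtrations} and~\ref{disp}. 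The paper merely packages the two Karoubi sequences into the explicit commutative diagram~(\ref{MV-square}) and cites the standard splicing exercise; otherwise the argument is identical.
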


\begin{proof}   
To construct the Mayer--Vietoris sequence, we will consider the diagram of additive categories
\begin{equation}\label{MV-square}
\xymatrix{
\frac{\Ac^+ (X_1) \cap \Ac^+ (X_2)}{\mS_{12}} \ar[rr]^{i_1} \ar[d]^{i_2} 
      & &\Ac^+ (X_1)/\mS_1 \ar[d]^{j_1} \\
\Ac^+ (X_2)/\mS_2 \ar[rr]^{j_2} \ar[d]^{q_2} 
      & &\Ac  (X)/\mS \ar[d]^{q_1}\\
\frac{\Ac^+ (X_2)/\mS_2}{\left(\Ac^+ (X_1) \cap \Ac^+ (X_2)\right)/\mS_{12}} \ar[rr]^(.61)F_(.61){\isom}
& &\frac{\Ac (X)/\mS}{\Ac^+(X_1)/\mS_1},
}
\end{equation}
where the $q_i$ are the Karoubi projections guaranteed by Lemma~\ref{quot} and (as we will check)
the induced map $F$ is an equivalence of categories by
Proposition~\ref{3rd}.
Applying $K$--theory  produces two vertical long-exact sequences (Theorem~\ref{LES}),
which can be weaved together using the isomorphism $F_*$ to form the
desired Mayer--Vietoris sequence (see, for example, Hatcher~\cite[Section 2.2, Exercise 38]{Hatcher}).
The facts that $\Acp(X_1)/\mS_1$, $\Acp(X_2)/\mS_2$, and $\frac{\Acp (X_1) \cap \Acp (X_2)}{\mS_{12}}$ are isomorphic to their images in $\Acp (X)/\mS$ will also follow from Lemma~\ref{quot}.  
The Karoubi filtrations needed to apply Lemma~\ref{quot} come from Lemma~\ref{filtrations} (see Remark~\ref{filtration-rmk}), while the necessary dispersion conditions can be checked using Lemma~\ref{disp}.

To complete the proof, we must check that the conditions of
Proposition~\ref{3rd} are satisfied, so that we obtain a well-defined equivalence of categories
$$\frac{\Ac^+ (X_2)/\mS_2}{\left(\Ac^+ (X_1) \cap \Ac^+ (X_2)\right)/\mS_{12}} \srm{F} \frac{\Ac (X)/\mS}{\Ac^+(X_1)/\mS_1}.$$
The necessary Karoubi filtrations and the dispersion conditions are checked using Lemmas~\ref{filtrations} and~\ref{disp}.
To check that $\Ac^+(X_1)$ and $\Ac^+(X_2)$ generate $\Ac^+(X)$, recall that Construction~\ref{sum} guarantees a direct sum decomposition 
$$M = M\left(N_{r}(X_1) \cross [0,1)\right) \oplus M\left( (X\cross [0,1))\setminus (N_{r} (X_1)\cross [0,1))\right).$$ 
Since $N_r (X_1) \cup N_r (X_2) = X$, we have 
$$(X\cross [0,1))\setminus (N_{r} (X_1)\cross [0,1))\subset N_r (X_2) \cross [0,1),$$
and hence $M\left( (X\cross [0,1))\setminus (N_{r} (X_1)\cross [0,1))\right)\in \Acp (X_2)$. 
\end{proof}

\subsection{Decomposed sequences and Rips complexes}

We will apply our general Mayer--Vietoris sequence (Proposition~\ref{MV}) to decompositions of Rips complexes arising from decompositions of the underlying metric space.  For the proof of our vanishing result for continuously controlled $K$--theory, it will be necessary to consider an infinite sequence of increasingly refined decompositions of our space.  In fact, we will need to consider such sequences all at once by forming an infinite disjoint union of the spaces involved in the decompositions, and we will need to iterate this process (by further decomposing each space in the initial decomposition).  Such considerations lead to the notion of \e{decomposed sequence} introduced below.

We begin by recalling the construction of the Rips complex.

\begin{definition} Given a metric space $X$ and a number $s>0$, the Rips complex $P_s (X)$ is the simplicial complex with vertex set $X$ and with a simplex $\langle x_0, \ldots, x_n\rangle$ whenever $d(x_i, x_j) \leqs s$ for all $i, j\in \{0, \ldots, n\}$.
\end{definition} 

We will often view $X$ as a subset of $P_s (X)$ by identifying $X$ with the vertices of  $P_s (X)$.

Note that if $X$ is a metric space with bounded geometry (i.e. if for each $r>0$ there exists $N>0$ such that for all $x\in X$, the ball $B_r (x)$ contains at most $N$ points), then the Rips complex $P_s (X)$ is finite dimensional and locally finite.  When forming Rips complexes, we will always assume that the underlying metric space has bounded geometry.  Note that a finitely generated group, with the word metric arising from a finite generating set, always has bounded geometry.  This is our main source of examples.  

\begin{definition}
Let $X$ be a bounded geometry metric space and consider a sequence of subspaces $\Z = (Z^1, Z^2, \ldots )$, $Z^i \subset X$, equipped with decompositions 
\begin{equation}\label{decomp}Z^r = \bigcup_{\alpha\in A_r} Z^r_\alpha\end{equation}
 of each $Z^r$ ($r = 1, 2, \ldots$).
 We will call this data (the sequence $\Z$ together with the families $\{Z^r_\alpha\}_{\alpha\in A_r}$) a \emph{decomposed sequence} in $X$.  Note that  the $Z^r_\alpha$ need not be disjoint.

Let $\Seq$ denote the partially ordered set consisting of all non-decreasing sequences of (strictly) positive real numbers, with the ordering $(s_1, s_2, \ldots) \leqs (s_1', s_2', \ldots)$ if $s_i \leqs s_i'$ for all $i$.  Note that $\Seq$ is directed.

Given a decomposed sequence $\Z$ in $X$ and a sequence $\bs\in \Seq$, the Rips complex $P_\bs (\Z)$ is the simplicial complex
$$P_{\bs} (\Z) = \coprodmo_{r=1}^\infty \coprod_{\alpha \in A_r} P_{s_r} (Z^r_\alpha).$$
Note that $Z^r_\alpha$ and $Z^{s}_\beta$ may overlap inside of $X$, so to be precise, points in $P_{\bs} (\Z)$ have the form $(x, r, \alpha)$ where $\alpha\in A_r$ and $x\in P_{s_r} (Z^r_\alpha)$.  
Each simplicial complex $P_{s_r} (Z^r_\alpha)$ is equipped with the metric induced by the simplicial metric on $P_{s_r} (X)$ (see Section~\ref{modules} for a definition of the simplicial metric), and the distance between $(x,r,\alpha)$ and $(y, r', \beta)$ is set to infinity unless $r=r'$ and $\alpha = \beta$.  
In other words, we consider $P_{\bs} (\Z)$ to be a subset of the infinite disjoint union
\begin{equation}\label{psx} \coprodmo_{r=1}^\infty \coprod_{\alpha \in A_r} P_{s_r} (X),\end{equation}
with the induced metric.  
\end{definition}

\begin{remark} $\label{triv-seq-rmk}$
Given a decomposed sequence 
$\Z = (Z^1, Z^2, \ldots)$ with decompositions $Z^r = \bigcup_{\alpha\in A_r} Z^r_\alpha$  ($r\geqs 1)$,
we let $\X_\Z$ denote the decomposed sequence $\X_\Z = (X, X, \ldots)$ with decompositions 
 $X = \bigcup_{\alpha\in A_r} X$ for each $r\geqs 1$.
Then $P_\bs (\X_\Z)$ is the metric space (\ref{psx}).
\end{remark}

We will need to consider coverings of one decomposed sequence by two subsequences.  In the applications, the subsequences will have lower ``decomposition complexity" than the original sequence, in a sense that will be explained in Section~\ref{FDC}.    

\begin{definition} Let $\Z = (Z^1, Z^2, \ldots)$ be a decomposed sequence inside the metric space $X$, with decompositions $Z^r = \bigcup_{\alpha\in A_r} Z^r_\alpha$.  
We write $\Z = \U \cup \V$ if
$\U$ and $\V$ are decomposed sequences in $X$ whose decompositions are indexed over the same sets $A_r$ ($r\geqs 1$) and for each $r\geqs 1$ and each $\alpha\in A_r$ we have
$$Z^r_\alpha = U^r_\alpha \cup V^r_\alpha.$$

Similarly,  we write $\U\subset \Z$ if $\U$ is a decomposed sequence in $X$ with the same indexing sets as $\Z$, and for each $r\geqs 1$ and each $\alpha\in A_r$ we have
$$U^r_\alpha \subset Z^r_\alpha.$$

Given a sequence $\bs\in \Seq$, we define
$$\Ac^{\Z +} (P_\bs (\U)) := \Ac^{P_\bs (\Z) +} (P_\bs (\U))$$
as in Definition~\ref{controlled-mod}.  Note that both $P_\bs (\Z)$ and $P_\bs (\U)$ have the metric induced by the simplicial metric on $P_\bs (\X_\Z)$.  We will sometimes drop $\Z$ from the superscript when it is clear from context.
\end{definition}

In the proof of our vanishing result for continuously controlled $K$--theory (Theorem~\ref{vanishing-thm}), it will be important to ignore the initial portion of a decomposed sequence.  This is done via the following constructions.

\begin{definition}$\label{decomposed}$
Given proper metric spaces $Y_1, Y_2, \ldots$ and a subcategory
$$\A \subset \Ac \left(\coprod_{r = 1}^\infty Y_r\right),$$
we define $\mS = \mS(\A)$ to be the full subcategory of $\A$ consisting of those geometric modules supported on
$\coprod_{r = 1}^R Y_r \cross [0,1)$
for some $R>0$.  Note that 
$$\mS = \colim_{R>0} \left(\A \cap \Ac\left(\coprod_{r = 1}^R Y_r\right)\right).$$
We then define
$\overline{\A} = \A/\mS$.

Given decomposed sequences $\U\subset \Z$ in $X$, we set $\bAc (\Z) = \overline{\Ac (\Z)}$ and 
$\bAcp (\U) = \bAc^{\Z +} (\U) = \overline{\Ac^{\Z+} (\U)}$.
\end{definition}

\begin{remark}$\label{functoriality-rmk}$ The constructions $\bAc$ and $\bAcp$ enjoy the same sort of functoriality as $\Ac$ and $\Acp$.  The statements in Lemma~\ref{functoriality} regarding  functoriality of $\Ac$ and $\Acp$ for inclusions of simplicial complexes   apply to inclusions of Rips complexes associated to inclusions of decomposed sequences, and Lemma~\ref{induced-map} yields corresponding statements for $\bAc$ and $\bAcp$.  

In the sequel, we will simply refer to Lemma~\ref{functoriality} when constructing functors between categories $\bAc(-)$ and $\bAcp(-)$.
\end{remark}

\subsection{Mayer--Vietoris for Rips complexes}$\label{MVRips-sec}$

\begin{theorem} $\label{Rips-MV}$
Let $\Z$, $\U$, and $\V$ be decomposed sequences in a bounded geometry metric space $X$, with $\Z = \U \cup \V$, and choose $\bs\in \Seq$.
Then there is a long exact sequence in non-connective $K$--theory of the form
\begin{equation}\label{Rips-MV-seq}
\cdots \maps \K_*\left(\I_\bs (\U, \V) \right)
\xrightarrow{(i_1, i_2)}
\K_*\bAc^{\Z +} (P_\bs (\U)) \oplus \K_* \bAc^{\Z +} (P_\bs (\V))
\end{equation}
\begin{equation*}\xrightarrow{(j_1)_* - (j_2)_*} \K_* (\bAc  (P_\bs (\Z))) \stackrel{\partial}{\maps}
\K_{*-1} \left(\I_\bs (\U, \V) \right) \maps \cdots,
\end{equation*}
where $\I_\bs (\U, \V)$
denotes the intersection in $\bAc (P_\bs (\Z))$ of  $\bAc^{\Z +} (P_\bs (\U))$ and $\bAc^{\Z +} P_\bs (\V)$.  The maps 
 $i_1$ and $i_2$ are  induced by the relevant inclusions of categories and the maps
$j_1$ and $j_2$ are the functors associated to the inclusions of simplicial complexes $P_\bs (\U) \injects P_\bs (\Z)$ and $P_\bs (\V) \injects P_\bs (\Z)$.
\end{theorem}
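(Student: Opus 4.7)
The plan is to obtain Theorem~\ref{Rips-MV} as a direct specialization of Proposition~\ref{MV} to the ambient proper metric space $P_\bs(\Z)$, with the roles of $X_1$ and $X_2$ played by the simplicial subcomplexes $P_\bs(\U)$ and $P_\bs(\V)$ (each with the subspace metric inherited from $P_\bs(\X_\Z)$), after identifying the Karoubi quotients that appear there with the categories $\bAc$ and $\bAcp$ of Definition~\ref{decomposed}. The key input is to choose the family $\{S_i\}$ in Proposition~\ref{MV} so that the resulting ideal $\mS$ is precisely the one used in Definition~\ref{decomposed} to pass from $\Ac$ to $\bAc$.

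Concretely, for each $R \in \bbN$ set
$$S_R \;=\; \coprodmo_{r=1}^R \coprod_{\alpha\in A_r} P_{s_r}(Z^r_\alpha) \;\subset\; P_\bs(\Z).$$
Because distinct disjoint-union summands of $P_\bs(\X_\Z)$ are at infinite distance, $N_t(S_R)=S_R$ for all $t$, so the family $\{S_R\}_R$ satisfies the hypothesis of Proposition~\ref{MV}. The other hypothesis, $N_r(P_\bs(\U)) \cup N_r(P_\bs(\V)) = P_\bs(\Z)$, follows with $r=1$: since $\Z = \U\cup \V$, every vertex of $P_\bs(\Z)$ lies in $P_\bs(\U)\cup P_\bs(\V)$, and every point of $P_\bs(\Z)$ is within simplicial distance $1$ of some vertex. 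Proposition~\ref{MV} then produces a long exact sequence whose middle terms are $K_*(\Ac^{X+}(P_\bs(\U))/\mS_1)$ and $K_*(\Ac^{X+}(P_\bs(\V))/\mS_2)$, whose rightmost term is $K_*(\Ac(P_\bs(\Z))/\mS)$, and whose leftmost term involves the Karoubi quotient of $\Ac^{X+}(P_\bs(\U)) \cap \Ac^{X+}(P_\bs(\V))$ by $\mS_{12}$.

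The remaining work is to identify these quotients with the categories named in the theorem. By construction $\mS = \Ac(\{S_R\}_R)$ coincides with the subcategory $\mS(\Ac(P_\bs(\Z)))$ of Definition~\ref{decomposed}, so $\Ac(P_\bs(\Z))/\mS = \bAc(P_\bs(\Z))$. To see that $\Ac^{X+}(P_\bs(\U))/\mS_1 = \bAcp^{\Z}(P_\bs(\U))$, note that $\mS_1$ consists of precisely those objects of $\Ac^{X+}(P_\bs(\U))$ whose support meets only finitely many summands indexed by $r$, which is exactly $\mS(\Ac^{X+}(P_\bs(\U)))$; likewise for $\V$. The isomorphism assertion in Proposition~\ref{MV} then identifies the quotient of the intersection category with the intersection $\I_\bs(\U,\V)$ inside $\bAc(P_\bs(\Z))$. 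Finally, functoriality of the inclusions $P_\bs(\U) \hookrightarrow P_\bs(\Z)$ and $P_\bs(\V) \hookrightarrow P_\bs(\Z)$ at the level of $\bAcp$ and $\bAc$ is handled by Lemma~\ref{functoriality} together with Remark~\ref{functoriality-rmk}, so the maps $j_1$ and $j_2$ have the stated description.

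The main bookkeeping obstacle is Step~3: one must be scrupulous in verifying that taking the intersection of $\mS$ with the support-restricted subcategories $\Ac^{X+}(P_\bs(\U))$, $\Ac^{X+}(P_\bs(\V))$, and their intersection really recovers the $\mS$ prescribed by Definition~\ref{decomposed} in each of these ambient categories (so that no spurious modules are introduced), and in checking that the equivalence of intersection categories provided by Proposition~\ref{MV} descends correctly to the quotients. Modulo this compatibility check, the theorem is immediate from Proposition~\ref{MV}.
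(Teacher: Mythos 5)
Your proposal is correct and follows the same strategy as the paper: specialize Proposition~\ref{MV} with $X = P_\bs(\Z)$, $X_1 = P_\bs(\U)$, $X_2 = P_\bs(\V)$, $\{S_i\}$ the finite initial segments, and then verify the coverage condition using that every point of $P_\bs(\Z)$ is within simplicial distance $1$ of a vertex. The paper's proof is considerably terser (it does not spell out the choice of $\{S_i\}$ or the identification of the Karoubi quotients with $\bAc$ and $\bAcp$, treating these as routine); your more explicit tracking of the ideal $\mS$ and its intersections is a faithful expansion of what the paper leaves implicit, not a different route.
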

\begin{proof}  By Proposition~\ref{MV} it suffices to check that
$$N_1 \left( P_\bs (\U) \right) \cup N_1 \left( P_\bs (\V)\right) = P_\bs (\Z).$$
Given a simplex $\sigma = \langle x_0, \ldots, x_n \rangle$ in $P_\bs (\Z)$, we either have $x_0\in U^r_\alpha$ for some $r\geqs 1$ and some $\alpha\in A_r$, or we have $x_0\in V^r_\alpha$ for some $r\geqs 1$ and some $\alpha\in A_r$.  In the former case, $\langle x_0 \rangle$ is a $0$--simplex in $P_\bs (\U)$, and $\sigma \subset N_1 \left(\langle x_0 \rangle\right) \subset N_1 \left(P_\bs (\U)\right)$.  In the latter case, 
$\sigma \subset N_1 \left(P_\bs (\V)\right)$.
\end{proof}

\subsection{Mayer--Vietoris for relative Rips complexes}$\label{rel-sec}$

We will need another Mayer--Vietoris sequence for the proof of our vanishing theorem (Theorem~\ref{vanishing-thm}), involving the relative Rips complexes introduced by Guentner--Tessera--Yu~\cite[Appendix A]{GTY-rigid}.   

\begin{definition}$\label{rel-Rips}$
Consider a bounded geometry metric space $X$, along with a subspace $Z\subset X$ and a family $\bbW$ of subspaces of $X$. Given $0<s<s'$, the \emph{relative Rips complex} $P_{s, s'} (Z, \bbW)$ is  the subcomplex of $P_{s'} (X)$ consisting of those simplices $\langle x_0, \ldots, x_n\rangle$ satisfying at least one of the following conditions:
\begin{enumerate}
\item $x_0, \ldots, x_n \in Z$ and $d(x_i, x_j)\leqs s$ for all $i, j$; 
\item $x_0, \ldots, x_n \in W$ for some $W\in \bbW$.  
\end{enumerate}
Note that in the second case, $d(x_i, x_j)\leqs s'$ for all $i,j$ since we are defining a subcomplex of $P_{s'} (X)$.
We equip $P_{s, s'} (Z, \bbW)$ with the metric induced by the simplicial metric on $P_{s, s'} (X, \bbW)$.    (It will be crucial for our arguments that we \e{do not} use the metric inherited from the simplicial metric on $P_{s'} (X)$; see in particular Lemmas~\ref{rel-nbhd-comp} and~\ref{rel-metric-comp}.)  Note that in this definition, we do not require that the subspaces $W\in \bbW$ satisfy $W\subset Z$.

Given a  decomposed sequence $\Z = (Z^1, Z^2, \ldots)$ in $X$ with decompositions $Z^r = \bigcup_{\alpha\in A_r} Z^r_\alpha$, a set  $\bbW = \{\bbW^r_\alpha: r\geqs 1, \alpha\in A_r\}$ of families of subspaces of $X$,
and $\bs, \bs' \in \Seq$ satisfying $\bs \leqs \bs'$,
we define the relative Rips complexes
$$P_{\bs, \bs'} (\Z, \bbW) := \coprod_{r=1}^\infty \coprod_{\alpha\in A_r} P_{s_r, s_r'} (Z^r_\alpha,  \bbW^r_\alpha)\hspace{1.5in}$$
$$\hspace{1.5in} \subset \coprod_{r=1}^\infty \coprod_{\alpha\in A_r} P_{s_r, s_r'} (X, \bbW^r_\alpha) =: P_{\bs, \bs'} (X, \bbW),
$$
and we give $P_{\bs, \bs'} (\Z, \bbW)$ the metric induced by the simplicial metric on 
$P_{\bs, \bs'} (X, \bbW)$.   
\end{definition}

Given a covering $\Z = \U\cup \V$ of a decomposed sequence by two subsequences, we will need to consider a relative Rips complex in which the ``larger" simplices are constrained to lie near both $\U$ and $\V$.

\begin{definition}$\label{W_t-def}$
Consider decomposed sequences  $\Z   = (Z^1, Z^2, \ldots)$, $\U =  (U^1, U^2, \ldots)$, and $\V = (V^1, V^2, \ldots)$ in a metric space $X$,
with decompositions $Z^r = \bigcup_{\alpha\in A_r} Z^r_\alpha$,  $U^r = \bigcup_{\alpha\in A_r} U^r_\alpha$, and  $V^r = \bigcup_{\alpha\in A_r} V^r_\alpha$.  Assume that
$\Z= \U \cup \V$, and say that we are given additional decompositions
\begin{equation} \label{addl-decomps} U^r_\alpha = \coprod_{i\in I(r, \alpha)} U^r_{\alpha i}  \, \textrm{ and } \, V^r_\beta = \coprod_{j\in J(r, \beta)}V^r_{\beta j}\end{equation}
for each $r\geqs 1$, and each $\alpha, \beta \in A_r$.
Given $T> 0$, $r\geqs 1$, and $\alpha\in A_r$, we define
$$\bbW^r_{T, \alpha} = \{N_{T} (U^r_{\alpha i}) \cap N_{T} (V^r_{\alpha j})\cap Z^r_\alpha : i\in I(r,\alpha),\, j\in J(r, \beta) \},$$
and given  $\bT\in \Seq$, we define the set of metric families $\bbW_\bT  (\U, \V, \Z)$ to be
$$\bbW_\bT  (\U, \V, \Z) =\bbW_\bT =  \{\bbW^r_{T_r, \alpha}: r\geqs 1, \alpha\in A_r\}.$$
(We will suppress the dependence of $\bbW_\bT  (\U, \V, \Z)$ on the chosen additional decompositions (\ref{addl-decomps}).)

For any $\bs, \bs'\in \Seq$, we can now form the relative Rips complexes
$$P_{\bs, \bs'} (\Z, \bbW_\bT),\,\, P_{\bs, \bs'} (\U, \bbW_\bT)\,\, {\textrm and }\,\,P_{\bs, \bs'} (\V, \bbW_\bT)$$
as in Definition~\ref{rel-Rips}.  Following Definition~\ref{decomposed}, we set
$$\bAc \left(P_{\bs, \bs'} (\Z, \bbW_\bT)\right) := \Ac \left(P_{\bs, \bs'} (\Z, \bbW_\bT)\right)/\mS.$$ 
We define
$\bAc^{\Z +} \left(P_{\bs, \bs'} (\U, \bbW_\bT)\right)$ and $\bAc^{\Z +} \left(P_{\bs, \bs'} (\V, \bbW_\bT)\right)$ similarly, by allowing modules supported on neighborhoods of $P_{\bs, \bs'} (\U, \bbW_\bT)$ (or, respectively, $P_{\bs, \bs'} (\V, \bbW_\bT)$) inside $P_{\bs, \bs'} (\Z, \bbW_\bT)$ (recall that these complexes are given the metrics  inherited from the simplicial metric on $P_{\bs, \bs'} (X, \bbW_\bT)$).
\end{definition}

\begin{theorem}$\label{rel-MV}$
Let $\Z$, $\U$, $\V$ and $X$ be as in Theorem~\ref{Rips-MV}, and say that for each $r\geqs 1$ and each $\alpha, \beta \in A_r$ we are given additional decompositions
$$U^r_\alpha = \coprod_{i\in I (r, \alpha)} U^r_{\alpha i} \, \textrm{ and } \,  U^r_\beta = \coprod_{j\in J(r, \beta)}V^r_{\beta j}.$$

Then for any sequences  $\bs, \bs', \bT\in \Seq$, we can form the sequence $\bbW_\bT = \bbW_\bT (\U, \V, \Z)$ as in Definition~\ref{W_t-def}, and
 there is a long exact Mayer--Vietoris sequence in non-connective $K$--theory of the form
\begin{eqnarray*}
\cdots \maps K_{*+1} \bAc P_{\bs, \bs'} (\Z,   \bbW_\bT) 
\stackrel{\partial}{\maps}  K_{*}  \I'_{\bs, \bs', \bT} (\U, \V)\hspace{.7in}\\
\maps   K_*  \bAc^{\Z +} P_{\bs, \bs'} (\U, \bbW_\bT) 
		\oplus
	   K_* \bAc^{\Z +} P_{\bs, \bs'} (\V, \bbW_\bT) \\
\maps  K_* \bAc  P_{\bs, \bs'} (\Z,   \bbW_\bT) 
\stackrel{\partial}{\maps} \cdots,\hspace{.87in}
\end{eqnarray*}
where $\I'_{\bs, \bs', \bT} (\U, \V)$ is the intersection of the subcategories $\bAc^{\Z +} P_{\bs, \bs'} (\U, \bbW_\bT)$ and 
$\bAc^{\Z +} P_{\bs, \bs'} (\V,  \bbW_\bT)$ inside $\bAc P_{\bs, \bs'} (\Z, \bbW_\bT)$.
\end{theorem}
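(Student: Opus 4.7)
The plan is to mirror the proof of Theorem~\ref{Rips-MV}: apply the general Mayer--Vietoris sequence of Proposition~\ref{MV} to the space $X = P_{\bs, \bs'}(\Z, \bbW_\bT)$ covered by $X_1 = P_{\bs, \bs'}(\U, \bbW_\bT)$ and $X_2 = P_{\bs, \bs'}(\V, \bbW_\bT)$, with the subcategory $\mS$ given by Definition~\ref{decomposed} (modules supported on the first finitely many factors of the infinite disjoint union).

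To fit into the framework of Proposition~\ref{MV}, I would take the family $\{S_R\}_{R\in\bbN}$ with
$$S_R \;=\; \coprod_{r=1}^R \coprod_{\alpha \in A_r} P_{s_r, s_r'}\bigl(Z^r_\alpha, \bbW^r_{T_r, \alpha}\bigr).$$
Since distinct factors of the disjoint union lie at infinite distance in the simplicial metric on $P_{\bs, \bs'}(X, \bbW_\bT)$, we have $N_t(S_R) = S_R$ for every $t\in\bbN$, so the enlargement condition of Proposition~\ref{MV} is trivially satisfied. Under the identifications of Definition~\ref{decomposed}, the Karoubi quotients appearing in Proposition~\ref{MV} coincide with $\bAc P_{\bs, \bs'}(\Z, \bbW_\bT)$, $\bAc^{\Z +} P_{\bs, \bs'}(\U, \bbW_\bT)$, $\bAc^{\Z +} P_{\bs, \bs'}(\V, \bbW_\bT)$, and the intersection $\I'_{\bs, \bs', \bT}(\U, \V)$ in the statement.

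The only remaining ingredient is the covering condition
$$N_1\bigl(P_{\bs, \bs'}(\U, \bbW_\bT)\bigr) \cup N_1\bigl(P_{\bs, \bs'}(\V, \bbW_\bT)\bigr) = P_{\bs, \bs'}(\Z, \bbW_\bT),$$
which I would verify by a case analysis on a simplex $\sigma = \langle x_0, \ldots, x_n\rangle$ of $P_{s_r, s_r'}(Z^r_\alpha, \bbW^r_{T_r, \alpha})$, using the two cases of Definition~\ref{rel-Rips}. If $\sigma$ satisfies condition~(2), so that its vertices lie in some $W \in \bbW^r_{T_r, \alpha}$, then because the \emph{same} family $\bbW^r_{T_r, \alpha}$ is used to build the relative Rips complex over $U^r_\alpha$, the simplex $\sigma$ is already a simplex of $P_{s_r, s_r'}(U^r_\alpha, \bbW^r_{T_r, \alpha})$ (satisfying condition~(2) there), regardless of whether its vertices lie in $U^r_\alpha$. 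If instead $\sigma$ satisfies condition~(1), then $x_0 \in Z^r_\alpha = U^r_\alpha \cup V^r_\alpha$, and without loss of generality $x_0 \in U^r_\alpha$; then $\langle x_0\rangle$ is a $0$-simplex of $P_{s_r, s_r'}(U^r_\alpha, \bbW^r_{T_r, \alpha})$, and since every simplex has diameter $1$ in the simplicial metric, $\sigma \subset N_1(\langle x_0\rangle) \subset N_1(P_{\bs, \bs'}(\U, \bbW_\bT))$.

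No step appears technically demanding; the principal subtlety is remembering that the relative Rips complexes for $\U$ and $\V$ are built with the very same large-simplex family $\bbW_\bT$ as the one for $\Z$, which is precisely what makes case~(2) work cleanly without introducing any further propagation constants or neighborhood enlargements. Once the covering condition is verified, the long exact sequence is simply the output of Proposition~\ref{MV}.
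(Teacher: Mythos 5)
Your proposal is correct and takes essentially the same route as the paper: both apply Proposition~\ref{MV} to the decomposition $P_{\bs,\bs'}(\Z,\bbW_\bT)$ covered by the relative Rips complexes over $\U$ and $\V$, with $\mS$ as in Definition~\ref{decomposed}, and verify the covering condition $N_1(P_{\bs,\bs'}(\U,\bbW_\bT))\cup N_1(P_{\bs,\bs'}(\V,\bbW_\bT)) = P_{\bs,\bs'}(\Z,\bbW_\bT)$. Your case analysis on a simplex (condition (1) handled via a vertex in $U^r_\alpha$ or $V^r_\alpha$, condition (2) handled because the same family $\bbW^r_{T_r,\alpha}$ is used in all three complexes) is exactly the verification the paper leaves to the reader by reference to the proof of Theorem~\ref{Rips-MV}.
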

\begin{proof}  We apply Proposition~\ref{MV}.  The conditions are checked just as in the proof of Theorem~\ref{Rips-MV}: for each $r\geqs 1$ and each $\alpha\in A_r$, each point in 
 $P_{s_r, s_r'} \left(Z^r_\alpha, \bbW^r_{T_r, \alpha} \right)$  
is within distance 1 of
$$P_{s_r, s_r'} \left(U^r_\alpha, \bbW^r_{T_r, \alpha} \right)
\cup P_{s_r, s_r'} \left(V^r_\alpha, \bbW^r_{T_r, \alpha}\right).$$
\end{proof}

\subsection{A comparison of Mayer--Vietoris sequences}

For the arguments in Section~\ref{FDC}, we will need to compare the absolute and relative Mayer--Vietoris sequences from Sections~\ref{MVRips-sec} and~\ref{rel-sec}.

\begin{theorem}$\label{MV-diagram}$
Let $\Z, \U, \V,$ and $X$ be as in Theorem~\ref{Rips-MV}.

Then for any $\bs, \bs', \bT\in \Seq$ there are functors 
$$\bAc^{\Z +} P_{\bs} (\U) \xmaps{i_\U} \bAc^{\Z +} P_{\bs, \bs'} (\U, \bbW_\bT),\,\,\,\,\, \bAc^{\Z +} P_{\bs} (\V) \xmaps{i_\V} \bAc^{\Z +} P_{\bs, \bs'} (\V, \bbW_\bT),$$
$$\bAc  P_{\bs} (\Z) \srt{\gamma} \bAc  P_{\bs, \bs'} (\Z, \bbW_\bT), \textrm{ and }\,\,\, \I_{\bs} (\U, \V) \srt{\rho} \I'_{\bs, \bs', \bT} (\U, \V)$$
such that the diagram of Mayer--Vietoris sequences
\begin{equation}\label{MV-commutes}
\xymatrix{ 
K_{*} \left(\I_{\bs} (\U, \V) \right)\ar[d]
\ar[rr]^-{\rho_*}
	& & K_{*} \left(\I'_{\bs, \bs', \bT} (\U, \V) \right)\ar[d]\\
K^+_* \left( P_\bs \U  \right) \oplus K^+_* \left( P_\bs \V  \right) \ar[d] \ar[rr]^-{(i_\U)_* \oplus (i_\V)_*}
	& & K^+_*  ( P_{\bs, \bs'} (\U, \bbW_\bT)  )
		 \oplus K^+_* ( P_{\bs, \bs'} (\V, \bbW_\bT) )   \ar[d]  \\
K_* \left( P_{\bs} (\Z)  \right) \ar[rr]^-{\gamma_*} \ar[d]^-{\partial} 
	& &K_*  \left(P_{\bs, \bs'} (\Z, \bbW_\bT) \right) \ar[d]^-{\partial}  \\
K_{*-1} \left(\I_{\bs} (\U, \V) \right)
\ar[rr]^-{\rho_*}
	&   &K_{*-1} \left(\I'_{\bs, \bs', t} (\U, \V) \right).\\
}
\end{equation}
is commutative; note that in the middle two rows we have used $K_*$ and $K^+_*$ as  shorthand for $K_* (\bAc (-))$ and $K_* (\bAc^{\Z +} (-))$ (respectively).
\end{theorem}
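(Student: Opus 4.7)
The plan is to construct the four comparison functors from the natural inclusions of simplicial complexes $P_\bs(-) \hookrightarrow P_{\bs,\bs'}(-,\bbW_\bT)$, then deduce commutativity of the ladder from naturality of the categorical diagram (4.1) that underlies the Mayer--Vietoris sequence of Proposition~\ref{MV}.

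First I would verify that for each $r \geqs 1$ and each $\alpha \in A_r$, every simplex $\langle x_0, \ldots, x_n\rangle$ of $P_{s_r}(Z^r_\alpha)$ satisfies condition (1) of Definition~\ref{rel-Rips} for $P_{s_r, s_r'}(Z^r_\alpha, \bbW^r_{T_r,\alpha})$: its vertices lie in $Z^r_\alpha$ and have pairwise distances at most $s_r \leqs s_r'$. Hence we have genuine inclusions of simplicial subcomplexes $P_\bs(\Z) \hookrightarrow P_{\bs,\bs'}(\Z,\bbW_\bT)$, $P_\bs(\U) \hookrightarrow P_{\bs,\bs'}(\U,\bbW_\bT)$, and $P_\bs(\V) \hookrightarrow P_{\bs,\bs'}(\V,\bbW_\bT)$. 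Each inclusion is $1$--Lipschitz (any path in the smaller complex is a path in the larger one, and the larger complex is equipped with the simplicial metric inherited from $P_{\bs,\bs'}(X,\bbW_\bT)$), and is proper because each Rips complex is locally finite. By Lemma~\ref{functoriality} these inclusions induce functors on the $\Ac$ and $\Acp$ categories, and by Remark~\ref{functoriality-rmk} (applying Lemma~\ref{induced-map} to pass through the Karoubi quotients of Definition~\ref{decomposed}) we obtain the functors $\gamma$, $i_\U$, and $i_\V$ on the ``$\bAc$'' categories. The functor $\rho$ is then defined as the restriction of $\gamma$ to $\I_\bs(\U,\V)$: since the inclusions are $1$--Lipschitz, the image of any module supported in finite neighborhoods of both $P_\bs(\U)$ and $P_\bs(\V)$ in $P_\bs(\Z)$ lies in finite neighborhoods of both $P_{\bs,\bs'}(\U,\bbW_\bT)$ and $P_{\bs,\bs'}(\V,\bbW_\bT)$ in $P_{\bs,\bs'}(\Z,\bbW_\bT)$, so $\gamma$ restricts to a functor $\rho \co \I_\bs(\U,\V) \to \I'_{\bs,\bs',\bT}(\U,\V)$.

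Next I would show that $(i_\U, i_\V, \gamma, \rho)$ extends to a morphism between the two copies of the categorical diagram (4.1) used in the proof of Proposition~\ref{MV} (one for $\Z=\U\cup\V$ with coefficients in $\bAc P_\bs(-)$, the other for the relative Rips complexes with coefficients in $\bAc P_{\bs,\bs'}(-,\bbW_\bT)$). Every square in this map of diagrams commutes because all the functors involved are induced by the \emph{same} underlying commutative square of simplicial inclusions, and Lemma~\ref{induced-map} guarantees that their descents to the Karoubi quotients are unique. Applying non-connective $K$--theory to this morphism of diagrams and invoking the naturality of the Karoubi long exact sequence of Theorem~\ref{LES} (with respect to functors of Karoubi-filtered pairs) yields a commutative ladder of long exact sequences. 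Weaving the two vertical long exact sequences in each ladder rung together as in the proof of Proposition~\ref{MV} produces exactly the Mayer--Vietoris ladder (\ref{MV-commutes}); in particular, the commutativity of the square involving $\partial$ follows from the naturality of the Karoubi connecting map.

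The main obstacle is bookkeeping: one must carefully check that every inclusion of subcategories, every Karoubi filtration (from Lemma~\ref{filtrations}), and every dispersion condition (from Lemma~\ref{disp}) used in assembling the Mayer--Vietoris sequence is preserved by the four comparison functors after passage to the quotient categories $\bAc$ and $\bAcp$. None of these checks is difficult in isolation—they reduce to the observation that the functors are induced by distance-decreasing simplicial inclusions and so respect all the support conditions at play—but the consistency needed to make the entire ladder (\ref{MV-commutes}) commute must be traced through every step.
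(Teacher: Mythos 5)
Your proposal is correct and follows essentially the same route as the paper: construct $i_\U$, $i_\V$, $\gamma$, $\rho$ from the simplicial inclusions via Lemma~\ref{functoriality} and Lemma~\ref{induced-map}, observe these give a morphism between two copies of diagram (\ref{MV-square}), and deduce commutativity of the ladder by naturality. The only stylistic difference is that the paper phrases the final step at the level of spectra (a map of homotopy cocartesian squares of spectra yields a commutative Mayer--Vietoris ladder), whereas you invoke naturality of the Karoubi long exact sequence and the weaving argument directly at the level of homotopy groups; the spectrum-level formulation is slightly cleaner because naturality of the spliced connecting map $\partial$ is then automatic, but both are valid.
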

\begin{proof}  The Mayer--Vietoris sequences are produced by Theorem~\ref{Rips-MV} and Theorem~\ref{rel-MV}.  The functors $i_\U$, $i_\V$, and $\gamma$ are induced by the relevant inclusions of simplicial complexes, which satisfy the hypotheses of Lemma~\ref{functoriality}.  There is then an induced functor 
$\I_{\bs} (\U, \V) \srt{\rho} \I'_{\bs, \bs', \bT}$
between the intersection categories.  By Lemma~\ref{induced-map}, these functors produce a commutative diagram $\D$ of categories consisting of two diagrams of the form (\ref{MV-square}), with one mapping to the other.
After taking $K$--theory spectra, we obtain a morphism between two homotopy (co-)cartesian squares of spectra, together with maps between the homotopy cofibers of the vertical maps in these squares. It is a general fact that maps between homotopy (co-)cartesian squares of spectra yield commutative diagrams of Mayer--Vietoris sequences.
\end{proof}

%%%%%%%%%%%%%%%%%%%%%%%%%%%%%%%%%%%%%%%%%%%%%%%%
%%%%%%%%%%%%%%%%%%%%%%%%%%%%%%%%%%%%%%%%%%%%%%%%

\section{Metric properties of Rips complexes}$\label{Rips-sec}$

In this section we record some basic geometric results about Rips complexes, some of which may be found in Guentner--Tessera--Yu~\cite[Appendix A]{GTY-rigid}.  For completeness, we provide detailed proofs.

\begin{definition}$\label{C}$ If $X$ is a bounded geometry metric space and $s$ is a positive real number, we let $C(s, X) = (2\sqrt{2} + 1)^{N-1}$, where $N$ is the dimension of $P_s (X)$ (if $P_s (X)$ is zero-dimensional, we set $C(s, X) = 1$).
\end{definition}

\begin{lemma}$\label{metric-comp}$ Let $(X,d)$ be a metric space with bounded geometry and let $d_\Delta$ denote the simplicial metric on $P_s (X)$.  Then for all $x, y\in X \subset P_s (X)$,
$$d (x,y) \leqs s C(s,X) \sd(x,y).$$
\end{lemma}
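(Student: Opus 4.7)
The plan is to prove this by induction on $N = \dim P_s(X)$, after reformulating the statement slightly to make the induction work.  Specifically, I would prove: for any simplicial complex $K$ of dimension at most $N$ whose vertex set is a subset of a metric space $(X, d)$ such that every pair of vertices sharing a simplex of $K$ satisfies $d(u, v) \leqs s$, one has $d(x, y) \leqs s(2\sqrt{2}+1)^{N-1} \sd(x, y)$ for all vertices $x, y$ of $K$.  This mild generalization is needed because the $(N-1)$-skeleton of $P_s(X)$, to which the inductive hypothesis will be applied, is not itself a Rips complex, though it does inherit the weaker adjacency hypothesis.

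For the base case $N = 1$, the complex $K$ is a graph, and a shortest simplicial path between two vertices traverses $\sd(x,y)$ edges, each joining a pair of vertices at $d$-distance at most $s$; the triangle inequality in $(X, d)$ immediately gives $d(x, y) \leqs s\sd(x, y)$, matching $(2\sqrt{2}+1)^0 = 1$.  (When $N = 0$ the inequality is trivial since distinct vertices lie in different components and $\sd(x, y) = \infty$.)

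For the inductive step, given $\epsilon > 0$ I would pick a simplicial path $x = p_0, \ldots, p_M = y$ of total length less than $\sd(x, y) + \epsilon$.  After shortening each maximal subpath lying in a single top-dimensional simplex to the straight Euclidean line between its entry and exit points, one may assume the path is piecewise linear, with each linear piece joining two boundary points of a single simplex of $K$.  I would then replace every linear piece lying in an $N$-dimensional simplex $\sigma$ by a path on $\partial\sigma$, a subcomplex of dimension $N-1$, producing a new path that lies entirely in the $(N-1)$-skeleton of $K$.  Granting the geometric claim below, the total simplicial length grows by a factor of at most $2\sqrt{2} + 1$, so applying the inductive hypothesis in the $(N-1)$-skeleton yields $d(x, y) \leqs s(2\sqrt{2}+1)^{N-2}\cdot(2\sqrt{2}+1)\cdot(\sd(x, y) + \epsilon) = s(2\sqrt{2}+1)^{N-1}(\sd(x, y) + \epsilon)$.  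Sending $\epsilon \to 0$ finishes the induction.

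The main obstacle is the Euclidean geometric estimate inside the regular $N$-simplex $\sigma$ of diameter $1$: for any two points $p, q \in \partial\sigma$, the induced path metric on $\partial\sigma$ should satisfy $d_{\partial\sigma}(p, q) \leqs (2\sqrt{2}+1)\,|p - q|$.  Every proper face of $\sigma$ has circumradius at most $1/\sqrt{2}$, so any boundary point lies within Euclidean distance $1/\sqrt{2}$ of a vertex of $\sigma$, and distinct vertices of $\sigma$ are at distance exactly $1$.  The bound should follow from a case analysis on the minimal faces containing $p$ and $q$: it is trivial when they share a face; when they lie on two distinct $(N-1)$-faces one can use an unfolding argument across the codimension-one face they share; and in the remaining cases one routes a boundary path through carefully chosen vertices, paying the vertex-detour cost $1/\sqrt{2} + 1 + 1/\sqrt{2} = 1 + \sqrt{2}$ once.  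Showing that the single constant $2\sqrt{2}+1$ absorbs all sub-cases uniformly in $N$ is the delicate point, and is precisely what produces the exponent in $C(s, X) = (2\sqrt{2}+1)^{N-1}$.
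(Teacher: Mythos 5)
Your high-level strategy — induct on dimension, push each segment that crosses the interior of an $N$-simplex onto its boundary at the cost of a factor $(2\sqrt{2}+1)$, then invoke the lower-dimensional statement — is exactly the paper's strategy, and your reformulation to arbitrary complexes with the adjacency property is a legitimate way to make the induction run (the paper instead tracks the quantity $\dim(\gamma) = \max_i \dim(\sigma_i)$ over the path, which amounts to the same thing).  The lemma you isolate, $d_{\partial\sigma}(p,q) \leqs (2\sqrt{2}+1)\,|p-q|$ for boundary points of a regular diameter-one simplex, is also correct and is precisely what the paper establishes.

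Where your proposal has a genuine gap is in the proof of that geometric lemma.  The ``vertex-detour cost $1/\sqrt{2}+1+1/\sqrt{2}$'' is an \emph{additive} bound, and no fixed additive detour can yield a \emph{multiplicative} Lipschitz estimate: as $|p-q|\to 0$ the ratio would blow up.  The unfolding argument is the right idea when $p$ and $q$ are close to the shared codimension-two face, but the unfolded geodesic is not guaranteed to stay inside the union of the two facets (it crosses the affine span of the shared face, not necessarily the shared face itself, when $p$ and $q$ are laterally offset), so one would have to handle reflections off other facets, and making the constant $2\sqrt{2}+1$ come out uniformly in $N$ from this route is exactly the delicacy you flag without resolving.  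The paper's proof avoids all of this with a single uniform construction: writing $p\in\langle x_0,\ldots,x_{n-1}\rangle$ and $q\in\langle x_1,\ldots,x_n\rangle$ (always possible after relabelling, provided $p,q$ are not on a common facet, in which case no replacement is needed), it takes orthogonal projections $\overline{p},\overline{q}$ onto the affine span of the shared face $\langle x_1,\ldots,x_{n-1}\rangle$, checks via barycentric coordinates that $\sd(p,\overline{p})$ and $\sd(q,\overline{q})$ are each at most $\sqrt{2}\,\sd(p,q)$ while $\sd(\overline{p},\overline{q})\leqs\sd(p,q)$ (projections are $1$-Lipschitz), and verifies that these projections actually land inside the shared face.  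The three-segment path $p\to\overline{p}\to\overline{q}\to q$ then lies in the $(N-1)$-skeleton and has length at most $(2\sqrt{2}+1)\,\sd(p,q)$.  To repair your proposal you would want to replace your case analysis with this projection construction.
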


\begin{proof} Given a sequence $\gamma = (p_0, p_1, \ldots, p_k)$  of points in $P_s (X)$, let $l(\gamma) = \sum_{i=0}^{k-1} d_\Delta (p_i, p_{i+1})$. 
By definition of the simplicial metric, we must show that 
$$d(x,y) \leqs s (2\sqrt{2} + 1)^{N-1}  \, l(\gamma)$$
for all sequences $\gamma = (p_0, p_1, \ldots, p_k)$ such that $p_0 = x$, $p_k = y$, and for $i = 1, \ldots k$, $p_i$ and $p_{i-1}$ lie in a common simplex $\sigma_i$ (which we may assume is the smallest simplex containing  $p_i$ and $p_{i-1}$).
Let $\dim(\gamma) = \max_i \dim (\sigma_i)$, and note that $\dim(\gamma)\leqs N$.
We will show by induction on $\dim(\gamma)$ that
$d(x,y) \leqs s (2\sqrt{2} + 1)^{\dim(\gamma)-1} l(\gamma)$.  Note that if $\sigma_i \subset \sigma_{i+1}$ or  $\sigma_{i+1} \subset \sigma_{i}$, then we may shorten $\gamma$ by removing $p_{i}$, so we may assume without loss of generality that $\sigma_i\cap \sigma_{i+1}$ is a proper face of both $\sigma_i$ and $\sigma_{i+1}$.  This implies that $p_i$ and $p_{i+1}$ lie in the boundary of $\sigma_{i+1}$ for $i = 0, \ldots, k-1$.

If $\dim(\gamma)=1$, then $p_i\in X$ for each $i$, and we have
$$d(x,y) \leqs \sum_{i=0}^{k-1} d(p_i, p_{i+1}) \leqs sk = s (2\sqrt{2} + 1)^0  \, l(\gamma).$$
Now assume the result for paths of dimension at most $n-1$, and say $\dim(\gamma) = n$.  We will replace $\gamma$ by a nearby path of lower dimension.  By assumption, there exists $i\in \{0, \ldots, k-1\}$ such that $\sigma_{i+1} = \langle x_0, \ldots, x_n \rangle$ for some $x_0,\ldots, x_n\in X$.  Reordering the $x_j$ if necessary, we may further assume that $p_i \in \langle x_0, \ldots, x_{n-1} \rangle$ and $p_{i+1} \in \langle x_1, \ldots, x_{n} \rangle$.   Letting $\overline{p_i}$ and $\overline{p_{i+1}}$ denote the orthogonal projections of these points to the affine $(n-2)$--plane containing $\langle x_1, \ldots, x_{n-1} \rangle$ (note that these orthogonal projections necessarily lie inside $\langle x_1, \ldots, x_{n-1} \rangle$), we will replace $\gamma$ by the piecewise geodesic path $\gamma' = (p_0, \ldots, p_i, \overline{p_i}, \overline{p_{i+1}}, p_{i+1}, \ldots, p_k)$.   

We claim that $\sd(p_i, \overline{p_i})$  and   
$\sd(\overline{p_{i+1}}, p_{i+1})$ are  at most $\sqrt{2} \sd(p_i, p_{i+1})$.
In barycentric coordinates, we may write  $p_i = \sum_{i=0}^n a_i x_i$ 
(with $a_n = 0$)
and $p_{i+1} =  \sum_{i=0}^n b_i x_i,$ 
(with $b_0 = 0$).
Setting $w = (a_0 + a_1) x_1 +  \sum_{i=2}^{n-1} a_i x_i$
we have $\sd (p_i, w) = \sqrt{2}a_0$ and
$\sd(p_i, p_{i+1}) \geqs a_0$,
so  $\sd(p_i, w)   \leqs \sqrt{2} \sd (p_i,p_{i+1})$.
Hence 
$$\sd(p_i, \overline{p_i}) \leqs \sd(p_i, w) \leqs  \sqrt{2}  \sd (p_i,p_{i+1}),$$ 
as desired.  Similarly,
$\sd(\overline{p_{i+1}}, p_{i+1}) \leqs \sqrt{2}\sd (p_i,p_{i+1})$.  

Since orthogonal projections decrease distances, we also have
$$\sd(\overline{p_i}, \overline{p_{i+1}}) \leqs \sd(p_i, p_{i+1})\leqs 1,$$
and hence   $l(p_i, \overline{p_i}, \overline{p_{i+1}}, p_{i+1})\leqs (2\sqrt{2} + 1) \sd(p_i, p_{i+1})$.  Repeating this procedure for each $n$--simplex among the $\sigma_i$, we obtain a new path $\gamma'$ (from $x$ to $y$) which lies entirely in the $(n-1)$--skeleton of $P_{s} (X)$ (meaning that $\dim(\gamma')\leqs n-1$) and satisfies 
$l(\gamma')\leqs (2\sqrt{2} + 1) l(\gamma)$.  By induction, we know that $d(x,y)\leqs (2\sqrt{2} + 1)^{n-2} l(\gamma')$, so  
$d(x,y) \leqs (2\sqrt{2} + 1)^{n-1} l(\gamma)$, completing the proof.
\end{proof}

It is important to note that no bound exists in the opposite direction: if $d(x,y) > s$, then $x$ and $y$ may lie in different connected components of $P_s (X)$, in which case $\sd (x,y) = \infty$.

\vspace{.2in}
The following result will allow us to compare distances in  relative Rips complexes.  For this result to hold, it is crucial that we give the relative Rips complex $P_{s, s'} (Z, W)$ the metric inherited from the simplicial metric on $P_{s, s'} (X, W)$ rather than $P_{s'} (X)$.

Note that each point $x$ in a simplicial complex $K$ can be written uniquely, in barycentric coordinates, in the form $x = \sum c_{v_i} (x) v_i$ with $c_{v_i} (x) > 0$ for each $i$.  We will refer to the vertices $v_i$ as the \e{barycentric vertices} of $x$.
Given a vertex $v\in K$, we can extend $c_v$ to a continuous function from $K$ to $[0,1]$ by setting $c_v (x) = 0$ if $v$ is not a barycentric vertex of $x$.

\begin{lemma} $\label{rel-nbhd-comp}$ Let $W\subset X$ be metric spaces, and assume $X$ has bounded geometry.  Given  $s'\geqs s>0$, let $N_t (P_{s'} (W))$ denote a $t$--neighborhood of $P_{s'} (W)$ inside $P_{s, s'} (X, W)$.
Then for all $x\in X\cap N_t (P_{s'} (W))$ (where $X$ is viewed as the $0$--skeleton of $P_{s, s'} (X, W)$), we have 
\begin{equation}\label{dist-est}d (x, W) \leqs (t+1) C(s, X) s.\end{equation}

It follows that inside the simplicial complex $P_{s'} (X)$, we have  inclusions
\begin{equation}\label{rnc1}N_t (P_{s'} (W)) \subset   P_{s, s'} (N_{(t+2) C(s, X) s} (W), W) 
\subset P_{s'} (N_{(t+2) C(s, X) s} (W)),\end{equation}
where on the left, the neighborhood is still taken with respect to the simplicial metric on $P_{s, s'} (X, W)$.
Additionally, for any $U\subset X$, we have inclusions
\begin{equation}\label{rnc2}N_t (P_{s, s'} (U, W)) \subset 
 N_t (P_{s'} (U \cup W)) \subset P_{s'} (N_{(t+2) C(s, X) s} (U\cup W)),\end{equation}
 where the first  neighborhood is taken inside $P_{s, s'} (X, W)$ and the second is taken inside $P_{s, s'} (X, U\cup W)$.
\end{lemma}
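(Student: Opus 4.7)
The plan is to reduce the estimate (\ref{dist-est}) to Lemma~\ref{metric-comp} by converting a given path from $x$ in $P_{s,s'}(X,W)$ into one lying entirely in the ordinary Rips complex $P_s(X)$, at only a mild cost in length. Assuming $x \notin W$ (otherwise the estimate is trivial), I would fix $p \in P_{s'}(W)$ with $\sd^{P_{s,s'}(X,W)}(x,p) < t$ and choose a path $x = p_0, p_1, \ldots, p_m = p$ of length $< t$ in $P_{s,s'}(X,W)$ passing through smallest common simplices $\sigma_i$. Each $\sigma_i$ is either \emph{small} (a simplex of $P_s(X)$, vertices pairwise within $s$ in $X$) or \emph{big} (all vertices in $W$, pairwise within $s'$). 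Since $p$ lies in a simplex of $P_{s'}(W)$, some $\sigma_i$ must contain a vertex of $W$; let $i^*$ be the smallest such index, so that $\sigma_1, \ldots, \sigma_{i^*-1}$ are all small.

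The next step is to exhibit a vertex $v \in W$ reachable from $x$ by a short path in $P_s(X)$. If $\sigma_{i^*}$ is small, I append the edge $p_{i^*-1} \to v$ for any $v \in \sigma_{i^*} \cap W$. If $\sigma_{i^*}$ is big, then $i^* \geqs 2$ (the alternative $i^* = 1$ would force $x \in W$), and the common face $\sigma_{i^*-1} \cap \sigma_{i^*}$ contains a vertex $v$, which necessarily lies in $W$ (as a vertex of the big simplex) and also in the small simplex $\sigma_{i^*-1}$; I append the edge $p_{i^*-1} \to v$ inside $\sigma_{i^*-1}$. In either case the resulting path stays in $P_s(X)$, has length $<t+1$, and ends at $v \in W$, so Lemma~\ref{metric-comp} delivers $d(x,v) \leqs (t+1)C(s,X)s$, which gives (\ref{dist-est}). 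The only delicate point I anticipate is the big case, where the appended edge must be routed through $\sigma_{i^*-1}$ rather than $\sigma_{i^*}$ itself; this uses the standard fact that the intersection of two simplices in a simplicial complex is a common face and hence contains a common vertex.

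To derive (\ref{rnc1}), I would observe that any $y \in N_t(P_{s'}(W))$ lies in some simplex $\sigma$ of $P_{s,s'}(X,W)$, each of whose vertices is within simplicial distance $1$ of $y$ and therefore in $N_{t+1}(P_{s'}(W))$. Applying (\ref{dist-est}) with $t+1$ in place of $t$ places every vertex of $\sigma$ within $(t+2)C(s,X)s$ of $W$ (trivially so for vertices of a big $\sigma$), so $\sigma$ remains a simplex of $P_{s,s'}(N_{(t+2)C(s,X)s}(W), W)$ of the same type as before, yielding the first inclusion; the second inclusion is immediate since every simplex of the middle space has all vertices in $N_{(t+2)C(s,X)s}(W)$ and pairwise within $s'$.

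Finally, (\ref{rnc2}) follows from two routine observations: $P_{s,s'}(U,W) \subset P_{s'}(U \cup W)$ holds as subsets of $P_{s'}(X)$ directly from Definition~\ref{rel-Rips}; and $P_{s,s'}(X,W)$ is a subcomplex of $P_{s,s'}(X, U \cup W)$, so the intrinsic simplicial metric on the former dominates the metric restricted from the latter, giving the first inclusion by a straightforward comparison of $t$-neighborhoods. The second inclusion then follows immediately by applying (\ref{rnc1}) with $W$ replaced by $U \cup W$, noting that the constant $C(s,X)$ depends only on $X$.
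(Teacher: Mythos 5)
Your argument is correct and is a discrete reformulation of the paper's continuous one: both locate the point where the path first enters a simplex with a vertex in $W$, route to such a vertex through a small simplex of $P_s(X)$, and invoke Lemma~\ref{metric-comp}; your derivations of (\ref{rnc1}) and (\ref{rnc2}) also coincide with the paper's. One small observation: your ``big'' subcase for $\sigma_{i^*}$ is in fact vacuous, since the barycentric vertices of $p_{i^*-1}$ must be vertices of both $\sigma_{i^*-1}$ (none of whose vertices lie in $W$, by minimality of $i^*$) and $\sigma_{i^*}$ (all of whose vertices would lie in $W$), so the crossing simplex is automatically small---which is exactly what the paper's proof shows directly.
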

\begin{proof}  Say $x\in X\cap N_t (P_{s'} (W))$.  Then there exists a piecewise geodesic path $\gamma$ in $P_{s, s'} (X, W)$, starting at  $x$ and ending at a point in $P_{s'} (W)$, such that $l(\gamma) < t$, where $l( \gamma)$ is the sum of the lengths of the geodesics making up $\gamma$.  

Since $X$ has bounded geometry, the path $\gamma \co [0,1]\to P_{s, s'} (X, W)$ meets only finitely many (closed) simplices $\sigma_1, \ldots, \sigma_m$.  Let $J\subset \{1, \ldots, m\}$ be the subset of those $j$ such that $\sigma_j$ has a vertex lying in $W$; note that $J\neq \emptyset$ since $\gamma$ ends in $P_{s'} (W)$.  Let $r\in [0,1]$ be the minimum element of the compact set $\bigcup_{j\in J} \gamma^{-1} (\sigma_j)$.  If $r=0$, then $d(x, W)\leqs s$ and we are done, so we assume $r>0$.
For $r'< r$, the barycentric vertices of $\gamma(r')$ all lie in $X\setminus W$, so $\gamma(r') \in P_s (X)$.
Continuity of the barycentric coordinate functions implies that the barycentric vertices $x_0, \ldots, x_n$ of $\gamma(r)$ all lie in 
$X\setminus W$ as well. 
By choice of $r$, we know that $\gamma(r)$ lies in a simplex $\sigma$ having a vertex $w\in W$.  This simplex must contain $\langle x_0, \ldots, x_n\rangle$, and since $x_i \notin W$ we conclude (from the definition of the relative Rips complex) $\sigma \subset P_s (X)$.
Concatenating $\gamma|_{[0,r]}$ with a geodesic in $\sigma$ connecting $\gamma(r)$ and $w$ yields a piecewise geodesic path, inside $P_s (X)$, of length at most $t+1$.  Hence the simplicial distance, in $P_s (X)$, from $x$ to $w$ is at most $t+1$, and Lemma~\ref{metric-comp} tells us that $d(x, w)\leqs (t+1) C(s, X) s$.   This proves (\ref{dist-est}).

The first containment in (\ref{rnc1}) follows from the distance estimate (\ref{dist-est}), since if $z \in N_t (P_{s'} (W))$ lies in a simplex $\langle x_0, \ldots, x_n\rangle \subset P_{s, s'} (X, W)$, then for each $i$, the simplicial distance (in $P_{s, s'} (X, W))$ from $x_i$ to $P_{s'} (W)$ is at most $t+1$, so (\ref{dist-est}) shows that $x_i \in N_{(t+2) C(s, X) s} (W)$.   The second containment in (\ref{rnc1}) is immediate from the definitions.

The first containment in (\ref{rnc2}) follows from the fact that the simplicial metric on $P_{s, s'} (X, U\cup W)$ is smaller than the simplicial metric on the subcomplex $P_{s, s'} (X, W)$, while the second follows from (\ref{rnc1}), with $U\cup W$ playing the role of $W$.
\end{proof}

The following result, which generalizes (\ref{dist-est}), will be used in the proof of Lemma~\ref{rel-refinement}.

\begin{lemma}$\label{rel-metric-comp}$ Let $X$ be a bounded geometry metric space, with subspaces $X_1, X_2 \subset X$, and let $\bbW_1$ and $\bbW_2$ be families of subspaces of $X$.  
For $i=1,2$, let 
$$W_i = \bigcup \bbW_i = \{x\in X : x\in W  \textrm{ for some } W\in \bbW_i\}$$ 
denote the union of the subspaces in $\bbW_i$.
Set $\bbW = \bbW_1 \cup \bbW_2$, and let $d_\Delta$ denote the simplicial metric on $P_{s, s'} (X, \bbW)$ for some fixed $s, s' > 0$.  Setting $V_i = X_i \cup  W_i$ and $P_i = P_{s, s'} (X_i, \bbW_i)$ ($i=1,2$),
we have
\begin{equation}\label{rel-dist-est} d (V_1, V_2) \leqs (d_\Delta (P_1, P_2)+2) s C(s, X).\end{equation}
\end{lemma}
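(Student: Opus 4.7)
The plan is to adapt the argument behind the distance estimate~(\ref{dist-est}) in Lemma~\ref{rel-nbhd-comp}. The key new structural observation is that any $\bbW$-simplex of $P_{s, s'}(X, \bbW)$ has all of its vertices in some single $W \in \bbW_1 \cup \bbW_2$, and hence either entirely in $W_1 \subset V_1$ or entirely in $W_2 \subset V_2$. Assuming $V_1 \cap V_2 = \emptyset$ (otherwise $d(V_1, V_2) = 0$ and the inequality is trivial), this says that no $\bbW$-simplex can straddle the decomposition $V_1 \sqcup V_2$ nor have a vertex outside $V_1 \cup V_2$. Any ``transition'' from $V_1$ to $V_2$ along a path in $P_{s, s'}(X, \bbW)$ must therefore occur inside $P_s(X)$-simplices, where Lemma~\ref{metric-comp} can be applied.

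To exploit this, I would fix $\epsilon > 0$ and choose a piecewise geodesic path $\gamma \co [0, 1] \to P_{s, s'}(X, \bbW)$ of simplicial length at most $L := d_\Delta(P_1, P_2) + \epsilon$, from some $p \in P_1$ to some $q \in P_2$, passing through simplices $\sigma_1, \dots, \sigma_m$ along a subdivision $0 = t_0 < t_1 < \dots < t_m = 1$; the barycentric vertices of $p$ (respectively $q$) are vertices of $\sigma_1$ lying in $V_1$ (respectively of $\sigma_m$ lying in $V_2$). Set
\[ j^* = \min\{ j \co \sigma_j \text{ has a vertex in } V_2 \} \,\text{ and }\, i^* = \max\{ i \leqs j^* \co \sigma_i \text{ has a vertex in } V_1 \}.\]
I claim each $\sigma_k$ with $i^* \leqs k \leqs j^*$ is a $P_s(X)$-simplex. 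For $i^* < k < j^*$, the extremality of $i^*$ and $j^*$ forces $\sigma_k$ to have no vertex in $V_1 \cup V_2$, ruling out the $\bbW$-simplex case. For $\sigma_{i^*}$ with $i^* < j^*$: it has a vertex in $V_1$, so it cannot be a $\bbW_2$-simplex; and if it were a $\bbW_1$-simplex, the non-empty common face $\sigma_{i^*} \cap \sigma_{i^* + 1}$ (through which $\gamma$ passes at time $t_{i^*}$) would consist of vertices in $V_1$, in particular furnishing a vertex of $\sigma_{i^* + 1}$ in $V_1$, contradicting maximality of $i^*$. A symmetric argument disposes of $\sigma_{j^*}$, and the case $i^* = j^*$ is immediate since $\sigma_{i^*}$ then has vertices in both $V_1$ and $V_2$.

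To finish, I would pick vertices $v_1 \in V_1 \cap \sigma_{i^*}$ and $v_2 \in V_2 \cap \sigma_{j^*}$, prepend to $\gamma|_{[t_{i^* - 1}, t_{j^*}]}$ a geodesic from $v_1$ to $\gamma(t_{i^* - 1})$ inside $\sigma_{i^*}$, and append a geodesic from $\gamma(t_{j^*})$ to $v_2$ inside $\sigma_{j^*}$. Since $\sigma_{i^*}, \dots, \sigma_{j^*}$ are $P_s(X)$-simplices, the resulting concatenation lies in the sub-complex $P_s(X)$, and its length in the simplicial metric of $P_s(X)$ coincides with its length in $P_{s, s'}(X, \bbW)$ (because geodesics within a simplex are intrinsic to the simplex), which is at most $L + 2$. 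Applying Lemma~\ref{metric-comp} inside $P_s(X)$ yields $d(v_1, v_2) \leqs (L + 2) s C(s, X)$, and hence $d(V_1, V_2) \leqs (L + 2) s C(s, X)$; letting $\epsilon \to 0$ establishes~(\ref{rel-dist-est}).

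The main obstacle is the middle paragraph: without the structural observation of the first paragraph, $\bbW$-simplices of diameter $s'$ could provide short paths through $P_{s, s'}(X, \bbW)$ that bear no useful relation to distances in $X$, so Lemma~\ref{metric-comp} could not be applied to $d_\Delta(P_1, P_2)$ in any direct way. Pinning down that the ``critical stretch'' $\sigma_{i^*}, \ldots, \sigma_{j^*}$ avoids all such shortcuts is the heart of the proof.
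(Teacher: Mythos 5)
Your argument is correct and follows the paper's strategy: isolate a sub-path of $\gamma$ lying entirely in $P_s(X)$, extend it by geodesics within the endpoint simplices to vertices $v_1 \in V_1$ and $v_2 \in V_2$, and apply Lemma~\ref{metric-comp} to bound $d(v_1, v_2)$ by $(l+2)\,sC(s,X)$. The only departure is that you locate this critical stretch combinatorially via the simplex indices $i^*, j^*$ and an analysis of shared faces, whereas the paper uses the continuous extrema $t_1 = \max\{t : \gamma(t) \text{ lies in a simplex with a vertex in } V_1\}$, $t_2 = \min\{t \geqs t_1 : \ldots V_2\}$ together with continuity of barycentric coordinates; this is a stylistic rather than a substantive variation.
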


\begin{proof}

Consider a piecewise geodesic path $\gamma\co [0,1] \to P_{s, s'} (X, \bbW)$ with $\gamma(0) \in P_1$ and $\gamma(1)\in P_2$.  It will suffice to show that 
$$d (V_1, V_2) \leqs (l(\gamma)+2) s C(s, X).$$
Arguing as in the proof of Lemma~\ref{rel-nbhd-comp}, 
let $t_1 \in [0,1]$ denote the maximum time at which $\gamma(t)$ lies in a simplex with a vertex in $V_1$, and let $t_2 \in [t_1, 1]$ denote the minimum time (in the interval $[t_1, 1]$) at which $\gamma(t)$  lies in a simplex with a vertex in $V_2$.

If $t_1 = t_2$, then there is a simplex in $P_{s, s'} (X, \bbW)$ containing vertices from both $V_1$ and $V_2$.  If this simplex lies outside $P_s (X)$, then its vertices must lie entirely inside some set in $\bbW = \bbW_1 \cup \bbW_2$, and we find that $V_1 \cap V_2\neq \emptyset$.  If this simplex lies in $P_s (X)$, then we have $d(V_1, V_2) \leqs s$.  In either case, (\ref{rel-dist-est}) is trivially satisfied.

We now assume that $t_1 < t_2$.  Then for $t\in (t_1, t_2)$, if $\sigma\subset P_{s, s'} (X, \bbW)$ is a simplex containing $\gamma(t)$, then $\sigma$ has no vertex in $V_1\cup V_2$, and in particular no vertex in $W_1 \cup W_2$.  Thus $\gamma(t_1, t_2) \subset P_s (X)$.    Furthermore, by considering barycentric coordinates as in the proof of Lemma~\ref{rel-nbhd-comp}, one may check that for $i=1, 2$, $\gamma (t_i)$ lies in  a simplex $\sigma_i \subset P_s (X)$ such that at least one vertex $v_i\in \sigma_i$ satisfies  $v_i \in V_i$.  
Concatenating $\gamma|_{[t_1, t_2]}$ with geodesic paths inside $\sigma_i$ from $\gamma(t_i)$ to $v_i$, we obtain a path inside $P_s (X)$, of simplicial length at most $l(\gamma) +2$, connecting $V_1$ and $V_2$.  The result now follows from Lemma~\ref{metric-comp}.
\end{proof}

%%%%%%%%%%%%%%%%%%%%%%%%%%%%%%%%%%%%%%%%%%%%%%%%
%%%%%%%%%%%%%%%%%%%%%%%%%%%%%%%%%%%%%%%%%%%%%%%%

\section{Controlled $K$--theory for spaces of finite decomposition complexity}$\label{FDC}$

We now apply the results of Sections~\ref{MV-sec} and~\ref{Rips-sec} to  the continuously controlled  $K$--theory of spaces with \e{finite decomposition complexity}.  

We begin by reviewing some definitions from Guentner--Tessera--Yu~\cite{GTY-FDC, GTY-rigid}, where the notion of decomposition complexity was first introduced.   A set of metric spaces will be called a \e{metric family}.  Let $\fB$ denote the class of uniformly bounded metric families; that is, a family $\mF$  lies in $\fB$ if there exists $R>0$ such that  $\diam(F) < R$ for all $F\in \mF$.  Given a class $\fD$ of metric families, we say that a metric family $\mF = \{F_\alpha\}_{\alpha \in A}$ \emph{decomposes} over $\fD$ if for every $r>0$ and every $\alpha \in A$ there exists a decomposition $F_\alpha = U^r_\alpha \cup V^r_\alpha$ and $r$--disjoint decompositions
$$U^r_\alpha = \coprod^{r\textrm{--disjoint}}_{i\in I (r, \alpha)} U^r_{\alpha i} \, \textrm{ and } \, V^r_\alpha =   \coprod^{r\textrm{--disjoint}}_{j\in J (r, \alpha)} V^r_{\alpha j}$$
such that the families 
$$\{U^r_{\alpha i} : \alpha\in A,\,  i\in I(r, \alpha)\}\,  \textrm{ and }\, \{V^r_{\alpha j}: \alpha\in A,  j\in J(r, \alpha)\}$$ 
lie in $\fD$.\footnote{In the original definition in~\cite[Section 2]{GTY-rigid}, one assumes instead that there exists a family $\mF'\in \fD$ such that 
$\{U^r_{\alpha i}: \alpha \in A, \,i\in I(r, \alpha)\}\cup\{V^r_{\alpha j}: \alpha \in A, \,j\in J(r, \alpha)\} \subset \mF'$. 
However, since the collections of families $\D_\gamma$ defined here, and the analogous families defined in~\cite{GTY-rigid}, are closed under forming finite unions of families and under subfamilies, the two definitions of $\D_\gamma$ agree.  (With our definition of $D_\gamma$, closure under finite unions is checked by transfinite induction; closure under subfamilies follows, for example, from Lemma~\ref{nbhds}.)}
Here $r$--disjoint simply means that if $i_1, i_2\in I (r, \alpha)$ for some $\alpha\in A$, and  $i_1 \neq i_2$, then $d(U^r_{\alpha i_1}, U^r_{\alpha i_2}) > r$ (and similarly for $V$ in place of $U$).
We set $\fD_0 = \fB$, and given a successor ordinal $\gamma+1$ we define
$\fD_{\gamma+1}$ to be the class of all metric spaces which decompose over $\fD_\gamma$.
If $\gamma$ is a limit ordinal, we define 
$$\fD_{\gamma} = \bigcup_{\beta < \gamma} \fD_{\beta}.$$
(This definition will make the limit ordinal cases of all our transfinite induction arguments trivial.)

\begin{definition}$\label{fdc}$
We say that a metric space $X$ has finite decomposition complexity if the single-element family $\{X\}$ lies in $\fD_\gamma$ for some ordinal $\gamma$.  (We often write $X\in \fD_\gamma$ rather than $\{X\}\in \fD_\gamma$.)
\end{definition}

\begin{remark} If $X\in \fD_\gamma$ for some ordinal $\gamma$, then in fact there exists a \emph{countable} ordinal $\gamma'$ such that $X\in \fD_{\gamma'}$.  This is proven in Guentner--Tessera--Yu~\cite[Theorem 2.2.2]{GTY-FDC}.
\end{remark}

Given a metric space $X$, we use the term \e{metric family in $X$} to mean a metric family $\mF$ such that each $F\in \mF$ is a subspace of $X$ (with the induced metric).

\begin{lemma}$\label{nbhds}$ Let $X$ be a metric space, and let $\{Z_\alpha\}_{\alpha\in A}$ and $\{Y_\beta\}_{\beta \in B}$ be metric families in  $X$.  Say $\{Z_\alpha\}_{\alpha\in A} \in \fD_\gamma$ for some ordinal $\gamma$.  Assume further that there exists $t>0$ such that for all $\beta\in B$, there exists $\alpha\in A$ with $Y_\beta \subset N_t (Z_\alpha)$.  Then $\{Y_\beta\}_{\beta\in B} \in \fD_\gamma$ as well.  (Note here that the parameter $t$ is \e{independent} of $\beta\in B$.)
\end{lemma}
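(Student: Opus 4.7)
The plan is to argue by transfinite induction on $\gamma$. Throughout, we  take the indexing set for the decomposition of a member of $\{Y_\beta\}_\beta$ to be inherited from the decomposition of a suitably chosen neighboring $Z_\alpha$, and verify by direct computation that $t$--neighborhoods convert $r'$--disjointness into $r$--disjointness provided $r' \geqs r + 2t$.

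For the base case $\gamma = 0$, if $\{Z_\alpha\}_\alpha \in \fB$ with uniform diameter bound $R$, then each $Y_\beta \subset N_t(Z_{\alpha(\beta)})$ satisfies $\diam(Y_\beta) \leqs R + 2t$, so $\{Y_\beta\}_\beta \in \fB = \fD_0$. The limit ordinal case is immediate from the definition: if $\{Z_\alpha\}_\alpha \in \fD_\gamma = \bigcup_{\beta' < \gamma} \fD_{\beta'}$, then it lies in some $\fD_{\beta_0}$ with $\beta_0 < \gamma$, and the inductive hypothesis moves $\{Y_\beta\}_\beta$ into $\fD_{\beta_0} \subset \fD_\gamma$.

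For the successor case $\gamma = \delta + 1$, fix $r > 0$ and set $r' = r + 2t$. Decompose each $Z_\alpha = U^{r'}_\alpha \cup V^{r'}_\alpha$ with $r'$--disjoint refinements $U^{r'}_\alpha = \coprod_{i \in I(r', \alpha)} U^{r'}_{\alpha i}$ and $V^{r'}_\alpha = \coprod_{j \in J(r', \alpha)} V^{r'}_{\alpha j}$, so that the families $\{U^{r'}_{\alpha i}\}_{\alpha, i}$ and $\{V^{r'}_{\alpha j}\}_{\alpha, j}$ lie in $\fD_\delta$. For each $\beta \in B$, pick $\alpha = \alpha(\beta)$ with $Y_\beta \subset N_t(Z_\alpha)$, and set
\[
U^r_\beta = Y_\beta \cap N_t(U^{r'}_\alpha), \qquad V^r_\beta = Y_\beta \cap N_t(V^{r'}_\alpha),
\]
with refinements $U^r_{\beta i} = Y_\beta \cap N_t(U^{r'}_{\alpha i})$ indexed by $I(r, \beta) := I(r', \alpha(\beta))$, and similarly for $V$. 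Since $Y_\beta \subset N_t(Z_\alpha) = N_t(U^{r'}_\alpha) \cup N_t(V^{r'}_\alpha)$, we have $Y_\beta = U^r_\beta \cup V^r_\beta$. The inequality $d(N_t(U^{r'}_{\alpha i_1}), N_t(U^{r'}_{\alpha i_2})) \geqs d(U^{r'}_{\alpha i_1}, U^{r'}_{\alpha i_2}) - 2t > r' - 2t = r$ gives $r$--disjointness of the $U^r_{\beta i}$ (and similarly for $V$). Finally, $U^r_{\beta i} \subset N_t(U^{r'}_{\alpha(\beta) i})$ and $\{U^{r'}_{\alpha i}\}_{\alpha, i} \in \fD_\delta$, so the inductive hypothesis yields $\{U^r_{\beta i}\}_{\beta, i} \in \fD_\delta$, and likewise for the $V$--family. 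This witnesses the decomposition of $\{Y_\beta\}_\beta$ over $\fD_\delta$, placing it in $\fD_{\delta+1}$.

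The main (minor) obstacle is purely bookkeeping: one needs the indexing sets for the refinement of $Y_\beta$ to be chosen consistently, and the parameter shift $r \mapsto r' = r + 2t$ needs to propagate through uniformly in $\beta$. Both are handled by the single choice $\alpha(\beta)$ together with the uniform bound on $t$ hypothesized in the statement; crucially, no version of this argument works if $t$ is allowed to depend on $\beta$, which is why the lemma emphasizes uniformity in $t$.
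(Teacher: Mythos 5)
Your proof is correct and follows essentially the same route as the paper's: transfinite induction on $\gamma$, with $Y_\beta$ decomposed by intersecting with $t$--neighborhoods of the pieces of a decomposition of the chosen $Z_{\alpha(\beta)}$, and the same $2t$--disjointness loss. The only superficial difference is parametrization: you start from the $(r+2t)$--disjoint decomposition of the $Z_\alpha$ and land exactly on $r$--disjointness for $Y_\beta$, while the paper starts from the $r$--disjoint decomposition, obtains $(r-2t)$--disjointness, and invokes $r-2t\to\infty$.
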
 
\begin{proof}  We use transfinite induction.  In the base case, we have a uniform bound $D$ on the diameter of the $Z_\alpha$, and $D+t$ gives a uniform bound on the diameter of the $Y_\beta$, so $\{Y_\beta\}_{\beta } \in \fD_0$.  Now say 
$\gamma = \delta + 1$ is a successor ordinal, and assume the result for $\fD_\delta$.   If $\{Z_\alpha\}_\alpha \in \fD_\gamma$, then for each $r>0$ and each $\alpha\in A$ there exist $U^r_\alpha$ and $V^r_\alpha$ such that $Z_\alpha = U^r_\alpha \cup V^r_\alpha$, and there exist decompositions
$$U^r_\alpha = \coprodmo^{r\textrm{--disjoint}}_{i\in I(r, \alpha)} U^r_{\alpha i}, \hspace{.4in} V^r_\alpha = \coprodmo^{r\textrm{--disjoint}}_{j\in J(r, \alpha)} V^r_{\alpha j}$$
such that the families
$$\{U^r_{\alpha i} : \alpha\in A,\, i\in I(r, \alpha)\} \,\textrm{ and }\, \{V^r_{\alpha j}: \alpha\in A,  j\in J(r, \alpha)\}$$
lie in $\fD_\delta$.   For each $\beta\in B$, we know there exists $\alpha = \alpha(\beta) \in A$ such that $Y_\beta\subset N_t (Z_\alpha)$.  We now have decompositions
$$Y_\beta = \left(N_t (U^r_{\alpha}) \cap Y_\beta\right) \cup \left(N_t (V^r_{\alpha}) \cap Y_\beta\right),$$
and $(r-2t)$--disjoint decompositions
$$N_t (U^r_{\alpha}) \cap Y_\beta = \coprodmo_{i\in I(r, \beta)} N_t (U^r_{\alpha i}) \cap Y_\beta$$
and
$$N_t (V^r_{\alpha}) \cap Y_\beta = \coprodmo_{j\in J(r, \beta)} N_t (V^r_{\alpha j}) \cap Y_\beta.$$
By induction we know that 
the families 
$$\{N_t (U^r_{\beta i}) \cap Y_\beta : \beta\in B,\, i\in I(r, \beta)\} \,\,\, \textrm{  and  }\,\,\,  \{N_t (V^r_{ \beta j}) \cap Y_\beta\ : \beta\in B,\, j\in J(r, \beta)\}$$
 lie in $\fD_\delta$.   Since $r-2t$ tends to infinity with $r$, we see that the family $\{Y_\beta\}_\beta$ decomposes over $\fD_\delta$, as desired.  The case of limit ordinals is trivial.
\end{proof}
 
We now come to the main result of this section.

\begin{theorem}$\label{vanishing-thm}$
If $X$ is a bounded geometry metric space with finite decomposition complexity, then for each $*\in \bbZ$ we have
$$\mocolim_{s\to \infty} K_* \left( \Ac (P_s X) \right) = 0,$$
where the colimit is taken with respect to the maps 
$$K_*\left(\Ac (P_s X)\right) \xmaps{\eta_{s,s'}} K_*\left(\Ac (P_{s'} X)\right)$$
induced by applying Lemma~\ref{functoriality} to the inclusions $P_s X \injects P_{s'} X$.
\end{theorem}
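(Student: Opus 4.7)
The plan is to prove the theorem by transfinite induction on the smallest ordinal $\gamma$ with $X\in\fD_\gamma$, carried out for a strictly stronger statement involving arbitrary decomposed sequences. The naive inductive statement is too weak: Mayer--Vietoris applied to a decomposition $X=U^r\cup V^r$ produces pieces $U^r,V^r$ of the same complexity as $X$, and the complexity drop is carried only by the \emph{families} of their $r$-disjoint internal components $\{U^r_i\}_{r,i}$, $\{V^r_j\}_{r,j}\in\fD_{\gamma-1}$. The decomposed-sequence formalism of Section~\ref{MV-sec} is designed precisely to package such families into a single object amenable to controlled $K$-theory, with the Karoubi quotient $\bAc$ encoding the ``$r\to\infty$'' regime. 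Concretely, I would establish the following inductive statement $(\star_\gamma)$: for every bounded geometry metric space $X$ and every decomposed sequence $\Z$ in $X$ whose constituent family $\{Z^r_\alpha\}_{r,\alpha}$ lies in $\fD_\gamma$, we have $K_*(\bAc(P_\bs\Z))=0$ for every $\bs\in\Seq$, together with the analogous vanishing for the relative Rips complex categories appearing in Theorem~\ref{rel-MV}.

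The theorem reduces to $(\star_\gamma)$ applied to the trivial decomposed sequence $\Z=(X,X,\ldots)$ with $A_r=\{*\}$ and $Z^r_*=X$. Given $[x]\in K_*(\Ac(P_{s_0}X))$, apply Lemma~\ref{functoriality} to the inclusions $P_{s_0}X\injects P_{s_r}X$ (for any $\bs\in\Seq$ with $s_r\geqs s_0$) to obtain a class in $K_*(\Ac(P_\bs\Z))$. The Karoubi-filtration long exact sequence (Theorem~\ref{LES}) for $\mS\subset\Ac(P_\bs\Z)$, combined with $K_*(\bAc(P_\bs\Z))=0$, then forces this class to lift to the subcategory $\mS$ of modules supported on finitely many levels, which means the image of $[x]$ in $K_*(\Ac(P_{s_r}X))$ vanishes for $r$ sufficiently large. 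Hence $[x]=0$ in $\mocolim_s K_*(\Ac(P_sX))$.

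The base case $\gamma=0$ (uniformly bounded families, so each $P_{s_r}(Z^r_\alpha)$ lies in a simplex of bounded dimension) is handled by an Eilenberg swindle in the $r$-direction, available because $\bAc$ trivializes modules supported on finitely many initial levels. The limit ordinal case is immediate from $\fD_\gamma=\bigcup_{\beta<\gamma}\fD_\beta$. For the successor step $\gamma+1$, decompose each $Z^r_\alpha=U^r_\alpha\cup V^r_\alpha$ with $r$-disjoint internal decompositions whose component families lie in $\fD_\gamma$, form $\U,\V\subset\Z$ with $\Z=\U\cup\V$, and apply Theorem~\ref{rel-MV} for parameters $\bs,\bs',\bT$ to be chosen. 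Using the growing $r$-disjointness, modules in $\bAc^+(P_{\bs,\bs'}(\U,\bbW_\bT))$ split in the Karoubi quotient as direct sums over the refined indices $i$, reducing to $(\star_\gamma)$ applied to the refined family $\{U^r_{\alpha i}\}_{r,\alpha,i}\in\fD_\gamma$; likewise for $\V$. The intersection category $\I'$ is controlled via Lemmas~\ref{rel-nbhd-comp} and~\ref{rel-metric-comp}, which show that modules in $\I'$ are essentially supported on $N_{T_r}(U^r_{\alpha i})\cap N_{T_r}(V^r_{\alpha j})\cap Z^r_\alpha$; with $\bT$ chosen to grow slowly compared to $r$, these intersections form a uniformly bounded family, so the base case applies. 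Finally, Theorem~\ref{MV-diagram} transfers vanishing from the relative Rips setting to the absolute one, completing the induction for $(\star_{\gamma+1})$.

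The main obstacle is the simultaneous choice of $\bs,\bs',\bT\in\Seq$ subject to three competing constraints: $\bT$ must be large enough for the comparison in Theorem~\ref{MV-diagram} and for the simplicial-metric estimates of Lemma~\ref{rel-nbhd-comp} to yield useful neighborhood control; $\bT$ must be small enough compared to the $r$-disjointness parameter so that modules in $\bAc^+(P_{\bs,\bs'}(\U,\bbW_\bT))$ genuinely split along the refined indices in the Karoubi quotient; and the resulting refined decomposed sequences must inherit a uniform-in-$r$ family structure in $\fD_\gamma$ (reducible via Lemma~\ref{nbhds} to a coarse version of the $r$-disjointness). Coordinating these parameter choices through successive applications of Mayer--Vietoris, and verifying the compatibility of the various Karoubi filtrations and dispersion conditions of Section~\ref{K-filt} at each stage, is the central technical challenge; the directed structure of $\Seq$ and the freedom to take colimits over $\bs$ provide the room needed to make the estimates cohere.
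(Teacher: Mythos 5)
Your plan mirrors the paper's architecture closely (decomposed sequences, transfinite induction, the absolute/relative Mayer--Vietoris comparison, Karoubi quotients to kill finitely many levels), but there are two substantive gaps in the inductive statement and in how the intersection term is handled.

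First, the inductive hypothesis $(\star_\gamma)$, asserting $K_*(\bAc(P_\bs\Z))=0$ for \emph{every} $\bs\in\Seq$, is too strong and is false already in the base case. For $\Z\in\fD_0(X)$ with uniform diameter bound $\bN$, one gets $K_*\bAc(P_\bs\Z)=0$ only for $\bs\geqs\bN$ (so that each $P_{s_r}(Z^r_\alpha)$ collapses to a simplex); for $\bs < \bN$ the Rips complexes $P_{s_r}(Z^r_\alpha)$ can be topologically nontrivial (e.g.\ circles), and the germ-at-infinity category $\bAc$ of a disjoint union of such complexes need not have vanishing $K$--theory. The correct statement is the \emph{colimit} vanishing of Definition~\ref{vanishing-def}: for each $\bs$ and each $x\in K_*(\bAc(P_\bs\Z))$ there exists $\bs'\geqs\bs$ with $\eta_{\bs,\bs'}(x)=0$. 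This is not merely a technical weakening: the entire mechanism of the diagram chase in the successor step depends on being allowed to increase $\bs$ (and the relative parameter $\bs'$) to kill a given class, and without this freedom the Mayer--Vietoris argument does not close up. Your reduction to the colimit over integer parameters $s$ is also more delicate than stated; one must run the projections $q_m$ through Diagram~(\ref{prod-diag}) and use that $q_m\circ j_N$ is the zero functor for $m>N$, not merely that the class ``lifts to $\mS$.''

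Second, your treatment of the intersection term is incorrect. You claim that with $\bT$ growing slowly the sets $N_{T_r}(U^r_{\alpha i})\cap N_{T_r}(V^r_{\alpha j})\cap Z^r_\alpha$ form a uniformly bounded family, so that the base case applies. This is false: $U^r_\alpha\cup V^r_\alpha=Z^r_\alpha$, so $U^r_{\alpha i}$ and $V^r_{\alpha j}$ may overlap substantially, and their common neighborhood intersected with $Z^r_\alpha$ need not be bounded at all (it contains $U^r_{\alpha i}\cap V^r_{\alpha j}$). What is actually true, and what the paper uses in Lemma~\ref{rho-lemma}, is that each such intersection is contained in a neighborhood of $U^r_{\alpha i}$, so Lemma~\ref{nbhds} puts the refined family $\{W^r_{t\alpha ij}\}$ in $\fD_\beta$ (not $\fD_0$). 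Thus the intersection term must be handled by the \emph{inductive hypothesis at level $\beta$} (combined with the refinement Lemma~\ref{refinement} to pass from the $ij$-indexed decomposition back to the $\alpha$-indexed one), not by the base case. Fixing these two points essentially forces you onto the paper's formulation in terms of vanishing sequences and the two lemmas (\ref{rho-lemma} and \ref{mu-lemma}) that drive the diagram chase.
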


We will deduce Theorem~\ref{vanishing-thm} from a closely related vanishing result for the constant and trivially
 decomposed sequence 
\begin{equation} \label{triv-seq} \X = (X, X, X, \ldots),\end{equation}
 where at each level  $X$ is decomposed into the one-element family $\{X\}$.  
 
\begin{definition} Let $\Z$ be a decomposed sequence in $X$.
For each $\bs \leqs \bs'\in \Seq$, we define 
\begin{equation}\label{vanishing}\eta_{\bs, \bs'} = \eta_{\bs, \bs'} (\Z)  \co K_* \left(\bAc (P_\bs (\Z))\right) \maps K_* \left(\bAc (P_{\bs'} (\Z))\right)\end{equation}
to be the map  induced by the inclusion  $P_\bs (\Z) \subset P_{\bs'} (\Z)$.
 \end{definition}
 
\begin{proposition}$\label{vanishing-prop2}$
If $X$ is a bounded geometry metric space with finite decomposition complexity, then for each  
$\bs\in \Seq$ and each element $x\in K_*(\bAc (P_\bs (\X)))$
there exists $\bs'\in \Seq$, with $\bs'\geqs \bs$, such that  $\eta_{\bs, \bs'} (x) = 0$.

\end{proposition}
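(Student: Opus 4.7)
The plan is to prove, by transfinite induction on the ordinal $\gamma$, the following strengthened statement: for every decomposed sequence $\Z$ in $X$ whose metric family $\{Z^r_\alpha\}_{r,\alpha}$ lies in $\fD_\gamma$, every $\bs \in \Seq$, and every $x \in K_*(\bAc(P_\bs(\Z)))$, there exists $\bs' \geqs \bs$ such that $\eta_{\bs, \bs'}(\Z)(x) = 0$. The proposition is then the special case $\Z = \X$, for which the metric family is simply $\{X\}$; the limit-ordinal case of the induction is immediate from the definition of $\fD_\gamma$ at limit ordinals.

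For the base case $\gamma = 0$, the family $\{Z^r_\alpha\}$ is uniformly bounded by some $D$. Choosing $\bs' \geqs \bs$ with $s'_r \geqs D$ for all $r$, each Rips complex $P_{s'_r}(Z^r_\alpha)$ becomes a single simplex, so $P_{\bs'}(\Z)$ is a countable disjoint union of uniformly bounded contractible simplicial complexes. An Eilenberg swindle that shifts modules along the sequence of levels then shows that $K_*(\bAc(P_{\bs'}(\Z))) = 0$ in every degree, and in particular $\eta_{\bs, \bs'}(x) = 0$.

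For the successor case $\gamma = \delta + 1$, the definition of $\fD_{\delta+1}$ provides, after choosing a sufficiently large disjointness parameter (possibly varying with the level), decomposed sequences $\U, \V \subset \Z$ with $\Z = \U \cup \V$, together with disjoint refinements $U^r_\alpha = \coprod_i U^r_{\alpha i}$ and $V^r_\beta = \coprod_j V^r_{\beta j}$, whose combined refined metric families lie in $\fD_\delta$. For additional parameters $\bs' \geqs \bs$ and $\bT \in \Seq$ to be chosen, we invoke Theorem~\ref{MV-diagram} to obtain the commutative diagram comparing the ordinary Mayer--Vietoris sequence for $\Z = \U \cup \V$ at parameter $\bs$ with the relative Mayer--Vietoris sequence at parameters $(\bs, \bs', \bT)$.

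The central task is to identify the categories appearing in the relative Mayer--Vietoris sequence with controlled $K$--theory of decomposed sequences whose metric families lie in $\fD_\delta$. Using Lemmas~\ref{rel-nbhd-comp} and~\ref{rel-metric-comp}, once the disjointness is large enough relative to the chosen $s_r, s'_r$, and $T_r$, the refinements force the relevant neighborhoods inside the relative Rips complex to split as disjoint unions over the refined index $(r, \alpha, i)$; thus $\bAc^+(P_{\bs, \bs'}(\U, \bbW_\bT))$ is equivalent to the controlled $K$--theory of a decomposed sequence built from $\{U^r_{\alpha i}\}$, and similarly for $\V$. For the intersection category $\I'_{\bs, \bs', \bT}(\U, \V)$, Lemma~\ref{rel-metric-comp} forces its modules to be supported near the sets $W \in \bbW_\bT$, and by Lemma~\ref{nbhds} this family too lies in $\fD_\delta$ since each $W$ is contained in a neighborhood of some $U^r_{\alpha i}$. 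The induction hypothesis then lets us kill any class in the $K$--theory of these three categories by further enlarging $\bs'$, and a diagram chase through the comparison of Mayer--Vietoris sequences yields the desired $\bs'$. The main obstacle is the careful simultaneous management of $\bs, \bs', \bT$, and the disjointness parameter; in particular, after each application of the induction hypothesis to $\U$, $\V$, and $\I'$, the parameter $\bs'$ may need to grow further, and the disjointness parameter must be coordinated with all chosen $s'_r, T_r$ so that the key splitting arguments hold at every level.
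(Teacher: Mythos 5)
Your high-level strategy matches the paper exactly: induct transfinitely on $\gamma$ over the auxiliary notion that decomposed sequences in $\fD_\gamma(X)$ are vanishing, handle the base case by a swindle argument, and handle the successor case by comparing the absolute and relative Mayer--Vietoris sequences via Theorem~\ref{MV-diagram} and the refinement isomorphisms. However, there are two concrete problems.

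First, your base-case swindle does not work as stated. You propose ``an Eilenberg swindle that shifts modules along the sequence of levels'': but distinct components $P_{s'_r}(Z^r_\alpha)$ of $P_{\bs'}(\Z)$ are at infinite distance from one another, so a functor that moves a module from level $r$ to level $r+1$ is realized by a morphism of \emph{infinite} propagation, which is not a morphism in $\Ac$ (and passing to $\bAc$ does not help, since morphisms in the Karoubi quotient are still represented by bounded morphisms in $\Ac$). The natural isomorphism $\Sigma \oplus \Id \isom \Sigma$ that a swindle requires must itself be continuously controlled and bounded, and the shift-by-one-level comparison map is not. The paper instead first collapses each simplex $P_{s'_r}(Z^r_\alpha)$ to a point via a uniformly Lipschitz homotopy equivalence (Bartels' criterion), then runs a swindle \emph{within each factor} by shifting towards $t=1$ in the $[0,1)$ direction, where the shifts stay bounded; the quotient $\bAc$ is then handled by the same swindle applied to $\mS$ together with the Karoubi long exact sequence.

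Second, the strengthened statement you propose to prove by induction is not quite the right one. You ask that ``the metric family $\{Z^r_\alpha\}_{r,\alpha}$'' (over \emph{all} $r$ and $\alpha$ together) lie in $\fD_\gamma$; the paper's Definition~\ref{vanishing-def} only asks that $\{Z^r_\alpha\}_{\alpha\in A_r}\in\fD_\gamma$ for each fixed $r$ separately, and this distinction matters. For the successor step you must apply the induction hypothesis to the refined sequences $\U'$, $\V'$, and the intersection-type sequences $\W_t$ and $N^\Z_{t\bC\bs}\U'$: their families lie in $\fD_\delta$ \emph{level by level} (via Lemma~\ref{nbhds}/\ref{nbhds2}), but there is no reason the union over all levels $r$ should lie in a single family in $\fD_\delta$, since the definition of $\fD_\delta$ is not closed under countable unions and the decomposition parameter $C_r s_r r$ used at level $r$ tends to infinity. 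With the level-wise formulation the induction goes through; with the global-family formulation the inductive hypothesis cannot be applied to the refined sequences. Beyond these two points, you are correct in outline, but the heart of the argument — the factorizations of $\rho_{\bs,\bs'}$ and $\mu_{\bs,\bs',\bs''}$ through vanishing categories (the paper's Lemmas~\ref{rho-lemma} and~\ref{mu-lemma}, resting on Lemmas~\ref{metric-ind}, \ref{refinement}, and \ref{rel-refinement}) — is substantial work that your proposal only gestures at; in particular, the choice of disjointness $C_r s_r r$ at level $r$ (depending only on $\bs$, not on $\bs'$ or $\bT$) and the subsequent management of metrics on the relative Rips complexes are exactly the machinery that makes those factorizations possible.
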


We will see  in the proof  that $\bs'$ may depend on $x$.

\begin{remark} $\label{colim-rmk}$ Note that since $K$--theory commutes with directed colimits of additive categories (see Quillen~\cite[Section 2]{Quillen}), Proposition~\ref{vanishing-prop2} is equivalent to the statement that 
\begin{equation*}\mocolim_{\bs \in \Seq} K_* \Big(\bAc (P_\bs \X)\Big) = 0.\end{equation*}
\end{remark}

\vspace{.2in}
\noindent {\bf Proof of Theorem~\ref{vanishing-thm} assuming Proposition~\ref{vanishing-prop2}.}
We apply Proposition~\ref{vanishing-prop2} with $\bs = (s, s, \ldots)$.  Given $\bs'\geqs \bs$ and $m\geqs 1$, consider the diagram
\begin{equation}\label{prod-diag}
\xymatrix{ 	&	&	\mocolim_n \Ac \left(\coprod\limits_{r=1}^n  P_{s'_r} (X)\right) \ar[d]^{\mocolim j_n}  \\
	\Ac (P_s X) \ar[r]^\mu & \Ac (P_\bs \X) \ar[r]^{i} \ar[d]^{\pi_\bs} & \Ac (P_{\bs'} \X) \ar[r]^{q_{m}} \ar[d]^{\pi_{\bs'}}  
				&    \Ac P_{{s'_m}} (X)	\\
				&		\bAc (P_\bs \X) \ar[r]^{\overline{i}} & \bAc (P_{\bs'} \X).
}
\end{equation}
Here the maps $i$, $\bar{i}$, and $j_n$ are induced by inclusions of simplicial complexes, $\pi_\bs$ and 
$\pi_{\bs'}$ are the Karoubi projections,
 the functor $\mu$ sends a geometric module $M$ on $P_s (X)\cross [0,1)$ to the constant sequence $(M, M, \ldots)$ (and similarly for morphisms), and $q_{m}$ is the functor which restricts a geometric module to the subspace  $P_{s'_m} (X)\cross [0,1)\subset \coprod_{r=1}^\infty P_{s'_r} (X)\cross [0,1)$.  

Let $x\in K_* \Ac (P_s X)$ be given.
 In $K$--theory, $\bar{i}_*$ is the map (\ref{vanishing}), so Proposition~\ref{vanishing-prop2} implies that we can choose $\bs'\geqs \bs$ such that $\bar{i}_*\left(\pi_\bs\circ \mu (x)\right) = 0$.  For $m>n$ the composite $q_{m} \circ j_n$ is the constant functor mapping all objects to $0$, so $(q_{m})_* (j_n)_* = 0$ in $K$--theory.  However, for any $m$, the composite $q_{m} \circ i \circ\mu$ is simply the functor induced by the inclusion $P_s (X) \injects P_{{s'_m}} (X)$, so $(q_{m} \circ i \circ\mu)_* = \eta_{s, s_m'}$.  Since the third column of Diagram (\ref{prod-diag})
 is a Karoubi sequence, chasing the diagram and applying Remark~\ref{colim-rmk} shows that for some $N\geqs 0$ and some $y\in K_* \Ac \left(\coprod_{r=1}^N  P_{{s'_r}} (X)\right)$, we have $i_* \mu_* (x) = (j_N)_* (y)$, so 
$$\eta_{s, s'} (x) = (q_{N+1})_* \circ i_* \circ\mu_* (x)
= (q_{N+1})_* (j_N)_* (y) = 0.$$
The result now follows, since the colimit in Theorem~\ref{vanishing-thm} is defined in terms of the maps $\eta_{s, s'}$. 
$\hfill \Box$

\vspace{.2in}
To prove the desired vanishing result for the map (\ref{vanishing}), we will proceed through an induction for decomposed sequences inside $X$. 

\begin{definition}$\label{vanishing-def}$ Let  $\Z = (Z^1, Z^1, \ldots)$ be a decomposed sequence in $X$ with decompositions 
$Z^r = \bigcup_{\alpha\in A_r} Z^r_\alpha$. 
We say that $\Z$ is a \emph{vanishing sequence} (or more briefly, $\Z$ is vanishing) if for each $\bs\in \Seq$ and each $x\in K_* (\bAc (P_\bs \Z))$, there exists $\bs'\geqs \bs$  such that $x$ maps to zero under
\begin{equation*}  K_* (\bAc (P_{ \bs} \Z)) \xmaps{\eta_{\bs, \bs'}} K_* (\bAc (P_{ \bs'}  \Z)).\end{equation*}
For each ordinal $\gamma$, let $\fD_\gamma (X)$ denote the set of $\mF \in \fD_\gamma$ such that $\mF$ is a metric family in $X$.
By abuse of notation we write $\Z\in \fD_\gamma (X)$ if 
$\{Z^r_\alpha\}_{\alpha\in A_r} \in \fD_\gamma (X)$ for each $r\geqs 1$.
We say that $\fD_\gamma (X)$ is vanishing if all decomposed sequences $\Z\in \fD_\gamma (X)$ are vanishing.  

Finally, given a sequence $\bs\in \Seq$, we say that $\Z$ is \e{vanishing at} $\bs$ if for each  $x\in K_* (\bAc (P_\bs \Z))$, there exists $\bs'\geqs \bs$  such that $\eta_{\bs, \bs'} (x) = 0.$
\end{definition}

\begin{definition} Given a sequence $\bT\in \Seq$ and a decomposed sequence $\Z = (Z^1, Z^2, \ldots)$ in $X$ with decompositions
$Z^r = \bigcup_{\alpha\in A_r} Z^r_\alpha$, we define
$N_\bT (\Z)$ to be the decomposed sequence 
$(N_{T_1} (Z^1), N_{T_2} (Z^2), \ldots)$, with decompositions
$N_{T_r} (Z^r) = \bigcup_{\alpha\in A_r} N_{T_r} (Z^r_\alpha)$.
\end{definition}

The next lemma is an immediate consequence of Lemma~\ref{nbhds}.

\begin{lemma}$\label{nbhds2}$
Let $\Z$ be a decomposed sequence in $X$, and say $\Z\in \fD_\gamma (X)$ for some ordinal $\gamma$.
If $\Y$ is another decomposed sequence in $X$, and $\Y\subset N_\bT (\Z)$ for some sequence $\bT$ of positive real numbers, then $\Y\in \fD_\gamma (X)$ as well.
\end{lemma}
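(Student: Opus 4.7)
The plan is essentially to apply Lemma~\ref{nbhds} level-by-level in the sequence. Fix $r \geqs 1$. By the definition of $\Z \in \fD_\gamma(X)$, the metric family $\{Z^r_\alpha\}_{\alpha \in A_r}$ belongs to $\fD_\gamma$. By the definition of $\Y \subset N_\bT(\Z)$, the sequences $\Y$ and $\Z$ share indexing sets, and for every $\alpha \in A_r$ we have $Y^r_\alpha \subset N_{T_r}(Z^r_\alpha)$.

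The crucial observation is that the neighborhood parameter $T_r$ depends only on $r$, not on the index $\alpha \in A_r$. Thus, with $t := T_r$, the pair of metric families $\{Z^r_\alpha\}_{\alpha \in A_r}$ and $\{Y^r_\alpha\}_{\alpha \in A_r}$ satisfies the hypothesis of Lemma~\ref{nbhds}: for every $\alpha \in A_r$ there exists (the same) $\alpha \in A_r$ such that $Y^r_\alpha \subset N_t(Z^r_\alpha)$. Lemma~\ref{nbhds} then yields $\{Y^r_\alpha\}_{\alpha \in A_r} \in \fD_\gamma$.

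Since this holds for every $r \geqs 1$, the decomposed sequence $\Y$ lies in $\fD_\gamma(X)$ by the (abuse of) definition preceding the lemma. No obstacles are expected: the result is just a reformulation of Lemma~\ref{nbhds} in the language of decomposed sequences, and the only thing to check is that $T_r$ is allowed to serve as the uniform bound $t$ in Lemma~\ref{nbhds} \emph{one $r$ at a time} — which it does, since the condition ``$\Y \in \fD_\gamma(X)$'' only requires membership level-by-level, without any uniformity in $r$.
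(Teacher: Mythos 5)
Your proof is correct and matches the paper's intent exactly: the paper records Lemma~\ref{nbhds2} as an "immediate consequence of Lemma~\ref{nbhds}," and your level-by-level application (with $t = T_r$ fixed for each $r$, noting that no uniformity across $r$ is required) is precisely the intended argument.
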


Note that a metric space $X$ has finite decomposition complexity if and only if the constant and trivially  decomposed sequence $\X = (X, X, \ldots)$ (see (\ref{triv-seq})) lies in $\fD_\gamma (X)$ for some ordinal $\gamma$, so Proposition~\ref{vanishing-prop2} is an immediate consequence of the next result.

\begin{proposition}$\label{vanishing-prop4}$
If $X$ is a bounded geometry metric space,
then $\fD_\gamma (X)$ is vanishing for every ordinal $\gamma$.  
\end{proposition}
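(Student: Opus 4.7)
The plan is to proceed by transfinite induction on $\gamma$.  The limit case is immediate from the definition $\fD_\gamma(X) = \bigcup_{\beta < \gamma} \fD_\beta(X)$, since any class in the colimit-definition of ``vanishing sequence'' over $\bs \in \Seq$ is already representable at some sub-ordinal stage.

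For the base case $\gamma = 0$, let $\Z \in \fD_0(X)$, so at each level $r$ the family $\{Z^r_\alpha\}_{\alpha \in A_r}$ is uniformly bounded by some $D_r > 0$.  After replacing $\bs$ by $\bs' \geq \bs$ with $s'_r \geq D_r$, bounded geometry forces each $Z^r_\alpha$ to be finite and each $P_{s'_r}(Z^r_\alpha)$ to collapse to a single simplex.  Then $P_{\bs'}(\Z)$ is a disjoint union of uniformly bounded simplices at pairwise infinite distance.  Since continuously controlled morphisms have finite propagation, morphisms in $\Ac(P_{\bs'}(\Z))$ cannot cross between distinct components, and on each bounded component the standard Eilenberg swindle in the $[0,1)$-direction, obtained by shifting support through a self-embedding $\phi \co [0,1) \to [1/2, 1)$, trivializes $K$-theory.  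The same conclusion passes to the Karoubi quotient $\bAc$, yielding $\eta_{\bs, \bs'}(x) = 0$.

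For the successor case $\gamma = \delta + 1$, assume $\fD_\delta(X)$ is vanishing and let $\Z \in \fD_{\delta+1}(X)$.  For each $r \geq 1$ apply the decomposition property of $\fD_{\delta+1}$ at parameter $r$ to obtain $Z^r_\alpha = U^r_\alpha \cup V^r_\alpha$ with $r$-disjoint refinements $U^r_\alpha = \coprod_i U^r_{\alpha i}$ and $V^r_\alpha = \coprod_j V^r_{\alpha j}$, where the refined metric families $\{U^r_{\alpha i}\}$ and $\{V^r_{\alpha j}\}$ lie in $\fD_\delta$.  These define decomposed sub-sequences $\U, \V \subset \Z$ with $\Z = \U \cup \V$.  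Given $\bs \in \Seq$ and $x \in K_*(\bAc(P_\bs \Z))$, the Mayer--Vietoris sequence of Theorem~\ref{Rips-MV} reduces the problem to showing that for some $\bs' \geq \bs$, the images of the relevant classes in $K_*(\bAcp(P_{\bs'} \U))$, $K_*(\bAcp(P_{\bs'} \V))$, and $K_*(\I_{\bs'}(\U, \V))$ all vanish.

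The key point is that each of these three categories can be identified, up to enlarging $\bs'$, with the controlled $K$-theory of a decomposed sequence in $\fD_\delta(X)$, so that the inductive hypothesis applies.  Consider the finer decomposed sequence $\U_{\textrm{fine}}$ with the same underlying sequence as $\U$ but with its level-$r$ decomposition indexed over all pairs $(\alpha, i)$; this lies in $\fD_\delta(X)$ by construction.  Because the refinement is $r$-disjoint, any continuously controlled morphism with propagation bound $R$ respects the refined decomposition at all levels $r > R$, and since the Karoubi quotient $\bAcp$ already discards any finite initial segment, this produces a natural $K$-theory identification $\bAcp(P_{\bs'} \U) \cong \bAc(P_{\bs'} \U_{\textrm{fine}})$.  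The analogous argument handles $\V$.  For the intersection $\I_{\bs'}(\U, \V)$, one invokes the relative Rips complex together with the families $\bbW_\bT$ of Definition~\ref{W_t-def}: modules supported near both $\U$ and $\V$ are controlled by the intersection family $\{N_T(U^r_{\alpha i}) \cap N_T(V^r_{\alpha j}) \cap Z^r_\alpha\}$, which lies in $\fD_\delta(X)$ by Lemma~\ref{nbhds2}.

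The main obstacle will be the intersection step.  The delicate balancing act is to compare the absolute Mayer--Vietoris sequence of Theorem~\ref{Rips-MV} with the relative Mayer--Vietoris sequence of Theorem~\ref{rel-MV} through the commutative diagram of Theorem~\ref{MV-diagram}, choosing the neighborhood parameter $\bT \in \Seq$ to grow rapidly enough to absorb the propagation of any fixed representative of $x$ (so that the class is captured by the relative Rips complex), yet slowly enough relative to the disjointness $r$ that the intersection families $\bbW_\bT$ remain in $\fD_\delta(X)$ via the distance estimates of Lemmas~\ref{rel-nbhd-comp} and~\ref{rel-metric-comp}.  Verifying that this balancing is consistent across all three terms of the Mayer--Vietoris sequence, and that the resulting vanishings are compatible with the boundary maps, is the technical heart of the induction step.
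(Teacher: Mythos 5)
Your overall strategy matches the paper's: transfinite induction, with the base case handled by collapsing to disjoint simplices and the successor case handled by a comparison between the absolute and relative Mayer--Vietoris sequences of Theorems~\ref{Rips-MV} and~\ref{rel-MV}, using the refinement families $\bbW_\bT$ to feed the inductive hypothesis. Your base case is essentially right (the paper first collapses each simplex to a point via~\cite[Corollary 3.19]{Bartels} and then applies the swindle on $\coprod\{*\}$, but the effect is the same). However, your final paragraph explicitly defers the actual content of the successor step, and the deferred content is precisely where the proof lives — the paper devotes Lemmas~\ref{rho-lemma}, \ref{mu-lemma}, \ref{refinement}, \ref{rel-refinement}, and~\ref{metric-ind} to it, and their interaction is not routine.

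Concretely, three things you gloss over would not go through as stated. First, the diagram chase is not ``show the images in all three Mayer--Vietoris terms vanish.'' It is sequential: one first kills $\rho(\partial x)$ in the \emph{relative} intersection term (Lemma~\ref{rho-lemma}), then uses exactness of the second column of Diagram~(\ref{MV-diag-eq}) to lift $\gamma(x)$ to the direct-sum term, and only then kills that lift via Lemma~\ref{mu-lemma}; commutativity of the right-hand square of~(\ref{MV-diag-eq}) finishes the argument. The ``relevant classes'' you want to kill only become available after the lift, so your phrasing conflates two separate steps. Second, the disjointness you request is only ``$r$-disjoint,'' but the distance estimates of Lemmas~\ref{metric-comp} and~\ref{rel-metric-comp} introduce the simplicial-metric constants $C_r s_r$, so the paper has to request $(C_r s_r r)$-disjointness at the outset (see~(\ref{decomps})); otherwise the refinement arguments of Lemmas~\ref{refinement} and~\ref{rel-refinement} fail because $g_r/C_r q_r$ does not tend to infinity. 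Third, your identification $\bAcp(P_{\bs'}\U) \cong \bAc(P_{\bs'}\U_{\mathrm{fine}})$ drops the $+$-support condition on the right, and this is not cosmetic: Lemma~\ref{rel-refinement} is needed precisely to produce an isomorphism between the $\bAc^{\Z'+}$ and $\bAc^{\Z+}$ variants, and the proof of Lemma~\ref{rho-lemma} additionally requires passing through the functor $\Phi_\bq$ of Lemma~\ref{metric-ind}, which changes the reference metric on the Rips complex from the ambient one to the intrinsic one — without this change of metric one cannot invoke the inductive vanishing hypothesis for $\W_t$. Your proposal does not address this metric change at all. In short, you have correctly located the obstacle, but the ``delicate balancing act'' you name in your last paragraph is not a loose end to be verified; it is the theorem.
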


The proof of Proposition~\ref{vanishing-prop4} will be by transfinite induction on the ordinal $\gamma$, and will fill the remainder of the section.  

For the rest of the section, we fix a bounded geometry metric space $X$. 
We first consider the base case of our induction, $\Z \in \D_0(X)$.
This means $\Z$ is a decomposed sequence in $X$ for which each family $\{Z^r_\alpha\}_{\alpha\in A_r}$ is uniformly bounded.  Hence for each $r\geqs 1$, there exists $N(r)$ such that for all $\alpha\in A_r$, the diameter of $Z^r_\alpha$ is at most $N(r)$.  This means that if $\bs' \geqs \bN := (N(1), N(2), \ldots)$, the simplicial complex
$$P_{\bs'} (\Z) = \coprod_{r=1}^{\infty} \coprod_{\alpha\in A_r} P_{s'_r} (Z^r_\alpha)$$
is a disjoint union of simplices, one for each pair $r\geqs 1$, $\alpha\in A_r$.  The following lemma will now establish the base case of our induction.

\begin{lemma} Say $\Z = (Z^1, Z^2, \ldots)$ is a decomposed sequence in $X$ with decompositions $Z^r = \bigcup_{\alpha\in A_r} Z^r_\alpha$.  Assume that there exists a sequence $\bN = (N_1, N_2, \ldots)\in\Seq$, such that for all $r\geqs 1$ and for all $\alpha\in A_r$, the diameter of $Z^r_\alpha$ is at most $N_r$.  

Then if $\bs\geqs \bN$, we have
$K_* \bAc (P_\bs (\Z)) = 0$ for all $*\in \bbZ$.  
\end{lemma}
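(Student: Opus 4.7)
The hypothesis $\bs \geqs \bN$ forces every pair of points in $Z^r_\alpha$ to lie within distance $N_r \leqs s_r$, so $P_{s_r}(Z^r_\alpha)$ is the single closed simplex with vertex set $Z^r_\alpha$. Since $X$ has bounded geometry and $Z^r_\alpha$ has diameter at most $N_r$, the set $Z^r_\alpha$ is finite, and this simplex is compact of diameter $1$ in the simplicial metric. Hence $P_\bs(\Z)$ is a proper metric space given as a countable, locally finite disjoint union of compact simplices.

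The idea is an Eilenberg swindle in the $[0,1)$-direction. Define $\sigma\colon [0,1) \to [1/2,1)$ by $\sigma(t)=(t+1)/2$, and let $\sigma_*$ denote the endofunctor of $\Ac(P_\bs(\Z))$ given by pushforward along $\sigma$, acting only on the $[0,1)$-coordinate. Since $\sigma$ is a strict contraction and extends continuously to $\bar\sigma\colon [0,1]\to[0,1]$ with $\bar\sigma(1)=1$, this pushforward preserves local finiteness, finite propagation, and continuous control at $1$. As $\sigma_*$ leaves the spatial coordinate untouched, it preserves $\mS$ and descends to an endofunctor of $\bAc(P_\bs(\Z))$. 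The identity maps $M_{(x,t)} = (\sigma_* M)_{(x,\sigma(t))}$ assemble into a natural isomorphism $\Id \xrightarrow{\simeq} \sigma_*$ whose components have propagation at most $1/2$ (the maximum of $|\sigma(t)-t|$), so $(\sigma_*)_* = \Id_*$ on $K$-theory.

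Form the swindle $\Phi(M) = \bigoplus_{n \geqs 0} \sigma_*^n (M)$. Because $\sigma_*^n(M)$ is supported in $P_\bs(\Z) \times [1 - 2^{-n},1)$ and $P_\bs(\Z) \times [0,1)$ is proper, any compact subset meets only finitely many summands, making $\Phi(M)$ locally finite. Likewise $\Phi(\phi)$ has propagation bounded by that of $\phi$ (since $\sigma$ is $1/2$-Lipschitz in $t$), and continuous control holds because, near any boundary point $(x_0,1)$, the shrinking $t$-propagation $\leqs R/2^n$ of $\sigma_*^n(\phi)$ means only finitely many summands can contribute matrix entries that escape a given neighborhood; a finite intersection of the individual control neighborhoods for those $\sigma_*^n(\phi)$ gives the required neighborhood. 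By construction $\Phi(M) = M \oplus \sigma_*(\Phi(M))$, and combining this with the natural isomorphism $\sigma_* \cong \Id$ produces a natural isomorphism $\Phi \cong \Id \oplus \Phi$ of endofunctors of $\bAc(P_\bs(\Z))$. Passing to $K$-theory yields $\Phi_* = \Id_* + \Phi_*$, whence $\Id_* = 0$ and $K_*\bAc(P_\bs(\Z)) = 0$ for every $* \in \bbZ$.

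The main obstacle is verifying that $\Phi$ takes values in $\Ac(P_\bs(\Z))$ --- specifically, the continuous control of $\Phi(\phi)$. The key observation (that only finitely many summands can cross any given neighborhood of $(x_0,1)$, reducing the issue to a finite intersection of control neighborhoods) is the crux; once in place, the natural swindle identity $\Phi \cong \Id \oplus \Phi$ immediately forces the $K$-theory to vanish.
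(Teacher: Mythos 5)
Your Eilenberg swindle in the $[0,1)$--direction has a genuine gap in the verification that $\Phi(\phi)=\bigoplus_n \sigma_*^n(\phi)$ is continuously controlled, and this gap is fatal because $P_\bs(\Z)$ consists of simplices of positive dimension, not isolated points. You argue that the shrinking $t$--propagation $\leqs R/2^n$ of $\sigma_*^n(\phi)$ forces only finitely many summands to cross a given neighborhood of $(x_0,1)$. But $\sigma_*^n$ only contracts the $t$--coordinate; the \emph{spatial} propagation of $\sigma_*^n(\phi)$ is identical to that of $\phi$. As $n$ grows, $\sigma_*^n$ pushes the entire morphism $\phi$ --- including its behaviour at small $t$, where $\phi$ is allowed to have spatial spread up to the global propagation bound $R$ --- into any preassigned $t$--window $(1-\eta,1)$. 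Continuous control of $\phi$ guarantees its spatial spread vanishes as $t\to 1$, but $\sigma^{-n}$ pulls $t$--values near $1$ back out to all of $[0,1)$, so this decay of $\phi$ does not carry over to $\sigma_*^n(\phi)$ uniformly in $n$.

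A concrete failure: work on a single $1$--simplex $[0,1]\subset P_\bs(\Z)$. Put $M_{(0,\,1-2^{-k})}\neq 0$ and $N_{(2^{-k},\,1-2^{-k})}\neq 0$ for $k\geqs 0$ (zero elsewhere), and let $\phi$ have its only nonzero entries joining $(0,1-2^{-k})$ to $(2^{-k},1-2^{-k})$. These modules are locally finite and $\phi$ is bounded and continuously controlled: the spatial target $2^{-k}\to 0$ as $t\to 1$. But $\sigma_*^n(\phi)$ joins $(0,1-2^{-(n+k)})$ to $(2^{-k},1-2^{-(n+k)})$, so $\Phi(\phi)$ joins $(0,1-2^{-m})$ to $(2^{-j},1-2^{-m})$ for every $0\leqs j\leqs m$; in particular to $(1,1-2^{-m})$ for all $m$. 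Thus for $U=B_{1/2}(0)\times(1/2,1]$ there is no admissible $V$: infinitely many summands cross $U\setminus V$, and $\Phi(\phi)$ is not continuously controlled. So $\Phi$ does not land in $\Ac(P_\bs(\Z))$, and the swindle identity $\Phi\cong\Id\oplus\Phi$ has nothing to apply to.

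The missing ingredient is precisely what the paper's proof supplies: first collapse each compact simplex to a point via the Lipschitz homotopy equivalence $P_\bs(\Z)\rightleftarrows\coprod\{*\}$ (Bartels, Corollary~3.19), which induces an isomorphism on controlled $K$--theory, and only then perform the $t$--direction Eilenberg swindle on $\Ac(\coprod\{*\})$, where the spatial coordinate is a single point and no spatial crossing can occur. The same reduction is carried out for the subcategory $\mS$ before invoking the Karoubi long exact sequence. Your observation that $P_\bs(\Z)$ is a disjoint union of simplices at infinite distance is correct and is the starting point of the paper's argument, but the swindle must be performed on the $0$--dimensional model, not on $P_\bs(\Z)$ itself.
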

\begin{proof}  We have already observed that for $\bs\geqs \bN$, $P_{\bs} (\Z)$ is a disjoint union of simplices, all at infinite distance from one another.  We claim that the controlled $K$--theory of such a metric space vanishes.  Let $W = \coprod_{i\in I} W_i$ be such a metric space, meaning that for each $i$ we have $W_i\isom \Delta^{k_i}$ for some $k_i$ and the distance between $W_i$ and $W_j$ is infinite if $i\neq j$.  Choose inclusions $\{*\}\injects W_i$, where $\{*\}$ denotes the one-point space, and let $j$ denote the resulting map $\coprod_{i\in I} \{*\} \injects W$.  Also, let $\pi$ denote the natural projection $W\to \coprod_{i\in I} \{*\}$.  By Bartels~\cite[Corollary 3.19]{Bartels}, the induced maps $\pi_* \co K_* \Ac (W) \to K_* \Ac (\coprod_{i\in I} \{*\})$ and $j_*\co K_* \Ac (\coprod_{i\in I} \{*\})\to K_* \Ac (W)$ are inverse isomorphisms:  $\pi\circ j$ is the identity and $j\circ \pi$ is continuously Lipschitz homotopic to the identity (as defined in~\cite[Definition 3.16]{Bartels}).
The category $\Ac (\coprod \{*\})$ has trivial $K$--theory, because it admits an Eilenberg swindle (this is analogous to Bartels~\cite[Remark 3.20]{Bartels}, which treats the case of a single point).
Thus we conclude that $\Ac(P_\bs (\Z))$ has trivial $K$--theory.

A similar argument shows that the subcategory 
$$\mS = \colim_n \Ac \left(\coprod_{r=1}^n \coprod_{\alpha\in A_r} P_{s_r} (Z^r_\alpha)\right) \subset \Ac (P_{\bs } ( \Z))$$
has trivial $K$--theory.
We  conclude that $K_* \bAc P_{\bs}  (\Z) = 0$ for all $*$ by examining the long exact sequence in $K$--theory associated to the Karoubi sequence
$\mS\injects  \Ac P_{\bs} (\Z) \to \bAc P_{\bs }(\Z)$.
 \end{proof}

\vspace{.2in}
If $\gamma$ is a limit ordinal and Proposition~\ref{vanishing-prop4} holds for all $\beta< \gamma$, it follows immediately from the definitions\footnote{This is an oversimplification. See the Addendum (Section~\ref{Addendum}) for full details of the limit ordinal case.} that Proposition~\ref{vanishing-prop4} also holds for $\gamma$.

Next, consider a successor ordinal $\gamma = \beta + 1$ and assume that $\fD_\beta (X)$ is vanishing.  For the rest of the section, we fix  a decomposed sequence $\Z  = (Z^1, Z^2, \ldots) \in \fD_\gamma (X)$, with decompositions  $Z^r = \bigcup_{\alpha\in A_r} Z^r_\alpha$,
and we fix a sequence $\bs\in \Seq$.  We will show that $\Z$ is vanishing at $\bs$.   

Let $C_r = (2\sqrt{2} +1)^{\dim (P_{s_r} (X)) - 1}$ be the sequence of constants from Definition~\ref{C}, and let $\bC = (C_1, C_2, \ldots)$.
Since $\Z\in \fD_\gamma (X)$ and $\gamma = \beta + 1$, for each $r\geqs 1$ and each $\alpha \in A_r$ we may choose decompositions $Z^r_\alpha = U^r_\alpha (\bs) \cup V^r_\alpha (\bs)$
and $(C_r   s_r  r)$--disjoint decompositions
\begin{equation}\label{decomps}U^r_\alpha (\bs) = \coprodmo_{i\in I(r, \alpha) }^{(C_r   s_r   r)\textrm{--disjoint}} U^r_{\alpha i} (\bs)
\,\,\,\,\,\,\,\,\,\textrm{    and    }\,\,\,\, \,\,\,\,\,V^r_\alpha = \coprodmo_{j\in J(r, \alpha) }^{(C_r   s_r  r)\textrm{--disjoint}} V^r_{\alpha j} (\bs) \end{equation}
such that  
\begin{equation}\label{db}\{U^r_{\alpha i} (\bs) : \alpha\in A_r,\, i\in I(r, \alpha)\},  \, \{V^r_{\alpha j} (\bs) : \alpha\in A_r,\, j\in J(r, \alpha)\} \in \fD_\beta.\end{equation}
Setting 
\begin{equation}\label{s-decomp}U^r (\bs) = \bigcup_{\alpha\in A_r} U^r_\alpha (\bs) \, \textrm{    and    } \,V^r (\bs) = \bigcup_{\alpha\in A_r} V^r_\alpha (\bs),\end{equation}
we have decomposed sequences 
$$\U_\bs = (U^1 (\bs), U^2 (\bs), \ldots) \, \textrm{    and    } \, \V_\bs  (V^1 (\bs), V^2 (\bs), \ldots),$$ 
with decompositions given by (\ref{s-decomp}); note that $\Z = \U(\bs) \cup \V(\bs)$.  On the other hand, we can also consider $\U_\bs$ and $\V_\bs$ as decomposed sequences under the finer decompositions
$$U^r(\bs) = \bigcup_{\alpha\in A_r} \bigcup_{i\in I(r,\alpha)} U^r_{\alpha i} (\bs) \, \textrm{    and    } \, 
V^r (\bs)= \bigcup_{\alpha\in A_r} \bigcup_{j\in J(r,\alpha)} V^r_{\alpha j}(\bs).$$
We will denote these more finely decomposed sequences by $\U'_\bs$ and $\V'_\bs$.  Note that by (\ref{db}), we have $\U_\bs', \V_\bs'\in \D_\beta (X)$.  We will use this observation in the proofs of Lemmas~\ref{rho-lemma} and~\ref{mu-lemma}.  In the sequel, we will often write $\U = \U_{\bs}$, $\V= \V_{\bs}$, $\U'=\U'_{\bs}$, and $\V'=\V'_{\bs}$, suppressing the dependence of these sequences and their underlying data on  $\bs$ (and similarly for $U^r_{\alpha i} (\bs)$ and $V^r_{\alpha j} (\bs)$).

For any $\bs', \bs''\in \Seq$ satisfying $\bs\leqs \bs' \leqs \bs''$, Theorems~\ref{Rips-MV},~\ref{rel-MV}, and~\ref{MV-diagram} imply that there is a commutative diagram as follows, in which the first column comes from the Mayer--Vietoris sequence in Theorem~\ref{Rips-MV} and the
second column is the colimit, over $t>0$, of the Mayer--Vietoris sequences from Theorem~\ref{rel-MV} (we write $\bigcup_t$ rather than $\mocolim_t$ to save space):
\begin{equation}\label{MV-diag-eq}
\xymatrix{
	& *\txt{$\bigcup\limits_t K^{+}_*  \left(P_{\bs, \bs'} (\U, \bbW_{t\bC \bs}) \right)$\\
		$\bigoplus$ \\  $\bigcup\limits_t K^{+}_*  \left(P_{ \bs, \bs'} (\V,  \bbW_{t\bC \bs}) \right)$}  \ar[d]^(.6){i_\U + i_\V} \ar[r]^-{\mu_{\bs, \bs', \bs''}}
              & *\txt{$\bigcup\limits_t K^{+}_* \left(   P_{ \bs''}(N^\Z_{t \bC \bs} \U  )\right)$ \\ $\bigoplus$ \\ $\bigcup\limits_t K^{+}_*\left( P_{\bs''}(N^\Z_{t \bC \bs} \V )\right)$} \ar[d]\\
K_* (P_{\bs} (\Z)) \ar[r]^-{\gamma_{\bs, \bs'}} \ar[d]^-{\partial} 
	& \bigcup\limits_t K_* \left(P_{ \bs, \bs'} (\Z,  \bbW_{t\bC \bs}) \right) \ar[d]^-{\partial} \ar[r]^-{\zeta_{\bs, \bs', \bs''}}
        & K_*  (P_{ \bs''} ( \Z))\\
K_{*-1} \left(\I_{\bs} (\U, \V) \right)
\ar[r]^-{\rho_{\bs, \bs'}}
	& \bigcup\limits_t K_{*-1} \left(\I'_{\bs, \bs', t} (\U, \V) \right).\\
}
\end{equation}
The importance of Diagram (\ref{MV-diag-eq}) stems from the fact  (which follows easily from the definitions below) that the composite $\zeta_{\bs, \bs', \bs''} \circ\gamma_{\bs, \bs'}$ is  the natural map 
$$\eta_{\bs, \bs'}\co K_* \left(\bAc (P_\bs (\Z))\right) \xmaps{\eta_{\bs, \bs'}} K_* \left(\bAc (P_{\bs'} (\Z))\right).$$
We now explain the various terms in Diagram (\ref{MV-diag-eq}).
\begin{itemize}
\item The functor $K_*$ is shorthand for $K_* \bAc$.

\item $t\bC\bs$ is the product sequence with $r\ts{th}$ term $t C_r s_r$, and $C_r = C_r (s_r, X)$ is the constant from Definition~\ref{C}.

\item The sequence $\bbW_{t\bC\bs} = \bbW_{t\bC\bs} (\U, \V, \Z)$ was defined in Definition~\ref{W_t-def}.

\item  The functor $K^{+}_*$ is shorthand for $K_* \bAc^{\Z +}$ (Definitions~\ref{decomposed} and~\ref{W_t-def}).

\item The maps $\gamma_{\bs, \bs'}$ and $\rho_{\bs, \bs'}$ are simply the compositions of the maps appearing in Theorem~\ref{MV-diagram} (for any chosen $t>0$) with the natural maps to the colimits.  Theorem~\ref{MV-diagram} implies that the left-hand square in Diagram (\ref{MV-diag-eq}) commutes.

\item In the third column, $N^\Z_{t \bC \bs} \U$ is the decomposed sequence with $r$th term 
\begin{equation}\label{NZU}\bigcup_{\alpha\in A_r} Z^r_\alpha \cap N_{tC_r s_r} U^r_\alpha\end{equation}
and with decompositions exactly as shown in (\ref{NZU}),
and similarly for $\V$ in place of $\U$.

\item The vertical map in the third column arises from the inclusions
$$P_{\bs''} \left(N^\Z_{t\bC\bs} \U\right) \subset P_{\bs''} (\Z) \textrm{  and  }  P_{\bs''} \left(N^\Z_{t\bC\bs} \V\right) \subset  P_{\bs''} (\Z).$$

\item To describe the horizontal map $\zeta = \zeta_{\bs, \bs', \bs''}$, note that for each $t>0$, the inclusion  of simplicial complexes
$$P_{\bs, \bs'} (\Z, \bbW_{t\bC\bs}) \subset P_{\bs''} (\Z)$$
induces a functor after applying $\bAc (-)$ (Lemma~\ref{functoriality}).  These maps are compatible as $t$ increases, and $\zeta$ is the induced map from the colimit.  

\item The map $\mu_{\bs, \bs', \bs''}$ is the direct sum of  maps
 $\mu_{\bs, \bs', \bs''} (\U)$ and  $\mu_{\bs, \bs', \bs''} (\V)$ 
induced by the inclusions
$$\hspace{.45in} P_{\bs, \bs'} (\U, \bbW_{t\bC\bs}) \subset P_{\bs''}(N^\Z_{t \bC \bs} \U ) \,\, \textrm{ and } \,\,
P_{\bs, \bs'} (\V, \bbW_{t\bC\bs}) \subset P_{\bs''}(N^\Z_{t \bC \bs} \V ).$$
\end{itemize}
Note that the term-wise colimit of a (directed) sequence of exact sequences is exact, so the second column of Diagram (\ref{MV-diag-eq}) is exact.  Commutativity of the right-hand square in Diagram (\ref{MV-diag-eq}) is immediate from the definitions of the functors inducing the maps.

We will prove the following two lemmas, which will allow us to deduce that $\Z$ is vanishing by chasing Diagram (\ref{MV-diag-eq}).

\begin{lemma}$\label{rho-lemma}$
For each
$x \in K_{*-1} (\I_\bs (\U_\bs, \V_\bs))$, there exists $\bs'\geqs \bs$ such that $\rho_{\bs, \bs'} (x) = 0$.  
\end{lemma}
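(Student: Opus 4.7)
The plan is to use the finer decompositions $\U'_\bs, \V'_\bs \in \fD_\beta(X)$ (coming from the $(C_rs_rr)$-disjoint decompositions (\ref{decomps})) together with the metric estimates for relative Rips complexes from Section~\ref{Rips-sec} to reduce the vanishing of the class $\rho_{\bs,\bs'}(x)$ to the inductive hypothesis that $\fD_\beta(X)$ is vanishing. The strategy is to identify the relative intersection category $\I'_{\bs, \bs', t}(\U, \V)$, up to categories whose $K$-theory is controlled by the inductive hypothesis, with the controlled $K$-theory of a Rips complex on an auxiliary decomposed sequence $\W$ in $\fD_\beta(X)$.

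For each $t > 0$, introduce the decomposed sequence $\W = \W_{t\bC\bs}$ in $X$ whose $r$-th term is the metric family
$$\{W^r_{\alpha, i, j}\}_{\alpha\in A_r,\, i\in I(r,\alpha),\, j\in J(r,\alpha)}, \quad W^r_{\alpha, i, j} := N_{tC_rs_r}(U^r_{\alpha i}) \cap N_{tC_rs_r}(V^r_{\alpha j}) \cap Z^r_\alpha,$$
so that $\bigcup_{i,j} W^r_{\alpha, i, j} = \bigcup \bbW^r_{tC_rs_r,\alpha}$. Since $\U'_\bs \in \fD_\beta(X)$ and each $W^r_{\alpha, i, j}$ is contained in $N_{tC_rs_r}(U^r_{\alpha i})$, Lemma~\ref{nbhds2} yields $\W \in \fD_\beta(X)$. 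The key geometric input is that a module supported in a bounded simplicial neighborhood of both $P_{\bs, \bs'}(\U, \bbW_{t\bC\bs})$ and $P_{\bs, \bs'}(\V, \bbW_{t\bC\bs})$ must in fact be supported in a simplicial neighborhood of $\bigcup_{\alpha,i,j} P_{s_r'}(W^r_{\alpha,i,j})$: the $(C_rs_rr)$-disjointness of the $U^r_{\alpha i}$'s and $V^r_{\alpha j}$'s, combined with Lemma~\ref{metric-comp} (which bounds the ambient metric by $s_rC_r$ times the simplicial metric on $P_{s_r}(X)$) and Lemma~\ref{rel-metric-comp}, forces any simplex in $P_{\bs,\bs'}(Z^r_\alpha, \bbW^r_{tC_rs_r,\alpha})$ that is within bounded simplicial distance of both $P_{\bs,\bs'}(U^r_\alpha, \bbW^r_{tC_rs_r,\alpha})$ and $P_{\bs,\bs'}(V^r_\alpha, \bbW^r_{tC_rs_r,\alpha})$ to lie near a set of the form $W^r_{\alpha,i,j}$ (with the exceptional $r$, where this fails because the parameters are too small, handled by the Karoubi quotient in $\bAc$).

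With this concentration in hand, apply the Karoubi filtration machinery of Section~\ref{K-filt} (in the spirit of Remark~\ref{filtration-rmk} and Lemma~\ref{filtrations}) to produce a functor from $\bAc(P_{\bs''}(\W))$, for suitable $\bs'' \geq \bs$, that realizes the class $\rho_{\bs, \bs'}(x)$ in the colimit $\mocolim_t K_{*-1}(\I'_{\bs, \bs', t}(\U, \V))$. Since $\W \in \fD_\beta(X)$ and $\fD_\beta(X)$ is vanishing by the inductive hypothesis, there exists $\bs''' \geq \bs''$ such that $\eta_{\bs'', \bs'''}$ kills the corresponding class in $K_{*-1}(\bAc P_{\bs''}(\W))$; pulling back, taking $\bs'$ large enough, and enlarging $t$ as needed makes $\rho_{\bs, \bs'}(x)$ vanish in the colimit. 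The main obstacle is the coordination of parameters in the third step: one must choose $t$ large enough that the metric estimates from Lemmas~\ref{rel-nbhd-comp} and~\ref{rel-metric-comp} really do concentrate support near the $W^r_{\alpha,i,j}$'s, then choose $\bs''$ large enough (depending on $\bs'$ and $t$) so that the simplices arising from the relative Rips complex of $\Z$ actually factor through simplices of $P_{\bs''}(\W)$, and finally choose $\bs'$ (depending on $\bs'''$ produced by the induction) to realize the killing in the colimit. This delicate interplay is the technical heart of the lemma and the place where the $(C_rs_rr)$-disjointness built into (\ref{decomps}) is essential.
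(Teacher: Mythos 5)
Your overall strategy is the right one and closely tracks the paper's: introduce an auxiliary decomposed sequence built from the intersection neighborhoods $N_{tC_rs_r}(U^r_{\alpha i})\cap N_{tC_rs_r}(V^r_{\alpha j})\cap Z^r_\alpha$, observe via Lemma~\ref{nbhds2} that it lies in $\fD_\beta(X)$, concentrate the support of classes in the intersection category near this sequence, and invoke the inductive hypothesis. The concentration step you describe (that modules in the intersection category have support near $\bigcup W^r_{\alpha ij}$) is correct and is carried out in the paper via inclusion (\ref{rnc1}) of Lemma~\ref{rel-nbhd-comp}.

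However, there is a genuine gap in the reduction to the inductive hypothesis, and it occurs at exactly the place you flag as ``delicate coordination of parameters.'' The concentration step produces modules on $P_\bs(\W_t)$ carrying the metric inherited from $P_\bs(X)$, i.e.\ objects of the category $\bAc^X(P_\bs(\W_t))$, but the inductive vanishing hypothesis applies to the category $\bAc^{\W_t}(P_{\bs'}(\W_t))$ built using the \emph{intrinsic} simplicial metric on the Rips complex of $\W_t$. These two metrics are not coarsely equivalent at a fixed Rips parameter: two points of $W^r_{t\alpha}$ can be close in the ambient $P_{s_r}(X)$-metric yet lie at large or infinite intrinsic distance in $P_{s_r}(W^r_{t\alpha})$. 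Your proposal speaks of ``producing a functor from $\bAc(P_{\bs''}(\W))$'' for a suitable $\bs''$, but no such functor exists before one has a mechanism for converting bounded propagation in the ambient metric into bounded propagation in the intrinsic metric. This is precisely what Lemma~\ref{metric-ind} supplies: the functor $\Phi_\bq$ increases the Rips parameter (by a factor controlled by Lemma~\ref{metric-comp}) to make this conversion possible, and the resulting maps are compatible only in the colimit over $\bs'\in\Seq$. Without this step, the class $\rho_{\bs,\bs'}(x)$ cannot be transported into a category to which the inductive hypothesis applies.

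A secondary (but real) omission is the distinction between the coarsely decomposed sequence $\W_t$ (with pieces $W^r_{t\alpha}$ indexed only by $\alpha$) and the finely decomposed sequence $\W_t'$ (pieces $W^r_{t\alpha ij}$). The concentration step naturally lands you in a neighborhood of $P_\bs(\W_t)$ inside $P_\bs(Z^r_\alpha)$, not disjointly on the pieces of $P_\bs(\W_t')$, so the object to which you need to apply the inductive hypothesis is $\W_t$. But only $\W_t'$ is known to lie in $\fD_\beta(X)$. One must therefore invoke Lemma~\ref{refinement}, using the $r$-disjointness of the $W^r_{t\alpha ij}$ (inherited from (\ref{decomps})), to deduce that $\W_t$ is vanishing at $\bs$ from the vanishing of $\W_t'$. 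Your proposal works directly with $\W_t'$ and silently identifies it with the complex where the support is concentrated; this identification is nontrivial and is exactly what Lemma~\ref{refinement} establishes.
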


\begin{lemma}$\label{mu-lemma}$
For each sequence  $\bs'\geqs \bs$, and for each element 
$$x\in \left(\bigcup_t K^{+}_*  \left(P_{\bs, \bs'} (\U_\bs, \bbW_{t\bC \bs}) \right)\right) \oplus \left(\bigcup_t K^{+}_*  \left(P_{ \bs, \bs'} (\V_\bs,  \bbW_{t\bC \bs}) \right)\right),$$ 
there exists $\bs''\geqs\bs'$ such that $\mu_{\bs, \bs', \bs''} (x) = 0$.  
\end{lemma}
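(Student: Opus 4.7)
We sketch the proof for the $\U$-summand; the $\V$-summand is symmetric. The strategy is to factor $\mu_{\bs,\bs',\bs''}(\U)$ through constructions built from the finer decomposition $\U'_\bs$ of (\ref{decomps}). By (\ref{db}) we have $\U'_\bs \in \fD_\beta(X)$, and Lemma~\ref{nbhds2} then gives $N^{\Z}_{t\bC\bs}\U'_\bs \in \fD_\beta(X)$ for every $t>0$; hence the induction hypothesis (Proposition~\ref{vanishing-prop4} for $\beta$) applies to these sequences.

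First, represent $x$ at a finite level of the colimit: pick $t_0>0$ and a representative $\tilde x \in K_*\bAcp P_{\bs,\bs'}(\U_\bs, \bbW_{t_0\bC\bs})$. The key geometric input is that the pieces $U^r_{\alpha i}$ are $(C_r s_r r)$--disjoint, so for $r>2t_0$ their $(t_0 C_r s_r)$--neighborhoods are $(C_r s_r(r-2t_0))$--disjoint, and every $W\in\bbW^r_{t_0 C_r s_r,\alpha}$ is contained in exactly one such neighborhood. Consequently each simplex of $P_{s_r, s'_r}(U^r_\alpha, \bbW^r_{t_0 C_r s_r,\alpha})$ sits inside a single $N_{t_0 C_r s_r}(U^r_{\alpha i})$, so
\begin{equation*}
P_{s_r, s'_r}(U^r_\alpha, \bbW^r_{t_0 C_r s_r,\alpha}) = \coprod_{i\in I(r,\alpha)} P_{s_r, s'_r}(U^r_{\alpha i}, \bbW^{(r,\alpha,i)}_{t_0 C_r s_r}),
\end{equation*}
and by Lemma~\ref{rel-metric-comp} the simplicial distance in the ambient complex between distinct $(\alpha,i)$--components tends to infinity with $r$. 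Choosing $\bs''\geqs \bs'$ with $s''_r \leqs C_r s_r(r-2t_0)$ for all but finitely many $r$, the same argument applied to the target gives $P_{s''_r}(N^{\Z}_{t_0 C_r s_r}U^r_\alpha) = \coprod_i P_{s''_r}(N_{t_0 C_r s_r}U^r_{\alpha i})$, with an analogous divergence of simplicial distances between $(\alpha,i)$--components inside $P_{s''_r}(X)$.

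Because $\bAc(-)$ and $\bAcp(-)$ quotient out modules supported on finitely many initial $r$--levels, any morphism of bounded propagation in the coarse $\Z$--ambient metric is, after bar-quotienting, forced to respect the fine $(\alpha,i)$--decomposition. This identifies (modulo bar--equivalence) the source and target of $\mu_{\bs,\bs',\bs''}(\U)$ with the corresponding categories built from $\U'_\bs$ relative to the finer ambient space $\X_{\U'_\bs}$, producing a commutative diagram in which $\mu_{\bs,\bs',\bs''}(\U)$ is the image of an analogous map of finer--decomposed categories.

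To conclude, we apply the induction hypothesis: $N^{\Z}_{t_0\bC\bs}\U'_\bs \in \fD_\beta(X)$ is vanishing, so after possibly enlarging $\bs''$ we can arrange that the image of $\tilde x$ in $K_* \bAc P_{\bs''}(N^{\Z}_{t_0\bC\bs}\U'_\bs)$ is zero. Pushing forward via the coarse--to--fine identification yields $\mu_{\bs,\bs',\bs''}(\tilde x)=0$ in the colimit $\bigcup_{t} K_* \bAcp P_{\bs''}(N^{\Z}_{t\bC\bs}\U_\bs)$, as required. The main technical obstacle is the coarse--to--fine identification of the $\bAcp$ categories: one must check that bounded--propagation morphisms in the coarse ambient metric respect the fine decomposition once $r$ is large enough, which is accomplished by combining the metric estimates of Lemmas~\ref{metric-comp} and~\ref{rel-metric-comp} with the bar--quotient structure.
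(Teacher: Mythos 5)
Your strategy matches the paper's: factor $\mu_{\bs,\bs',\bs''}$ through categories built from the finer decompositions $\U'_\bs$, $\V'_\bs$ and the $N^\Z_{t\bC\bs}$--neighborhood sequences, invoke Lemma~\ref{nbhds2} to place these in $\fD_\beta(X)$, and apply the induction hypothesis. That much is correct and in line with the paper.

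However, the ``coarse--to--fine identification of the $\bAcp$ categories'' that you describe as the main technical obstacle is precisely what the paper has to prove as a standalone result, Lemma~\ref{rel-refinement}, and your sketch of it contains a real gap. You argue that each relative Rips complex $P_{s_r,s'_r}(U^r_\alpha,\bbW^r_{t_0 C_r s_r,\alpha})$ decomposes for large $r$ as a disjoint union over $i\in I(r,\alpha)$ of sub-complexes; this is true at the level of simplicial sets. But the source of $\mu$ is $\bAc^{\Z+}(P_{\bs,\bs'}(\U,\bbW_{t\bC\bs}))$, which allows geometric modules supported on arbitrary $T$--neighborhoods (in the coarse ambient metric of $P_{\bs,\bs'}(X,\bbW_{t\bC\bs})$) of the decomposed Rips complex, and the target category $\bAc^{\Z'+}$ uses the \emph{finer} ambient metric coming from $P_{\bs,\bs'}(X,\bbW'_{t\bC\bs})$. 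Showing that the neighborhood $N^{Y^r_\alpha}_{T,d_\Delta}$ taken in the coarse ambient complex agrees with the neighborhood taken in the fine ambient complex for large $r$ (the paper's equation (\ref{N=N'})), and that bounded-propagation morphisms split compatibly, is not a direct consequence of $r$--disjointness of vertex sets; it requires the metric comparison of Lemma~\ref{rel-metric-comp} applied to the \emph{relative} Rips metrics, along with the observation that the $g_r$--disjointness involves the sets $U^r_{\alpha i}\cup W^r_{\alpha i}$ rather than just the $U^r_{\alpha i}$. Your phrase ``relative to the finer ambient space $\X_{\U'_\bs}$'' also mis-identifies the intermediate category: the paper uses $\bAc^{\Z'+}$ (with $\Z'$ the $\Z$--sequence trivially refined over $I(r,\alpha)$), not the intrinsic metric $\bAc^{\U'}$, and then transitions to $\bAc^X$ via the explicit geometric inclusions (\ref{1})--(\ref{4}). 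In short: the approach is right, but the isomorphism you appeal to is a theorem whose proof constitutes most of the work, and the precise factorization chain for $\mu$ (the functors $j_{t,T}$ and the commutative diagram (\ref{commutes})) needs to be exhibited rather than asserted.
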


\noindent {\bf Proof of Proposition~\ref{vanishing-prop4} assuming Lemmas~\ref{rho-lemma} and~\ref{mu-lemma}.}  For simplicity, we drop most subscripts from the maps in Diagram (\ref{MV-diag-eq}).
For each element $x\in K_* \left(\bAc \left(P_\bs (\Z)\right)\right)$, we have $\partial(\gamma x) = \rho (\partial x)$.  By Lemma~\ref{rho-lemma}, we can choose $\bs'$ large enough so that $\rho (\partial x) = 0$.  Exactness of the second column in Diagram (\ref{MV-diag-eq}) then shows that $\gamma (x) = (i_\U + i_\V) (x_1, x_2)$ for some $x_1, x_2$.  Now Lemma~\ref{mu-lemma} tells us that for $\bs''$ large enough, we have $\mu (x_1, x_2) = 0$, and it follows from commutativity of the right-hand square of Diagram~(\ref{MV-diag-eq}) that $\zeta (\gamma x) = \zeta ( (i_\U + i_\V) (x_1, x_2)) = 0$.  However, as mentioned above the composite $\zeta \circ\gamma$ is simply the natural map 
$$\eta_{\bs, \bs'}\co K_* \left(\bAc (P_\bs (\Z))\right) \xmaps{\eta_{\bs, \bs'}} K_* \left(\bAc (P_{\bs'} (\Z))\right).$$
Hence $\Z$ is vanishing at $\bs$.  Since $\bs\in \Seq$ was arbitrary, $\Z$ is in fact a vanishing sequence, and our induction is complete.
$\hfill \Box$

\vspace{.2in}

To prove Lemma~\ref{rho-lemma}, we need to compare two versions of the category of controlled modules on $P_\bq (\W)$, where $\bq\in \Seq$ and $\W = (W^1, W^2, \ldots)$ is a decomposed sequence in $X$ with decompositions $W^r = \bigcup_{\alpha\in A_r} W^r_\alpha$.  
We may give $P_\bq (\W)$ either its intrinsic simplicial metric or the simplicial metric inherited from $P_\bq (\X_\W)$ (where $\X_\W$ is the decomposed sequence defined in Remark~\ref{triv-seq-rmk}).   The   category corresponding to the first metric will be denoted $\bAc^\W (P_\bq (\W))$.   The latter metric is the one used to define the category $\Ac(P_\bq (\W))$, and we will sometimes write $\bAc^X (P_\bq (\W)) = \bAc (P_\bq (\W))$  simply to emphasize the chosen metric on $P_\bq (\W)$.

\begin{lemma}$\label{metric-ind}$ Let $\W$ be a decomposed sequence in $X$ and let $\bq\in \Seq$ be any sequence.  Then there exist functors
\begin{equation}\label{Phi_s}\Phi_\bq \co \bAc^{ X} (P_\bq (\W)) \maps \mocolim_{\bn\in \Seq} \bAc^\W (P_\bn (\W))
\end{equation}
that
make the diagram
\begin{equation}\label{compatible}\xymatrix{ \bAc^ X (P_\bq \W) \ar[d] \ar[r]^-{\Phi_\bq} & \mocolim\limits_{\bn\in\Seq} \bAc^\W (P_\bn \W)\\
		\bAc^ X (P_{\bq'} \W) \ar[ur]_-{\Phi_{\bq'}}
		}
\end{equation}
commute whenever $\bq\leqs \bq'$.
\end{lemma}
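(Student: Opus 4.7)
The plan is to define $\Phi_\bq$ as the ``identity on underlying data'' functor, sending a module $M$ or morphism $\phi$ over $P_\bq(\W) \times [0,1)$ to the same matrix data, but now viewed in the intrinsic $\W$-metric on $P_\bn(\W)$ for $\bn$ sufficiently large. The two metrics on $P_\bq(\W)$ induce the same topology (the subspace topology of $P_\bq(\W)$ inside the locally finite simplicial complex $P_\bq(\X_\W)$ coincides with the CW topology of $P_\bq(\W)$, and the latter is the $\W$-metric topology), so local finiteness of support is preserved and continuous control at $1$ transfers automatically. The serious issue is propagation, since the $\W$-metric is larger than the $X$-metric: a morphism with small $X$-propagation need not have small $\W$-propagation inside the original complex $P_\bq(\W)$. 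The fix is to enlarge the Rips parameter.

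Concretely, let $\phi \in \Ac^X(P_\bq(\W))$ have $X$-propagation at most $R$. First, if $(x,s)$ and $(y,t)$ lie in different $(r,\alpha)$-components of $P_\bq(\W)$, then their $X$-distance is infinite, so $\phi_{(x,s),(y,t)} = 0$; hence $\phi$ respects the decomposition. Second, for $(x,s),(y,t)$ in the same component $P_{q_r}(W^r_\alpha) \times [0,1)$ with $\phi_{(x,s),(y,t)} \neq 0$, pick vertices $v_\sigma, v_\tau \in W^r_\alpha$ of simplices of $P_{q_r}(W^r_\alpha)$ containing $x,y$. Then $d_{P_{q_r}(X)}(v_\sigma, v_\tau) \leqs R + 2$, and Lemma~\ref{metric-comp} gives $d_X(v_\sigma, v_\tau) \leqs q_r C(q_r, X)(R+2)$. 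Consequently, whenever $\bn \geqs \bq$ satisfies $n_r \geqs q_r C(q_r, X)(R+2)$ for every $r$, the pair $\langle v_\sigma, v_\tau\rangle$ spans a $1$-simplex of $P_{n_r}(W^r_\alpha)$, so $d_{P_{n_r}(W^r_\alpha)}(x,y) \leqs 3$. Thus $\phi$ becomes a morphism in $\Ac^\W(P_\bn(\W))$ of $\W$-propagation at most $3$.

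With this in hand, define $\Phi_\bq$ on objects by sending $M$ to its canonical image in $\mocolim_\bn \bAc^\W(P_\bn(\W))$ (starting at level $\bq$), and on a morphism $\phi$ by choosing any $\bn \geqs \bq$ satisfying the bound above and taking the equivalence class of $\phi$ in the colimit. Different choices of $\bn$ agree in the colimit, and composition is preserved because matrix multiplication commutes with changing metric. The functor descends from $\Ac$ to $\bAc$ because a module supported on $\coprod_{r \leqs R} Y_r$ in the $X$-metric is supported on the same subset in the $\W$-metric, so the ignored subcategory $\mS$ is mapped into its analogue. For the commutativity of (\ref{compatible}): both $\Phi_{\bq'} \circ (\textrm{inclusion})$ and $\Phi_\bq$ send a module or morphism to the same underlying data, now represented at different levels of the colimit; these agree in $\mocolim_\bn \bAc^\W(P_\bn(\W))$ because the comparison functors $\bAc^\W(P_\bq(\W)) \to \bAc^\W(P_{\bq'}(\W))$ (Lemma~\ref{functoriality}) are likewise identities on underlying data.

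The main obstacle is the propagation step: translating the ``external'' $X$-metric bound $R$ into a quantitative enlargement of the Rips parameter that gives bounded $\W$-propagation, which is exactly where Lemma~\ref{metric-comp} and the constants $C(q_r, X)$ enter. All other verifications (topology agreement, continuous control preservation, functoriality, descent to $\bAc$, compatibility diagram) are essentially bookkeeping once one has committed to the ``same underlying matrix'' definition.
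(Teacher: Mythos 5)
Your proposal is correct and matches the paper's own proof in essentially every respect: defining $\Phi_\bq$ to be the identity on underlying data, observing that the two metrics induce the same topology so that local finiteness and continuous control are preserved, and using Lemma~\ref{metric-comp} (with the constants $C(q_r,X)$) to translate the $X$-propagation bound into a suitably enlarged Rips parameter $\bn$ at which the morphism has small $\W$-propagation. The only cosmetic difference is that the paper records the resulting $\W$-propagation bound as $4$ (spatial distance at most $3$ plus at most $1$ in the $[0,1)$ coordinate) rather than your $3$, which does not affect the argument.
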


\begin{proof}

To construct the functors $\Phi_\bq$, first note that objects in $\bAc^ X (P_\bq \W)$ are also objects in  $\bAc^\W (P_\bq \W)$ because the change of metrics does not affect which sets are compact, and hence the locally finiteness condition is the same in both cases.  Letting 
$$i_\bq\co \bAc^\W (P_\bq (\W))\maps \colim_{\bn\in\Seq} \bAc^\W (P_\bn (\W))$$
denote the structure map for the colimit, we can now define  $\Phi_\bq$ to be the identity on objects by setting $\Phi_\bq (M) = i_\bq (M)$.
More care is required to define $\Phi_\bq$ on morphisms.  

A morphism in $\psi \co M\to N$ in $\Ac^ X (P_\bq \W)$ is a bounded map of geometric modules on $P_\bq (\W) \cross [0,1)$ (with metric induced from $P_\bq (\X_\W) \cross [0,1)$) which is controlled at 1.  Let $d<\infty$ denote a bound on the propagation of $\psi$, and let $\bq'$ be the sequence with $r\ts{th}$ term $q'_r = q_r C_r (d+2)$ (where $C_r = C_r (q_r, X)$ is the constant from Definition~\ref{C}).  We claim that $\psi\in \Ac^{\W} (P_{\bq'} (\W))$.
First we check that $\psi$ is bounded as a morphism on $P_{\bq'} (\W) \cross [0,1)$, where $P_{\bq'} (\W)$ has its intrinsic simplicial metric.  Let $(a_1, t_1), (a_2,t_2)\in P_\bq (\W) \cross [0,1)$ be points such that 
$\psi_{(a_1,t_1),(a_2,t_2)}\neq 0$.  Note that this implies that $a_1, a_2 \in P_{q_r} (W_r)$ for some $r$.  Choose barycentric vertices $v_1, v_2\in W_r$ for $a_1$ and $a_2$ (respectively).   
 Lemma~\ref{metric-comp} implies that $d(v_1,v_2) \leqs q_r C_r (d+2)$, so $v_1$ and $v_2$ lie in a common simplex in $P_{q'_r} (W_r)$ (by choice of $q'_r$).  It follows that $a_1$ and $a_2$ are at most distance $3$ apart in the simplicial metric on $P_{q'_r} (W_r)$, so $(a_1,t_1)$  and $(a_2,t_2)$ are at most distance $4$ apart in the corresponding metric on $P_{q'_r} (W_r)\cross [0,1)$.  Hence $\psi$ has propagation at most $4$ in this metric.

To check that $\psi$ is controlled as a morphism on $P_{\bq'} (\W)\cross [0,1)$, 
note that (continuous) control is a \e{topological} condition: it does not refer to the metric on the complex in question.  Since $\psi$ is controlled as a morphism on $P_{\bq} (\W)\cross [0,1)$, Lemma~\ref{functoriality} implies that $\psi$ is also controlled on $P_{\bq'} (\W)\cross [0,1)$ (for the purpose of applying Lemma~\ref{functoriality}, we can give these complexes the metrics inherited from $P_\bq (\X_\W)$ and $P_{\bq'} (\X_\W)$, respectively).

We can now define $\Phi_\bq ([\psi])$ to be the morphism $\Phi_\bq (M) \to \Phi_\bq (N)$ represented by $\psi$.
Since $\Phi_\bq$ does not change the underlying data of either geometric modules or morphisms, it follows that $\Phi_\bq$ is a functor and that Diagram  (\ref{compatible}) commutes.  
\end{proof}

The key result behind the proofs of Lemmas~\ref{rho-lemma} and~\ref{mu-lemma} is a comparison between the categories of controlled modules associated to a decomposed sequence and to a sufficiently good refinement of that sequence.  First we record a lemma regarding the construction $\bAc (-)$.

\begin{lemma}$\label{>R}$
Let $ K_1 \subset K_1', K_2 \subset K_2', \ldots$ be locally finite simplicial complexes.  Then for each $R>0$, the inclusion
$$\coprod_{r\geqs R} K_r \injects \coprod_{r \geqs 1} K_r$$
induces an equivalence of categories 
$$i_R \co \bAc \left(\coprod_{r\geqs R} K_r\right) \srm{\isom} \bAc\left(\coprod_{r\geqs 1} K_r\right),$$
where on the left we use the simplicial metric from $ \coprod_{r\geqs R} K_r' $ and on the right  we use the simplicial metric from $ \coprod_{r\geqs 1} K_r' $.

In particular, for each decomposed sequence $\Y = (Y^1, Y^2, \ldots)$  in $X$ 
 and each $\bq \in \Seq$, there is an equivalence of categories
$$i_R \co \bAc \left(P_{\bq(R)} \left(\Y(R)\right)\right) \srm{\isom} \bAc \left(  P_\bq \left(\Y\right)\right),$$
where $\bq(R) = (q_{R}, q_{R+1}, \ldots)$ and $\Y(R)$ denotes the decomposed sequence $(Y^{R}, Y^{R+1}, \ldots)$, with the same decompositions as in $\Y$.
\end{lemma}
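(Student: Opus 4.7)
My plan is to construct $i_R$ by invoking Lemmas~\ref{functoriality} and~\ref{induced-map}, and then to verify that it is essentially surjective and fully faithful.  The geometric input underlying every step is that distinct connected components of a locally finite simplicial complex lie at infinite simplicial distance from one another, which has two consequences I will repeatedly exploit:  (a) the intrinsic simplicial metric on $\coprod_{r\geqs R} K_r'$ agrees with the subspace metric inherited from $\coprod_{r\geqs 1} K_r'$, and (b) any finite-propagation morphism in $\Ac(\coprod_{r\geqs 1} K_r)$ must have matrix entries $\phi_{xy}$ vanishing whenever $x$ and $y$ lie in different $K_r$'s.  By (a), Lemma~\ref{functoriality} gives a functor $j_*\co \Ac(\coprod_{r\geqs R} K_r) \to \Ac(\coprod_{r\geqs 1} K_r)$; since any module supported on $\coprod_{r=R}^{R+M} K_r$ is supported on $\coprod_{r=1}^{R+M} K_r$, $j_*$ takes $\mS$ into $\mS$, so Lemma~\ref{induced-map} produces the desired $i_R$ on the Karoubi quotients.

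For essential surjectivity I would apply Construction~\ref{sum} with $Z = \coprod_{r\geqs R} K_r$ to any module $M \in \Ac(\coprod_{r\geqs 1} K_r)$, obtaining a direct sum decomposition in which the summand supported on $\coprod_{r<R} K_r$ lies in $\mS$ and therefore becomes zero in $\bAc$.  The remaining summand is visibly in the image of $j_*$, so $M$ is isomorphic to an object of the form $i_R(N)$ in $\bAc(\coprod_{r\geqs 1} K_r)$.

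The core of the argument is fully faithfulness.  Fullness is immediate from (b): given a morphism $\phi\co j_*(M) \to j_*(N)$ in $\Ac(\coprod_{r\geqs 1} K_r)$ with $M, N \in \Ac(\coprod_{r\geqs R} K_r)$, the entries $\phi_{xy}$ with $x$ or $y$ in $\coprod_{r<R} K_r$ are forced to vanish simply because $M_y = 0$ or $N_x = 0$ there, so $\phi$ lifts to a morphism in the source.  For faithfulness, suppose $j_*(\phi - \psi)$ factors as $\beta\alpha$ through some $S \in \mS(\coprod_{r\geqs 1} K_r)$, say supported on $\coprod_{r=1}^{R'} K_r$.  I would split $S = S' \oplus S''$ via Construction~\ref{sum} along $\coprod_{r\geqs R} K_r$, so $S' \in \mS(\coprod_{r\geqs R} K_r)$ (after the appropriate reindexing).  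Observation (b) applied to the morphisms $M \to S''$ and $S'' \to N$ forces the component of $\beta\alpha$ through $S''$ to vanish (the relevant matrix entries would have to bridge different $K_r$'s), leaving a factorization of $\phi - \psi$ through $S'$ entirely inside $\Ac(\coprod_{r\geqs R} K_r)$.  The main obstacle here is just carefully keeping track of the two indexings of $\mS$ and of the directions in which modules can be supported.  The ``In particular'' assertion for the decomposed sequence $\Y$ follows by applying the general statement with $K_r = \coprod_{\alpha \in A_r} P_{q_r}(Y^r_\alpha) \subset K_r' = \coprod_{\alpha \in A_r} P_{q_r}(X)$ and then shifting indices by $R-1$.
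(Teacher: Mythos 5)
Your proof is correct and follows essentially the same route as the paper's: construct $i_R$ via Lemma~\ref{functoriality} and Lemma~\ref{induced-map}, show essential surjectivity by restricting modules to $\coprod_{r\geqs R}K_r\cross[0,1)$, and check fully faithfulness from the definitions. The paper dispatches fully faithfulness in one clause (``it follows from the definitions''), whereas you spell out the key verification for faithfulness -- splitting the factoring object $S\in\mS$ via Construction~\ref{sum} and using the infinite-distance separation of components to kill the part supported near $\coprod_{r<R}K_r$ -- which is exactly what that clause implicitly relies on.
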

\begin{proof}  The functor $i_R$ exists by Lemma~\ref{functoriality}, and it follows  from the definitions that $i_R$ is full and faithful.  Each module $M\in \Ac \left(\coprod_{r\geqs 1} K_r\right)$ is isomorphic, in  the Karoubi quotient  $\bAc \left( \coprod_{r\geqs 1} K_r\right)$, to its restriction 
$$M\left(\coprod_{r\geqs R}K_r\cross[0,1) \right).$$  
This restriction is in the image of $i_R$, completing the proof.
\end{proof}

\begin{lemma}$\label{refinement}$ Let $\Y = (Y^1, Y^2, \ldots)$ be a decomposed sequence in $X$, with decompositions $Y^r = \bigcup_{\alpha\in A_r} Y^r_\alpha$.  Consider sequences $\bq$, $\bbf \in \Seq$ satisfying $\lim_{r\to \infty} f_r/ C_r q_r = \infty$, where $C_r = C(q_r, X)$ is the constant from Definition~\ref{C}.  Assume that for each sufficiently large $r$ and each $\alpha\in A_r$ we have a decomposition 
$$Y^r_\alpha = \coprodmo_{i\in I(r, \alpha)}^{f_r\textrm{--disjoint}} Y^r_{\alpha i}.$$
Let $\Y'$ be the decomposed sequence $\Y' = (Y^1, Y^2, \ldots)$ with decompositions
$$Y^r = \bigcup_{\alpha\in A_r} \bigcup_{i\in I(r, \alpha)} Y^r_{\alpha i}.$$

Then there are maps
$$\Psi_\bt \co K_* \left(\bAc \left(P_\bt (\Y')  \right) \right) \maps K_* \left( \bAc  \left(P_\bt (\Y) \right)\right),$$
natural with respect to $\bt\in \Seq$, and $\Psi_\bq$ is an \e{isomorphism} in all dimensions.
Consequently, if $\Y'$ is  vanishing at $\bq$, then so is $\Y$.
\end{lemma}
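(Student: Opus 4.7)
The plan is to realize $\Psi_\bt$ by applying Lemma~\ref{functoriality} to a natural inclusion of simplicial complexes $P_\bt(\Y') \hookrightarrow P_\bt(\Y)$, and then, for the distinguished parameter $\bq$, to promote the induced functor on $\bAc$ to an equivalence of categories. The inclusion exists because the families $\{Y^r_{\alpha i}\}_i$ are disjoint, so vertices of $P_\bt(\Y')$ correspond bijectively to vertices of $P_\bt(\Y)$; moreover every simplex of $P_\bt(\Y')$ sits inside a single $P_{t_r}(Y^r_{\alpha i}) \subseteq P_{t_r}(Y^r_\alpha)$ and is thus a simplex of $P_\bt(\Y)$. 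This inclusion is continuous, proper, injective, and distance non-increasing (in the source the distance between different $Y^r_{\alpha i}$'s with the same $(r,\alpha)$ is infinite, while in the target it is finite), so Lemma~\ref{functoriality} together with Remark~\ref{functoriality-rmk} yields a functor $\bAc(P_\bt(\Y')) \to \bAc(P_\bt(\Y))$ whose $K$--theory map is $\Psi_\bt$; naturality in $\bt$ is automatic.

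For $\Psi_\bq$ to be an isomorphism, the key geometric input is Lemma~\ref{metric-comp}: for $y_1 \in Y^r_{\alpha i}$ and $y_2 \in Y^r_{\alpha j}$ with $i \neq j$, we have $d_\Delta^{P_{q_r}(X)}(y_1, y_2) \geq d(y_1, y_2)/(C_r q_r) > f_r/(C_r q_r)$. Since $f_r/(C_r q_r) \to \infty$ by hypothesis, there is an $R_0$ such that for all $r \geq R_0$ we have $f_r > q_r$, so no simplex of $P_{q_r}(Y^r_\alpha)$ can span multiple $Y^r_{\alpha i}$'s; thus for $R \geq R_0$, the truncated Rips complexes $P_{\bq(R)}(\Y(R))$ and $P_{\bq(R)}(\Y'(R))$ coincide as simplicial complexes (hence as topological spaces), differing only in the finite versus infinite distance their metrics assign between different $Y^r_{\alpha i}$'s in the same $Y^r_\alpha$.

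Using Lemma~\ref{>R}, it suffices to show that the inclusion-induced functor between truncated categories becomes an equivalence after passage to the Karoubi quotient $\bAc$. Essential surjectivity on objects is immediate from the identification of the underlying topological spaces for $r \geq R_0$: each object in the target is literally an object in the source, and local finiteness reduces to the same condition on the common compact subsets. For fullness, given a morphism $\phi$ in the target of propagation $d$, I choose $R \geq R_0$ large enough that $f_r/(C_r q_r) > d$ for all $r \geq R$; then the restriction of $\phi$ to levels $r \geq R$ has no matrix entries connecting different $Y^r_{\alpha i}$'s, so it is automatically a morphism of the source category, and by Lemma~\ref{>R} it represents the same class in $\bAc$. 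Faithfulness is proved analogously: any morphism of the source that factors through an $\mS$-object in the target, once truncated far enough, must have its factorization occur within a single $Y^r_{\alpha i}$ at each level by the same propagation bound, hence factors through $\mS$ in the source as well. Finally, the vanishing consequence follows from naturality of $\Psi$ combined with the isomorphism at $\bt = \bq$.

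The main obstacle is the interplay between three distinct layers of structure: the simplicial complex (which genuinely differs between $P_\bq(\Y)$ and $P_\bq(\Y')$ for small $r$ but coincides for $r \geq R_0$), the metric (which differs for \emph{all} $r$, assigning finite versus infinite distances between the $Y^r_{\alpha i}$'s), and the Karoubi quotient (which lets us ignore the initial levels at the cost of careful bookkeeping via Lemma~\ref{>R}). Promoting the ``per-morphism'' propagation argument to a genuine equivalence of Karoubi quotients—uniformly over all representatives—is the heart of the proof and requires verifying that local finiteness, boundedness, and continuous control all transport correctly across the inclusion on truncated complexes.
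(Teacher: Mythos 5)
Your overall strategy is the same as the paper's, and the arguments for bijectivity on objects, fullness, and faithfulness of the truncated functor are essentially the paper's arguments. However, your construction of $\Psi_\bt$ has a genuine gap: you assert that there is a natural \emph{inclusion} of simplicial complexes $P_\bt(\Y') \hookrightarrow P_\bt(\Y)$ "because the families $\{Y^r_{\alpha i}\}_i$ are disjoint," and you build $\Psi_\bt$ as the $K$--theory map of the induced functor. But the hypothesis only guarantees that the $Y^r_{\alpha i}$ are $f_r$--disjoint (hence pairwise disjoint) for \emph{sufficiently large} $r$. For small $r$ the sets $Y^r_{\alpha i}$ may overlap --- the paper emphasizes this explicitly, and it is genuinely needed when Lemma~\ref{refinement} is applied in the proof of Lemma~\ref{rho-lemma}, where the refined pieces $W^r_{t\alpha ij}$ are only $C_r s_r (r-2t)$--disjoint and hence may intersect for $r \leqs 2t$. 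When the $Y^r_{\alpha i}$ overlap, the natural simplicial map $\coprod_i P_{t_r}(Y^r_{\alpha i}) \to P_{t_r}(Y^r_\alpha)$ is no longer injective (a shared point appears once per index $i$ in the source but only once in the target), and it need not even be proper if a point lies in infinitely many of the $Y^r_{\alpha i}$. So there is no "inclusion" to feed into Lemma~\ref{functoriality}, and $\Psi_\bt$ cannot be defined by a functor on the untruncated complexes.

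The fix is exactly the paper's device: first pick $R$ large enough that disjointness holds for all $r \geqs R$, construct the functor $\Phi_\bt$ only on the truncated complexes $P_{\bt(R)}(\Y'(R)) \hookrightarrow P_{\bt(R)}(\Y(R))$, and then \emph{define} $\Psi_\bt$ at the level of $K$--theory as $i_* \circ (\Phi_\bt)_* \circ (i')_*^{-1}$, where $i$ and $i'$ are the Lemma~\ref{>R} equivalences. In other words, $\Psi_\bt$ is a zig-zag and exists only after taking $K$--theory, not as a functor $\bAc(P_\bt(\Y')) \to \bAc(P_\bt(\Y))$. You do invoke Lemma~\ref{>R}, but only in the isomorphism verification, not in the construction itself; the construction must already go through the truncation. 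Once you make that change, the rest of your argument (the fullness and faithfulness checks via propagation bounds and Lemma~\ref{metric-comp}, and the naturality in $\bt$) matches the paper's proof.
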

\begin{proof} Naturality of the maps $\Psi_\bt$ means that for $\bt'\geqs \bt$, we will construct a commutative diagram
\begin{equation}\label{Phi-diag}\xymatrix{ K_* \left(\bAc \left( P_\bt (\Y)\right)\right) \ar[rr]^-{\eta_{\bt, \bt'}(\Y)} 
	& &K_* \left(\bAc \left( P_{\bt'} (\Y)\right)\right)  \\
K_* \left(\bAc \left( P_{\bt} (\Y')\right)\right) \ar[rr]^-{\eta_{\bt, \bt'} (\Y') } \ar[u]^{\Psi_\bt} && K_* \left(\bAc \left( P_{\bt'} (\Y')\right)\right).  \ar[u]^{\Psi_{\bt'}}
}
\end{equation}
The final statement of the lemma will  follow from commutativity of Diagram (\ref{Phi-diag}) together with the definition  of vanishing, once we establish that $\Psi_\bq$ is an isomorphism.

We now define the desired homomorphisms $\Psi_{\bt}$ for each sequence $\bt\in \Seq$.
Our hypotheses imply that  there exists $R>0$ such that if $r\geqs R$ then $f_r > q_r>0$
and $d(Y^r_{\alpha i}, Y^r_{\alpha j})\geqs f_r$ for all $\alpha \in A_r$ and all $i, j\in I(r, \alpha)$ with $i\neq j$.
In particular, for such $r, \alpha, i$ and $j$ we have $Y^r_{\alpha i} \cap Y^r_{\alpha j} = \emptyset$, so there is an injective map of simplicial complexes
$$P_{\bt(R )} (\Y' (R )) \injects P_{\bt(R )} (\Y (R )).$$
(Note that for $r<R$, we allow for the possibility that  $Y^r_{\alpha i} \cap Y^r_{\alpha j} \neq \emptyset$ for some $i\neq j$; this will be important when we apply the Lemma~\ref{refinement} in the proof of Lemma~\ref{rho-lemma}.)
This map is proper and decreases distances, so by Lemma~\ref{functoriality} we have an induced functor
$$\Phi_{\bt} \co \bAc\left( P_{\bt(R )} \left(\Y' (R )\right)\right) \maps \bAc\left( P_{\bt(R )} \left(\Y (R )\right)\right).$$
Lemma~\ref{>R}  yields a diagram
\begin{equation}\label{zz}\xymatrix{
 \bAc \left(P_\bt (\Y')  \right)   & \bAc  \left(P_\bt (\Y) \right) \\
\bAc\left(  P_{\bt(R )} \left(\Y'(R )\right) \right) \ar[r]^-{\Phi_\bt} \ar[u]^-{i'}_-\isom & \bAc \left( P_{\bt(R )}  \left(\Y(R )\right) \right)  \ar[u]^-{i}_-\isom,}
\end{equation}
and we define
$$K_* \left(\bAc \left(P_\bt (\Y')  \right) \right) \srm{\Psi_\bt}  K_* \left( \bAc  \left(P_\bt (\Y) \right)\right)$$
to be the homomorphism obtained from Diagram (\ref{zz}) by inverting $(i')_*$.
Given $\bt'\geqs \bt$, we obtain a commutative diagram linking the zig-zag (\ref{zz}) to the corresponding zig-zag for $\bt'$; this yields commutativity of Diagram (\ref{Phi-diag}).

We need to check that $\Psi_\bq$ is an isomorphism.  We will show that $\Phi_\bq$ is an \e{isomorphism} of categories (not just an equivalence).  
For $r\geqs R$, $\alpha\in A_r$, and $i,j\in I(r, \alpha)$ with $i\neq j$, we have $d(Y^r_{\alpha i}, Y^r_{\alpha j})> q_r$,
so the simplicial complex $P_{q_r} (Y^r_\alpha)$ is the disjoint union of the subcomplexes $P_{q_r} (Y^r_{\alpha i})$.  This shows that $\Phi_\bq$ is bijective on objects: each module on $P_\bq (Y^r_\alpha)\cross [0,1)$ is the direct sum of its restrictions to the disjoint subspaces 
$P_\bq (Y^r_{\alpha i})\cross [0,1)$.  Next, we check that $\Phi_\bq$ is surjective on morphisms.  Given $[\psi] \in\bAc P_{\bq(R)} (\Y(R))$, let $T$ be a bound on the propagation of $\psi$.  Since $f_r/C_r q_r\to \infty$, there exists $R_T$ such that $f_r > (T+1)q_r C_r$ for $r\geqs R_T$.
Now $[\psi] = [\psi(R_T)]$, where $\psi(R_T)$ denotes the morphism 
$$\psi(R_T)_{ a,b } = \left\{ \begin{array}{ll}
					\psi_{a,b}, \,\,\, a, b\in P_{q_r} (Y^r_\alpha) \cross [0,1) \textrm{ for some } r\geqs R_T, \alpha\in A_r,\\
					0, \,\,\,\,\,\,\,\,\,\, else.
				      \end{array}
					\right.
$$  
Lemma~\ref{metric-comp} and our choice of $R_T$ imply that $\psi(R_T)$  is a direct sum, over $r>R$, $\alpha\in A_r$ and $i\in I(r, \alpha)$, of controlled morphisms $\psi(R_T)_{r \alpha i}$ supported on $P_{q_r} (Y^r_{\alpha i})\cross [0,1)$.  Hence $[\psi] = [\psi(R_T)]$ is in the image of $\Phi_\bq$.  Note here that both $P_{q_r} (Y^r_{\alpha i})$ and $P_{q_r} (Y^r_{\alpha})$ have the metric inherited from $P_{q_r} (X)$, so the propagation of 
$$\psi(R_T) = \bigoplus_{r \geqs R_T} \bigoplus_{\alpha\in A_r} \bigoplus_{i\in I(r, \alpha)} \psi(R_T)_{r \alpha i}$$ 
is the same whether we consider it as a morphism between  modules on $P_\bq (\Y)$ or on $P_\bq (\Y')$.

Finally, we must check that $\Phi$ is faithful.  If $\Phi_\bq ([\psi_1]) = \Phi_\bq ([\psi_2])$, then for sufficiently large $R_0$, the restrictions of $\psi_1$ and $\psi_2$ to  
$$\coprod_{r>R_0} \coprod_{\alpha\in A_r} P_{q_r} (Y^r_{\alpha})\cross [0,1)$$
 are identical, and hence $[\psi_1] = [\psi_2]$.
\end{proof}

\noindent {\bf Proof of Lemma~\ref{rho-lemma}.}
Given $t>0$, $r\geqs 1$, and $\alpha\in A_r$, we define
\begin{equation}\label{wrta}W^r_{t \alpha} = N_{tC_r s_r} (U^r_\alpha) \cap N_{tC_r s_r} (V^r_\alpha)\cap Z^r_\alpha.\end{equation}
For each $t > 0$ we define the decomposed sequence
$\W_t = \W_t (\U, \V, \Z)$, whose $r\ts{th}$ term is 
\begin{equation}\label{W^r_t} W^r_t = \bigcup_{\alpha\in A_r} W^r_{t \alpha};
\end{equation}
the decomposition of $W^r_t$ is exactly that displayed in (\ref{W^r_t}).
We claim that $\W_t$ is vanishing at $\bs$.  Consider the decomposed sequence $\W_t'$, whose $r\ts{th}$ term is the same as that of $\W_t$, but with the 
finer decomposition
\begin{equation}\label{fine}
W^r_t = \bigcup_{\alpha\in A_r} \bigcup_{(i,j) \in I(r, \alpha) \cross J(r, \alpha)} W^r_{t\alpha i j},
\end{equation}
where
$$W^r_{t\alpha i j} = N_{tC_r s_r} (U^r_{\alpha i}) \cap N_{tC_r s_r} (V^r_{\alpha j})\cap Z^r_\alpha.$$
By (\ref{db}), the families
$$\{U^r_{\alpha i}   : \alpha\in A_r,\, i\in I(r, \alpha)\} \, \textrm{    and    } \, \{V^r_{\alpha j}   : \alpha\in A_r,\, j\in J(r, \alpha)\}$$ 
lie in $\fD_\beta$, so
the induction hypothesis and Lemma~\ref{nbhds} tell us that $\W_t'$ is a vanishing sequence.   
Our disjointness hypotheses (\ref{decomps}) imply that 
$$W^r_{t \alpha} = \coprod^{C_r s_r (r - 2t)-\textrm{disjoint}}_{(i,j) \in I(r, \alpha) \cross J(r, \alpha)} W^r_{t\alpha i j},$$
and $(C_r s_r (r - 2t))/C_r s_r = r - 2t$ tends to infinity with $r$.
By Lemma~\ref{refinement}, $\W_t$ is  vanishing at $\bs$.

For each $t>0$, let $\bbW_t$ denote the set of metric families
$$\bbW_t = \{ \{W^r_{t \alpha}\}_{\alpha \in A_r} \}_{r\geqs 1}.$$
Note that  $P_{\bs'} (\W_{t} ) \subset P_{\bs, \bs'} (X, \bbW_t)$, where the latter complex was introduced in Definition~\ref{rel-Rips}.
For each $\bs'\geqs \bs$, we will show that the map $\rho_{\bs, \bs'}$ factors through a map
\begin{equation}\label{eta-factorization} \mocolim_{t\to \infty} K_{*-1} \left(\bAc^X  P_{ \bs} (\W_{t })\right) 
\xmaps{(\xi_{\bs, \bs'})_*}  \mocolim_{t\to \infty}  K_{*-1} \left(\bAc^\textrm{rel}  P_{\bs'} (\W_{t} )\right),
\end{equation}
where the superscripts indicate that we give these Rips complexes the  metrics induced from the simplicial metrics on 
$P_\bs (\X_{\W_t})$ and $P_{\bs, \bs'} (X, \bbW_t)$ (respectively).   (This choice of metrics will be important in obtaining the desired factorization of $\rho_{\bs, \bs'}$; in particular, if we used the metric on $P_{\bs'} (\W_t)$ inherited from the simplicial metric on $P_{\bs'} (X)$  to define the codomain of $(\xi_{\bs, \bs'})_*$, we would not be able to define the map $l$ in (\ref{eta}) below.)   Lemma~\ref{functoriality} shows that the inclusion of simplicial complexes
$P_{ \bs} (\W_{t }) \subset P_{\bs'} (\W_{t} )$
induces a functor 
$$\xi_{\bs, \bs', t} \co \bAc^X  P_{ \bs} (\W_{t })\maps  \bAc^\textrm{rel}  P_{\bs'} (\W_{t} ).$$ 
We set $\xi_{\bs, \bs'} = \mocolim_t \xi_{\bs, \bs', t}$, and (\ref{eta-factorization}) is the induced map on $K$--theory.

We now  show that for every class on the left-hand side of (\ref{eta-factorization}), there exists $\bs'\geqs \bs$ such that $(\xi_{\bs, \bs'})_* (x) = 0$.  (The corresponding result for $\rho_{\bs, \bs'}$ will follow immediately once we establish the claimed factorization.)
It suffices to show that the functor $\xi_\bs = \mocolim_{\bs' \in \Seq} \xi_{\bs, \bs'}$ induces the zero-map on $K$--theory.  In Lemma~\ref{metric-ind}, we constructed functors
$$\Phi_{\bs, t} \co \bAc^{X} (P_\bs (\W_t)) \maps \mocolim_{\bs'\in \Seq} \bAc^{\W_t} (P_{\bs'} (\W_t)),$$
where again the superscripts indicate the chosen metrics on the Rips complexes (see the discussion preceding Lemma~\ref{metric-ind}).  Lemma~\ref{functoriality} yields functors $\bAc^{\W_t} (P_{\bs'} (\W_t))\xmaps{\Psi_{\bs', t}} \bAc^\textrm{rel}  (P_{\bs'} (\W_{t}))$ for each $\bs' \in \Seq$ and each $t>0$, and now $\xi_\bs= \mocolim_{\bs' \in \Seq} \xi_{\bs, \bs'}$   factors as
\begin{eqnarray*}  \mocolim_{t\to \infty} \bAc^X  \left(P_{ \bs} (\W_{t })\right) \xmaps{\mocolim_{t} \Phi_{\bs, t}}  \mocolim_{t\to \infty} \mocolim_{\bs'\in \Seq} \bAc^{\W_t} (P_{\bs'} (\W_t))\hspace{1.2in}\\
 \hspace{.2in} \xmaps{\colim_{t, \bs'}  \Psi_{\bs', t}} \mocolim_{t\to \infty} \mocolim_{\bs'\in \Seq}   \bAc^\textrm{rel}  P_{\bs'} (\W_{t} )  \isom
\mocolim_{\bs'\in \Seq}   \mocolim_{t\to \infty}    \bAc^\textrm{rel}  P_{\bs'} (\W_{t} ).
 \end{eqnarray*}
 It will suffice to show that the maps $\Phi_{\bs, t}$ induce the zero map on $K$--theory for all $t$.  
Recall (see Diagram (\ref{compatible})) that for each $t$ and each  $\bs'\geqs \bs$, the map $\Phi_{\bs, t}$ factors through the natural map 
$$ \bAc^X  P_{ \bs} (\W_{t }) \xmaps{ \eta_{\bs, \bs', t}}  \bAc^X  P_{ \bs'} (\W_{t }),$$
and hence $\mocolim_{t} \Phi_{\bs, t}$ factors through $\colim_{t, \bs'}  \eta_{\bs, \bs', t}$. 
As established above, $\W_t$ is vanishing at $\bs$ for every $t$, so $\colim_{\bs'}  \eta_{\bs, \bs', t}$ induces the trivial map on $K$--theory for each $t$.  Hence the first map $\mocolim_{t} \Phi_{\bs, t}$ in the above composition induces the  trivial map on $K$--theory, and we conclude that the same is true of $\xi_\bs$ (as desired).

The desired factorization of $\rho_{\bs, \bs'}$ comes from a sequence of functors
\begin{eqnarray}\label{eta}  
\I_\bs (\U, \V) \srm{i} \mocolim_{t\to \infty} \bAc^X \left(P_\bs (\Z) \cap (N_t P_\bs \U)\cap (N_t P_\bs \V)\right)\hspace{1in} \notag\\
 \srm{j}  \mocolim_{t\to \infty} \bAc^X \left(  P_\bs ( \W_t)\right)
\xmaps{\colim_t \xi_{\bs, \bs', t}}\mocolim_{t\to \infty} \bAc^\textrm{rel} \left(  P_{\bs'} ( \W_t) \right)\\
\srm{l} \mocolim_{t\to \infty} \I'_{\bs, \bs', t} (\U, \V).\notag
\end{eqnarray}
The functors $i$ and $l$ are inclusions of categories that exist by the definitions of the intersection terms in the Mayer--Vietoris sequences.  (In the case of $l$, note that both of these categories are defined using the simplicial metric on $P_{\bs, \bs'} (X, \bbW_t)$, and use the fact that for metric spaces $A\subset B$, a continuously controlled morphism between geometric modules on $A\cross [0,1)$ is also continuously controlled on $B\cross [0,1)$.  This latter fact was shown in the proof of Lemma~\ref{metric-ind}, and also follows from Lemma~\ref{functoriality}.)
  The functor $j$ exists by Equation (\ref{rnc1}) in Lemma~\ref{rel-nbhd-comp} (which may be applied to non-relative Rips complexes simply by setting the two parameters $s, s'$ appearing in the Lemma to be equal), and it is immediate from the definitions that the composite of these functors is the functor inducing $\rho_{\bs, \bs'}$ on $K$--theory.  
$\hfill \Box$

\vspace{.2in}
For the proof of Lemma~\ref{mu-lemma}, we need a relative version of (one part of) Lemma~\ref{refinement}.

\begin{lemma}$\label{rel-refinement}$ Let $\Q = (Q^1, Q^2, \ldots)$ and $\Y = (Y^1, Y^2, \ldots)$ be decomposed sequences in $X$ satisfying $\Q\subset  \Y$, and say the decompositions of these sequences are $Q^r = \bigcup_{\alpha\in A_r} Q^r_\alpha$ and $Y^r = \bigcup_{\alpha\in A_r} Y^r_\alpha$.  Let $\bq$, $\bbg \in \Seq$ satisfy 
$$\lim_{r\to \infty} g_r/ C_r q_r = \infty,$$ 
where $C_r = C(q_r, X)$ is the constant from Definition~\ref{C}.  Assume that for each $r$ and each $\alpha\in A_r$ we have decompositions 
$$Q^r_\alpha = \bigcup_{i\in I(r, \alpha)} Q^r_{\alpha i}.$$
Let $\Q'$ and $\Y'$ (respectively) be the decomposed sequences $\Q' = (Q^1, Q^2, \ldots)$  and 
$\Y' = (Y^1, Y^2, \ldots)$, with decompositions
$$Q^r = \bigcup_{\alpha\in A_r} \bigcup_{i\in I(r, \alpha)} Q^r_{\alpha i} \,\,\,  \textrm{and} \,\,\, Y^r = \bigcup_{\alpha\in A_r} \bigcup_{i\in I(r, \alpha)} Y^r_{\alpha i},$$
where $Y^r_{\alpha i} = Y^r_\alpha$ for each $r\geqs 1$ and each $i\in I(r, \alpha)$.  Note that  $\Q'\subset \Y'$.

Assume further that we are given a set
$$\bbW = \{\bbW^r_\alpha : r\geqs 1,\, \alpha\in A_r\}$$ 
of metric families in $X$.
Let 
$$\bbW' = \{\bbW^r_{\alpha i} : r\geqs 1,\, \alpha\in A_r,\, i\in I(r, \alpha)\}$$
 be a refinement of $\bbW$, in the sense that for each $r\geqs 1$ and each $\alpha\in A_r$, we have
$$\bbW^r_\alpha = \{S  : S\in \bbW^r_{\alpha i} \textrm{ for some } i\in I(r,\alpha)\}.$$  
Given $r\geqs 1$, $\alpha\in A_r$, and $i\in I(r,\alpha)$, let 
\begin{equation}\label{W}W^r_{\alpha i} = \bigcup \bbW^r_{\alpha i} = \{x\in X : x\in S \textrm{ for some } S\in \bbW^r_{\alpha i}\}
\end{equation}
denote the union of all the sets in the family 
$\bbW^r_{\alpha i}$.
Assume that there exists $R_0>0$ such that 
 for each $r\geqs R_0$ and each $\alpha\in A_r$, the family 
$$\{Q^r_{\alpha i} \cup  W^r_{\alpha i}\}_{i\in I(r, \alpha)}$$
 is $g_r$--disjoint, meaning that  
\begin{equation}\label{g}d(Q^r_{\alpha i} \cup  W^r_{\alpha i}, Q^r_{\alpha j} \cup  W^r_{\alpha j}) > g_r\end{equation}
 for $i\neq j$.

Then for each $\bq'\in \Seq$ and each $*\in \bbZ$, there is an isomorphism
$$\Psi_{\bq, \bq'} \co K_* \left(\bAc^{\Y'+} \left(P_{\bq, \bq'} (\Q', \bbW')  \right) \right) \srm{\isom} K_* \left( \bAc^{\Y+}  \left(P_{\bq, \bq'} (\Q, \bbW) \right)\right).$$
\end{lemma}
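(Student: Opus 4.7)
The proof plan is to follow the template of Lemma~\ref{refinement}. I will construct a functor
$$\Phi \co \bAc^{\Y'+}(P_{\bq, \bq'}(\Q', \bbW')) \maps \bAc^{\Y+}(P_{\bq, \bq'}(\Q, \bbW))$$
induced by the natural map of simplicial complexes, and show it becomes an equivalence of additive categories after restricting on each side to the $r\geqs R_0$ tail via a relative version of Lemma~\ref{>R}. That relative version is proven exactly as the original one: in the Karoubi quotient $\bAc$, modules supported on any fixed initial segment $\coprod_{r<R_0}$ are killed, so the inclusion of the tail is an equivalence. The desired map $\Psi_{\bq,\bq'}$ is then obtained from $\Phi$ by this tail identification.

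The first step is purely simplicial. For $r\geqs R_0$, with $q_r<g_r$ (which holds automatically for $r$ large since $g_r/C_r q_r \to \infty$), the $g_r$-disjointness of the sets $\{Q^r_{\alpha i}\cup W^r_{\alpha i}\}_{i\in I(r,\alpha)}$ forces the decomposition of simplicial complexes
$$P_{q_r,q'_r}(Q^r_\alpha,\bbW^r_\alpha) = \coprodmo_{i\in I(r,\alpha)} P_{q_r,q'_r}(Q^r_{\alpha i},\bbW^r_{\alpha i}).$$
Indeed, a small simplex has vertices in $Q^r_\alpha$ at pairwise distance $\leqs q_r<g_r$, so lies in a single $Q^r_{\alpha i}$; a large simplex has vertices in some $S\in \bbW^r_\alpha$, and by the refinement condition $S\in \bbW^r_{\alpha i}$ for some $i$; the pieces are pairwise vertex-disjoint by $g_r>0$. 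The natural inclusion then defines $\Phi$ via Lemma~\ref{functoriality}, and $\Phi$ is bijective on objects by Construction~\ref{sum}, since each module in the target decomposes as the direct sum of its restrictions to the pieces.

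The heart of the argument is fullness of $\Phi$. Given a morphism $\psi$ in the target of finite propagation $T$ in the $\Y$-metric (the simplicial metric on $P_{q_r,q'_r}(X,\bbW^r_\alpha)$) and support within a $D$-neighborhood of $P_{\bq,\bq'}(\Q,\bbW)$, I claim that for $r$ sufficiently large, $\psi$ has zero matrix entries between the $D$-neighborhoods of distinct pieces. This rests on a separation estimate that extends Lemma~\ref{rel-metric-comp}: writing $V_k:=Q^r_{\alpha k}\cup W^r_{\alpha k}$, the subcomplexes $P_{q_r,q'_r}(Q^r_{\alpha i},\bbW^r_{\alpha i})$ are pairwise separated by simplicial distance at least $g_r/(q_r C_r)-2$ inside $P_{q_r,q'_r}(X,\bbW^r_\alpha)$. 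The key observation is that every nonempty large simplex of the ambient complex has all its vertices in a single $V_k$ (vertices in some $S\in\bbW^r_{\alpha i}$ automatically lie in $V_i$, and $V_i\cap V_j=\emptyset$ by $g_r$-disjointness), and consecutive big simplices along a geodesic cannot share a face unless their $V$-labels coincide. Consequently any simplicial path from $V_i$ to $V_j$ contains a subpath in small simplices (i.e.\ in $P_{q_r}(X)$) connecting vertices in distinct $V_k$'s; by Lemma~\ref{metric-comp} such a subpath has simplicial length at least $g_r/(q_r C_r)$, yielding the asserted bound after accounting for a $\leqs 1$ offset at each endpoint. Taking $r$ large enough that $g_r/(q_r C_r)-2 > T+2D$ forces the non-mixing of $\psi$, so $\psi$ is block-diagonal and is the image of its block-diagonal lift under $\Phi$.

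Faithfulness of $\Phi$ is immediate from the vertex-disjointness of the pieces. Combined with bijection on objects and fullness, this makes $\Phi$ an isomorphism of additive categories after the tail restriction, so $\Psi_{\bq,\bq'}$ is an isomorphism on non-connective $K$-theory, as required. The main obstacle is the separation estimate: whereas the non-relative Lemma~\ref{refinement} only required Lemma~\ref{metric-comp}, the presence of large simplices in the ambient complex forces the extension of Lemma~\ref{rel-metric-comp} sketched above, and the key new bookkeeping is tracking which $V_k$-label each simplex along a geodesic carries, exploiting the $g_r$-disjointness of \emph{all} the $V_k$'s from one another (not merely of $V_i$ from $V_j$).
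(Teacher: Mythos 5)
Your proposal follows essentially the same route as the paper's proof: restrict to a large tail via Lemma~\ref{>R}, use $g_r$--disjointness (through Lemma~\ref{rel-metric-comp}) to show the relative Rips complex of $Q^r_\alpha$ splits into pieces indexed by $i$, and then show the natural functor is bijective on objects, full, and faithful by the separation estimate. One step is stated too loosely, though: you attribute bijectivity on objects to Construction~\ref{sum}, but that only splits a module over a subset and its complement; what the paper actually needs, and proves as its equation~(\ref{N=N'}), is that for $r$ large the $T$--neighborhood of $P_{q_r,q'_r}(Q^r_{\alpha i},\bbW^r_{\alpha i})$ taken inside $P_{q_r,q'_r}(X,\bbW^r_\alpha)$ coincides with the $T$--neighborhood taken inside the finer complex $P_{q_r,q'_r}(X,\bbW^r_{\alpha i})$ --- otherwise the $i$--th restriction of a target object need not be supported on a neighborhood of the right kind and so need not be a source object. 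Your separation estimate (``any path from $V_i$ to $V_j$ must be long'') is exactly what proves this, so the gap is one of bookkeeping rather than substance, but it should be stated as an explicit claim about the two neighborhoods rather than waved at via Construction~\ref{sum}. Note also that the paper invokes Lemma~\ref{rel-metric-comp} directly (taking $X_2$ and $\bbW_2$ to be the unions over $k\neq i$), so no genuine extension of that lemma is required.
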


\begin{proof}
For each $T>0$, define
$$N_{T, d_\Delta'}^{Y^r_\alpha} \left(P_{q_r, q'_r} \left(Q^r_{\alpha i}, \bbW^r_{\alpha i}\right)\right) :=  P_{q_r, q'_r} \left(Y^r_\alpha, \bbW^r_{\alpha i}\right) \cap N_{T, d_\Delta'} \left(P_{q_r, q'_r} \left(Q^r_{\alpha i}, \bbW^r_{\alpha i}\right)\right),$$
where on the right, the neighborhood is taken inside the larger complex $P_{q_r, q'_r} \left(X, \bbW^r_{\alpha i}\right)$, with its simplicial metric $d_\Delta'$.
Similarly, let 
$$N_{T, d_\Delta }^{Y^r_\alpha} \left(P_{q_r, q'_r}  \left(Q^r_\alpha, \bbW^r_\alpha\right)\right)
 :=  P_{q_r, q'_r} \left(Y^r_\alpha, \bbW^r_{\alpha}\right) \cap N_{T, d_\Delta  } \left(P_{q_r, q'_r} \left(Q^r_{\alpha}, \bbW^r_{\alpha}\right)\right),$$
where on the right, the neighborhood is taken inside $P_{q_r, q'_r} \left(X, \bbW^r_{\alpha}\right)$ with its simplicial metric $d_\Delta = d_\Delta (r, \alpha)$.

Our hypotheses imply that for each $T>0$, there exists $R_T \geqs R_0$ such that if $r\geqs R_T$ then 
\begin{equation}\label{g2}g_r>(2T+2) C_rq_r.\end{equation} 
Set
$$K'_{T, d'_\Delta} (R_T) := \coprodmo_{r\geqs R_T} \coprodmo_{\alpha\in A_r} \coprodmo_{i\in I(r, \alpha)} N_{T, d'_\Delta}^{Y^r_\alpha} \left(P_{q_r, q'_r} (Q^r_{\alpha i}, \bbW^r_{\alpha i})\right)$$
and
$$K_T(R_T) :=
\coprodmo_{r\geqs R_T} \coprodmo_{\alpha\in A_r} N_{T, d_\Delta}^{Y^r_\alpha} \left(P_{q_r, q'_r} (Q^r_{\alpha  }, \bbW^r_{\alpha })\right).$$
Let $\bbW(R_T)$ and $\bbW'(R_T)$
denote the sets of metric families
$$\{\bbW^{r}_\alpha : r\geqs R_T,\, \alpha\in A_r\} \,\,\, \textrm{ and } \,\,\, \{\bbW^{r}_{\alpha i} : r\geqs R_T,\, \alpha\in A_r,\, i\in I(r, \alpha)\},$$
respectively.
We have inclusions of simplicial complexes
$$K'_{T, d'_\Delta} (R_T) \subset P_{\bq(R_T), \bq'(R_T)} \left(X, \bbW'(R_T)\right)$$
and
$$K_T (R_T) \subset P_{\bq(R_T), \bq'(R_T)} \left(X, \bbW(R_T)\right)$$
(recall that these relative Rips complexes were introduced in Definition~\ref{rel-Rips}), and 
we give $K'_{T, d'_\Delta} (R_T)$ and $K_T (R_T)$ the metrics induced from the simplicial metrics on  these relative Rips complexes.

The inclusion maps  
$$N_{T, d'_\Delta}^{Y^r_\alpha} \left(P_{q_r, q'_r} (Q^r_{\alpha i}, \bbW^r_{\alpha i})\right) \injects N_{T, d_\Delta}^{Y^r_\alpha} \left(P_{q_r, q'_r}  \left(Q^r_\alpha, \bbW^r_\alpha\right)\right)$$
combine to yield a simplicial map
$$K'_{T, d'_\Delta} (R_T)  \xmaps{\phi_{\bq, \bq'}(T)}  K_T (R_T),$$
which decreases distances (by our choice of metrics).
We claim that $\phi_{\bq, \bq'} (T)$ is injective as well.  If not, we would have
\begin{eqnarray*}d_\Delta \left(P_{q_r, q'_r} \left(Q^r_{\alpha i}, \bbW^r_{\alpha i}\right), P_{q_r,  q'_r} \left(Q^r_{\alpha j}, \bbW^r_{\alpha j}\right) \right) \hspace{1in}\\
\hspace{1in} \leqs d'_\Delta \left(P_{q_r, q'_r} \left(Q^r_{\alpha i}, \bbW^r_{\alpha i}\right), P_{q_r,  q'_r} \left(Q^r_{\alpha j}, \bbW^r_{\alpha j}\right) \right) < 2T
\end{eqnarray*}
for some $r>R_T$, $\alpha\in A_r$, and $i, j\in I(r, \alpha)$ with $i\neq j$, and then Lemma~\ref{rel-metric-comp} would yield
$$d\left(Q^r_{\alpha i} \cup W^r_{\alpha i}, \bigcup_{k\in I(r, \alpha),\, k\neq i} \left(Q^r_{\alpha k} \cup  W^r_{\alpha k}\right)\right)
\leqs (2T+2)C_r q_r,$$
contradicting (\ref{g}) and (\ref{g2}).

Since $\phi_{\bq, \bq'} (T)$ is injective and decreases distances, by Lemma~\ref{functoriality} it induces a functor
$$\bAc(K'_{T, d'_\Delta}(R_T)) \xmaps{\Phi_{\bq, \bq'}(T)} \bAc(K_T(R_T)).$$

Define
$$
K'_{T, d'_\Delta} := \coprodmo_{r\geqs 1 }  \coprodmo_{\alpha\in A_r} \coprodmo_{i\in I(r, \alpha)}  N_{T, d'_\Delta }^{Y^r_\alpha} \left(P_{q_r, q'_r} (Q^r_{\alpha i}, \bbW^r_{\alpha i})\right)$$
and
$$K_T := \coprodmo_{r\geqs 1}  \coprodmo_{\alpha\in A_r} N_{T, d_\Delta }^{Y^r_\alpha} \left(P_{q_r, q'_r}  \left(Q^r_\alpha, \bbW^r_\alpha\right)\right),$$
and give these complexes the metrics induced by the simplicial metrics on $P_{\bq, \bq'} (X, \bbW')$ and $P_{\bq, \bq'} (X, \bbW)$, respectively.
By Lemma~\ref{>R}, we have a diagram
\begin{equation}\label{zz-rel}\xymatrix{
\bAc(K'_{T, d'_\Delta})   & & \bAc(K_T)  \\
\bAc(K'_{T, d'_\Delta}(R_T) ) \ar[rr]^-{\Phi_{\bq, \bq'} (T)} \ar[u]^-{i'}_-\isom & &\bAc (K_T(R_T) )  \ar[u]^-{i}_-\isom,}
\end{equation}
and we define
$$K_* \left(\bAc(K'_T) \right) \xmaps{\Psi_{\bq, \bq'} (T)}  K_* \left( \bAc(K_T)\right)$$
by the equation  
$$\Psi_{\bq, \bq'} (T) = i_* \circ (\Phi_{\bq, \bq'} (T))_* \circ(i')_*^{-1}.$$

By definition, we have
$$\bAc^{\Y+} P_{\bq, \bq'} (\Q, \bbW) = \mocolim_{T>0}  \bAc (K_T),$$
and  
\begin{equation} \label{prime}\bAc^{\Y'+} P_{\bq, \bq'} (\Q', \bbW') = \mocolim_{T>0}  \bAc (K'_{T, d'_\Delta}).\end{equation}
The maps $\Psi_{\bq, \bq'} (T)$ are natural with respect to $T$, 
so we obtain the desired map  $\Psi_{\bq, \bq'} = \mocolim_{T>0} \Psi_{\bq, \bq'}(T)$:
$$K_* \left(\bAc^{\Y'+} \left(P_{\bq, \bq'}  (\Q', \bbW')  \right) \right) \xmaps{\Psi_{\bq, \bq'}}  K_* \left( \bAc^{\Y+}  \left(P_{\bq, \bq'}  (\Q, \bbW) \right)\right).$$
We need to check that $\Psi_{\bq, \bq'}$ is an isomorphism.  We will show that $\Phi_{\bq, \bq'}(T)$ is an isomorphism of categories for each $T>0$.

We claim that the maps $\phi_{\bq, \bq'}(T)$ are actually \e{bijections}.  We have already shown that $\phi_{\bq, \bq'}(T)$ is injective, so we need only consider surjectivity.  

Set
$$N_{T, d_\Delta}^{Y^r_\alpha} \left(P_{q_r, q'_r} \left(Q^r_{\alpha i}, \bbW^r_{\alpha i}\right)\right) :=  P_{q_r, q'_r} \left(Y^r_\alpha, \bbW^r_{\alpha}\right) \cap N_{T, d_\Delta } \left(P_{q_r, q'_r} \left(Q^r_{\alpha i}, \bbW^r_{\alpha i}\right)\right),$$
where on the right, the neighborhood is taken inside $P_{q_r, q'_r} \left(X, \bbW^r_{\alpha}\right)$ with its simplicial metric $d_\Delta$.
We claim that for
$r\geqs R_T$,
\begin{equation}\label{N=N'} N_{T, d_\Delta}^{Y^r_\alpha} \left(P_{q_r, q'_r} \left(Q^r_{\alpha i}, \bbW^r_{\alpha i}\right)\right)
= N_{T, d_\Delta'}^{Y^r_\alpha} \left(P_{q_r, q'_r} \left(Q^r_{\alpha i}, \bbW^r_{\alpha i}\right)\right) 
\end{equation}
for all $\alpha\in A_r$, $i\in I(r, \alpha)$.  
It follows easily from the  definitions that 
$$N_{T, d_\Delta'}^{Y^r_\alpha} \left(P_{q_r, q'_r} \left(Q^r_{\alpha i}, \bbW^r_{\alpha i}\right)\right) 
\subset N_{T, d_\Delta}^{Y^r_\alpha} \left(P_{q_r, q'_r} \left(Q^r_{\alpha i}, \bbW^r_{\alpha i}\right)\right).$$
Now say $x\in N_{T, d_\Delta}^{Y^r_\alpha} \left(P_{q_r, q'_r} \left(Q^r_{\alpha i}, \bbW^r_{\alpha i}\right)\right)$.  
Then there is a piecewise geodesic path $\gamma$ in $P_{q_r, q_r'} (X, \bbW^r_\alpha)$, of length less than $T$, from $x$ to $P_{q_r, q'_r} \left(Q^r_{\alpha i}, \bbW^r_{\alpha i}\right)$.  We claim that $\gamma$ lies inside $P_{q_r, q'_r} \left(X, \bbW^r_{\alpha i}\right)$ (which will imply, in particular, that $x\in N_{T, d'_\Delta}  \left(P_{q_r, q'_r} \left(Q^r_{\alpha}, \bbW^r_{\alpha i}\right)\right)$).  If not, then for some $t\in [0,1]$ and some $j\in I(r, \alpha)$ with $i\neq j$, we have $\gamma(t) \in  P_{q'_r} (W)$ for some $W\in \bbW^r_{\alpha j}$.  Then
$$d_\Delta \left(P_{q_r, q'_r} \left(Q^r_{\alpha i}, \bbW^r_{\alpha i}\right), P_{q_r,  q'_r} \left(Q^r_{\alpha j}, \bbW^r_{\alpha j}\right) \right) < T.$$
By Lemma~\ref{rel-metric-comp}, we have
$$d\left(Q^r_{\alpha i} \cup W^r_{\alpha i}, \bigcup_{j\in I(r, \alpha),\, j\neq i} \left(Q^r_{\alpha j} \cup  W^r_{\alpha j}\right)\right)
\leqs (T+2)C_r q_r,$$
contradicting our choice of $R_T$ (note that  $W^r_{\alpha i}$ and $W^r_{\alpha j}$ were defined in (\ref{W})).  A similar argument shows that $x\in P_{q_r, q_r'} (Y^r, \bbW^r_{\alpha i})$, establishing (\ref{N=N'}).  From here on we drop the subscripts $d_\Delta$ and $d_\Delta'$ from the sets in (\ref{N=N'}).

By (\ref{g}) and (\ref{g2}), for $r> R_T$, $\alpha\in A_r$, and $i, j\in I(r, \alpha)$ with $i\neq j$, we have $d(Q^r_{\alpha i}, Q^r_{\alpha j}) > (2T+2) C_rq_r> q_r$.
Hence
$$P_{q_r, q_r'} (Q^r_{\alpha}, \bbW^r_\alpha) = \bigcup_{i\in I(r, \alpha)} P_{q_r, q_r'} (Q^r_{\alpha i}, \bbW^r_{\alpha i}).$$
Together with (\ref{N=N'}), this establishes surjectivity of $\phi_{\bq, \bq'}(T)$.

Bijectivity of $\phi_{\bq, \bq'}(T)$ implies that $\Phi_{\bq, \bq'}(T)$ is bijective on objects: for all $r\geqs R_T$, $\alpha\in A_r$, each module on 
$$N_{T, d_\Delta}^{Y^r_\alpha} \left(P_{q_r, q'_r} (Q^r_\alpha, \bbW^r_\alpha)\right)\cross [0,1) = \coprod_{i\in I(r, \alpha)} N_{T}^{Y^r_\alpha} \left( P_{q_r, q'_r} (Q^r_{\alpha i}, \bbW^r_{\alpha i}) \right)\cross [0,1)$$
 is the direct sum of its restrictions to the disjoint subspaces on the right.

Next we check that $\Phi_{\bq, \bq'} (T) $ is full.  Each morphism $\alpha$  in 
$\bAc^{\Y+} P_{\bq, \bq'} (\Q, \bbW)$
is represented by a morphism $\psi$   in $\Ac (K_T)$ for some $T>0$.
Let $D$ be a bound on the propagation of $\psi$.  Since $g_r/C_r q_r \to \infty$, there exists $S=S(\psi) \geqs R_T$ such that $g_r  > (D+2T+2)C_rq_r $ for $r\geqs S$.
Setting
$$\psi(S)_{ a,b } = \left\{ \begin{array}{ll}
					\psi_{a,b}, \,\,\, a, b\in N_{T}^{Y^r_\alpha} \left(P_{q_r, q'_r} (Q^r_\alpha, \bbW^r_\alpha)\right) \cross [0,1) \textrm{ for some } r\geqs S\\
					 \,\,\,\,\,\,\,\,\,\,  \,\,\,\,\,\,\,\,\,\,  \,\,\,\,\,\,\,\,\,\, \textrm{ and some } \alpha\in A_r,\\
					0, \,\,\,\,\,\,\,\,\,\, else,
				      \end{array}
					\right.
$$  
we have $[\psi] = [\psi(S)]$ as morphisms in $\bAc (K_T)$.
As above, Lemma~\ref{rel-metric-comp} and our choice of $S$ imply that for $r>S$, $\alpha\in A_r$, and $i,j\in I(r, \alpha)$ with $i\neq j$,
\begin{equation}\label{D+2T}d_\Delta \left(P_{q_r, q'_r} \left(Q^r_{\alpha i}, \bbW^r_{\alpha i}\right), P_{q_r,  q'_r} \left(Q^r_{\alpha j}, \bbW^r_{\alpha j}\right)\right) \geqs D+2T.
\end{equation}
Hence $\psi(S)$  is a direct sum, over $r\geqs S$, $\alpha\in A_r$, and $i\in I(r, \alpha)$, of morphisms supported on 
$N_{T}^{Y^r_\alpha} \left(P_{q_r, q_r'} (Q^r_{\alpha i}, \bbW^r_{\alpha i})\right)\cross [0,1).$  When viewed as a morphism between modules on $P_{\bq (S), \bq' (S)} (X, \bbW'(S))\cross [0,1)$, the morphism
$\psi (S)$ still has propagation at most $D$: if  $i\in I(r, \alpha)$ for some $r\geqs S$ and some $\alpha\in A_r$ and there exist points
$$(x,t), (y,s)\in N_{T}^{Y^r_\alpha} \left(P_{q_r, q_r'} (Q^r_{\alpha i}, \bbW^r_{\alpha i})\right)\cross [0,1)$$ with $\psi(S)_{(x,t), (y, s)} \neq 0$, then there exists a simplicial path in $P_{q_r, q_r'} (X, \bbW^r_{\alpha})$ of length at most $D$ connecting $x$ and $y$.  This path must in fact lie in $P_{q_r, q_r'} (X, \bbW^r_{\alpha i})$, since otherwise 
we would have 
$$d_\Delta  \left(P_{q_r, q'_r} \left(Q^r_{\alpha i}, \bbW^r_{\alpha i}\right), P_{q_r,  q'_r} \left(Q^r_{\alpha j}, \bbW^r_{\alpha j}\right)\right) < T+D$$
for some $j\in I(r, \alpha)$ with $j\neq i$, contradicting (\ref{D+2T}).
This shows that $[\psi] = [\psi(S)]$ is in the image of $\Phi_{\bq, \bq'} (T)$.  

Finally,  check that $\Phi_{\bq, \bq'} (T)$ is faithful.  If $\Phi_{\bq, \bq'} (T) ([\psi_1]) = \Phi_{\bq, \bq'} (T) ([\psi_2])$, then for sufficiently large $R_0$, the restrictions of $\psi_1$ and $\psi_2$ to  
$$\coprod_{r>R_0} \coprod_{ \alpha\in A_r} P_{\bq, \bq'} (Q^r_{\alpha}, \bbW^r_\alpha)\cross [0,1)$$ 
are identical, and hence $[\psi_1] = [\psi_2]$.

\end{proof}

\vspace{.1in}
\noindent {\bf Proof of Lemma~\ref{mu-lemma}.}
As in (\ref{decomps}), let $\U'$ and $\V'$ be the decomposed sequences $\U' = (U^1, U^2, \ldots)$  and $\V' = (V^1, V^2, \ldots)$,  with  decompositions
$$U^r = \bigcup_{\alpha\in A_r} \bigcup_{i\in I(r, \alpha)} U^r_{\alpha i} \, \textrm{ and } \, 
V^r = \bigcup_{\alpha\in A_r} \bigcup_{j\in J(r, \alpha)} V^r_{\alpha j},$$ 
respectively.  By  (\ref{db}), we have $\U', \V' \in \D_\beta (X)$. 
 
Given $t>0$, let $N^\Z_{t\bC  \bs} \U'$ denote the decomposed sequence with $r$th term
\begin{equation}\label{NZU'}\bigcup_{\alpha\in A_r} \bigcup_{i\in I(r, \alpha)} Z^r_\alpha \cap N_{tC_r s_r} U^r_{\alpha i}\end{equation}
and with decompositions exactly as shown in (\ref{NZU'}),
and similarly for $\V$ in place of $\U$.
Lemma~\ref{nbhds2} implies that for each $t>0$, $N^\Z_{t\bC  \bs} \U'$ and $N^\Z_{t\bC  \bs} \V'$ are in $\D_\beta (X)$ as well.  By the induction hypothesis, $N^\Z_{t\bC  \bs} \U'$ and $N^\Z_{t\bC  \bs} \V'$ are vanishing sequences.

We will show that for any $\bs', \bs''\in \Seq$ with $\bs \leqs \bs'\leqs \bs''$, the map $\mu_{\bs, \bs', \bs''} = \mu_{\bs, \bs', \bs''} (\U) \oplus \mu_{\bs, \bs', \bs''} (\V)$ factors through the direct sum of the maps
\begin{equation}\label{eta-eq} \mocolim_{t\to \infty} K_* \bAc^X (P_{\bs'} (N^\Z_{t\bC  \bs} \U' )) 
	\xmaps{\eta = \mocolim_t \eta_{\bs', \bs''} (t) } \mocolim_{t\to \infty}   K_* \bAc^X (P_{\bs''} (N^\Z_{t\bC  \bs} \U' )) 
\end{equation}
and
\begin{equation} \label{eta2}  \mocolim_{t\to \infty} K_* \bAc^X (P_{\bs'} (N^\Z_{t\bC  \bs} \V' )) \xmaps{\eta =\mocolim_t \eta_{\bs', \bs''} (t) } \mocolim_{t\to \infty}   K_* \bAc^X (P_{\bs''} (N^\Z_{t\bC  \bs} \V' )).
\end{equation}
Since $N^\Z_{t\bC  \bs} \U'$ and $N^\Z_{t\bC  \bs} \V'$ are vanishing sequences,
the desired result will follow from this factorization.  
We will in fact show that $\mu_{\bs, \bs', \bs''} (\U)$ factors through (\ref{eta-eq}) and $\mu_{\bs, \bs', \bs''} (\V)$ factors through (\ref{eta2}).
From here on we deal only with $\U$; the argument for $\V$ is identical.

For $t> 0$, $r\geqs 1$, $\alpha\in A_r$, $i\in I(r, \alpha)$ and $j\in J(r, \alpha)$, let
$$W^r_{t \alpha i j} = Z^r_\alpha \cap N_{tC_r s_r} (U^r_{\alpha i}) \cap N_{t C_r s_r} (V^r_{\alpha j})$$
and let 
$$W^r_{t \alpha i} = \bigcup_{j\in J(r, \alpha)} W^r_{t \alpha i j}.$$
Furthermore, let $\bbW^r_{t \alpha i }$ denote the metric family
$\{W^r_{t \alpha i j}\}_{j\in J(r, \alpha)}$.

Given $T> 0$, $r\geqs 1$, $\alpha\in A_r$, and $i\in I(r, \alpha)$,   we have
\begin{eqnarray}\label{1} \hspace{.3in}P_{s_r, s_r'} \left(Z^r_\alpha,\bbW^r_{t \alpha i } \right)\cap N_{T} \left( P_{s_r, s'_r} \left(U^r_{\alpha i}, \bbW^r_{t \alpha i } \right)\right)\\
\subseteq
P_{s_r, s_r'} \left(Z^r_\alpha, W^r_{t \alpha i}\right) \cap N_{T} \left( P_{s_r, s'_r} \left(U^r_{\alpha i}, W^r_{t \alpha i} \right)\right),\notag
\end{eqnarray}
where the first neighborhood is taken with respect to the simplicial metric on 
$P_{s_r, s_r'} \left(X, \bbW^r_{t \alpha i } \right)$, and  the second neighborhood is taken with respect to the (smaller) simplicial metric on 
$P_{s_r, s_r'} \left(X, W^r_{t \alpha i}\right)$.
Equation (\ref{rnc2}) in Lemma~\ref{rel-nbhd-comp}, along with the fact that $W^r_{t\alpha i}\subset Z^r_\alpha$, now shows that
\begin{eqnarray} \label{2}P_{s_r, s_r'} \left(Z^r_\alpha, W^r_{t \alpha i} \right) \cap N_{T} \left( P_{s_r, s'_r} \left(U^r_{\alpha i}, W^r_{t \alpha i}\right)\right)\hspace{.4in}\\
\subseteq
P_{s_r'} \left(Z^r_\alpha \cap N_{(T+2) C_r s_r} \left(U^r_{\alpha i} \cup W^r_{t \alpha i}\right)\right),\notag
\end{eqnarray}
where the first neighborhood is taken inside $P_{s_r, s_r'} (X, W^r_{t \alpha i})$.
From the definitions of $W^r_{t \alpha i j}$ and $W^r_{t \alpha i}$, we have 
\begin{equation}\label{3}N_{(T+2) C_r s_r} \left(U^r_{\alpha i} \cup W^r_{t \alpha i} \right) \subset  N_{(T+t+2) C_r s_r} (U^r_{\alpha i} ).
\end{equation}
Combining (\ref{1}), (\ref{2}), and (\ref{3}) yields
\begin{eqnarray}\label{4} P_{s_r, s_r'} \left(Z^r_\alpha, \bbW^r_{t \alpha i } \right)\cap N_{T} \left( P_{s_r, s'_r} \left(U^r_{\alpha i}, \bbW^r_{t \alpha i } \right)\right)\\
 \subset P_{s_r'} \bigg(Z^r_\alpha \cap N_{(T+t+2) C_r s_r} (U^r_{\alpha i} )\bigg)\notag.
 \end{eqnarray}
 
Let $\bbW'_{t \bC \bs}$ denote the set of   metric families 
$$\bbW'_{t \bC \bs} = \{ \bbW^r_{t \alpha i } : r\geqs 1,\, \alpha\in A_r,\, i\in I(r, \alpha)\},$$
and let $\Z'$ denote the decomposed sequence $\Z' = (V^1, V^2, \ldots)$ with  decompositions
$Z^r = \bigcup_{\alpha\in A_r} \bigcup_{ i \in I(r, \alpha)} Z^r_{\alpha i}$, where $Z^r_{\alpha i} = Z^r_\alpha$ for each $r\geqs 1, \alpha\in A_r$, and $i\in I(r, \alpha)$.
For each $t, T>0$,  we define
$$N_T^{\Z}  \left( P_{\bs, \bs'} \left(\U', \bbW'_{t \bC \bs} \right)\right) := P_{\bs, \bs'}  (\Z', \bbW'_{t \bC \bs}) \cap N_T \left(P_{\bs, \bs'} \left(\U', \bbW'_{t \bC \bs} \right)\right),$$
where on the right,  the neighborhood  is taken inside the larger complex $P_{\bs, \bs'} \left(X, \bbW'_{t \bC \bs} \right)$ (with respect to the simplicial metric on $P_{\bs, \bs'} \left(X, \bbW'_{t \bC \bs} \right)$).   We give $N_T^{\Z}  \left( P_{\bs, \bs'} \left(\U', \bbW'_{t \bC \bs} \right)\right)$  the metric  induced by the simplicial metric on $P_{\bs, \bs'} \left(X, \bbW'_{t \bC \bs} \right)$.
Applying Lemma~\ref{functoriality} to the inclusions (\ref{4}) yields functors
\begin{equation}\label{j}\bAc \left(N_T^{\Z}  \left( P_{\bs, \bs'} \left(\U', \bbW'_{t \bC \bs} \right)\right) \right)
\xmaps{j_{t,T}} \bAc^X \left( P_{\bs'} \left(N^\Z_{(t+T+2)\bC\bs} (\U')\right)\right)
\end{equation}

The colimit, over $T>0$, of the categories appearing in the domain of $j_{t,T}$ is precisely
$\bAc^{\Z' +} \left(P_{\bs, \bs'} \left(\U', \bbW'_{t \bC \bs} \right)\right)$.  Hence the functors $j_{t,T}$ combine to yield a functor
$$ \bAc^{\Z' +} \left(P_{\bs, \bs'} \left(\U', \bbW'_{t \bC \bs} \right)\right)
\xmaps{j_t = \colim_{T} j_{t, T}} \mocolim_{t\to \infty}  \bAc^X \left(P_{\bs'} \left(N^\Z_{t\bC\bs} (\U')\right) \right).$$
For each $r\geqs 1$, $\alpha\in A_r$, and $i\in I(r, \alpha)$ we have 
$$U^r_{\alpha i} \cup  W^r_{t  \alpha  i} \subset N_{tC_rs_r} (U^r_{\alpha i}),$$ 
and the families 
$$\{N_{tC_rs_r} (U^r_{\alpha i}) : i\in I(r, \alpha)\}$$ 
are $(C_r s_r r - 2tC_r s_r)$--disjoint (by (\ref{decomps})).  Since $(r-2t)C_r s_r/C_r s_r = r-2t$ tends to infinity with $r$, 
Lemma~\ref{rel-refinement} tells us that for each $t > 0$ there is an isomorphism
$$\Psi_{\bs, \bs'}(t) \co K_* \bAc^{\Z' +} \left(P_{\bs, \bs'} \left(\U', \bbW'_{t \bC \bs} \right)\right)  \srm{\isom} 
K_*  \bAc^{\Z +} \left(P_{\bs, \bs'} \left(\U, \bbW_{t \bC \bs} \right)\right).$$

The desired factorization of $\mu_{\bs, \bs', \bs''} (\U)$ is obtained by composing the isomorphism $\mocolim_{t\to\infty} \Psi_{\bs, \bs'}(t)^{-1}$ 
with the composite 
\begin{eqnarray*} \mocolim_{t\to \infty} K_* \bAc^{\Z' +} \left(P_{\bs, \bs'} \left(\U', \bbW'_{t \bC \bs} \right)\right)
\xmaps{\colim_t (j_{t })_*} \mocolim_{t\to \infty} K_* \bAc^X \left(P_{\bs'} \left(N^\Z_{t\bC\bs} (\U')\right) \right)\\
\srm{\eta_*} \mocolim_{t\to \infty} K_* \bAc^X \left(P_{\bs''} (N^\Z_{t\bC\bs} (\U')) \right)
\srm{\Psi_{\bs''}}  \mocolim_{t\to \infty} K_* \bAc^X \left(P_{\bs''} (N^\Z_{t\bC\bs} (\U))\right)\\
\srm{k_*} \mocolim_{t\to \infty} K_* \bAc^{\Z +} \left(P_{\bs''} (N^\Z_{t\bC\bs} (\U)) \right),
\end{eqnarray*}
where $\eta$ is the functor from (\ref{eta-eq}), $\Psi_{\bs''}$ is the colimit (over $t$) of the isomorphisms from Lemma~\ref{refinement}, and $k_*$ is induced by the colimit of the inclusions
$$\Ac^X \left(P_{\bs''} (N^\Z_{t\bC\bs} (\U)) \right) \subset \Ac^{\Z +} \left(P_{\bs''} (N^\Z_{t\bC\bs} (\U)) \right).$$

To show that this composite agrees with $\mu_{\bs, \bs', \bs''} (\U)$, we examine Diagram (\ref{commutes}), whose terms are explained below. 
The dotted arrows in Diagram (\ref{commutes}) exist only after passing to $K$--theory.  The maps labelled injective are inclusions of one term into a colimit. 
To save space, we have written $\bbW_{t_0}$ and $\bbW'_{t_0}$ rather than $\bbW_{t_0 \bC\bs}$ and $\bbW'_{t_0 \bC\bs}$, and we have written $\bigcup_t$ rather than $\colim_t$.  In the upper left corner, 
$$P^R_{\bs, \bs'} (\U', \bbW'_{t_0}) := P_{\bs(R), \bs'(R)} (\U'(R), \bbW_{t_0}'(R)),$$
and the other superscripts on the Rips complexes should be interpreted similarly.
The isomorphisms labelled $i$ are those from Lemma~\ref{>R}. 
 Furthermore, we have set $\tau = t_0 + T + 2$.   

\begin{equation}\label{commutes}
\xymatrix{ \bAc N^\Z_{T} P^{R}_{\bs, \bs'} (\U', \bbW'_{t_0}) \ar[rr]^{\Phi^{t_0}_{\bs, \bs'} (T)}_\isom \ar[ddd]^{j_{t_0, T} (R)} \ar[dr]^i_\isom
	& & \bAc N^\Z_{T} P^{R}_{\bs, \bs'} (\U, \bbW_{t_0})\ar[d]^i_\isom \\
 & \bAc N^\Z_T P_{\bs, \bs'} (\U', \bbW'_{t_0}) \ar@{^{(}->}[d]  \ar@{.>}[r]^-{\Psi^{t_0}_{\bs, \bs'} (T)}_-\isom  & \bAc N^\Z_{T} P_{\bs, \bs'} (\U, \bbW_{t_0}) \ar@{^{(}->}[d]\\
& \bAc^{\Z' +} P_{\bs, \bs'} (\U', \bbW'_{t_0})\ar[d]^{j_{t_0}} \ar@{.>}[r]^-{ \Psi^{t_0}_{\bs, \bs'}}_-\isom  & \bAc^{\Z +} P_{\bs, \bs'} (\U, \bbW_{t_0})\ar[d]^{\beta_{t_0}}\\
 \bAc P^{R_{\tau}}_{\bs'} N^\Z_{\tau\bC\bs} (\U') \ar[r] \ar@{^{(}->}[d] &\bigcup_t \bAc P_{\bs'}  N^\Z_{t \bC\bs} (\U')  \ar[d]^\eta
 	& \bAc^{\Z +} P_{\bs'} N^\Z_{t_0\bC\bs} (\U) \ar@{^{(}->}[d]^{\gamma_{t_0}}\\
\bigcup_t \bAc P_{\bs''}^{R_t} N^\Z_{t\bC\bs } (\U') \ar[r]^i_\isom \ar[d]^{\Phi_{\bs''}}_\isom & \bigcup_t \bAc P_{\bs''} N^\Z_{t\bC\bs } (\U') 
	\ar@{.>}[d]^{\Psi_{\bs''}} & \bigcup_t \bAc^{\Z +} P_{\bs'} N^\Z_{t\bC\bs } (\U)\ar[d]^\delta\\
\bigcup_t \bAc P_{\bs''}^{R_t} N^\Z_{t\bC\bs } (\U) \ar[r]^i_\isom	& \bigcup_t \bAc P_{\bs''} N^\Z_{t \bC\bs} (\U ) \ar[r]^-k
	&\bigcup_t  \bAc^{\Z +} P_{\bs''} N^\Z_{t\bC\bs } (\U).
}
\end{equation}

Diagram (\ref{commutes}) exists  for each $t_0, T>0$, in the sense that we may choose natural numbers $R = R(T, t_0)$ and $R_t$ (for each $t>0$) such that all the maps exist.  Specifically, for each $t>0$, choose $R_t$ large enough that $\Phi_{\bs''}$ exists (where $\Phi_{\bs''}$ is the colimit over $t$ of the maps constructed in the proof of Lemma~\ref{refinement}) and then choose $R\geqs R_{\tau}$ large enough that   $\Phi^{t_0}_{\bs, \bs'} (T)$ exists, where $\Phi^{t_0}_{\bs, \bs'} (T)$ is the map constructed in the proof of Lemma~\ref{rel-refinement}.  After passing to $K$--theory, the proof of Lemma~\ref{rel-refinement} also gives the maps 
$\Psi^{t_0}_{\bs, \bs'} (T)$ and $ \Psi^{t_0}_{\bs, \bs'}$ appearing on the right-hand side of the diagram.  It follows from the definitions of these maps that the squares having these maps as their horizontal sides are commutative (after passing to $K$--theory).  Similarly, the square in the lower left corner is commutative after passing to $K$--theory.
Since $R\geqs R_\tau$, we can define $j_{t_0, T} (R)$ in analogy with $j_{t_0, T}$, so that the trapezoid on the left of the diagram commutes.  The map $\beta_{t_0}$ is induced by the inclusion
$$P_{\bs, \bs'} (\U, \bbW_{t_0})\subset P_{\bs'} N^\Z_{t_0\bC\bs} (\U),$$
and $\delta = \mocolim_{t\to\infty} \eta_{\bs', \bs''} (N^\Z_{t\bC\bs})$.

Commutativity of the undotted portion of the diagram follows quickly from the definitions of the functors involved.  For instance, the outer square commutes because the maps involved do not change the underlying data of geometric modules or morphisms.  
Commutativity of the lower right rectangle (after passing to $K$--theory) now follows from the fact that in the upper left corner of this rectangle, $K_* \left(\bAc^{\Z +} P_{\bs, \bs'} (\U', \bbW'_{t_0})\right)$ is (isomorphic to) the  colimit over $T>0$ of the $K$--theories of the categories
$\bAc N^\Z_{T} P^{R}_{\bs, \bs'} (\U', \bbW'_{t_0})$ appearing in the upper left-hand corner of the diagram.

By definition, the map $\mu_{\bs, \bs', \bs''} (\U)$ is obtained from $\delta\circ\gamma_{t_0}\circ\beta_{t_0}$
by passing to the colimit (over $t_0$) in the domain (and then applying $K$--theory).  Commutativity of the lower right rectangle (after passing to $K$--theory) shows that 
$$\delta\circ\gamma_{t_0}\circ\beta_{t_0} = (k\circ \Psi_{\bs''} \circ \eta \circ j_{t_0})_* \circ  \Psi_{\bs, \bs'} (t_0)^{-1},$$
and taking colimits over $t_0$ gives the claimed factorization of $\mu_{\bs, \bs', \bs''} (\U)$.
$\hfill \Box$

%%%%%%%%%%%%%%%%%%%%%%%%%%%%%%%%%%%%%%%%%%%%%%%%
%%%%%%%%%%%%%%%%%%%%%%%%%%%%%%%%%%%%%%%%%%%%%%%%

\section{Assembly for FDC groups}$\label{assembly-sec}$

In this section, we apply our vanishing result for continuously controlled $K$--theory (Theorem~\ref{vanishing-thm}) to study assembly maps.  We first prove a large-scale, bounded version of the Borel Conjecture, analogous to Guentner--Tessera--Yu~\cite[Theorems 4.3.1, 4.4.1]{GTY-rigid}, relating the bounded $K$--theory of the Rips complexes on an FDC metric space to an associated homology theory.  Then we study the classical $K$--theoretic assembly map, using Carlsson's descent argument~\cite{Carlsson-assembly}.

\begin{theorem}$\label{bdd-Borel}$
Let $X$ be a bounded geometry metric space with finite decomposition complexity.  Then there is an isomorphism
$$\mocolim_{s\to \infty}  H_* (P_s(X); \mathbb{K} (\A)) \isom \mocolim_{s\to \infty} K_* (\Ab (P_s (X))).$$
\end{theorem}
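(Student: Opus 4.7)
The plan is to realize the two sides of the claimed isomorphism as consecutive terms of a single long exact sequence whose third term is $\mocolim_s K^c_*(P_s X)$, and then invoke Theorem~\ref{vanishing-intro} to make that third term vanish.

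The first step is to establish the relevant Karoubi fibration. Consider the full subcategory $\Ac(P_s X)_{<1} \subset \Ac(P_s X)$ consisting of those modules $M$ whose support is contained in $P_s X \times [0,t]$ for some $t<1$. Applying Construction~\ref{sum} with $Z = P_s X \times [0, 1-1/n]$ and using continuous control at $1$ (to verify the Karoubi factorization conditions, indexed over $n \in \bbN$) shows that this inclusion admits a Karoubi filtration, as anticipated in Remark~\ref{filtration-rmk}. Theorem~\ref{LES} then supplies a long exact sequence in non-connective $K$--theory
$$\cdots \maps K_{*+1}\bigl(\Ac(P_s X)/\Ac(P_s X)_{<1}\bigr) \stackrel{\partial}{\maps} K_*\bigl(\Ac(P_s X)_{<1}\bigr) \maps K_*^c(P_s X) \maps \cdots$$
which, by Lemma~\ref{functoriality}, is natural in $s$ with respect to the inclusions $P_s X \injects P_{s'} X$.

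The second step is to identify the two outer $K$--groups above. Since $X$ has bounded geometry, each $P_s X$ is a finite-dimensional, locally finite simplicial complex. For such complexes, standard arguments in controlled algebra (Pedersen--Weibel~\cite{Pedersen-Weibel}, Anderson--Connolly--Ferry--Pedersen~\cite{ACFP}, Weiss~\cite{Weiss}, C\'ardenas--Pedersen~\cite{Cardenas-Pedersen}) furnish natural identifications
$$K_*\bigl(\Ac(P_s X)_{<1}\bigr) \isom H_*(P_s X; \bbK(\A)) \quad \text{and} \quad K_{*+1}\bigl(\Ac(P_s X)/\Ac(P_s X)_{<1}\bigr) \isom K_*(\Ab(P_s X)).$$
The first arises from the Pedersen--Weibel realization of the homology spectrum via bounded $K$--theory of an open cone: the ``radial'' direction is played by $[0,1)$, and modules compactly supported bounded away from $1$ match up with bounded-propagation modules on the cone over $P_s X$. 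The second is the classical ``germs at infinity are a suspension of bounded $K$--theory'' identification for continuously controlled $K$--theory.

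Finally, passing to the directed colimit in $s$ (which preserves exactness for abelian groups) and using Theorem~\ref{vanishing-intro} to kill $\mocolim_s K_*^c(P_s X)$ for every $*$, the connecting homomorphism
$$\mocolim_s K_{*+1}\bigl(\Ac(P_s X)/\Ac(P_s X)_{<1}\bigr) \stackrel{\partial}{\maps} \mocolim_s K_*\bigl(\Ac(P_s X)_{<1}\bigr)$$
becomes an isomorphism, which under the identifications above reads precisely $\mocolim_s K_*(\Ab(P_s X)) \isom \mocolim_s H_*(P_s X; \bbK(\A))$. The main obstacle is the pair of identifications in the second step: although each is essentially classical, one must verify that they are natural with respect to the inclusions $P_s X \injects P_{s'} X$ in a manner compatible with the present paper's conventions on $\Ac$ and on non-connective $K$--theory of additive categories. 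Checking this compatibility is the principal technical burden, but it is routine once the relevant Pedersen--Weibel cone arguments are in place.
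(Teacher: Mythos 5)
Your overall framework is the same as the paper's: use the Karoubi filtration $\Ac(P_s X)_{<1} \subset \Ac(P_s X)$, take the long exact sequence of Theorem~\ref{LES}, identify the outer terms, pass to the colimit over $s$, and use Theorem~\ref{vanishing-intro} to kill the middle term. However, your two key identifications are \emph{swapped}, and this is a genuine error in the proof even though the final conclusion happens to come out the same. The paper's Lemma~\ref{bounded} shows that $\Ab(X) \isom \Ac(X)_{<1}$ is an elementary equivalence of additive categories (collapse all the $[0,1)$ coordinates to $0$), so
$$K_*\bigl(\Ac(P_s X)_{<1}\bigr) \isom K_*\bigl(\Ab(P_s X)\bigr),$$
\emph{not} $H_*(P_s X;\bbK\A)$. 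It is the Karoubi quotient $\Ainf(X) = \Ac(X)/\Ac(X)_{<1}$ (the category of ``germs at $1$'') that satisfies excision and homotopy invariance and therefore realizes Steenrod homology: by Weiss~\cite{Weiss}, $K_*\bigl(\Ainf(X)\bigr) \isom H_{*-1}(X;\bbK\A)$, i.e.\ $K_{*+1}\bigl(\Ac(X)/\Ac(X)_{<1}\bigr)\isom H_*(X;\bbK\A)$. Your heuristic that $X\times[0,1)$ is an open cone is the source of the confusion: the metric on $X\times[0,1)$ is $d_X + d_{[0,1)}$, which is \emph{bounded} in the $[0,1)$ direction, so this product is coarsely equivalent to $X$ itself, not to $O(X)$; that is exactly why the subcategory of modules bounded away from $1$ recovers $\Ab(X)$, while the homology theory lives in the quotient.

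A secondary issue is your definition of $\Ac(P_s X)_{<1}$ as modules supported in $P_s X \times [0,t]$ for a single $t<1$, with Karoubi decompositions indexed over the closed strips $P_s X\times[0,1-1/n]$. For noncompact $P_s X$ this does not satisfy condition (1) of a Karoubi filtration: take $A$ supported on $\{(x_n, 1-1/n)\}$ with $x_n\to\infty$, let $S=A(\{(x_n,0)\}_n)$, and let $\phi\co A\to S$ be nonzero between corresponding points. Then $\phi$ is continuously controlled and has propagation $<1$, but it factors through no $A(P_s X\times[0,t])$. The paper's definition of $\Ac(X)_{<1}$ requires only that $\overline{\supp(M)}\cap (X\times\{1\})=\emptyset$ (pointwise, not uniformly, away from $1$); with that definition the module $A$ above is itself an allowable $A_i$, and the filtration conditions do hold (note Remark~\ref{filtration-rmk} explicitly flags this filtration as \emph{not} arising from Lemma~\ref{filtrations}). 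Both definitions of $\Ac_{<1}$ turn out to give categories equivalent to $\Ab(X)$, so the more serious error remains the inversion of the two identifications.
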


  This result may be thought of as excision statement for bounded $K$--theory.  
  Before giving the proof, we need some setup.
For a proper metric space $X$, let $\Ac(X)_{<1}$ denote the 
full additive subcategory of $\Ac (X)$ on those modules $M$ whose support has no limit points at 1; that is, 
$$\overline{\supp(M)} \cap (X\cross 1) = \emptyset,$$ where the closure $\overline{\supp(M)}$ is taken in $X\cross [0,1]$.
By an argument similar to the proof of Lemma~\ref{filtrations}, the inclusion of categories
$$\Ac ( X )_{<1} \subset \Ac ( X )$$
admits a Karoubi filtration.

\begin{definition}
The Karoubi quotient $\Ac (X)/ \Ac ( X )_{<1}$ is denoted $\Ainf (X)$.
\end{definition}

Theorem~\ref{LES} yields a long exact sequence in non-connective $K$--theory 
\begin{eqnarray}\label{assembly-LES}
\cdots \srm{\partial} K_{*} \Ac (X)_{<1} \maps K_* (\Ac (X))\hspace{1.4in}\\
\hspace{1.3in} \maps K_* \Ainf(X)     \srm{\partial} K_{*-1} \Ac (X)_{<1} \maps\cdots.\notag
\end{eqnarray}
As shown by Weiss~\cite{Weiss}, $K_{*} (\Ainf (-))$ is the (Steenrod) homology theory associated to the non-connective algebraic $K$--theory spectrum $\bbK (\A)$, with a dimension shift: in particular, if $X$ is a finite CW complex, there are isomorphisms
\begin{equation}\label{PW} K_{*} (\Ainf (X)) \isom H_{*-1} (X; \bbK (\A))\end{equation}
for each $*\in \bbZ$ (this result was first proven, in a slightly different form,  in Pedersen--Weibel~\cite{Pedersen-Weibel-homology}).   The two key components of Weiss's proof are the facts that the functor $X\goesto  K_{*} (\Ainf (X))$ is homotopy invariant and satisfies excision.  The methods of Weiss and Williams~\cite{Weiss-Williams} then show that $\Ainf (X)\heq X_+ \sm \Ainf (*)$ (at least for $X$ an ENR, and in particular for $X$ a finite CW complex).  One then identifies the coefficients $\Ainf(*)$ by observing that $K_{*} (\Ainf (\{*\}))$ is isomorphic to $K_{*-1} (\Ac(\{*\})_{<1})$, since the other terms in the long exact sequence~(\ref{assembly-LES}) vanish when $X = *$ (see, for example Bartels~\cite[Remark 3.20]{Bartels}), and  $K_{*-1} (\Ac(\{*\})_{<1})\isom \K_{*-1} \A$ by Lemma~\ref{bounded} below.   Details can be found in the above references; see~\cite[Section 5]{Weiss} in particular.  

\begin{remark} Weiss~\cite{Weiss} uses a somewhat different description of the category $\Ainf (X)$.  He describes the morphisms as  ``germs" of morphisms in $\Ac(X)$.  It is easy to check, however, that Weiss's germ category is the same as the Karoubi quotient $\Ainf (X)$.  Additionally, Weiss works with the idempotent completion of his germ category.  This does not affect the results though, since the non-connective $K$--theory spectrum of an additive category $\A$ is weakly equivalent to that for its idempotent completion $\A^\wedge$: this follows from Pedersen--Weibel~\cite[Lemmas 1.4.2 and 2.3]{Pedersen-Weibel}. 
\end{remark}

\begin{lemma} $\label{bounded}$ For every proper metric space $X$ there is an equivalence
$$\Ab (X)\srm{\isom}\Ac(X)_{<1}.$$
\end{lemma}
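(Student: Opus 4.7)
The plan is to exhibit an equivalence of categories $F \colon \Ab(X) \to \Ac(X)_{<1}$ by placing ordinary geometric modules at height zero. Concretely I would define $F(M)(x, 0) = M(x)$ and $F(M)(x, t) = 0$ for $t > 0$, with $F(\phi)_{(x, 0), (y, 0)} = \phi_{xy}$ and all other entries of $F(\phi)$ equal to zero. Since $\supp(F(M)) \subset X \times \{0\}$, the closure in $X \times [0,1]$ certainly misses $X \times \{1\}$, and continuous control at $1$ for $F(\phi)$ is vacuous (a sufficiently small neighborhood of $(x, 1)$ contains no support at all). So $F$ genuinely lands in $\Ac(X)_{<1}$. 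Fullness and faithfulness are then immediate: any morphism $F(M) \to F(M')$ in $\Ac(X)_{<1}$ has all matrix entries vanishing off $(X \times \{0\}) \times (X \times \{0\})$ by the support of both its source and target, and therefore corresponds uniquely to a bounded morphism $M \to M'$ in $\Ab(X)$.

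The substantive step is essential surjectivity. Given $N \in \Ac(X)_{<1}$, I first want to establish that for each compact $K \subset X$, the intersection $\supp(N) \cap (K \times [0,1))$ is finite. This follows because $\overline{\supp(N)}$ is closed in $X \times [0,1]$ and disjoint from the compact set $K \times \{1\}$, so for some $\epsilon > 0$ one has $\supp(N) \cap (K \times (1-\epsilon, 1)) = \emptyset$, and local finiteness of $N$ on the compact set $K \times [0, 1-\epsilon]$ then finishes the job. In particular, for each $x \in X$ the set $\{t : N(x,t) \neq 0\}$ is finite, so the geometric module $M(x) := \bigoplus_{t \in [0,1)} N(x, t)$ is well-defined and locally finite on $X$. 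I would then write down the natural isomorphism $\psi \colon N \to F(M)$ whose only nonzero entries are the canonical inclusions $\psi_{(x, 0), (x, t)} = \iota_t$, with inverse given by the corresponding projections.

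The main obstacle is checking that $\psi$ really is a morphism in $\Ac(X)_{<1}$, i.e.\ that it has finite propagation in $X \times [0,1)$ and is continuously controlled at $1$. The finite propagation is the one place where the height restriction matters: every nonzero entry of $\psi$ travels a distance $t < 1$ in the $[0,1)$-coordinate and zero in the $X$-coordinate, so the propagation of $\psi$ is uniformly bounded by $1$. This remains true even when the heights appearing in $\supp(N)$ accumulate toward $1$ as $x$ varies over noncompact pieces of $X$, which was the initially worrying case. Continuous control at $1$ for $\psi$ and $\psi^{-1}$ is then handled by the same neighborhood argument used for $F$: around any $(x_0, 1)$ one may shrink to a neighborhood $V$ disjoint from $\overline{\supp(N)}$, after which $\psi$ has no entries $\psi_{v, \cdot}$ or $\psi_{\cdot, v}$ to worry about. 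With $\psi$ verified as an isomorphism in $\Ac(X)_{<1}$, $F$ becomes an equivalence of categories as claimed.
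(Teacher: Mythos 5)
Your proof is correct and takes essentially the same route as the paper's: the functor is the inclusion $\Ab(X)=\Ab(X\times\{0\})\subset\Ac(X)_{<1}$, and essential surjectivity is proved by collapsing a module $N$ to $\overline{N}_x=\bigoplus_{t\in[0,1)}N_{(x,t)}$ and exhibiting the evident isomorphism, observing that it has propagation at most $1$ and is continuously controlled because $\overline{\supp(N)}$ misses $X\times\{1\}$. You supply a bit more detail on the local finiteness and finiteness-of-heights verifications than the paper does, but the argument is the same.
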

\begin{proof}  This equivalence is induced by the inclusion of categories 
$$\Ab(X) = \Ab(X\cross\{0\}) \subset \Ac(X)_{<1},$$
which is clearly bijective on Hom sets in the domain.  We need to check that every object in $\Ac(X)_{<1}$ is isomorphic to an object in $\Ab(X)$.  Given a module $M\in \Ac(X)_{<1}$, let 
 $\overline{M} \in \Ab (X)$ be the module
$$\overline{M}_x = \bigoplus_{t\in [0,1)} M_{(x, t)}.$$
Since objects in $\Ab(X\cross [0,1))_{<1}$ stay away from 1, $\overline{M}$ is finitely generated at each point, and properness of $X$ implies that $\overline{M}$ is locally finite.  
We now have an isomorphism $M\to \overline{M}$ sending $M_{(x, t)}$ isomorphically to the corresponding summand of $\overline{M}_x$.  This morphism has propagation at most 1, and is continuously controlled due to the support condition on $M$.
\end{proof}

\noindent {\bf Proof of Theorem~\ref{bdd-Borel}}  For each $s$, the isomorphisms given by (\ref{PW}) and Lemma~\ref{bounded} show that the long exact sequence (\ref{assembly-LES}) has the form
$$\cdots \to K_* (\Ac (P_s (X)))\to H_{*-1} (P_s(X); \mathbb{K} (\A)) \srt{\partial} K_{*-1} \Ab (P_s (X))\to \cdots.$$
 Since directed colimits preserve exact sequences and the $K$--theory of the category
$\mocolim_s \Ac (P_s X)$ vanishes (Theorem~\ref{vanishing-thm}), the colimit (over $s$) of the boundary maps for this sequence yields the desired isomorphism. $\hfill \Box$

\vspace{.2in}

We now begin the preparations for the proof of our main result, Theorem~\ref{assembly}.
The proof requires some preliminaries regarding group actions and the ``forget-control" description of the assembly map.
Let $X$ be a proper metric space with an isometric action of a group $\Gamma$.  Then $\Gamma$ acts on $\Ac (X)$ through additive functors (given by translating modules and morphisms), and this action maps the subcategory $\Ac(X)_{<1}$ into itself.  It follows from the definitions that the inclusion of fixed point categories $\Ac ( X )_{<1}^\Gamma \subset \Ac ( X )^\Gamma$ admits a Karoubi filtration.
We now have  a Karoubi sequence 
\begin{equation} \label{fp} \Ac ( X )_{<1}^\Gamma \subset \Ac ( X )^\Gamma \maps \left(\Ac ( X )^\Gamma \right)/\left(\Ac ( X )_{<1}^\Gamma \right).
\end{equation}
When $\Gamma$ acts freely and cocompactly on $X$, one may check that there is an equivalence of categories 
$$\Ac(X)_{<1}^\Gamma \isom \A[\Gamma]_c (X/\Gamma)_{<1};$$
note that by compactness, modules in $\A[\Gamma]_c (X/\Gamma)_{<1}$ have \e{finite} support and hence all morphisms in $\A[\Gamma]_c (X/\Gamma)_{<1}$ lift to bounded morphisms on $X\cross [0,1)$.
When $\A$ is the category of finitely generated free $R$--modules for some ring $R$, $\A[\Gamma]$ is the category of finitely generated free $R[\Gamma]$--modules. 
If $\Gamma$ acts properly discontinuously, there is also an equivalence of categories 
$$\left(\Ac ( X )^\Gamma \right)/\left(\Ac ( X )_{<1}^\Gamma \right) \isom \Ainf(X/\Gamma)$$ 
(this is essentially Carlsson--Pedersen~\cite[Lemma 2.8]{Carlsson-Pedersen}), and   (\ref{PW}) yields 
$$K_* \left(\left(\Ac ( X )^\Gamma \right)/\left(\Ac ( X )_{<1}^\Gamma \right)\right) \isom H_{*-1} (X/\Gamma ; \bbK \A).$$ 
The boundary map for the long exact sequence in $K$--theory associated to (\ref{fp}) now has the form
\begin{equation}\label{assembly7}H_{*} (X/\Gamma ; \bbK \A) \maps K_* \left(\A[\Gamma]_c (X/\Gamma)_{<1}\right).\end{equation}

The following lemma identifies the codomain of this map in the case of interest to us.

\begin{lemma} $\label{bounded2}$ If $K$ is a compact metric space with $\diam(K)<\infty$ and $\mathcal{E}$ is an additive category, then there are equivalences of categories
\begin{equation}\label{A}\mathcal{E}_c (K)_{<1}\srm{\isom} \mathcal{E}_b (K) \isom\mathcal{E}. \end{equation}
\end{lemma}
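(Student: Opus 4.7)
The plan is to prove the two equivalences separately, relying heavily on the compactness and finite-diameter hypotheses on $K$.

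For the first equivalence $\mathcal{E}_c(K)_{<1} \simeq \mathcal{E}_b(K)$, I would simply invoke Lemma~\ref{bounded}. Since $K$ is compact, it is proper, so that lemma applies verbatim and produces an equivalence realized by the inclusion $\mathcal{E}_b(K) = \mathcal{E}_b(K \times \{0\}) \subset \mathcal{E}_c(K)_{<1}$. The finite diameter hypothesis is not needed here; compactness alone guarantees that the ``totalization'' $\overline{M}_x = \bigoplus_{t \in [0,1)} M_{(x,t)}$ defined in the proof of Lemma~\ref{bounded} is locally finite.

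For the second equivalence $\mathcal{E}_b(K) \simeq \mathcal{E}$, I would construct a pair of functors. In one direction, fix a basepoint $x_0 \in K$ and define $\iota\colon \mathcal{E} \to \mathcal{E}_b(K)$ by sending $A \in \mathcal{E}$ to the geometric module $\iota(A)$ supported at $x_0$ with $\iota(A)_{x_0} = A$, extending in the obvious way on morphisms. In the other direction, define $\Sigma\colon \mathcal{E}_b(K) \to \mathcal{E}$ by $\Sigma(M) = \bigoplus_{x \in \supp(M)} M_x$; this is a \emph{finite} direct sum because $M$ is locally finite and $K$ is compact, and on morphisms $\phi\colon M \to N$ we take $\Sigma(\phi) = (\phi_{xy})$, viewed as a matrix of morphisms assembled via the direct sum structure on $\Sigma(M)$ and $\Sigma(N)$. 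Additivity of $\mathcal{E}$ makes both $\iota$ and $\Sigma$ well-defined additive functors.

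It is immediate that $\Sigma \circ \iota = \Id_\mathcal{E}$. For the other composite, observe that for $M \in \mathcal{E}_b(K)$, both $M$ and $\iota\Sigma(M)$ have the same total object $\Sigma(M) \in \mathcal{E}$; the morphism $\eta_M\colon M \to \iota\Sigma(M)$ given at each $x \in \supp(M)$ by the canonical inclusion $M_x \hookrightarrow \Sigma(M) = \iota\Sigma(M)_{x_0}$, and its inverse given by the corresponding projections, are morphisms of geometric modules whose propagation is bounded by $\diam(K) < \infty$. This is precisely where the finite-diameter hypothesis is used: without it, $\eta_M$ would fail to be a bounded morphism. These $\eta_M$ clearly assemble into a natural isomorphism $\Id_{\mathcal{E}_b(K)} \Rightarrow \iota \circ \Sigma$.

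The main (mild) obstacle is the bookkeeping needed to verify that the naturality squares for $\eta$ commute and that $\Sigma$ is functorial, since this requires unwinding how matrix multiplication in $\mathcal{E}_b(K)$ interacts with the direct sum structure in $\mathcal{E}$. This is a purely formal check once finiteness of $\supp(M)$ is invoked.
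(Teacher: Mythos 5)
Your proposal is correct and takes essentially the same approach as the paper: the first equivalence is a direct citation of Lemma~\ref{bounded}, and the second is the inclusion $\mathcal{E}\isom\mathcal{E}_b(\{x_0\})\subset\mathcal{E}_b(K)$, which the paper observes is an equivalence for exactly the reasons you give (compactness forces finite support, and $\diam(K)<\infty$ bounds the propagation of the isomorphism onto the total direct sum). You simply make the essential surjectivity explicit by writing down the inverse functor $\Sigma$ rather than stating the isomorphism $M\isom\bigoplus_{x\in S}M_x$ directly; the content is identical.
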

\begin{proof}  The first equivalence is given by Lemma~\ref{bounded}.
Given $x_0\in K$, the second equivalence is induced by the inclusion of categories 
$$\mathcal{E}\isom \mathcal{E}_b (\{x_0\}) \subset \mathcal{E}_b (K).$$
This inclusion is an equivalence because compactness implies that any locally finite module $M$ over $K$ is in fact supported on a finite set $S\subset K$, and is isomorphic to the module $\bigoplus_{x\in S} M_x$ considered as a module over $\{x_0\}$ (this isomorphism has finite propagation because $\diam(K) < \infty$).  
\end{proof}

Under the isomorphism induced by (\ref{A}), the map (\ref{assembly7})
agrees with the classical assembly map
$$H_{*} (X/\Gamma ; \bbK \A) \maps K_* \left(\A[\Gamma]\right).$$
(For proofs, see~\cite{Carlsson-Pedersen, Hambleton-Pedersen, Sperber, Weiss}.)
The boundary map for a fibration sequence of spectra can be realized (up to homotopy) as a map of spectra after looping the base spectrum, so we have a map
\begin{equation}\label{bdry-omega}\Omega \bbK \Ainf (X) \maps \bbK \Ac ( X )_{<1}\end{equation}
that induces the assembly map after taking fixed-point spectra and then homotopy groups.
(We are using the fact that if $\C$ is an additive category with an action of a group $G$ by additive functors,
then $\bbK (\C)^G \isom \bbK(\C^G)$.)  

\begin{remark}$\label{hofib}$ To be precise, the domain of (\ref{bdry-omega}) should be replaced by the \e{homotopy fiber} of the map $\bbK \Ac ( X )_{<1} \srt{i} \bbK \Ac (X)$; then the natural map $\hofib(i)\to \bbK \Ac ( X )_{<1}$ is $\Gamma$--equivariant and induces the boundary map on homotopy groups.  Moreover, since we are dealing with $\Omega$--spectra, the homotopy fiber can be formed level-wise and one finds that $\hofib(i)^\Gamma = \hofib(i^\Gamma)$, where $i^\Gamma$ is the restriction of $i$ to the fixed point spectra.
\end{remark}

The key ingredient in the proof of Theorem~\ref{assembly} will be a variation on Theorem~\ref{vanishing-thm}.  First, we need a simple lemma about homotopically finite classifying spaces of groups.  Note that up to homotopy, there is no difference between assuming that a group admits a finite CW model for $B\Gamma$ or a finite simplicial complex model, because every finite CW complex is homotopy equivalent to a finite simplicial complex.  We have the following lemma.

\begin{lemma}$\label{unif-contractible}$
If $E\Gamma\to B\Gamma$ is a universal principal bundle with $B\Gamma$ a finite simplicial complex, then the simplicial metric $d_\Delta$ on $E\Gamma$ (corresponding to the simplicial structure lifted from $B\Gamma$) is proper
and 
$E\Gamma$ is uniformly contractible with respect to $d_\Delta$.
\end{lemma}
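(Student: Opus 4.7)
The plan is to exploit the fact that the finite simplicial structure on $B\Gamma$ lifts to a $\Gamma$--equivariant simplicial structure on $E\Gamma$, with respect to which $\Gamma$ acts freely by isometries with compact quotient.

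First, for properness, I would observe that since $B\Gamma$ is a finite simplicial complex, each vertex of $B\Gamma$ lies in only finitely many simplices, and lifting locally around each vertex of $E\Gamma$ shows that $E\Gamma$ is again locally finite as a simplicial complex. As noted in Section~\ref{modules}, locally finite simplicial complexes are proper with respect to their simplicial metrics, so $(E\Gamma, d_\Delta)$ is proper.

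For uniform contractibility, the key is cocompactness. Since $B\Gamma$ is compact, there exists a compact subset $D\subset E\Gamma$ (e.g.\ a fundamental domain for the $\Gamma$--action, or just a closed ball of sufficiently large radius around a basepoint $x_0$) such that $\Gamma\cdot D = E\Gamma$. Let $C = \diam(D) < \infty$, and choose $x_0 \in D$. Since $E\Gamma$ is contractible, there exists a homotopy $H\co E\Gamma \cross [0,1] \to E\Gamma$ from $\Id_{E\Gamma}$ to the constant map at $x_0$. For each $r>0$, the set $\overline{B_r(x_0)}\cross [0,1]$ is compact by properness, and its image $H\left(\overline{B_r(x_0)}\cross [0,1]\right)$ is a compact subset of $E\Gamma$, hence contained in $B_{\rho(r)}(x_0)$ for some $\rho(r)>0$.

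Finally, to obtain uniform contractibility, I would use the isometric $\Gamma$--action to transport this contraction to an arbitrary point. Given $x\in E\Gamma$, choose $\gamma \in \Gamma$ with $\gamma^{-1} x\in D$, so that $d(\gamma x_0, x) \leqs C$.  Then
$$B_r(x) \subset B_{r+C}(\gamma x_0) = \gamma\cdot B_{r+C}(x_0),$$
and $\gamma\cdot H$ restricts to a homotopy contracting $B_{r+C}(\gamma x_0)$ inside $B_{\rho(r+C)}(\gamma x_0)\subset B_{\rho(r+C)+C}(x)$.  Restricting this homotopy to $B_r(x)$ exhibits the required contraction within the ball of radius $\rho'(r) := \rho(r+C)+C$, which depends only on $r$ (not on $x$). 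The only substantive step is the production of $\rho$ at a single basepoint, and that is immediate from compactness of $\overline{B_r (x_0)}\cross [0,1]$; the rest is equivariant bookkeeping.
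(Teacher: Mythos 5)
Your proof is correct, and it takes essentially the same route as the paper: the paper simply cites Bartels--Rosenthal (Lemma 1.5) for uniform contractibility and appeals to local finiteness of $E\Gamma$ for properness. Your argument---using properness to bound the image of a single contraction restricted to a closed ball about a basepoint, then transporting this bound to every point via the cocompact isometric $\Gamma$--action---is precisely the standard proof that the cited lemma supplies, written out in full.
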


The statement about uniform contractibility is a special case of Bartels--Rosenthal~\cite[Lemma 1.5]{Bartels-Rosenthal}.  Properness follows from the fact that $E\Gamma$ is a locally finite simplicial complex (this is similar to the proof of Lemma~\ref{metric-comp}).

\begin{theorem}$\label{controlled-vanishing-EG}$\
Let $\Gamma$ be a group with finite decomposition complexity, and assume that there exists a universal principal $\Gamma$--bundle $E\Gamma\to B\Gamma$ with $B\Gamma$ a finite simplicial complex (this implies, in particular, that $\Gamma$ is finitely generated).  Equip $E\Gamma$ with the simplicial metric corresponding to the simplicial structure lifted from $B\Gamma$.
Then the category $\Ac (E\Gamma)$ has trivial $K$--theory.
\end{theorem}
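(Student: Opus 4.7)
The plan is to deduce the theorem from Theorem~\ref{vanishing-thm} by factoring the identity map of $K_*(\Ac(E\Gamma))$ through the vanishing group $\colim_s K_*(\Ac(P_s(\Gamma)))$.  Since $B\Gamma$ is a finite simplicial complex, $\Gamma$ is finitely generated; fixing a finite generating set and the associated word metric makes $\Gamma$ a bounded geometry metric space.  The FDC hypothesis on $\Gamma$ is independent of the choice of word metric up to quasi-isometry, so Theorem~\ref{vanishing-thm} gives $\colim_s K_*(\Ac(P_s(\Gamma))) = 0$.

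The heart of the argument is the construction of two families of $\Gamma$-equivariant simplicial maps.  First, I fix lifts $\tilde v_0, \ldots, \tilde v_k \in E\Gamma$ of the vertices of $B\Gamma$ and define a $\Gamma$-equivariant vertex map by $\gamma \tilde v_i \mapsto \gamma \in \Gamma$.  Since $B\Gamma$ has only finitely many simplices, there is some $s_1 > 0$ such that this extends to a $\Gamma$-equivariant simplicial map $\beta\co E\Gamma \to P_{s_1}(\Gamma)$, inducing by Lemma~\ref{functoriality} a functor $\beta_*\co \Ac(E\Gamma)\to \Ac(P_{s_1}(\Gamma))$.  Second, using uniform contractibility of $E\Gamma$ (Lemma~\ref{unif-contractible}), I will build inductively over skeleta a compatible family of $\Gamma$-equivariant maps $\alpha_s\co P_s(\Gamma) \to E\Gamma$ for $s \geqs s_0$, extending the orbit map $\gamma \mapsto \gamma \tilde v_0$ and satisfying $\alpha_{s'}|_{P_s(\Gamma)} = \alpha_s$ whenever $s \leqs s'$.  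These induce a functor $(\alpha_\infty)_*\co \colim_s K_*(\Ac(P_s(\Gamma))) \to K_*(\Ac(E\Gamma))$.

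The composite $\alpha_{s_1} \circ \beta\co E\Gamma \to E\Gamma$ sends each vertex $\gamma \tilde v_i$ to $\gamma \tilde v_0$, and therefore lies at uniformly bounded distance $D := \max_i d(\tilde v_0, \tilde v_i)$ from $\Id_{E\Gamma}$.  Uniform contractibility of $E\Gamma$ will allow the construction, inductively over orbit representatives of simplices in $B\Gamma$, of a $\Gamma$-equivariant continuous homotopy $H\co E\Gamma \times [0,1]\to E\Gamma$ from $\alpha_{s_1}\circ\beta$ to $\Id_{E\Gamma}$ with uniformly bounded tracks.  A continuously Lipschitz homotopy argument in the style of Bartels~\cite[Corollary 3.19]{Bartels} should then yield $(\alpha_{s_1}\circ\beta)_* = \Id$ on $K_*(\Ac(E\Gamma))$.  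Given any $x \in K_*(\Ac(E\Gamma))$, set $y = \beta_*(x)$; since the image of $y$ in $\colim_s K_*(\Ac(P_s(\Gamma)))$ is zero, there is some $s' \geqs s_1$ with $(\iota_{s_1, s'})_*(y) = 0$ in $K_*(\Ac(P_{s'}(\Gamma)))$, where $\iota_{s_1, s'}\co P_{s_1}(\Gamma) \injects P_{s'}(\Gamma)$.  The compatibility $\alpha_{s'}\circ\iota_{s_1,s'} = \alpha_{s_1}$ then gives $x = (\alpha_{s_1}\circ\beta)_*(x) = (\alpha_{s'})_*((\iota_{s_1,s'})_*(y)) = 0$, so $K_*(\Ac(E\Gamma)) = 0$.

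The main obstacle will be the continuously Lipschitz homotopy step: the homotopy $H$ must be chosen so that the track-shifting natural transformation between $(\alpha_{s_1}\circ\beta)_*$ and the identity on $\Ac(E\Gamma)$ is itself continuously controlled at $t=1$ (Definition~\ref{control}), not merely of bounded propagation.  Constructing $H$ with uniformly Lipschitz tracks $\Gamma$-equivariantly, while maintaining the bounds supplied by the uniform contractibility modulus of Lemma~\ref{unif-contractible}, is the principal technical delicacy.
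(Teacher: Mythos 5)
Your overall strategy coincides with the paper's: build maps between $E\Gamma$ and the Rips complexes $P_s\Gamma$, show a suitable round-trip composite is Lipschitz homotopic to $\Id_{E\Gamma}$, and factor the identity on $K_*\Ac(E\Gamma)$ through $\colim_s K_*\Ac(P_s\Gamma) = 0$ (Theorem~\ref{vanishing-thm}).  The one genuine variation is your forward map: where the paper defines $f_s\co E\Gamma\to P_s\Gamma$ via a partition of unity subordinate to the cover $\{U_\gamma\}$, you instead use the simplicial vertex map $\beta\co \gamma\tilde v_i\mapsto\gamma$, which by cocompactness and left-invariance of the word metric lands in a fixed $P_{s_1}\Gamma$.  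That is a perfectly reasonable and slightly cleaner choice, and it is easy to see $\beta$ is continuous, proper (preimage of a finite subcomplex is a finite full subcomplex of $E\Gamma$), and metrically coarse, so Lemma~\ref{functoriality} applies.  The maps $\alpha_s$ (the paper's $g_s$) and the closing vanishing argument are identical.

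Where I would push back is on your diagnosis of the ``principal technical delicacy.''  You write that the hard part is ensuring the track-shifting natural transformation between $(\alpha_{s_1}\circ\beta)_*$ and $\Id$ is continuously controlled at $t=1$, rather than merely of bounded propagation.  That is precisely the content of Bartels' Proposition 3.17 / Corollary 3.19, which you are already invoking: once $H$ is a \emph{Lipschitz homotopy} in the sense of~\cite[Definition 3.16]{Bartels} (continuous, metrically coarse, and satisfying the properness condition that $\{x: H(x,t)\in C \text{ for some } t\}$ is compact for every compact $C$), the induced equality on continuously controlled $K$--theory is automatic.  The work you actually need to do is to show $H$ \emph{is} such a Lipschitz homotopy.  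The paper does this by building $H$ skeleton by skeleton using the uniform contractibility modulus of Lemma~\ref{unif-contractible}, obtaining for each $k$ a uniform bound $D(k)$ on $\diam(H(\sigma))$ over $k$--simplices $\sigma$; metric coarseness and the compactness condition are then verified by induction on $k$.  In particular, $\Gamma$--equivariance of $\beta$ and of $H$, which you emphasize, is not actually required here---the theorem concerns non-equivariant controlled $K$--theory, and the uniform bounds come from uniform contractibility, not from equivariance.  So your plan is sound and essentially the paper's, but you should redirect the technical effort from worrying about control of the natural transformation (handled by Bartels) to verifying the Lipschitz homotopy conditions on $H$.
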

\begin{proof}  This is  similar to the proofs of \cite[Lemma 4.3.6]{GTY-rigid} and~\cite[Lemma 4.4]{Bartels-Rosenthal}.
We will construct continuous, proper, metrically coarse maps 
$$f_s\co E\Gamma\to P_s \Gamma, \,\,\,\,\,\,\, g_s\co P_s \Gamma \to E\Gamma$$
 for all sufficiently large $s$, having the property that each composition
$$E\Gamma \srm{f_s} P_s \Gamma \stackrel{i}{\injects} P_{s'} \Gamma \srm{g_{s'}} E\Gamma$$
induces the identity on $K_* \Ac (E\Gamma)$.  This suffices, since given any element 
$x\in K_* \Ac (E \Gamma)$, Theorem~\ref{vanishing-thm} guarantees that  we can choose $s'$ large enough that $i_*(f_s)_* (x) = 0$ in $K_* \Ac (P_{s'} \Gamma)$; now $x = (g_{s'})_* i_*(f_s)_* (x) = 0$. 

Fix a vertex $x_0\in E\Gamma$ and consider the embedding $\Gamma \injects E\Gamma$, $\gamma\goesto \gamma\cdot x_0$.  The action of $\Gamma$ on $E\Gamma$ by deck transformations restricts to left multiplication on $\Gamma$, so
the simplicial metric $d_\Delta$ on $E\Gamma$ restricts to a proper, left-invariant metric $d_\Delta$ on $\Gamma$.  If we equip $\Gamma$ with the left-invariant metric $d_w$ associated to a finite generating set, then for each $R > 0$ there exists $S>0$ such that $d_\Delta (\gamma, \gamma') < R$ implies $d_w (\gamma, \gamma') < S$.  
In particular, letting $D$ denote the diameter of $B\Gamma = E\Gamma/\Gamma$, there exists $s>0$ such that $d_\Delta (\gamma, \gamma') < 2(D+1)$ implies $d_w (\gamma, \gamma') < s$. 
By choice of $D$, the sets
$$U_\gamma = B_{D+1} (\gamma\cdot x_0) \setminus \{\gamma'\cdot x_0 \, :\, \gamma'\neq \gamma\}.$$ 
($\gamma\in \Gamma$)  form an open cover of $E\Gamma$.
If $\{\phi_\gamma\}_{\gamma\in \Gamma}$ is a partition of unity subordinate to this cover,
we can define $f_s \co E\Gamma\to P_s \Gamma$ by the formula
$$f_s (x) = \sum_{\gamma\in \Gamma} \phi_\gamma (x) \gamma,$$ 
Note that $f_s (x)$ is a well-defined point in $P_s \Gamma$, by our choice of $s$.
For each $\gamma \in \Gamma$, we have $\phi_\gamma (\gamma\cdot x_0) = 1$ and hence $f_s (\gamma\cdot x_0) = \langle x_0 \rangle$.  

The maps $g_s\co P_s \Gamma\to E\Gamma$ ($s = 0, 1, \ldots$) are defined by induction over the simplices in $P_s \Gamma$.  When $s = 0$, $P_0 \Gamma = \Gamma$ and $g_0$ is just the embedding $\gamma \goesto \gamma\cdot x_0$.  Now assume that $g_{s-1}$ has been defined ($s>0$).  Let $P_s^{(k)} \Gamma$ denote the $k$--skeleton of $P_s \Gamma$.  Viewing $P_{s-1} \Gamma$ as a subcomplex of $P_s \Gamma$, we extend $g_{s-1}$ inductively over the subcomplexes $P_s^{(k)} \Gamma \cup P_{s-1} (\Gamma)$.
Assuming $g_s$ has been defined on the $P_s^{(k-1)} \Gamma \cup P_{s-1} (\Gamma)$ for some $k\geqs 1$, we extend over a $k$--simplex $\sigma \notin P_{s-1} \Gamma$ as follows.  Let $D = \diam (g_s (\partial \sigma))$ and choose $x\in g_s (\partial \sigma)$.  By uniform contractibility of $E\Gamma$ (Lemma~\ref{unif-contractible}) there exists $D'>0$ (depending only on $D$) and a nullhomotopy of $g_s|_{\partial \sigma}$ whose image lies inside $B_{D'} (x)$.  We now extend $g_s$ over $\sigma$ using this nullhomotopy.

One may now check that $f_s$ and $g_s$ are inverse coarse equivalences, hence metrically coarse and proper (since $E\Gamma$ and $P_s \Gamma$ are proper).  

To show that $g_{s} \circ f_s$ induces the identity map on continuously controlled $K$--theory, it suffices to show that this map is Lipschitz homotopic to the identity~\cite[Proposition 3.17]{Bartels}, where a Lipschitz homotopy $H\co X\cross I\to Y$ (with $X$ and $Y$ metric spaces) is simply a continuous, metrically coarse map for which $\{x\in X \, :\, H(x, t) \in C \textrm{ for some } t\in I\}$ is compact for all compact sets $C\subset Y$.  Following Bartels--Rosenthal~\cite[Lemma 4.4]{Bartels-Rosenthal}, one constructs a homotopy $H\co E\Gamma\cross I\to E\Gamma$ connecting $g_s\circ f_s$ to $\Id_{E\Gamma}$ by induction over the skeleta of $E\Gamma\cross I$, again using the uniform contractibility of $E\Gamma$.  (Here it is most convenient to use the cell structure on $E\Gamma\cross I$ in which cells are either of the form $\sigma\cross \{0\}$, $\sigma\cross \{1\}$, or $\sigma \cross I$, with $\sigma$ a simplex in $E\Gamma$.)

To see that $H$ is metrically coarse, note that its restriction to the zero skeleton of $E\Gamma\cross I$ is the disjoint union of $g_s f_s$ and $\Id_{E\Gamma}$, hence is metrically coarse.  Assuming $H$ is metrically coarse on the $k$--skeleton, one checks metric coarseness on the $(k+1)$--skeleton using the fact that there is a uniform bound $D(k)$ on the diameter of $H(\sigma)$ for $\sigma$ a $k$--simplex (note that for $1$--simplices, this follows from the fact that $g_s f_s$ is a bounded distance from the identity).  For the remaining condition, it suffices to check that 
$$\{x \, :\, d(H(x, t), \gamma\cdot x_0) < R \textrm{ for some } t\in I\}$$
is compact  for each $\gamma\in \Gamma$, $R>0$.  This is similar: if $x$ lies in a $k$--simplex, then $d(H(x,t), g_s f_s (x)) \leqs D(k)$ and $d(g_s f_s (x), x)\leqs S$ (for some constant $S$ independent of $x$), so if $d(H(x, t), \gamma\cdot x_0) < R$, we have $d(x, \gamma \cdot x_0) < S+D(k) + R$, which suffices.
\end{proof}

Theorem~\ref{assembly} can now be proven exactly as in Bartels' proof for groups with finite asymptotic dimension~\cite[Theorems 5.3 and 6.5]{Bartels}.  For convenience of the reader, we recall the argument.

\vspace{.2in}
\noindent {\bf Proof of Theorem~\ref{assembly}.} As explained above, the assembly map 
$$H_* \left(B\Gamma; \bbK (\A[\Gamma])\right) \maps K_* \A[\Gamma]$$
can be realized (up to homotopy) as the map of fixed-point spectra
\begin{equation}\label{assembly-map}\left(\Omega \bbK \left(\Ainf (E\Gamma)\right)\right)^\Gamma 
\srm{\partial^\Gamma} \left(\bbK\left(\Ac (E\Gamma)_{<1}\right)\right)^\Gamma
\end{equation}
associated to a map of spectra 
\begin{equation}\label{boundary}\Omega \bbK \left(\Ainf (E\Gamma)\right) \srm{\partial} \bbK\left(\Ac (E\Gamma)_{<1}\right)
\end{equation}
that induces, on homotopy groups, the $K$--theoretic boundary map for the Karoubi sequence 
\begin{equation}\label{assembly-seq}\Ac (E\Gamma)_{<1} \srm{i} \Ac (E\Gamma) \srm{q} \Ainf (E\Gamma).\end{equation}
Given an $\Omega$--spectrum $Y$ with a level-wise action of a group $G$, let $Y^{h\Gamma}$ denote the homotopy fixed point spectrum; that is, the function spectrum $F^G(EG_+, Y)$ consisting of (unbased) equivariant maps from $EG$ to $Y$.
The map (\ref{assembly-map}) sits in a  commutative diagram
\begin{equation} \label{descent-diag}
\xymatrix{  \left(\Omega \bbK\left(\Ainf (E\Gamma)\right)\right)^\Gamma \ar[r]^{ \partial^\Gamma} \ar[d]^i & \left(\bbK \left(\Ab (E\Gamma)\right)\right)^\Gamma \ar[d]^j\\
\left(\Omega\bbK \left(\Ainf (E\Gamma)\right)\right)^{h\Gamma} \ar[r]^{\partial^{h\Gamma}} & \left(\bbK \left(\Ab (E\Gamma)\right)\right)^{h\Gamma}.
}
\end{equation}
The fact that $E\Gamma/\Gamma = B\Gamma$ is a finite CW complex implies that $i$ is a weak equivalence of spectra (see, for example, Carlsson--Pedersen~\cite[Theorem 2.11]{Carlsson-Pedersen}).  
Theorem~\ref{controlled-vanishing-EG}, together with the long exact sequence in homotopy associated to the Karoubi sequence
$$\Ac (E\Gamma) \isom \Ac (E\Gamma)_{<1} \injects \Ac (E\Gamma) \maps \Ainf (E\Gamma),$$
shows that the map  (\ref{boundary}) is a weak equivalence.  It follows that the map $\partial^{h\Gamma}$ in Diagram (\ref{descent-diag}) is also a weak equivalence (every $G$--equivariant map between $\Omega$--spectra with $G$--actions that is a weak equivalence, in the usual non-equivariant sense, induces a weak equivalence on homotopy fixed point spectra). 
Commutativity of (\ref{descent-diag}) implies that the assembly map $\partial^\Gamma$ in (\ref{assembly-map}) is a split injection on homotopy, with splitting given by $(i_*)^{-1} (\partial^{h\Gamma}_*)^{-1} j_*$.
$\hfill \Box$

\vspace{.2in}
As is usually the case in this area (see Bartels~\cite[Section 7]{Bartels}, for example), Theorem~\ref{assembly} has an analogue for Ranicki's ultimate lower quadratic $L$--theory spectrum $\bbL^{-\infty} (\A)$ of an additive category $\A$ with involution.  

\begin{theorem}$\label{L-theory}$ Let $\Gamma$ be a group with finite decomposition complexity, and assume there exists a universal principal $\Gamma$--bundle $E\Gamma\to B\Gamma$ with $B\Gamma$ a finite CW complex.  Let $\A$ be an additive category with involution, and assume that for some $r>0$ we have $K_r (\mathcal{A}) = 0$ for all $*<-r$.  Then the assembly map
$$H_* (B\Gamma; \bbL^{-\infty} (\A)) \maps \bbL^{-\infty}_* (\A [\Gamma]),$$
is a split injection for all $*\in \bbZ$.
\end{theorem}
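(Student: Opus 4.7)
The plan is to run the entire argument of Theorem~\ref{assembly} verbatim with $\bbK(\A)$ replaced by $\bbL^{-\infty}(\A)$; nearly every ingredient has an $L$-theoretic counterpart, and the role of the hypothesis $\bbL^{-\infty}_*(\A)=0$ for $*<-r$ is to repair the one place where the $K$-theoretic and $L$-theoretic machinery differ.

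First I would replace the non-connective $K$-theory functor $\bbK$ on small additive categories by Ranicki's ultimate lower $L$-theory spectrum $\bbL^{-\infty}$, extended to small additive categories with involution (the relevant categories $\Ac(X)$, $\Ac(X)_{<1}$, and $\Ainf(X)$ all inherit involutions from $\A$). Karoubi filtrations produce long exact sequences in $\bbL^{-\infty}$ just as they do for $\bbK$; this is a standard consequence of the Pedersen--Weibel--type arguments (see e.g.\ Carlsson--Pedersen and the exposition in Bartels~\cite[Section 7]{Bartels}). In particular, the Karoubi sequence~(\ref{assembly-seq}) yields a long exact sequence with $\bbL^{-\infty}$ in place of $\bbK$, the identification $\bbL^{-\infty}_*(\Ainf(X)) \isom H_{*-1}(X; \bbL^{-\infty}(\A))$ (for $X$ a finite CW complex) holds by the same Weiss--Williams style argument, and the boundary map realizes Loday's $L$-theoretic assembly map after taking $\Gamma$-fixed points in the universal cover.

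Next I would prove the $L$-theoretic analogues of Theorem~\ref{vanishing-thm} and Theorem~\ref{controlled-vanishing-EG}:
$$\mocolim_{s\to\infty} \bbL^{-\infty}_*\bigl(\Ac(P_s X)\bigr) = 0$$
for bounded-geometry FDC metric spaces $X$, and the vanishing of $\bbL^{-\infty}_*(\Ac(E\Gamma))$ under the hypothesis on $\Gamma$. The entire Mayer--Vietoris machinery of Sections~\ref{K-filt}--\ref{FDC} is formal with respect to the choice of functor on additive categories: it relies only on Karoubi filtration long exact sequences (Theorem~\ref{LES}), the third isomorphism theorem (Proposition~\ref{3rd}), the compatibility of Karoubi quotients with the chosen functor, and the ability to take directed colimits. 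All of these hold for $\bbL^{-\infty}$. Thus the transfinite induction of Proposition~\ref{vanishing-prop4} carries over, provided the base case (uniformly bounded decomposed sequences) is established. The reduction in that base case to the $L$-theory of categories of the form $\Ac(\coprod_i \{*\})$ and then to a colimit of copies of $\bbL^{-\infty}(\A)$ is where the hypothesis $\bbL^{-\infty}_*(\A)=0$ for $*<-r$ enters: without an Eilenberg swindle, the flasque-style vanishing used in Bartels~\cite[Corollary 3.19]{Bartels} must be replaced by a direct computation that only succeeds in a range where $\bbL^{-\infty}(\A)$ has vanished (cf.\ Carlsson--Goldfarb and Bartels~\cite[Section 7]{Bartels}); once it holds in sufficiently negative degrees, the Karoubi filtration long exact sequences propagate the vanishing to all degrees. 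Then the geometric argument of Theorem~\ref{controlled-vanishing-EG}, which only uses metric coarseness, properness, and Lipschitz homotopy invariance (all of which are satisfied by $\bbL^{-\infty}\Ac(-)$), yields $\bbL^{-\infty}_*(\Ac(E\Gamma))=0$.

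Finally I would apply Carlsson's descent argument exactly as at the end of the proof of Theorem~\ref{assembly}. The finiteness of $B\Gamma$ ensures that the comparison map from fixed-point to homotopy fixed-point spectra of $\Omega\bbL^{-\infty}\Ainf(E\Gamma)$ is a weak equivalence; the vanishing of $\bbL^{-\infty}_*\Ac(E\Gamma)$ makes the boundary map $\Omega\bbL^{-\infty}\Ainf(E\Gamma)\to \bbL^{-\infty}\Ac(E\Gamma)_{<1}$ a weak equivalence of (non-equivariant) spectra, hence a weak equivalence on homotopy fixed points; and chasing the resulting square shows the assembly map is a split injection. The main obstacle, as indicated above, is the base case of the $L$-theoretic vanishing induction, and this is precisely what the hypothesis on $\A$ is tailored to resolve.
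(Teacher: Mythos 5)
Your overall plan --- run the argument of Theorem~\ref{assembly} with $\bbK$ replaced by $\bbL^{-\infty}$ and verify each ingredient --- is the same as the paper's (the paper's proof is a two-sentence remark saying exactly this and pointing to Carlsson--Pedersen). But you have misidentified where the extra hypothesis $\bbL^{-\infty}_*(\A)=0$ for $*<-r$ is actually needed, and this is a genuine error.

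You claim the hypothesis is needed because ``without an Eilenberg swindle,'' the base case of the vanishing induction (showing $\bbL^{-\infty}_*\Ac(\coprod_i\{*\})=0$) fails. That is not so: the whole point of passing to Ranicki's \emph{ultimate lower} quadratic $L$-theory $\bbL^{-\infty}$ is precisely that it \emph{does} vanish on flasque additive categories with involution, i.e., Eilenberg swindles work for $\bbL^{-\infty}$ just as for non-connective $\bbK$-theory. So the base case of Proposition~\ref{vanishing-prop4}, the $L$-theoretic Theorem~\ref{vanishing-thm}, and the $L$-theoretic Theorem~\ref{controlled-vanishing-EG} all go through for $\bbL^{-\infty}$ with \emph{no} additional hypothesis on $\A$. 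Where the hypothesis is actually used is in the descent step you describe in your final paragraph: the assertion that the map $i$ from $\Gamma$-fixed points to $\Gamma$-homotopy-fixed points of $\Omega\,\bbL^{-\infty}\Ainf(E\Gamma)$ is a weak equivalence. For $K$-theory this is Carlsson--Pedersen~\cite[Theorem 2.11]{Carlsson-Pedersen}; its $L$-theoretic analogue, \cite[Theorem 5.5]{Carlsson-Pedersen}, only holds under the boundedness assumption $\bbL^{-\infty}_*(\A)=0$ for $*<-r$ (roughly because $\bbL^{-\infty}$ is built as a homotopy colimit of deloopings and the comparison with homotopy fixed points requires this colimit to stabilize). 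So the hypothesis is a requirement of Carlsson's descent argument, not of your vanishing induction --- which, happily, means the part of the paper you actually developed (Sections~\ref{K-filt}--\ref{FDC}) transfers to $\bbL^{-\infty}$ unconditionally.
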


The proof is analogous to that of Theorem~\ref{assembly}.  The relevant tools for $L$--theory are provided in Carlsson--Pedersen~\cite[Section 4]{Carlsson-Pedersen}.  The additional condition on $\bbL^{-\infty}_* (\A)$ is needed in order to apply the $L$--theoretic analogue of Carlsson--Pedersen~\cite[Theorem 2.11]{Carlsson-Pedersen} (see~\cite[Theorem 5.5]{Carlsson-Pedersen}).  

\section{Addendum}\label{Addendum}

Here we provide details of the limit ordinal step in the inductive proof of Proposition~\ref{vanishing-prop4}. This argument was missing from previous versions of the article.  The  argument is essentially formal, and does not affect the strategy of the proof. We thank Daniel Kasprowski (private communication) for pointing out the omission and for sharing with us the argument presented below.

The proof of Proposition~\ref{vanishing-prop4} proceeds by transfinite induction on $\gamma$.  It is stated above that ``If  $\gamma$ is a limit ordinal and Proposition 6.11 holds for all $\beta < \gamma$, it follows immediately from the definitions that Proposition 6.11 also holds for $\gamma$."  This is an oversimplification: if $\Z \in \fD_\gamma (X)$, we can not immediately conclude that $\Z\in \fD_\beta$ for some $\beta < \gamma$.  The notation $\Z\in \fD_\gamma (X)$ is highly abusive; it means only that each family $\{Z^r_\alpha\}_{\alpha \in A_r}$ is an element of the set $\fD_\gamma (X) := \bigcup_{\beta < \gamma} \fD_\beta (X)$. Thus $\Z\in \fD_\gamma (X)$ means that for each $r$, there exists $\beta_r < \gamma$ such that $\{Z^r_\alpha\}_{\alpha \in A_r} \in \fD_{\beta_r} (X)$, but it is possible that the least upper bound of the ordinals $\beta_r$ is actually $\gamma$.
We now provide a complete discussion of the limit ordinal step in the proof of Proposition~\ref{vanishing-prop4}.   

To avoid further abuse of notation, from here on we will replace the notation $\Z \in \fD_\eta (X)$ by the statement ``$\Z$ has complexity at most $\eta$."  
Let $\gamma$ be a limit ordinal.
Assume that for all $\beta < \gamma$, every decomposed sequence $\W$ in $X$ with complexity at most $\beta$ is a vanishing sequence.  Let $\Z$ be a decomposed sequence in $X$ with complexity at most $\gamma$.  We must prove that $\Z$ is in fact a vanishing sequence, in the sense that  
\begin{equation}\label{Z-vanish}\colim_{\bs\in \Seq} K_* \bAc (P_\bs (\Z))  = 0.\end{equation}

\vspace{.1in}
\noindent {\bf Notation.}
Given a family of metric spaces $\{W_\alpha\}_{\alpha\in A}$ and a number $s>0$, let 
$P_s (\{W_\alpha\}_{\alpha})$ denote the Rips complex $P_s \left( \coprod_\alpha W_\alpha\right)$.

For each $\bs\in \Seq$, we have the Karoubi sequence
\begin{equation}\label{ses2}\mS_\bs \injects \Ac \left( \coprod_r \left(P_{s_r}\left(\{Z^r_\alpha\}_\alpha\right)\right)\right) \maps   
 \bAc (P_\bs (\Z))
\end{equation}
defining $\bAc (P_\bs (\Z))$.
Applying $K$--theory and passing to the colimit along $\bs\in \Seq$ gives a long exact sequence (note that this is a filtered colimit, so it preserves exactness).  
Hence to prove (\ref{Z-vanish}), it suffices to show that
\begin{equation}\label{S-vanish}
\textrm{ For each $*\in \bbZ$, }\,
\colim_{\bs\in \Seq} K_* \left( \mS_\bs\right) = 0  \end{equation}
and 
\begin{equation}\label{coprod-vanish}  
\textrm{ For each $*\in \bbZ$, }\,
\colim_{\bs\in \Seq} K_*\left( \Ac \left( \coprod_r P_{s_r} \left(\{Z^r_\alpha\}_\alpha\right)\right) \right) = 0.
\end{equation}

To prove (\ref{S-vanish}), note that $\mS_\bs$ is the colimit, over $r<R$, of 
$$\displaystyle{\prod_{r<R}} \Ac \left(P_{s_r}\left(\{Z^r_\alpha\}_\alpha\right)\right).$$ 
For each $r$, we know that
$\{Z^r_\alpha\}_\alpha \in \D_{\beta_r}$ for some $\beta_r < \gamma$.  The space $\coprod_\alpha Z^r_\alpha$ decomposes over the family  $\{Z^r_\alpha\}_\alpha$, so $\coprod_\alpha Z^r_\alpha\in \D_{\beta_r + 1}$.  Since $\beta_r < \gamma$ and $\gamma$ is a limit ordinal, we have $\beta_r + 1 < \gamma$, so our induction hypothesis implies that
\begin{equation*}\colim_{\bs\in \Seq} K_*  \Ac \left(P_{s_r}\left(\{Z^r_\alpha\}_\alpha\right)\right) = 0.\end{equation*}
The desired result now follows from 
the fact that $K$--theory commutes with filtered colimits and with (finite) products. 

\begin{remark}\label{IH}
Strictly speaking, our induction hypothesis  states only that every decomposed sequence of complexity at most $\beta$ $($with $\beta < \gamma$$)$ is a vanishing sequence.  
As explained in Section~\ref{FDC} $($see in particular Diagram (\ref{prod-diag})$)$, it follows that for every \e{space} $W\in \D_{\beta} (X)$ $(\beta < \gamma)$ we have
$$\colim_{\bs\in \Seq} K_* \left(P_s W\right) = 0.$$
\end{remark}

Now we turn to the proof of (\ref{coprod-vanish}).
For each $\bs\in \Seq$,  there is a natural inclusion of categories
$$\Ac \left( \coprod_r P_{s_r} \left(\{Z^r_\alpha\}_\alpha\right)\right) \injects \displaystyle{\prod_r} \Ac \left(P_{s_r}\left(\{Z^r_\alpha\}_\alpha\right)\right),$$
and these inclusions induce a functor
$$j\co \colim_{\bs\in \Seq} \Ac \left( \coprod_r P_{s_r} \left(\{Z^r_\alpha\}_\alpha\right)\right) \injects \colim_{\bs\in \Seq} \displaystyle{\prod_r} \Ac \left(P_{s_r}\left(\{Z^r_\alpha\}_\alpha\right)\right).$$  
We will now define a functor in the opposite direction,
$$C\co \colim_{\bs\in \Seq} \displaystyle{\prod_r} \Ac \left(P_{s_r}\left(\{Z^r_\alpha\}_\alpha\right)\right) \maps \colim_{\bs\in \Seq} \Ac \left( \coprod_r P_{s_r} \left(\{Z^r_\alpha\}_\alpha\right)\right),$$
which will be inverse to $j$.
On objects, this functor is simply induced by the inclusions 
$$\Ac \left(P_{s_r}\left(\{Z^r_\alpha\}_\alpha\right)\right) \injects \Ac \left( \coprod_r P_{s_r} \left(\{Z^r_\alpha\}_\alpha\right)\right),$$
which are compatible as $\bs$ increases.  Given a morphism
$$ (M_r)_r \xmaps{(\phi_r)_r} (N_r)_r$$
in the category
$$\displaystyle{\prod_r} \Ac \left(P_{s_r}\left(\{Z^r_\alpha\}_\alpha\right)\right),$$
 let $D_r < \infty$ be the propagation of $\phi_r$, and let $s'_r  = \max(s_r, D_r)$.  Applying the functor
$$\eta_{s_r, s'_r} \co \Ac \left(P_{s_r}\left(\{Z^r_\alpha\}_\alpha\right)\right) \maps \Ac \left(P_{s'_r}\{Z^r_\alpha\}_\alpha\right),$$
we see that $\eta_{s_r, s_r'} (\phi_r)$ now has propagation at most 3, so $(\eta_{s_r, s_r'} (\phi_r))_r$ is a morphism 
$$\eta_{\bs, \bs'}\left((M_r)_r\right) \maps \eta_{\bs, \bs'} \left((N_r)_r\right)$$
in the category 
$$\Ac \left(\coprod_r P_{s_r} (\{Z^r_\alpha\}_\alpha)\right).$$
Since $\eta_{\bs, \bs'} ((M_r)_r)$ represents $C((M_r)_r)$ and $\eta_{\bs, \bs'}((N_r)_r)$ represents $C((N_r)_r)$, we may define $C((\phi_r)_r)$ to be the morphism represented by $(\eta_{s_r, s_r'} (\phi_r))_r$.   It follows from the definitions that $C$ is well-defined on morphisms and functorial, and also that $C$ and $j$ are inverses.   
Hence (\ref{coprod-vanish}) is equivalent to the statement that
\begin{equation}\label{Prod-vanish} 
\textrm{ For each $*\in \bbZ$, }\,
K_*\left(  \colim_{\bs\in \Seq}   
 \displaystyle{\prod_r} \Ac \left(P_{s_r}\left(\{Z^r_\alpha\}_\alpha\right)\right)\right) = 0.\end{equation}

Examining the definitions, one sees there is an isomorphism of categories
$$ \colim_{\bs\in \Seq}  \displaystyle{\prod_r} \Ac \left(P_{s_r}\left(\{Z^r_\alpha\}_\alpha\right)\right)
\isom \displaystyle{\prod_r}  \colim_{\bs\in \Seq}  \Ac \left(P_{s_r}\left(\{Z^r_\alpha\}_\alpha\right)\right).$$
(Note that on the right, the colimit over $\bs\in \Seq$ may be replaced by a colimit over $s\in \bbN$, since $\Ac \left(P_{s_r}\left(\{Z^r_\alpha\}_\alpha\right)\right)$ depends only on the $r$--term of the sequence $\bs$.)
This yields
\begin{eqnarray*} 
K_*\left( \colim_{\bs \in \Seq}    \displaystyle{\prod_r} \Ac \left(P_{s_r}\left(\{Z^r_\alpha\}_\alpha\right)\right)\right) 
\isom K_*\left(\displaystyle{\prod_r}  \colim_{\bs\in \Seq}  \Ac \left(P_{s_r}\left(\{Z^r_\alpha\}_\alpha\right)\right)\right)\\
\isom
 \displaystyle{\prod_r} K_*  \left( \colim_{\bs\in \Seq}  \left(\Ac \left(P_{s_r}\left(\{Z^r_\alpha\}_\alpha\right)\right)\right)\right),
 \end{eqnarray*}
 where we have used  Carlsson's theorem that $K$--theory commutes with infinite products (for connective $K$--theory this is proven in~\cite{Carlsson-prod}; the proof is extended to non-connective $K$--theory in~\cite{Carlsson-assembly}).  
As discussed above, for each $r$ we have $\coprod_\alpha Z^r_\alpha \in \D_{\beta_r+1}$, and $\beta_r +1 < \gamma$,
so each term in the product 
$$ \displaystyle{\prod_r} K_*  \left( \colim_{\bs\in \Seq}  \left(\Ac \left(P_{s_r}\left(\{Z^r_\alpha\}_\alpha\right)\right)\right)\right) \isom  \displaystyle{\prod_r} \colim_{\bs\in \Seq}   K_*  \left(\Ac \left(P_{s_r}\left(\{Z^r_\alpha\}_\alpha\right)\right)\right)$$
vanishes.  This proves (\ref{Prod-vanish}), and completes the proof of (\ref{Z-vanish}).
 
\vspace{.3in}
\noindent{\bf Comments regarding $L$--theory.}
\vspace{.1in}

The argument above applies equally well to algebraic $L$--theory, if one invokes the theorem of Carlsson and Pedersen~\cite{Carlsson-Pedersen} that $L$--theory commutes with infinite products when coefficient category $\mathcal{A}$ satisfies $K_r (\mathcal{A}) = 0$  for  all $r << 0$.  In particular, this  fills the missing step in the proof of Theorem~\ref{L-theory}.  (We also take this opportunity to note that previous versions of this article contained an  error in the statement of that result; we thank Christoph Winges for pointing out the misstatement.)

It should be noted that our proof of the $L$--theoretic analog of the bounded Borel conjecture (that is, the $L$--theoretic version of Theorem~\ref{bdd-Borel} requires the Carlsson--Pedersen result, so one must again assume that  $K_* (\mathcal{A}) = 0$  for  all $*<<0$.  Since Winges has shown that there are in fact additive categories with involution for which $L$--theory does not commute with infinite products~\cite{Winges}, it would be interesting to know if there is a way to prove the $L$--theoretic bounded Borel conjecture for FDC metric spaces without invoking Carlsson's theorem.

%\bibliographystyle{plain}
%\bibliography{references}

\begin{thebibliography}{10}

\bibitem{ACFP}
Douglas~R. Anderson, Francis~X. Connolly, Steven~C. Ferry, and Erik~K.
  Pedersen.
\newblock Algebraic {$K$}-theory with continuous control at infinity.
\newblock {\em J. Pure Appl. Algebra}, 94(1):25--47, 1994.

\bibitem{BFJR}
Arthur Bartels, Tom Farrell, Lowell Jones, and Holger Reich.
\newblock On the isomorphism conjecture in algebraic {$K$}-theory.
\newblock {\em Topology}, 43(1):157--213, 2004.

\bibitem{Bartels-Rosenthal}
Arthur Bartels and David Rosenthal.
\newblock On the {$K$}-theory of groups with finite asymptotic dimension.
\newblock {\em J. Reine Angew. Math.}, 612:35--57, 2007.

\bibitem{Bartels}
Arthur~C. Bartels.
\newblock Squeezing and higher algebraic {$K$}-theory.
\newblock {\em $K$-Theory}, 28(1):19--37, 2003.

\bibitem{Baum-Connes}
Paul Baum and Alain Connes.
\newblock {$K$}-theory for discrete groups.
\newblock In {\em Operator algebras and applications, {V}ol.\ 1}, volume 135 of
  {\em London Math. Soc. Lecture Note Ser.}, pages 1--20. Cambridge Univ.
  Press, Cambridge, 1988.

\bibitem{BHM}
M.~B{\"o}kstedt, W.~C. Hsiang, and I.~Madsen.
\newblock The cyclotomic trace and algebraic {$K$}-theory of spaces.
\newblock {\em Invent. Math.}, 111(3):465--539, 1993.

\bibitem{Cardenas-Pedersen}
M.~C{\'a}rdenas and E.~K. Pedersen.
\newblock On the {K}aroubi filtration of a category.
\newblock {\em $K$-Theory}, 12(2):165--191, 1997.

\bibitem{Carlsson-assembly}
Gunnar Carlsson.
\newblock Bounded {$K$}-theory and the assembly map in algebraic {$K$}-theory.
\newblock In {\em Novikov conjectures, index theorems and rigidity, {V}ol.\ 2
  ({O}berwolfach, 1993)}, volume 227 of {\em London Math. Soc. Lecture Note
  Ser.}, pages 5--127. Cambridge Univ. Press, Cambridge, 1995.

\bibitem{Carlsson-prod}
Gunnar Carlsson.
\newblock On the algebraic {$K$}-theory of infinite product categories.
\newblock {\em $K$-Theory}, 9(4):305--322, 1995.

\bibitem{Carlsson-Goldfarb}
Gunnar Carlsson and Boris Goldfarb.
\newblock The integral {$K$}-theoretic {N}ovikov conjecture for groups with
  finite asymptotic dimension.
\newblock {\em Invent. Math.}, 157(2):405--418, 2004.

\bibitem{Carlsson-Pedersen}
Gunnar Carlsson and Erik~Kj{\ae}r Pedersen.
\newblock Controlled algebra and the {N}ovikov conjectures for {$K$}- and
  {$L$}-theory.
\newblock {\em Topology}, 34(3):731--758, 1995.

\bibitem{Farrell-Jones-isom}
F.~T. Farrell and L.~E. Jones.
\newblock Isomorphism conjectures in algebraic {$K$}-theory.
\newblock {\em J. Amer. Math. Soc.}, 6(2):249--297, 1993.

\bibitem{G-H-W}
Erik Guentner, Nigel Higson, and Shmuel Weinberger.
\newblock The {N}ovikov conjecture for linear groups.
\newblock {\em Publ. Math. Inst. Hautes \'Etudes Sci.}, (101):243--268, 2005.

\bibitem{GTY-rigid}
Erik Guentner, Romain Tessera, and Guoliang Yu.
\newblock A notion of geometric complexity and its application to topological
  rigidity.
\newblock {\em Invent. Math.}, 189(2):315--357, 2012.

\bibitem{GTY-FDC}
Erik Guentner, Romain Tessera, and Guoliang Yu.
\newblock Discrete groups with finite decomposition complexity.
\newblock {\em Groups Geom. Dyn.}, 7(2):377--402, 2013.

\bibitem{Hambleton-Pedersen}
Ian Hambleton and Erik~K. Pedersen.
\newblock Identifying assembly maps in {$K$}- and {$L$}-theory.
\newblock {\em Math. Ann.}, 328(1-2):27--57, 2004.

\bibitem{Hatcher}
Allen Hatcher.
\newblock {\em Algebraic topology}.
\newblock Cambridge University Press, Cambridge, 2002.

\bibitem{Hsiang-geom-applications}
Wu~Chung Hsiang.
\newblock Geometric applications of algebraic {$K$}-theory.
\newblock In {\em Proceedings of the {I}nternational {C}ongress of
  {M}athematicians, {V}ol.\ 1, 2 ({W}arsaw, 1983)}, pages 99--118, Warsaw,
  1984. PWN.

\bibitem{Loday}
Jean-Louis Loday.
\newblock {$K$}-th\'eorie alg\'ebrique et repr\'esentations de groupes.
\newblock {\em Ann. Sci. \'Ecole Norm. Sup. (4)}, 9(3):309--377, 1976.

\bibitem{OOY-quant}
Herv\'e Oyono-Oyono and Guoliang Yu.
\newblock On quantitative operator {$K$}--theory.
\newblock To appear in Ann. Inst. Fourier (Grenoble). Available at
  \url{www.math.univ-metz.fr/~oyono/pub.html}, 2011.

\bibitem{Pedersen-Weibel}
Erik~K. Pedersen and Charles~A. Weibel.
\newblock A nonconnective delooping of algebraic {$K$}-theory.
\newblock In {\em Algebraic and geometric topology (New Brunswick, N.J.,
  1983)}, volume 1126 of {\em Lecture Notes in Math.}, pages 166--181.
  Springer, Berlin, 1985.

\bibitem{Pedersen-Weibel-homology}
Erik~K. Pedersen and Charles~A. Weibel.
\newblock {$K$}-theory homology of spaces.
\newblock In {\em Algebraic topology ({A}rcata, {CA}, 1986)}, volume 1370 of
  {\em Lecture Notes in Math.}, pages 346--361. Springer, Berlin, 1989.

\bibitem{Quillen}
Daniel Quillen.
\newblock Higher algebraic {$K$}-theory. {I}.
\newblock In {\em Algebraic {$K$}-theory, {I}: {H}igher {$K$}-theories ({P}roc.
  {C}onf., {B}attelle {M}emorial {I}nst., {S}eattle, {W}ash., 1972)}, pages
  85--147. Lecture Notes in Math., Vol. 341. Springer, Berlin, 1973.

\bibitem{R-Y-K-theory}
Andrew Ranicki and Masayuki Yamasaki.
\newblock Controlled {$K$}-theory.
\newblock {\em Topology Appl.}, 61(1):1--59, 1995.

\bibitem{R-Y-L-theory}
Andrew Ranicki and Masayuki Yamasaki.
\newblock Controlled {$L$}-theory.
\newblock In {\em Exotic homology manifolds---{O}berwolfach 2003}, volume~9 of
  {\em Geom. Topol. Monogr.}, pages 105--153 (electronic). Geom. Topol. Publ.,
  Coventry, 2006.

\bibitem{Rosenthal}
David Rosenthal.
\newblock Splitting with continuous control in algebraic {$K$}-theory.
\newblock {\em $K$-Theory}, 32(2):139--166, 2004.

\bibitem{STY-coarse-BC}
G.~Skandalis, J.~L. Tu, and G.~Yu.
\newblock The coarse {B}aum-{C}onnes conjecture and groupoids.
\newblock {\em Topology}, 41(4):807--834, 2002.

\bibitem{Sperber}
Ron Sperber.
\newblock Comparing assembly maps in algebraic {$K$}-theory.
\newblock {\em J. K-Theory}, 7(1):145--168, 2011.

\bibitem{Weiss}
Michael Weiss.
\newblock Excision and restriction in controlled {$K$}-theory.
\newblock {\em Forum Math.}, 14(1):85--119, 2002.

\bibitem{Weiss-Williams}
Michael Weiss and Bruce Williams.
\newblock Pro-excisive functors.
\newblock In {\em Novikov conjectures, index theorems and rigidity, {V}ol.\ 2
  ({O}berwolfach, 1993)}, volume 227 of {\em London Math. Soc. Lecture Note
  Ser.}, pages 353--364. Cambridge Univ. Press, Cambridge, 1995.

\bibitem{Winges}
Christoph Winges.
\newblock A note on the {$L$}-theory of infinite product categories.
\newblock {\em Forum Math.}, 25(4):665--676, 2013.

\bibitem{Yu}
Guoliang Yu.
\newblock The {N}ovikov conjecture for groups with finite asymptotic dimension.
\newblock {\em Ann. of Math. (2)}, 147(2):325--355, 1998.

\bibitem{Yu-BC}
Guoliang Yu.
\newblock The coarse {B}aum-{C}onnes conjecture for spaces which admit a
  uniform embedding into {H}ilbert space.
\newblock {\em Invent. Math.}, 139(1):201--240, 2000.

\end{thebibliography}

 \def\cprime{$'$}

\end{document}